\numberwithin{equation}{section}
\newcommand{\supp}{\operatorname{supp}}
\newcommand{\bc}{\begin{center}}
\newcommand{\ec}{\end{center}}
\newcommand{\ba}{\begin{array}}
\newcommand{\ea}{\end{array}}
\newcommand{\be}{\begin{eqnarray}}
\newcommand{\ee}{\end{eqnarray}}
\newcommand{\bel}{\begin{eqnarray}\label}
\newcommand{\eel}{\end{eqnarray}}
\newcommand{\bes}{\begin{eqnarray*}}
\newcommand{\ees}{\end{eqnarray*}}
\newcommand{\bn}{\begin{enumerate}}
\newcommand{\en}{\end{enumerate}}
\newcommand{\unif}{\mathsf{Unif}}
\newcommand{\domain}{\operatorname{dom}}
\definecolor{MIT}{cmyk}{.24, 1.00, .78, .17} 
\definecolor{pink}{cmyk}{0, 1, 0, 0} 
\definecolor{darkgreen}{cmyk}{1,0, 1, 0}
\newcommand{\iid}{{\it i.i.d.\ }}
\newtheorem{theorem}{Theorem}
\newtheorem*{theorem*}{Theorem}
\newtheorem{lemma}[theorem]{Lemma}
\newtheorem{propdef}[theorem]{Proposition-Definition}
\newtheorem{proposition}[theorem]{Proposition}
\newtheorem*{proposition*}{Proposition}
\newtheorem{corollary}[theorem]{Corollary}
\newtheorem{remark}[theorem]{Remark}
\newtheoremstyle{TheoremNum}
		{\topsep}{\topsep}              
		{\itshape}                      
		{}                              
		{\bfseries}                     
		{.}                             
		{ }                             
		{\thmname{#1}\thmnote{ \bfseries #3}}
\newtheorem*{rep@theorem}{\rep@title}
\newcommand{\newreptheorem}[2]{%
\newenvironment{rep#1}[1]{%
 \def\rep@title{#2 \ref{##1}}%
 \begin{rep@theorem}}%
 {\end{rep@theorem}}}
\theoremstyle{plain}
\newtheorem{assumpgen}{} %
\newtheorem{assumpspec}{} %
\newtheorem{assumpalt}{} %
\newcommand{\leadeq}[2][4]{\MoveEqLeft[#1] #2 \nonumber}
\newcommand{\mockalph}[1]{}
\newcommand{\subalign}[1]{%
  \vcenter{%
    \Let@ \restore@math@cr \default@tag
    \baselineskip\fontdimen10 \scriptfont\tw@
    \advance\baselineskip\fontdimen12 \scriptfont\tw@
    \lineskip\thr@@\fontdimen8 \scriptfont\thr@@
    \lineskiplimit\lineskip
    \ialign{\hfil$\m@th\scriptstyle##$&$\m@th\scriptstyle{}##$\crcr
      #1\crcr
    }%
  }
}
\newcommand{\cf}{cf\@ifnextchar.{}{.\ }}
\newcommand{\DS}{\displaystyle}
\newcommand{\cD}{\mathcal{D}}
\newcommand{\cF}{\mathcal{F}}
\newcommand{\cP}{\mathcal{P}}
\newcommand{\cS}{\mathcal{S}}
\newcommand{\cT}{\mathcal{T}}
\newcommand{\cX}{\mathcal{X}}
\newcommand{\R}{\mathbbm{R}}
\newcommand{\p}{\mathbbm{P}}
\newcommand{\E}{\mathbbm{E}}
\newcommand{\1}{\mathbbm{1}}
\newcommand{\pn}{\p_{\kern-0.25em n}}
\newcommand{\pnm}{\p_{\kern-0.25em n,m}}
\newcommand{\psubm}{\p_{\kern-0.25em m}}
\newcommand{\psubp}{\p_{\kern-0.25em p}}
\newcommand{\cfi}{\cF_{\kern-0.25em \infty}}
\newcommand{\argmin}{\mathop{\mathrm{argmin}}}
\newcommand{\ud}{\mathrm{d}}
\newcommand{\eps}{\varepsilon}
\newlength{\minipagewidth}
\newcommand{\extension}{\operatorname{ext}}
\newcommand{\ext}{\operatorname{ext}}
\newcommand{\potential}{f}
\newcommand{\tmap}{T}
\newcommand{\dualpotential}{g}
\newcommand{\risk}{\mathcal{S}}
\newcommand{\emprisk}{\hat{\risk}}
\newcommand{\mongeprimal}{\cP_{\mathrm{M}}}
\newcommand{\kantoprimal}{\cP_{\mathrm{K}}}
\newcommand{\kantoprimalmod}{\tilde \cP_{\mathrm{K}}}
\newcommand{\id}{\mathrm{id}}
\newcommand{\funcclass}{\mathcal{F}}
\newcommand{\funcclassstd}{\mathcal{X}}
\newcommand{\rkhs}{\mathcal{H}}
\newcommand{\funcclassgeneral}{\mathcal{G}}
\newcommand{\funcclassmap}{\mathcal{T}}
\newcommand{\measureclass}{\mathcal{M}}
\newcommand{\convexset}{U}
\newcommand{\measureone}{P}
\newcommand{\measuretwo}{Q}
\newcommand{\density}{\rho}
\newcommand{\diam}{\operatorname{diam}}
\newcommand{\interior}{\operatorname{int}}
\newcommand{\norm}[1]{\| #1 \|}
\newcommand{\constbd}{M} %
\newcommand{\constsmooth}{R} %
\newcommand{\constempproc}{C_1} %
\newcommand{\constlog}{C_2}
\newcommand{\constjapprox}{C_3}
\newcommand{\constsobolev}{C_4} %
\newcommand{\constbdder}{C_5}
\newcommand{\constlinfty}{C_6}
\newcommand{\constdudleycombined}{C_7}
\newcommand{\fvec}{\bm{f}}
\newcommand{\Tvec}{\bm{T}}
\newcommand{\xvec}{\bm{x}}
\newcommand{\yvec}{\bm{y}}
\newcommand{\legtrafo}{\mathcal{L}}
\newcommand{\interp}{\mathcal{P}}
\newcommand{\invwavtrafo}{W^\top}
\newcommand{\wavcoeffs}{\gamma}
\begin{document}

\begin{frontmatter}

	\title{Minimax estimation of smooth optimal transport maps}
	\runtitle{Minimax estimation of transport maps}

	\author{ \fnms{Jan-Christian} \snm{H\"utter}\thanksref{}\ead[label=huetter]{jhuetter@broadinstitute.org} 
		~and~
		\fnms{Philippe} \snm{Rigollet}\thanksref{}\ead[label=rigollet]{rigollet@math.mit.edu}
	}
	\affiliation{Broad Institute and Massachusetts Institute of Technology}

	\address{{Jan-Christian H\"utter}\\
		{Broad Institute of MIT and Harvard} \\
		{415 Main Street,}\\
		{Cambridge, MA 02142, USA}\\
		\printead{huetter}
	}

	\address{{Philippe Rigollet}\\
		{Department of Mathematics} \\
		{Massachusetts Institute of Technology}\\
		{77 Massachusetts Avenue,}\\
		{Cambridge, MA 02139-4307, USA}\\
		 \printead{rigollet}
	}

	\runauthor{H\"utter and Rigollet}

	\begin{abstract}
		\ Brenier's theorem is a cornerstone of optimal transport that guarantees the existence of an optimal transport map $T$ between two probability distributions $P$ and $Q$ over $\R^d$ under certain regularity conditions.
		The main goal of this work is to establish the minimax estimation rates for such a transport map from data sampled from $P$ and $Q$ under additional smoothness assumptions on \( T \).
		To achieve this goal, we develop an estimator based on the minimization of an empirical version of the semi-dual optimal transport problem, restricted to truncated wavelet expansions. 
		This estimator is shown to achieve near minimax optimality using new stability arguments for the semi-dual and a complementary minimax lower bound.
		Furthermore, we provide numerical experiments on synthetic data supporting our theoretical findings and highlighting the practical benefits of smoothness regularization.
		These are the first minimax estimation rates for transport maps in general dimension.
	\end{abstract}

	\begin{keyword}[class=AMS]\kwd{62G05}
	\end{keyword}
	\begin{keyword}[class=KWD]
		Optimal transport, nonparametric estimation, minimax rates, wavelet estimator
	\end{keyword}

\end{frontmatter}

\section{Introduction}
\label{SEC:intro}

Wasserstein distances and the associated problem of optimal transport date back to the work of Gaspard Monge \cite{Mon81} and have since then become important tools in pure and applied mathematics \cite{Vil03, Vil09, San15}. Tools from optimal transport have been successfully employed in machine learning~\cite{AlaGraCut18, PeyCut19,SchHeiBon18,FlaCutCou18,ArjChiBot17a,GenPeyCut18,JanCutGra18,MonMulCut16,RolCutPey16,SchHeiBon17,GraJouBer19, StaClaSol17,AlvJaaJeg18, ArjChiBot17a,DelGamGor19,CanRos12} computer graphics \cite{LavClaChi18, SolGoePey15, SolPeyKim16,FeyChaVia17}, statistics~\cite{SegCut15,AhiGouPar18, RigWee18, WeeBer19, ZemPan19, PanZem19, BigGouKlei17, CazSegBig18, RamTriCut17,ClaSol18,TamMun18,KlaTamMun18,BigCazPap17,BarGorLes19,KroSpoSuv19}, and, more recently, computational biology \cite{SchShuTab19, YanDamVen18}.

\medskip

Monge asked the following question:
Given two probability measures \( \measureone, \measuretwo \) in \( \R^d \), how can we transport \( \measureone \) to \( \measuretwo \) while minimizing the total distance traveled by this transport.
A classical instantiation of this problem over $\R^d$ is to find a map \( \tmap_0 \colon \R^d \to \R^d \) that minimizes the objective
\begin{equation}
	\label{eq:og}
	\min_{\tmap} \:  \int_{\R^d} \| \tmap(x) - x \|_2^2 \, \ud \measureone(x), \quad \text{s.t. } \tmap_\# \measureone = \measuretwo,
\end{equation}
which is known as the \emph{Monge problem}, where \( \tmap_\# \measureone \) denotes the push-forward of \( \measureone \) under \( \tmap \), that is,
\begin{equation}
	\label{eq:pe}
	\tmap_\# \measureone(A) = \measureone(\tmap^{-1}(A)), \quad \text{for all Borel sets } A.
\end{equation}

The highly non-linear constraint in \eqref{eq:pe} made the mathematical treatment of the Monge problem seem elusive for a long time, until the seminal work of Kantorovich \cite{Kan42,Kan48}, who considered the following relaxation.
Instead of looking for a map \( \tmap_0 \), look for a transport plan \( \Gamma_0 \) in the set of all possible probablity measures on \( \R^d \times \R^d \) whose marginals coincide with \( \measureone \) and \( \measuretwo \), which we denote by \( \Pi(\measureone, \measuretwo) \).
This leads to the optimization problem
\begin{equation}
	\label{eq:pg}
	\min_{\Gamma} \int \| x - y \|_2^2 \, \ud \Gamma(x,y) \quad \text{s.t. } \Gamma \in \Pi(\measureone, \measuretwo),
\end{equation}
which is known as the \emph{Kantorovich problem}, and whose value is the square of the 2-Wasserstein distance, \( W_2^2(\measureone, \measuretwo) \), between the two probability measures \( \measureone \) and \( \measuretwo \).
The two optimization problems are indeed linked: Brenier's Theorem (Theorem~\ref{thm:brenier} below), guarantees that under regularity assumptions on \( \measureone \), a solution \( \Gamma_0 \) to \eqref{eq:pg} is concentrated on the graph of a map \( \tmap_0 \).
That is, using a suggestive informal notation, \( \Gamma_0(x, y) = \measureone(x) \, \delta_{y = \tmap_0(x)} \), where \( \delta \) denotes a point mass.
Moreover,  \( \tmap_0 \) is the gradient of a convex function \( \potential_0 \).
While cost functions other than \( \| x - y \|_2^2 \) could be of interest, such as \( \| x - y \|_2^p \) for \( p \ge 1 \), this work entirely focuses on the quadratic cost, which allows leveraging the well-established theory of convex functions and formulating key assumptions in terms of strong convexity.

Statistical optimal transport describes a body of questions that arise when the measures $P$ and $Q$ are unknown but samples are available. While the question of estimation of various quantities such as $W_2(P,Q)$, for example, are of central importance, for applications such as domain adaptation and data integration \cite{DamKelFla18,CouFlaTui14,CouFlaHab17,SegDamFla18,PerCouFla16,CouFlaTui17, ForHutNit18, RigWee19}, the main quantity of interest is the transport map $T_0$ itself since it can be used to push almost every point in the support of $P$ to a point in the support of $Q$.
Moreover, the optimal transport map plays an important role in characterizing the Riemannian geometry that arises from endowing probability measures that have finite second moments with the 2-Wasserstein-distance.
In particular, it can be used to define the right-inverse to the exponential map in that space \cite{Gig11}, which in turn enables the generalization of PCA (principal component analysis) to spaces of probability measures \cite{BigGouKlei17, MasPanZem19}.
The goal of this paper is to study the rates of estimation of a smooth transport map $T_0$ from samples.

To fix a concrete setup assume that we have at our disposal \( 2n \) independent observations \( X_1, \dots, X_n \) from \( \measureone \) and \( Y_1, \dots, Y_n \) from \( \measuretwo \), based on which we would like to find an estimator \( \hat \tmap \) for \( \tmap_0 \). This statistical problem poses several challenges:

\noindent (i) The most straightforward estimator is obtained by  replacing \( \measureone \) and \( \measuretwo \) by their empirical counterparts~\cite{MunCza98}.
It leads to a finite-dimensional linear problem that can be approximated very efficiently due to recent algorithmic advances \cite{Cut13, AltWeeRig17, PeyCut19, DvuGasKro18}.
However, even if the resulting optimizer \( \hat \Gamma \) is actually a map (matching), which it is not in general, it is not defined outside the sample points. In particular, it does not indicate how to transport a point $x \notin \{X_1, \ldots, X_n\}$.
In contrast, we would like to obtain an estimator \( \hat \tmap \) with guarantees in \( L^2(\measureone) \), that is, with convergence of
\begin{equation}
	\label{eq:ql}
	\| \hat \tmap - \tmap_0 \|_{L^2(\measureone)}^2 := \int \| \hat \tmap(x) - \tmap_0(x) \|_2^2 \, \ud \measureone(x)\,.
\end{equation}
Note that such an estimator of \( \tmap_0 \) could be obtained by post-processing the above optimizer \( \hat \Gamma \), for example by interpolation techniques, see \cite{AndBor16}.
We also employ related techniques in Section \ref{sec:numerics} to obtain practical estimators for \( \tmap_0 \).
However, we are not aware of a statistical analysis of these procedures.

\noindent (ii) It is known that the estimator \( W_2^2(\hat \measureone, \hat \measuretwo) \) can be a poor proxy for \( W_2^2(\measureone, \measuretwo) \) if the underlying dimensionality of the distributions \( \measureone \) and \( \measuretwo \) is large, as it suffers from the so-called \emph{curse of dimensionality}.
	For example, if both \( \measureone \) and \( \measuretwo \) are absolutely continuous with respect to the Lebesgue measure in \( \R^d , d\ge 4\), it is known that up to logarithmic factors, $\E[|W_2(\hat \measureone, \hat \measuretwo) - W_2(\measureone, \measuretwo)|] \asymp n^{-1/d}$ (see \cite{NilRig19}).
In fact, we show in Theorem \ref{thm:lb} that without further assumptions on either \( \measureone, \measuretwo \), or \( \tmap_0 \), no estimator can have an expected squared loss~\eqref{eq:ql} uniformly better than \( n^{-2/d} \). 

\noindent (iii) While many heuristic approaches have been brought forward to address the previous point, a thorough statistical analysis of the rate of convergence has so far been lacking.
This can be partly attributed to the structure (or lack thereof) of problem \eqref{eq:pg}.
Being a linear optimization problem, it lacks simple stability estimates that are key to establish statistical guarantees by relating \( \| \hat \tmap - \tmap_0 \|_{L^2(\measureone)} \) to the sub-optimality gap
\begin{equation}
	\label{eq:pq}
	\int_{\R^d} \| \hat \tmap(x) - x \|_2^2 \, \ud \measureone(x) - \int_{\R^d} \| \tmap_0(x) - x \|_2^2 \, \ud \measureone(x).
\end{equation}

In this paper, we aim to address these problems by imposing additional assumptions on the transport map \( \tmap_0 \) that lead to a rate faster than \( n^{-2/d} \).
One assumption we impose on the transport map \( \tmap_0 \) is smoothness, a standard way of alleviating the curse of dimensionality in non-parametric estimation.
Another key assumption is based on an observation of Ambrosio published in an article by Gigli \cite{Gig11}.
They show that the optimization problem \eqref{eq:og} has quadratic growth, in the sense of a stability estimate
\begin{equation}
	\label{eq:qm}
	\| \tmap - \tmap_0 \|_{L^2(\measureone)}^2 \lesssim \int_{\R^d} \| \tmap(x) - x \|_2^2 \, \ud \measureone(x) - \int_{\R^d} \| \tmap_0(x) - x \|_2^2 \, \ud \measureone(x),
\end{equation}
provided \( \tmap_0 = \nabla \potential_0 \) is Lipschitz continuous on \( \R^d \) and \( \tmap_\# \measureone = \measuretwo \).
While this observation does not immediately lend itself to the analysis of an estimator due to the presence of the push-forward constraint, we show in Proposition~\ref{prop:gigli} that under similar assumptions, the so-called semi-dual problem (see \eqref{eq:pn} below) admits a stability estimate.

Due to the rising interest in Optimal Transport as a tool for statistics and machine learning, many empirical regularization techniques have been proposed, ranging from the computationally successful entropic regularization \cite{Cut13, GenCutPey16, AltWeeRig17}, \( \ell^2 \)-regularization \cite{BloSegRol17}, smoothness regularization \cite{PerCouFla16}, to regularization techniques specifically adapted to the application of domain adaptation \cite{CouFlaTui14, CouFlaTui17, CouFlaHab17}.
Notably, \cite{GenCutPey16} also consider regularization based on the semi-dual objective and reproducing kernel Hilbert spaces.
However, the statistical performance of these regularization techniques to estimate transport maps from sampled data has been largely unanswered, with the following exceptions.

The estimation of transport maps has been studied in the one-dimensional case under the name \emph{uncoupled regression}~\cite{RigWee19} where the sample $Y_1, \ldots, Y_n$ is subject to measurement noise. There, the main statistical difficulty arises from the presence of this additional noise and boils down to obtaining deconvolution guarantees in the Wasserstein distance. Such guarantees were recently obtained under smoothness assumptions on the underlying density~\cite{CaiChaDed11, DedMic13, DedFisMic15} but they do not translate directly into rates of estimation for the optimal transport map beyond the 1D case. Note that in the presence of Gaussian measurement noise, the rates of estimation are likely to become logarithmic rather than polynomial as deconvolution is a statistically difficult task.

Concurrently to this work,~\cite{FlaLouFer19} study the estimation of linear transport maps and establish a fast rate of convergence in this parametric setup.
Moreover, after finishing the first version of this paper, the authors became aware of the parallel work \cite{Gun18}.
There, Gunsilius analyzes the asymptotic variance of an estimator for \( \potential_0 \) under smoothness assumptions on the densities of the marginals \( \measureone \) and \( \measuretwo \) and states the problem of obtaining estimation rates for the transport map \( \tmap_0 \) explicitly as an open problem, which we address in this paper.
A similarity between \cite{Gun18} and this work is that the rates for the variance are obtained by applying empirical process theory to the semi-dual objective function.
However, we note the following key differences:
First, Gunsilius obtains curvature estimates for the semi-dual objective via variational techniques, while we use strong convexity of the candidate potentials.
Note that under slightly stronger regularity assumptions, i.e., uniform convexity of the supports, his assumptions would imply strong convexity for the ground truth potential, as shown in \cite{Gig11}.
Second, by assuming smoothness of the transport map instead of the distributions \( \measureone \) and \( \measuretwo \), our results are more flexible and can be applied in cases where neither distribution possesses H\"older smooth densities.
Third, by appealing to Cafarelli's global regularity theory, \cite[Theorem 12.50(iii)]{Vil09}, \cite{Caf92,Caf92a,Caf96}, we can obtain a variance rate of \( n^{-2 \alpha/(2 \alpha - 2 + d)} \) (up to log factors) under assumptions quantitatively comparable to Gunsilius's, while his results imply a sub-optimal rate of \( n^{-(2 \alpha - 2)/(2 \alpha - 2 + d) } \), see Section \ref{sec:caffarelli} in the appendix.

We note that both in the application of Caffarelli's theorem in Appendix \ref{sec:caffarelli} and in the statement of our main result, Theorem \ref{thm:minimax} below, currently available analytical tools limit the extent to which minimax results can be established.
In Appendix \ref{sec:caffarelli}, the lack of uniformity in Cafarelli's global regularity theory prevents us from claiming (near) minimax optimality over H\"older smooth densities, see Remark \ref{rem:caf-no-minimax}.
Similarly, in Theorem \ref{thm:minimax}, (near) minimax optimality is attained by considering fixed marginal supports because of the need for uniformly bounded constants in some of the classical inequalities we employ (for example, Poincar\'e inequality), see Remark \ref{rem:genassumptions}.
In effect, further strengthening these minimax results poses an interesting open problem involving deep analytical questions.

The rest of this paper is organized as follows.
In Section~\ref{sec:ot-overview}, we review some important concepts of optimal transport, mainly duality and Brenier's theorem.
These are instrumental in the definition of our estimator, which is postponed to Section~\ref{sec:upper}.
Indeed, since the main goal of this paper is to establish minimax rates of convergence for smooth transport maps, we present these rates in Section~\ref{sec:mainresults} and  prove lower bounds in the following Section~\ref{sec:lower}, since this proof illustrates well the source of the nonstandard exponent in the rates.
We then proceed to Section~\ref{sec:upper} where we define a minimax optimal estimator constructed as follows.
First, we define an estimator for the optimal Kantorovich potential as the solution to the empirical counterpart of the semi-dual problem restricted to a class of wavelet expansions.
Then, our estimator is defined as the gradient of this potential.
We prove that it achieves the near-optimal rate in the same section.
In Section~\ref{sec:numerics}, we present numerical experiments on synthetic data, introducing two estimators that exploit smoothness of the transport map.
The first illustrates that a version of the estimator considered in Section~\ref{sec:upper} can in fact be implemented, at least in low dimensions.
The second is heuristically motivated and based on kernel-smoothing the transport plan between empirical distributions, showing that practical gains in higher dimensions can be achieved for smooth transport maps.
Finally, some useful facts from convex analysis (Section \ref{SEC:convex-analysis}), approximation theory for wavelets (Section \ref{sec:wavelets}), empirical process theory (Section \ref{SEC:emp-process}), and tools for proving lower bounds (Section \ref{SEC:lower-bounds-tools}), are appendix.
Moreover, the appendix also contains a version of our upper bounds based on smoothness assumptions on the densities instead of the transport map (Section \ref{sec:caffarelli}), the deferred proofs (Section \ref{sec:proofs}), additional lemmas (Section \ref{sec:lemmas}), and more details on the numerical experiments (Section~\ref{sec:numerics-ctd}).

\noindent{\sc Notation.} For any positive integer $m$, define $[m]:=\{1, \ldots, m\}$.
We write \( |A| \) for the cardinality of a set \( A \).
The relation \( a \lesssim b \) is used to indicate that two quantities are the same up to a constant \( C \), \( a \le C b \).
The relation \( \gtrsim \) is defined analogously, and we write \( a \asymp b \) if \( a \lesssim b \) and \( a \gtrsim b \).
We denote by \( c \) and \( C \) constants that might change from line to line and that may depend on all parameters of the statistical problem except \( n \).
We abbreviate with \( a \vee b \), \( a \wedge b \) the maximum and minimum of \( a \in \R \) and \( b \in \R \), respectively.
For \( a \in \R \), the floor and ceiling functions are denoted by \( \lfloor a \rfloor \) and \( \lceil a \rceil \), indicating rounding \( a \) to the next smaller and larger integer, respectively.
We use \( \supp \potential \) to denote the support of a function or measure \( \potential \), and \( \diam \Omega \) for the diameter of a set \( \Omega \subseteq \R^d \).
We denote by \( B_1 \) the unit-ball with respect to the Euclidean distance in \( \R^d \), where \( d \) should be clear from the context.
For a real symmetric matrix \( A \) and \( \lambda \in \R \), we write \( \lambda \preceq A \) if all eigenvalues of \( A \) are bounded below \( \lambda \), and similarly for \( A \preceq \lambda \). 
Moreover, we denote the smallest and largest eigenvalues of \( A \) by \( \lambda_{\min}(A) \), \( \lambda_{\max}(A) \), respectively.

For \( p \in [1, \infty] \), we denote by \( \ell^p \) either the space \( \R^d \) endowed with the usual \( \ell^p \) norms \( \| \, . \, \|_p \), or, by abuse of notation, the spaces of multi-dimensional sequences \( \gamma : \mathbb{Z}^m \to \R \) with \( \| \gamma \|_{\ell^p}^p = \sum_{k \in \mathbb{Z}^m} | \gamma_k |^p \) for \( m \in \mathbb{N} \).
Further, for \( p \in [1, \infty] \), we denote by \( L^{p} \) the Lebesgue spaces of equivalence classes of functions on \( \R^d \) or subsets \( \Omega \subseteq \R^d \) with respect to the Lebesgue measure \( \lambda \) on \( \R^d \), whose norms we denote by \( \| \, . \, \|_{L^{p}(\R^d)} \) and \( \| \, . \, \|_{L^p(\Omega)} \), respectively.
By abuse of notation, for a different measure \( \measureone \), we denote the associated Lebesgue norms by \( \| \, . \, \|_{L^p(\measureone)} \).
We abbreviate with ``a.e.'' any statement that holds ``almost everywhere'' with respect to the Lebesgue measure.

For a differentiable one-dimensional function \( \potential \colon \R \supseteq \Omega \to \R \), we denote its derivative by \( \frac{d}{dx} \potential \).
For a function \( \potential \colon \R^d \supseteq \Omega \to \R \), we denote by \( \partial_{i} = \partial/(\partial x_i)\) its weak derivative in the sense of distributions in direction \( x_i \), which coincides with the usual (point-wise) derivative if \( \potential \) is differentiable in \( \Omega \).
For a multi-index \( \flat \in \mathbb{N}^{d} \), we set
\begin{equation}
	\label{eq:sh}
	\partial^\flat \potential = \frac{\partial}{\partial_{x_1}^{\flat_1}} \dots \frac{\partial}{\partial_{x_d}^{\flat_d}}\potential, \quad \text{ and } | \flat | = \sum_{i=1}^d \flat_i.
\end{equation}
The symbol \( \partial \potential \) is also used to denote the sub-differential of a convex function \( \potential \), while we use the symbols \( \nabla \potential \) for the gradient of a function \( \potential \) and \( D g\) for the derivative of a vector-valued function \( \dualpotential \colon \R^{d_1} \supseteq \Omega \to \R^{d_2} \), $\nabla \potential = (\partial_1 \potential, \dots, \partial_d \potential)^\top$ and  $D g = (\nabla g_1, \dots, \nabla g_{d_2})^\top$
respectively, and \( D^2 \potential = D \nabla \potential \) denotes the Hessian of \( \potential \).

If \( \Omega \subseteq \R^d \) is a closed set with non-empty interior and \( \alpha > 0 \), the H\"older spaces on \( \Omega \) as defined in Appendix \ref{sec:wavelets} are denoted by \( C^{\alpha}(\Omega) \) and their associated norms by \( \| \, . \, \|_{C^\alpha(\Omega)} \).
Similarly, the \( p \)-Sobolev spaces of order \( \alpha \) for \( p \in [1,\infty] \) are denoted by \( W^{\alpha,p}(\Omega) \) with norms \( \| \, . \, \|_{W^{\alpha,p}(\Omega)} \), as defined in Appendix \ref{sec:wavelets}.

We say that \( \Omega \subseteq \R^d \) is a Lipschitz domain if its boundary can be locally expressed as the sublevel set of Lipschitz functions \cite[Definition 1.103]{Tri06}.

\section{Brenier's theorem and the semi-dual problem}

\label{sec:ot-overview}

We begin by recalling the Monge and Kantorovich problems given in Section \ref{SEC:intro}.
Let \( \measureone, \, \measuretwo  \) be two Borel probability measures on \( \R^d \) with finite second moments.

The \emph{Monge (primal) problem} is defined as
\begin{align}
	\label{eq:qi}
	\min_{\tmap} {} & \mongeprimal(T) \quad \text{s.t. } \tmap_\# \measureone = \measuretwo\,, \quad \\
	\quad \text{where } &\mongeprimal(T) := \frac{1}{2} \int_{\R^d} \| \tmap(x) - x \|_2^2 \, \ud \measureone(x),
\end{align}
and the push-forward \( \tmap_\# \measureone \) is defined as $\tmap_\# \measureone(A) = \measureone(\tmap^{-1}(A))$ for all Borel sets $A$.

Its relaxation, the \emph{Kantorovich (primal) problem}, is given by
\begin{align}
	\label{eq:qo}
	\min_{\Gamma} {} & \kantoprimalmod(\Gamma) \quad \text{s.t. } \Gamma \in \Pi(\measureone, \measuretwo)\quad \\
	\text{where } {} & \kantoprimalmod(\Gamma) := \frac{1}{2} \int \| x - y \|_2^2 \, \ud \Gamma(x,y),
\end{align}
and \( \Pi(\measureone, \measuretwo) \) denotes the set of couplings between \( \measureone \) and \( \measuretwo \), that is, the set of probability measures $\Gamma$ on $\R^d\times \R^d$ such that $ \Gamma(A \times \R^d) = \measureone(A)$ and $\Gamma(\R^d \times A) = \measuretwo(A)$ for all Borel set $A \subset \R^d$.

The value of problem \eqref{eq:qo} is the square of the 2-Wasserstein distance, denoted by
\begin{equation}
	\label{eq:qs}
	W_2^2(\measureone, \measuretwo) := 
	\min_{\Gamma \in \Pi(\measureone, \measuretwo)} \kantoprimalmod(\Gamma).
\end{equation}

Note that we can expand the objective in~\eqref{eq:qo} as
\begin{align}
	\label{eq:pj}
	\kantoprimalmod(\Gamma) = {} & \frac{1}{2} \int \| x - y \|_2^2 \, \ud \Gamma(x, y)\\
	= {} & \frac{1}{2} \int \| x \|_2^2 \, \ud \measureone(x) + \frac{1}{2} \int \| y \|_2^2 \, \ud \measuretwo(y) - \int \langle x , y \rangle \, \ud \Gamma(x, y),
\end{align}
Since the first two terms above do not depend on \( \Gamma \), we obtain the equivalent optimization problem
\begin{equation}
	\label{eq:pk}
\quad 	\max_{\Gamma} {} \kantoprimal(\Gamma)  \quad \text{s.t. }  \Gamma \in \Pi(\measureone, \measuretwo)\,, \quad 
	\text{where } \quad \kantoprimal(\Gamma) := \int \langle x , y \rangle \, \ud \Gamma(x, y)\,.
\end{equation}
We focus on this equivalent formulation for the rest of the paper because it is more convenient to work with.

Problem \eqref{eq:pk} is a linear optimization problem, albeit an infinite-di\-men\-sional one.
Hence, it is natural to consider its dual problem:
\begin{align}
	\label{eq:pi}
	\min_{\potential, \dualpotential} {} & \int \potential(x) \, \ud \measureone(x) + \int \dualpotential(y) \, \ud \measuretwo(y) \quad \text{s.t. } \\
	&
\begin{array}{ll}
&	\potential(x) + \dualpotential(y) \ge \langle x , y \rangle \,,\   \measureone \otimes \measuretwo \text{-a.e}, \\
&\potential \in L^1(\measureone), \dualpotential \in L^1(\measuretwo).
\end{array}
\end{align}
The dual variables \( \potential \) and \( \dualpotential \) are  called \emph{potentials}, and for an optimal pair \( (\potential_0, \dualpotential_0) \), \( \potential_0 \) is called a \emph{Kantorovich potential}.

The dual problem \eqref{eq:pi} can be further simplified:
Assume we are given a candidate function \( \potential \) in \eqref{eq:pi} above.
Then, we can formally solve for the corresponding \( \dualpotential \) to get an optimal $g$ given by the Legendre-Fenchel conjugate (see Section~\ref{SEC:convex-analysis})  of \( \potential \):
\begin{equation}
	\label{eq:pm}
	g_\potential(y) = \sup_{x \in \R^d} \langle x , y \rangle - \potential(x) = \potential^\ast(y),
\end{equation}

Plugging solution \eqref{eq:pm} back into the optimization problem leads to the so-called \emph{semi-dual problem},
\begin{align}
	\label{eq:pn}
	\min_{\potential} {} & \risk(\potential)=\int \potential(x) \, \ud \measureone(x) + \int \potential^\ast(y) \, \ud \measuretwo(y)  \quad \text{s.t. } \potential \in L^1(P),
\end{align}
where the supremum in \eqref{eq:pm} is interpreted as an essential supremum with respect to \( P \).
By transitioning to the semi-dual, we effectively solved for all constraints in \eqref{eq:pi}, leaving us with an unconstrained convex problem that is not linear anymore. Under regularity assumptions, a solution to the semi-dual provides a solution to the Monge problem as indicated by the following theorem, which is a cornerstone of modern optimal transport.

\begin{theorem}
	[{Brenier's theorem, \cite{KnoSmi84, Bre91, RusRac90}}]
	\label{thm:brenier}

	Assume \( \measureone \) is absolutely continuous with respect to the Lebesgue measure and that both \( P \) and \( Q \) have finite second moments.
	Then, a unique optimal solution to \eqref{eq:pk} exists and is of the form \( \Gamma_0 = (\id, \tmap_0)_\# \measureone \), where \( \tmap_0 = \nabla \potential_0 \) is the gradient of a convex function \( \potential_0 \colon \R^d \to \R \).
	In fact, \( \potential_0 \) can be chosen to be a minimizer of the semi-dual objective in \eqref{eq:pn}.
\end{theorem}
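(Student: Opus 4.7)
The plan is to follow the classical Knott--Smith/Brenier/R\"uschendorf strategy in four movements: existence of an optimal coupling, cyclical monotonicity of its support, the Rockafellar characterization via convex subdifferentials, and single-valuedness via the absolute continuity of \( \measureone \). I would then read off uniqueness of \( \Gamma_0 \) and the semi-dual minimization property of \( \potential_0 \) from this structure.

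First I would establish existence. Since \( \measureone, \measuretwo \) have finite second moments, their marginals are tight, so \( \Pi(\measureone, \measuretwo) \) is tight and Prokhorov yields weak compactness; and \( \Gamma \mapsto \kantoprimal(\Gamma) = \int \langle x,y\rangle \, \ud \Gamma(x,y) \) is weakly continuous on \( \Pi(\measureone, \measuretwo) \) by Cauchy--Schwarz and uniform integrability, so a maximizer \( \Gamma_0 \) exists. Next, I would argue that \( \supp(\Gamma_0) \) is cyclically monotone: for any \( (x_1,y_1), \dots, (x_N,y_N) \in \supp(\Gamma_0) \) and any permutation \( \sigma \), one has \( \sum_i \langle x_i, y_i\rangle \ge \sum_i \langle x_i, y_{\sigma(i)}\rangle \). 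If this failed on some finite family, one could perturb \( \Gamma_0 \) by locally swapping mass along the offending cycle to obtain a competitor in \( \Pi(\measureone, \measuretwo) \) with strictly larger value of \( \kantoprimal \), contradicting optimality; making this rigorous requires disintegration of \( \Gamma_0 \) and a standard localization argument on small neighborhoods of the \( (x_i, y_i) \).

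I would then invoke Rockafellar's theorem: every cyclically monotone subset of \( \R^d \times \R^d \) is contained in the graph of the subdifferential \( \partial \potential_0 \) of some proper lower semicontinuous convex function \( \potential_0 \colon \R^d \to \R \cup \{+\infty\} \). Combined with the previous step, \( \supp(\Gamma_0) \subseteq \{(x,y) : y \in \partial \potential_0(x)\} \). Because any finite-valued convex function on \( \R^d \) is differentiable outside a Lebesgue-null set and the subdifferential is single-valued at differentiability points, and because \( \measureone \) is absolutely continuous with respect to Lebesgue, the equality \( \partial \potential_0(x) = \{ \nabla \potential_0(x)\} \) holds \( \measureone \)-a.e. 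This forces \( y = \nabla \potential_0(x) \) on \( \supp(\Gamma_0) \) up to a \( \Gamma_0 \)-null set, so setting \( \tmap_0 := \nabla \potential_0 \) yields \( \Gamma_0 = (\id, \tmap_0)_\# \measureone \), and consequently \( (\tmap_0)_\# \measureone = \measuretwo \) by looking at the second marginal.

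Uniqueness of \( \Gamma_0 \) follows because any other optimizer is likewise supported in \( \partial \potential_0 \), which is \( \measureone \)-a.e. single-valued, and hence must agree with \( (\id, \nabla \potential_0)_\# \measureone \). For the semi-dual identification, I would check that \( (\potential_0, \potential_0^\ast) \) is admissible in \eqref{eq:pi} with equality \( \potential_0(x) + \potential_0^\ast(y) = \langle x, y\rangle \) on \( \supp(\Gamma_0) \), so strong duality implies \( (\potential_0, \potential_0^\ast) \) attains the dual value; eliminating \( \dualpotential \) via \eqref{eq:pm} shows \( \potential_0 \) minimizes \( \risk \) in \eqref{eq:pn}. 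The main obstacle I anticipate is the cyclical monotonicity step: building the mass-swap perturbation while respecting the marginal constraints requires a delicate disintegration argument, whereas Rockafellar's theorem and the a.e.\ differentiability of convex functions are then invoked as essentially off-the-shelf classical tools.
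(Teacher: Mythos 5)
The paper does not prove Theorem~\ref{thm:brenier} at all: it is quoted as a classical result with citations to Knott--Smith, Brenier, and R\"uschendorf--Rachev, so there is no in-paper argument to compare against. Your four-movement plan (existence via tightness plus uniform integrability, cyclical monotonicity of the support of an optimizer, Rockafellar's theorem, and $\measureone$-a.e.\ single-valuedness of the subdifferential) is precisely the classical route, and in outline it is sound.

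Two points need repair before this is a complete argument. First, the uniqueness step as written does not follow: applying the cyclical-monotonicity/Rockafellar argument to a second optimizer $\Gamma_1$ produces, a priori, a \emph{different} convex potential, so you cannot simply assert that $\Gamma_1$ is ``likewise supported in $\partial\potential_0$''. The standard fixes are either to run your argument on the midpoint coupling $\tfrac12(\Gamma_0+\Gamma_1)$, which is also optimal and hence induced by a single gradient map, forcing $\Gamma_0=\Gamma_1$; or to use complementary slackness with the dual pair $(\potential_0,\potential_0^\ast)$ you construct at the end: for any optimizer $\Gamma$ one has $\int\bigl(\potential_0(x)+\potential_0^\ast(y)-\langle x,y\rangle\bigr)\,\ud\Gamma(x,y)=0$ with a nonnegative integrand, hence $\Gamma$ is concentrated on the graph of $\partial\potential_0$. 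Second, two smaller gaps: the Rockafellar potential is only proper and lower semicontinuous (it may take the value $+\infty$), so ``finite-valued convex functions are a.e.\ differentiable'' must be replaced by differentiability a.e.\ on the interior of its domain, combined with the facts that the boundary of a convex set is Lebesgue-null and that $\measureone$ gives full mass to $\domain(\partial\potential_0)$; and for the semi-dual identification you must verify $\potential_0\in L^1(\measureone)$ and $\potential_0^\ast\in L^1(\measuretwo)$ (using the finite second moments and the equality $\potential_0(x)+\potential_0^\ast(y)=\langle x,y\rangle$ on $\supp\Gamma_0$), since otherwise $(\potential_0,\potential_0^\ast)$ is not admissible in \eqref{eq:pi} and the strong-duality step has nothing to attain.
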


Brenier's theorem implies that a solution to the semi-dual problem readily gives an optimal transport map.
Our strategy is to minimize an approximation of the semi-dual and establish stability results as well as generalization bounds to conclude that the minimizer to the approximation is close to the minimizer of the original problem.

\section{Main results}
\label{sec:mainresults}

Let $X_{1:n} = (X_1, \ldots, X_n)$ and $Y_{1:n} = (Y_1, \ldots, Y_n)$ be $n$ independent copies of $X \sim P$ and $Y\sim Q=(T_0)_\#P$ respectively.
Furthermore, assume that $X_{1:n}$ and $Y_{1:n}$ are mutually independent.
Our goal is to estimate $T_0$. 
To that end, we consider the following set of assumptions on $P, \, Q$ and $T_0$. Throughout, we fix a constant $M\ge 2$.

\begin{assumpspec}[Source distribution]
	\label{assumpspec:source}
	Let \( \measureclass = \measureclass(\constbd) \) be the set of all probability measures \( \measureone \) with support $\Omega=[0,1]^d$ that admit a density \( \density_\measureone \) with respect to the Lebesgue measure such that $\DS M^{-1} \le \density_\measureone(x) \le M$ for almost all $x \in \Omega$. 
	Assume that the source distribution \( \measureone \) is in \(\measureclass \).
\end{assumpspec}

\begin{assumpspec}[Transport map]
	\label{assumpspec:map}
	Let \( \tilde \Omega = [-1,2]^d \) denote the enlargement of $\Omega$ by \( 1 \) in every direction. Let $\funcclassmap = \funcclassmap(\constbd)$ be the set of all differentiable functions \( \tmap \colon \tilde \Omega \to \R^d \) such that $T = \nabla \potential$ for some differentiable convex function $ \potential \colon \tilde \Omega \to \R^d$ and
	\begin{enumerate}
		\item \label{itm:assumpspec-bd} $\DS |T(x)|\le M$ for all $x \in \tilde \Omega$,
		\item \label{itm:assumpspec-d2} $\DS M^{-1} \preceq DT(x) \preceq M$ for all $x \in \tilde \Omega$,
	\item \label{itm:fixed-target-distribution} $\DS \supp \tmap_{\#}P = \Omega = [0,1]^d$\,.
	\end{enumerate}

	For \( \constsmooth > 1 \) and \( \alpha > 1 \), assume that 	$T_0 \in \funcclassmap_\alpha =   \funcclassmap_\alpha(\constbd, \constsmooth)$, where
$$
\funcclassmap_\alpha(\constbd, \constsmooth)=\{ \tmap \in \funcclassmap(\constbd) : \tmap \text{ is } \lfloor \alpha \rfloor \text{-times differentiable and } \| \tmap \|_{C^\alpha(\tilde \Omega)} \le R \}.
$$
\end{assumpspec}

Our main result is the following theorem. It characterizes, up to logarithmic factors,  the minimax rate of estimation of an $\alpha$-smooth transport map $T_0 \in \cT_\alpha$ in the setup described above.

\begin{theorem}

	\label{thm:minimax}
	Fix $\alpha \ge 1$, then
	\begin{equation}
		\label{eq:lb}
		\inf_{\hat \tmap} \sup_{\measureone \in \measureclass, \, \tmap_0 \in \funcclassmap_\alpha} \E
		\left[ \int\| \hat \tmap(x) - \tmap_0(x) \|_2^2 \, \ud P(x) \right]
		\gtrsim n^{-\frac{2 \alpha}{2 \alpha - 2 + d}} \vee \frac{1}{n} \,.
	\end{equation}
	where the infimum is taken over all measurable functions \( \hat T \) of the data \( X_{1:n} = 	(X_1, \dots, X_n) \), \( Y_{1:n} = (Y_1, \dots, Y_n) \).
	Moreover, if \( \measureone \in \measureclass \) and \( \tmap_0 \in \funcclassmap_\alpha \), there exists an estimator \( \hat T \), given in Section~\ref{sec:upper}, that is near minimax optimal. More specifically, there exists an integer $n_0=n_0(d, \alpha, \constbd, \constsmooth)$ such that for any $n \ge n_0$, it holds,
	\begin{equation}
		\label{eq:je}
		\sup_{\measureone \in \measureclass, \, \tmap_0 \in \funcclassmap_\alpha} \E
		\left[ \int\| \hat \tmap(x) - \tmap_0(x) \|_2^2 \, \ud P(x) \right]
		\lesssim n^{-\frac{2 \alpha}{2 \alpha - 2 + d}} (\log(n))^2 \vee \frac{1}{n}\,.
	\end{equation}
\end{theorem}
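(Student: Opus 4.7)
The plan is to prove the lower and upper bounds of Theorem \ref{thm:minimax} separately, both organized around the same structural feature: a one-derivative gap between the $L^2(P)$ loss on transport maps and the $L^2$ scaling of perturbations of the potential. This gap is what produces the ``nonstandard'' effective dimension $d - 2$ in the exponent.

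For the lower bound, I would use Assouad's method. Fix $P = \unif(\Omega)$, take $\phi_0(x) = \tfrac{1}{2}\|x\|_2^2$, choose a smooth bump $\psi$ with mean zero supported in $[0,1]^d$, and tile $\Omega$ into $m = h^{-d}$ disjoint cubes of side $h$ centered at $\{x_k\}$. For $\sigma \in \{0,1\}^m$, define
\begin{equation*}
	\phi_\sigma(x) = \phi_0(x) + h^{\alpha + 1} \sum_{k=1}^{m} \sigma_k\, \psi\bigl((x - x_k)/h\bigr),
\end{equation*}
so that $T_\sigma = \nabla \phi_\sigma$ lies in $\cT_\alpha$ for $h$ small enough (the amplitude $h^{\alpha+1}$ ensures $\|T_\sigma - \mathrm{id}\|_{C^\alpha} \lesssim 1$ and the Hessian perturbation $\lesssim h^{\alpha - 1}$ preserves convexity). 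The per-coordinate $L^2(P)$ separation is $\|T_\sigma - T_{\sigma'}\|_{L^2(P)}^2 \asymp d_H(\sigma,\sigma') \cdot h^{2\alpha + d}$. The crux is the KL computation between Hamming-adjacent push-forwards $Q_\sigma = (T_\sigma)_\# P$: a first-order Taylor analysis of the Jacobian shows the density difference is $O(h^{\alpha - 1})$ (one derivative of an $O(h^\alpha)$ map perturbation) on a region of volume $O(h^d)$, hence $\KL(Q_\sigma, Q_{\sigma'}) \lesssim h^{2\alpha + d - 2}$. Setting $h \asymp n^{-1/(2\alpha + d - 2)}$ makes $n \cdot \KL \lesssim 1$ per coordinate, and Assouad's lemma gives the lower bound $m h^{2\alpha + d} = h^{2\alpha} \asymp n^{-2\alpha/(2\alpha - 2 + d)}$. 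The $1/n$ term comes from a routine two-point Le Cam argument using a constant shift that keeps $T \in \cT_\alpha$.

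For the upper bound, let $\Phi_J$ denote the class of convex potentials whose gradients lie in $\cT$ and which are representable as truncated wavelet expansions at resolution $J$, so $N = \dim \Phi_J \asymp 2^{Jd}$. Define $\hat\phi \in \argmin_{\phi \in \Phi_J} \hat\risk(\phi)$ and set $\hat T = \nabla \hat\phi$. By Proposition \ref{prop:gigli}, $\|\hat T - T_0\|_{L^2(P)}^2 \lesssim \risk(\hat\phi) - \risk(\phi_0)$. Letting $\phi_J$ denote the best wavelet approximation of $\phi_0$ in $\Phi_J$, the basic inequality $\hat\risk(\hat\phi) \leq \hat\risk(\phi_J)$ yields
\begin{equation*}
	\risk(\hat\phi) - \risk(\phi_0) \leq \bigl[(\risk - \hat\risk)(\hat\phi) - (\risk - \hat\risk)(\phi_J)\bigr] + \bigl[\risk(\phi_J) - \risk(\phi_0)\bigr].
\end{equation*}
The bias term is controlled by $\|\nabla(\phi_J - \phi_0)\|_{L^2(P)}^2 \lesssim 2^{-2J\alpha}$, using vanishing of the first variation of $\risk$ at $\phi_0$ (Brenier) together with wavelet approximation rates for the $C^{\alpha+1}$ potential $\phi_0$.

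The main obstacle is showing that the stochastic term produces the effective variance $2^{J(d-2)}/n$ rather than the naive $N/n = 2^{Jd}/n$, without which the balance would yield only the suboptimal rate $n^{-2\alpha/(2\alpha + d)}$. The key insight is that the centered empirical processes $n^{-1}\sum_i[\phi(X_i) - \E\phi(X)]$ and $n^{-1}\sum_i[\phi^*(Y_i) - \E\phi^*(Y)]$ fluctuate at the scale of $\|\phi - \phi_0\|_{L^2(P)}$ (and its $L^2(Q)$ analogue), whereas stability only provides control in terms of $\|\nabla(\phi - \phi_0)\|_{L^2(P)}$. Since a gradient costs a factor of $2^J$ in $L^2$ at wavelet scale $J$, localizing to $\{\phi : \|\nabla\phi - \nabla\phi_0\|_{L^2(P)} \leq r\}$ forces $\|\phi - \phi_0\|_{L^2(P)} \lesssim r \cdot 2^{-J}$, and a Dudley bound over this $N$-dimensional ball yields a localized supremum $\lesssim r \cdot 2^{-J} \cdot 2^{Jd/2}/\sqrt{n} = r \cdot 2^{J(d-2)/2}/\sqrt{n}$, up to logarithmic factors. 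A peeling argument then gives $\|\hat T - T_0\|_{L^2(P)}^2 \lesssim 2^{-2J\alpha} + 2^{J(d-2)}/n + 1/n$, and the choice $2^J \asymp n^{1/(2\alpha + d - 2)}$ recovers the claimed rate. Additional care is required for the Legendre transform $\phi^*$, whose Lipschitz regularity, $C^{\alpha+1}$ smoothness and $L^2(Q)$ fluctuation must be controlled uniformly over $\Phi_J$ via the convexity and smoothness constraints built into the class (which is where the global regularity theory alluded to in the introduction plays a role).
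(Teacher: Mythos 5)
Your overall architecture matches the paper's: the same bump-tiling construction over $[0,1]^d$ at amplitude $h^{\alpha+1}$ with uniform source measure for the lower bound, and the same semi-dual wavelet M-estimator analyzed through the stability estimate of Proposition~\ref{prop:gigli}, localization, and a bias--variance tradeoff for the upper bound. Your substitutions of Assouad for Fano plus Varshamov--Gilbert, and of a Taylor/chi-square bound on the per-cell KL for the paper's exact $\log\det$ expansion with the divergence theorem, are unproblematic. However, two steps as written would fail. In the upper bound, the claim that localizing to $\{\|\nabla\phi-\nabla\phi_0\|_{L^2(P)}\le r\}$ inside the scale-$J$ wavelet space ``forces $\|\phi-\phi_0\|_{L^2(P)}\lesssim r\,2^{-J}$'' is false: a coarse-scale component (say at scale $j=0$) with gradient norm $r$ has $L^2$ norm of order $r$, not $r\,2^{-J}$, and after centering the Poincar\'e inequality only yields $\lesssim r$. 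The factor $2^{J(d-2)/2}$ does not come from a uniform $L^2$ shrinkage followed by Dudley over a Euclidean ball of dimension $2^{Jd}$; it comes from the scale-by-scale (Sobolev-ellipsoid) structure of $\{h\in V_J:\|h\|_{W^{1,2}}\le r\}$, whose complexity is of order $\frac{r}{\sqrt n}\bigl(\sum_{j\le J}2^{j(d-2)}\bigr)^{1/2}$ --- this is exactly the weighted Cauchy--Schwarz over wavelet scales in the proof of Proposition~\ref{lem:expectation-bound}, and it also explains the $d=1,2$ cases where your heuristic undercounts. Relatedly, the $Q$-side process involves $\phi^\ast$, which is \emph{not} band-limited at scale $J$ even when $\phi$ is, so a Dudley bound over the $N$-dimensional ball does not apply to it; the paper splits $\ext(\phi^\ast-\phi_0^\ast)$ into $\Pi_J$ and $\Pi_{>J}$ pieces and controls the tail by $L^\infty$-covering/bracketing, using the non-expansiveness of conjugation in $L^\infty$ (Lemma~\ref{lem:g}) and the $C^{1,1}$ bound on conjugates that follows from strong convexity of the candidates (Lemma~\ref{lem:conj-lipschitz}) --- not from any global regularity theory; uniform $C^{\alpha+1}$ control of $\phi^\ast$ over the class is neither available nor needed.

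The $1/n$ term via ``a constant shift that keeps $T\in\funcclassmap_\alpha$'' also does not work: shifting the map by a constant vector translates $Q=T_\#P$, which violates the fixed-support requirement \ref{assumpspec:map}\ref{itm:fixed-target-distribution}, and even without that requirement the two uniform-type measures on translated cubes are mutually singular, so the KL is infinite, while total variation scales linearly in the shift and would only give separation of order $n^{-2}$. The fix is the paper's: perturb the potential by a compactly supported bump of amplitude $\asymp n^{-1/2}$, i.e.\ $\phi_1=\phi_0+(\tilde\theta/\sqrt n)\,g$, which keeps the image equal to $[0,1]^d$, makes the per-sample KL quadratic in the amplitude, and yields the $1/n$ separation via the two-point bound (Theorem~\ref{thm:lower-bounds-master-twopoint}). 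Two smaller omissions: you should check that the truncated approximant $\Pi_J\ext\phi_0$ actually belongs to the strongly convex constraint set (the paper does this via Weyl's inequality and the $C^2$ wavelet approximation bound, for $J\ge J_0$), and that each lower-bound hypothesis maps $[0,1]^d$ onto itself so that \ref{assumpspec:map}\ref{itm:fixed-target-distribution} holds (Lemma~\ref{LEM:1to1}).
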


\begin{remark}
	\label{rem:genassumptions}
	Assumption \ref{assumpspec:source}  can be significantly relaxed with respect to the geometry of \( \Omega \) and the density of \( \measureone \). In fact, the upper bounds are given under more general assumptions in Section \ref{sec:upper}. Similarly, the assumption \ref{assumpspec:map}\ref{itm:fixed-target-distribution} that $\supp(Q)=[0,1]^d$ can also be relaxed. 

	However, the constants in the resulting upper bounds exhibit a dependence on the geometry of the supports of both \( \measureone \) and \( \measuretwo \) as well as on the enclosing set \( \tilde \Omega \) through functional analytical results used in the proofs.
	While it may be possible to make this dependence explicit in terms of geometric features of the sets $\supp(P), \supp(Q)$ and $\tilde \Omega$---see for example \cite{Jon81, EspNitTro11, TanMizSek13} for such estimates under restrictive assumptions---providing a uniform control on these quantities in terms of easily interpretable properties of the sets is beyond the scope of this article. Instead,  we chose to present Theorem \ref{thm:minimax} under these simplified assumptions to make the results more readable.

To discuss the remaining assumptions, we note that the most essential ones to obtain upper bounds are the following:
first, the lower bound in \ref{assumpspec:map} \ref{itm:assumpspec-d2}, \( M^{-1} \preceq DT(x) \), in particular on the support of \( \measureone \), \( x \in \Omega \).
This yields convergence estimates for the optimal transport map as shown in \cite{Gig11}, see \eqref{eq:qm}, and might be necessary to obtain fast rates for transport map estimation since it provides curvature estimates commonly needed to prove error bounds for M-estimators \cite[Chapter 5]{Vaa00}.
Second, the Sobolev regularity of \( T_0 \) is what governs the approximation rates of \( T_0 \) by wavelet expansions (see Section \ref{sec:approx-error} below) and thus enables fast rates via a bias-variance trade-off.
All remaining assumptions in \ref{assumpspec:source} and \ref{assumpspec:map}, including the existence of extensions of \( \tmap_0 \) to a superset \( \tilde \Omega \), are of technical nature and serve to give explicit bounds as needed in the proof of the upper bound.
While one might be able to relax these assumptions, especially in specific problem instances, we do not pursue this here beyond the more general versions given in \ref{assumpgen:source} and \ref{assumpgen:map} below.

\end{remark}

We conjecture that the logarithmic terms appearing in the upper bound are superfluous and arise as an artifact of our proof techniques. We briefly make a qualitative comment on the rate $n^{-\frac{2 \alpha}{2 \alpha - 2 + d}}$. Note first that it appears from this rate that estimation of transport maps, like the estimation of smooth functions suffers from the curse of dimensionality. However, as $\alpha \to \infty$, this curse of dimensionality may be mitigated by extra smoothness with the parametric rate $n^{-1}$ as a limiting case. Note also that we can formally take the limit $\alpha \to 1$, which corresponds to the case where no additional smoothness condition holds beyond having a strongly convex Kantorovich potential with Lipschitz gradient. This is essentially the minimal structural condition arising from Brenier's theorem with additional bounds on the derivative of \( \tmap_0 \). In this case, one formally recovers the rate $n^{-2/d}$ and we conjecture that this is the minimax rate of estimation in the context where $T_0$ is only assumed to be the gradient of a strongly convex function with Lipschitz gradient. If either of these two additional requirements is not fulfilled, our stability results no longer hold. 

\begin{remark}
	Since the transport map \( \tmap_0 \) is the main focus of our results, our assumptions impose smoothness directly on $\tmap_0$. In fact, smoothness of $\tmap_0$ can also be seen as a consequence of smoothness of the source and target distribution using Caffarelli's regularity theory \cite{Caf92,Caf92a, Caf96}.
	For completeness, we also give a version of our upper bound results under smoothness assumptions on \( \measureone \) and \( \measuretwo \) in Theorem~\ref{thm:caffarelli}, Section~\ref{sec:caffarelli} of the Appendix.\end{remark}

\begin{remark}
	\label{rem:smooth-density-in-wasserstein}
By prescribing a known base measure \( \measureone \), such as the uniform distribution on \( [0,1]^d \), and considering \( \tilde \measuretwo = \hat \tmap_{\#} \measureone \), estimation rates for \( \hat T \) immediately translate into rates for estimating \( \measuretwo \) \emph{in} the \( 2 \)-Wasserstein distance \cite{WeeBac17, SinPoc18, WeeBer19}.
In fact, \( W_2^2(\tilde \measuretwo, \measuretwo) \) can be bounded by \( \E_{\measureone}[\| \hat \tmap - \tmap_0\|_2^2] \), since \( (\hat \tmap, \tmap_0)_{\#} \measureone \) is a candidate transport plan between \( Q \) and \( \tilde Q \).
Up to log factors, our rates obtained below match those obtained in \cite{WeeBer19} for the estimation of a smooth density on \( [0,1]^d \) in the \( 2 \)-Wasserstein distance.
\end{remark}

\section{Lower bound}
\label{sec:lower}
In this section we begin by proving the lower bound \eqref{eq:lb} as it sheds light on the source of the non-standard exponent $\frac{2 \alpha}{2 \alpha - 2 + d}$ in the minimax rate. 
We prove the following theorem.

\begin{theorem}
\label{thm:lb}
Fix $\alpha \ge 1$. It holds that
	\begin{equation}
		\inf_{\hat T} \sup_{\measureone \in \measureclass, \, \tmap_0 \in \funcclassmap_\alpha} \E \left[ \int\| \hat T(x) - \tmap_0(x) \|_2^2 \, \ud P(x) \right]
		\gtrsim n^{-\frac{2 \alpha}{2 \alpha - 2 + d}} \vee \frac{1}{n} \,. \tag{\ref{eq:lb} revisited}
	\end{equation}
	where the infimum is taken over all measurable functions of \( X_1, \dots, X_n \), \(Y_1, \dots, Y_n \).
\end{theorem}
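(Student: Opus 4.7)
The plan is to apply Fano's method to an exponentially large family of hypotheses built by perturbing the identity with a grid of localized bumps. Let \( \measureone \) be uniform on \( \Omega = [0,1]^d \) and \( \potential_0(x) = \tfrac{1}{2}\|x\|_2^2 \), so \( \tmap_0 = \nabla\potential_0 = \id \in \funcclassmap_\alpha \). Fix a smooth bump \( \psi \in C_c^\infty([-1/2,1/2]^d) \), choose a resolution \( h > 0 \) to be optimized later, and place \( N = h^{-d} \) disjoint axis-aligned cubes \( B_k \subset [1/4,3/4]^d \) of side \( h \) with centers \( x_k \). For \( \omega \in \{0,1\}^N \) define
\[
	\psi_k(x) = h^{\alpha+1}\psi((x-x_k)/h), \qquad \potential_\omega = \potential_0 + \sum_{k=1}^N \omega_k \psi_k, \qquad \tmap_\omega = \nabla \potential_\omega.
\]
The scaling \( h^{\alpha+1} \) is tuned so that the standard rescaling identity for H\"older norms gives a bound on \( \|\nabla\psi_k\|_{C^\alpha} \) that is uniform in \( h \) and \( k \); since the supports are disjoint, the same is true of \( \tmap_\omega - \id \). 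For \( h \) small the Hessian \( D^2\potential_\omega = I + O(h^{\alpha-1}) \) lies in \( [M^{-1},M]\cdot I \), so \( \tmap_\omega \in \funcclassmap_\alpha \). As each \( \psi_k \) vanishes on a neighborhood of \( \partial\Omega \), the diffeomorphism \( \tmap_\omega \) agrees with the identity there and hence maps \( \Omega \) onto \( \Omega \), so Assumption \ref{assumpspec:map}\ref{itm:fixed-target-distribution} is satisfied.

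The next step is to compute the separation and the information simultaneously. Since the bumps have disjoint supports and \( \measureone \) is uniform,
\[
	\| \tmap_\omega - \tmap_{\omega'} \|_{L^2(P)}^2 = \sum_{k=1}^N (\omega_k-\omega'_k)^2 \int |\nabla\psi_k(x)|^2 \,dx \asymp d_H(\omega,\omega') \cdot h^{2\alpha+d},
\]
because \( \nabla\psi_k \) has pointwise size \( h^\alpha \) on a set of Lebesgue measure \( h^d \). For the KL divergence between \( Q_\omega = \tmap_{\omega,\#}P \) and \( Q_{\omega'} \), I would Taylor-expand the pushforward density \( q_\omega(y) = |\det D\tmap_\omega(\tmap_\omega^{-1}(y))|^{-1} \) via \( \log\det(I+A) = \tr A - \tfrac{1}{2}\tr A^2 + O(\|A\|^3) \); the leading discrepancy between \( q_\omega \) and \( q_{\omega'} \) on the image of \( B_k \) is \( (\omega_k-\omega'_k)\Delta\psi_k \), of magnitude \( h^{\alpha-1} \). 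Bounding the KL by the \( \chi^2 \) (the densities are bounded above and below) and using that the images \( \tmap_\omega(B_k) \) remain pairwise disjoint for \( h \) small, because \( \tmap_\omega \) is \( O(h^\alpha) \)-close to the identity and \( \alpha > 1 \) forces \( h^\alpha \ll h \), one arrives at
\[
	\KL(Q_\omega \,\|\, Q_{\omega'}) \lesssim d_H(\omega,\omega') \cdot h^{2\alpha-2+d}.
\]
The decisive feature of this pair of estimates is the \( h^{-2} \) ratio between per-bit KL and per-bit \( L^2 \) separation: KL probes the Hessian of \( \potential_\omega \) while the loss only probes its gradient, and this extra differentiation is exactly what shifts \( 2\alpha+d \) to \( 2\alpha-2+d \) in the denominator of the final exponent.

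To close, I would use the Gilbert--Varshamov bound to extract a subfamily of size \( M \ge 2^{N/8} \) with pairwise Hamming distance \( \ge N/4 \); the pairwise \( L^2 \) separation is then \( \gtrsim h^{2\alpha} \) and the pairwise KL between the product measures \( Q_\omega^{\otimes n} \) is \( \lesssim n h^{2\alpha-2} \). Fano's inequality yields a nontrivial bound whenever \( n h^{2\alpha-2} \lesssim h^{-d} \asymp \log M \), and choosing \( h \asymp n^{-1/(2\alpha-2+d)} \) produces the desired minimax lower bound of order \( n^{-2\alpha/(2\alpha-2+d)} \). The parametric floor \( 1/n \) follows separately from a Le Cam two-point argument between \( \tmap_0 = \id \) and \( \tmap_1 = \nabla(\potential_0 + n^{-1/2}\psi_0) \) for a single fixed smooth convex perturbation \( \psi_0 \) supported deep in \( \Omega \). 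The main obstacle I anticipate is the KL estimate: one has to control the higher-order terms in \( \log\det(I+A_\omega) \) uniformly in \( \omega \) and verify that the per-bump KL contributions add cleanly via the disjointness of the images \( \tmap_\omega(B_k) \); both are technical but reduce to routine computations given the explicit local construction.
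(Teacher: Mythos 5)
This is essentially the paper's own argument: you perturb the quadratic potential by a grid of $\asymp h^{-d}$ disjoint bumps of amplitude $\asymp h^{\alpha+1}$, check membership in $\funcclassmap_\alpha$, obtain separation $\asymp h^{2\alpha}$ and KL $\lesssim n\,h^{2\alpha-2}$ from the pushforward (Monge--Amp\`ere) density, combine Varshamov--Gilbert with Fano at $h \asymp n^{-1/(2\alpha-2+d)}$, and get the $1/n$ floor from a two-point comparison, exactly as in the paper. The only real deviation is the information step — you bound KL by $\chi^2$, which does give $\lesssim d_H(\omega,\omega')\,h^{2\alpha-2+d}$ directly (so your worry about higher-order terms of $\log\det$ is unnecessary), whereas the paper expands $\log\det D^2\phi_k$ and kills the linear $\int \tr(D^2 g_j)$ term via the divergence theorem — and one small fix: for $\alpha=1$ the Hessian perturbation is $O(1)$, so you must also shrink the bump amplitude by a small constant (the paper's $\kappa$) to keep $D^2\phi_\omega$ within $[M^{-1},M]$ and $\|T_\omega\|_{C^\alpha}\le R$.
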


\begin{proof}
	The proof uses standard tools to establish minimax lower bounds, \cite[Theorem~2.5, Lemma~2.9, Theorem~2.2]{Tsy09}, that we restate in Appendix \ref{SEC:lower-bounds-tools} for the convenience of the reader as Theorem~\ref{thm:lower-bounds-master}, Lemma~\ref{lem:varshamov-gilbert}, and Theorem~\ref{thm:lower-bounds-master-twopoint}, respectively.
	It relies on the following construction.

Set \( \measureone = \unif([0,1]^d) \in \measureclass \), the uniform distribution on the hypercube.
For $\alpha>1$, let \( \xi \colon \R \to \R \) be a non-zero function in $C^{\infty}(\R)$ with support contained in \( [0,1] \) such that there exists \( x_0 \in [0,1] \) with \( \xi(x_0) \neq 0 \), \( \frac{d}{dx} \xi(x_0) \neq 0 \), for example a bump-function.
Define $g \colon \R^d \to \R$ by
\begin{equation}
	\label{eq:am}
	 g(x) = \prod_{i \in [d]} \xi(x_i)\,, \qquad x=(x_1, \ldots, x_d)\,,
\end{equation}
and note that \( \nabla g(x_0, \dots, x_0) \neq 0 \) and \( \supp(g) = [0,1]^d \) by the above assumptions on \( \xi \).

Let $m=\lceil \theta n^{\frac{1}{2\alpha - 2 +d}}\rceil$ be a positive integer where $\theta$ is a universal constant to be chosen later. We form a regular discretization of the space $[0,1]^d$ by defining the collection of vectors $\{x^{(j)}\,:\, j \in [m]^d\} \subset [0,1]^d$ to have coordinates $x^{(j)}_i = (j_i - 1)/m$, $i=1,\ldots, d$
and let
\begin{equation}
	\label{eq:an}
	g_j(x) = \frac{\kappa}{m^{\alpha+1}} g(m(x - x^{(j)})),
\end{equation}
for a constant \( \kappa > 0 \) to be chosen later.
Note that \( \supp(g_j) \subseteq x^{(j)} + [0, 1/m]^d \) and hence that the supports of the functions \( \{ g_j \}_{j \in [m]^d} \) are pairwise disjoint.

Next, let $\flat\in \mathbb{N}^d$ be a multi-index and observe that the differential operator $\partial^\flat$ applied to $g_j$ yields $\partial^\flat g_j(\cdot)=m^{|\flat| - \alpha - 1} \partial^{\flat} g(m(\cdot - x_j))$. Since $\xi \in C^{\alpha+1}$, if \( \alpha > 1 \), a second-order Taylor expansion yields that $g_j$ has uniformly vanishing Hessian: $\| D^2 g_j \|_{L^\infty(\R^d)} \to 0$ as $m \to \infty$. 
In particular, in that case, there exists $m_0$ such that $\| D^2 g_j(x) \|_{\mathrm{op}} \le 1/2$ for all \( x \in \R^d \), $m \ge m_0$, $j \in [m]^d$.
If \( \alpha = 1 \), the same can be obtained by choosing \( \kappa \) small enough.
By the same reasoning, we can also guarantee \( \| \nabla g_j \|_{L^\infty(\R^d)} \le 1 \).

For $m^d\ge 8$, the Varshamov-Gilbert lemma, Lemma~\ref{lem:varshamov-gilbert}, guarantees the existence of binary vectors  $\tau^{(0)}, \tau^{(1)}, \ldots, \tau^{(K)} \in \{0,1\}^{[m]^d}$, \( \tau^{(0)} = (0, \dots, 0) \), $K \ge 2^{m^d/8}$ such that $\|\tau^{(k)}-\tau^{(k')}\|_2^2\ge m^d/8$ for $0\le k \neq k' \le K$. With this, we define the following collection of Kantorovich potentials:
\begin{align}
	\label{eq:ao}
	\phi_k(x) = \frac{1}{2} \| x \|^2 + \sum_{j \in [m]^d} \tau^{(k)}_j g_j(x)\,, \qquad k=0, \ldots, K\,.
\end{align}

It is easy to see (Lemma~\ref{LEM:1to1}) that for any $k=0, \ldots, K$ and $m \ge m_0$, 
	\( \nabla \phi_k \) is a bijection from \( [0,1]^d \) to \( [0,1]^d \).
	Moreover, by Weyl's inequality and the above bound \( \| D^2 g_j(x) \|_{\mathrm{op}} \le 1/2 \), for all \( k \),
\begin{equation}
	\label{eq:tg}
	\lambda_\mathrm{min}(D^2 \phi_k(x)) \ge 1 - \sum_{j \in [m]^d} \lambda_{\mathrm{max}}(D^2 g_j(x)) \ge \frac{1}{2}.
\end{equation}
Similarly, we obtain \( \| \nabla \phi_k \|_{L^\infty([0,1]^d)} \le 2 \) and \( \lambda_\mathrm{max}(D^2 \phi_k(x)) \le 2 \) for all \( x \in \R^d \).
Hence, \( T_k := \nabla \phi_k \in \funcclassmap_\alpha(\constbd, \constsmooth) \) for $M>2$ and \( \kappa \) small enough.
We now check the conditions of Theorem~\ref{thm:lower-bounds-master}, where we consider the distance measure
\begin{equation}
	\label{eq:tl}
	d(T_k, T_{k'})^2 = \int_{[0,1]^d} \| T_k - T_{k'} \|^2 \, \ud x, \quad, 0 \le k, k' \le K.
\end{equation}

First, observe that for $0\le k \neq k' \le K$, it holds that
\begin{align}
	\label{eq:az}
	\leadeq{\int_{[0,1]^d} \| \nabla \phi_k(x) - \nabla \phi_{k'}(x) \|^2 \, \ud x}\\
	= {} & \frac{\kappa^2}{m^{2 \alpha + d}} \sum_{j\in [m]^d} (\tau^{(k)}_j - \tau^{(k')}_j)^2 \int_{\R^d} \| \nabla g(x) \|^2 \, \ud x
	\gtrsim {}  \frac{1}{m^{2 \alpha}}\,.
\end{align}
This yields
$$
\int_{[0,1]^d} \| \nabla \phi_k(x) - \nabla \phi_{k'}(x) \|^2 \, \ud x \gtrsim {n^{-\frac{2 \alpha}{2 \alpha - 2 + d}}},
$$
which completes checking the separation condition (i) of Theorem~\ref{thm:lower-bounds-master}.

To check condition (ii) of Theorem~\ref{thm:lower-bounds-master}, recall the Kullback-Leibler (KL) divergence between two measures $Q, P$ such that $Q$ is absolutely continuous with respect to $P$ is defined by
$$
D(Q\|P)=\E \log\big(\frac{\ud Q}{\ud P}(W)\big)\,, \quad W \sim Q.
$$

	In view of Lemma~\ref{LEM:1to1}, for any $k=0, \ldots, K$, the measure $\measuretwo_k=(\nabla \phi_k)_\# \measureone$ is supported on $[0,1]^d$ and in particular, it is  absolutely continuous with respect to $\measureone$.
	By the change of variables formula, it admits the density
\begin{align}
	\label{eq:monge-ampere}
	\frac{\ud \measuretwo_k}{\ud \measureone}(y):=  \frac{1}{\det D^2 \phi_k((\nabla \phi_k)^{-1}(y))}\1((\nabla \phi_k)^{-1}(y) \in [0,1]^d).
\end{align}
Moreover, let $X\sim P$ and $Y\sim Q_k$ be two random variables. It holds
\begin{align}
\label{EQ:KLD2}
D(\measuretwo_k \| \measureone)
= {} & \E\log \left( \frac{\ud \measuretwo_k}{\ud \measureone} (Y)\right)
= \E\log \left( \frac{\ud \measuretwo_k}{\ud \measureone} \big(\nabla \phi_k(X)\big)\right)\\
= {} & -\int_{[0,1]^d} \log \left( \det D^2 \phi_k(x) \right) \, \ud x\,.
\end{align}
Recall that  $D^2 \phi_k = I_d + \sum_{j \in [m]^d} \tau^{(k)}_j D^2g_j$ 
where $I_d$ denotes the identity matrix in $\R^d$. Therefore, since the functions $g_j$ have disjoint support, we have for all $x \in [0,1]^d$ that
\begin{align}
\log \left( \det D^2 \phi_k(x) \right)
= {} & \sum_{l=1}^d \log\Big(1+ \lambda_l\big( \sum_{j \in [m]^d} \tau^{(k)}_j D^2g_j(x)\big)\Big)\\
= {} & \sum_{l=1}^d \sum_{j \in [m]^d} \log\left(1+ \tau^{(k)}_j \lambda_l\left( D^2g_j(x)\right)\right),
\end{align}
where $\lambda_l(A)$ denotes the $l$th eigenvalue of a matrix $A$. Since $\log(1+z) \ge z - z^2/2$ for all $z>0$,
$$
\log \left( \det D^2 \phi_k(x) \right)\ge\sum_{j \in [m]^d} \tau^{(k)}_j \mathsf{Tr}(D^2g_j(x))-\frac12 \sum_{j \in [m]^d}\|D^2g_j(x)\|_F^2
$$
where $\|\cdot\|_F$ denotes the Frobenius norm. Thus,
\begin{align}
D(\measuretwo_k \| \measureone)
\le {} & - \sum_{j \in [m]^d} \tau^{(k)}_j \int_{[0,1]^d}\mathsf{Tr}(D^2g_j(x)) \, \ud x\\
{} & \quad +\frac12 \sum_{j \in [m]^d}\int_{[0,1]^d}\|D^2g_j(x)\|_F^2 \, \ud x.
\end{align}
On the one hand, by the divergence theorem and the fact that $g_j$ has bounded support,
$$
\int_{[0,1]^d}\mathsf{Tr}(D^2g_j(x))\, \ud x
=\int_{\partial[0,1]^d} \langle v(x) , \nabla g_j(x) \rangle \, \ud x=0\,,
$$
where $\partial[0,1]^d$ denotes the boundary of the unit hypercube and \( v(x) \) its outward-pointing unit normal vector. On the other hand
\begin{align*}
\sum_{j \in [m]^d}\int_{[0,1]^d}\|D^2g_j(x)\|_F^2\, \ud x
= \frac{\kappa^2}{m^{2 \alpha - 2 + d}} \sum_{j \in [m]^d}  \int_{\R^d} \| D^2 g(x) \|_F^2 \, \ud x \lesssim \frac{1}{m^{2 \alpha - 2}}.
\end{align*}

The above three displays yield
\begin{equation}
\label{EQ:boundKL}
D(\measureone^{\otimes n}\otimes \measuretwo_k^{\otimes n} \| \measureone^{\otimes n}\otimes \measureone^{\otimes n})=nD(\measuretwo_k\|\measureone) \lesssim \frac{n}{m^{2 \alpha - 2}} \le \frac{m^d}{\theta} \le \frac{\log K}{9}
\end{equation}
for $\theta$ large enough.
This completes checking (ii) in Theorem~\ref{thm:lower-bounds-master} and hence the proof of the first part of the minimax lower bound.

To show the remaining lower bound of \( 1/n \), repeat the same argument as above with the two potentials $\phi_0(x)=\| x \|_2^2/2$ and $ \phi_1(x)= \phi_0(x)+(\tilde \theta/\sqrt{n})g(x)$
for \( \tilde \theta \) chosen to ensure \( \phi_0, \phi_1 \in \funcclassmap_\alpha \), applying Theorem~\ref{thm:lower-bounds-master-twopoint} in Appendix \ref{SEC:lower-bounds-tools}.
The separation condition is  given by
\begin{equation}
	\label{eq:pb}
	\int_{[0,1]^d} \| \nabla \phi_0(x) - \nabla \phi_{1}(x) \|^2 \, \ud x
	= \frac{\tilde \theta^2}{n} \int_{[0,1]^d} \| \nabla g(x) \|_2^2 \, \ud x \gtrsim \frac{1}{n},
\end{equation}
and the KL divergence between the associated probability distributions can be estimated by
\begin{equation}
	\label{eq:pd}
	D(\measureone^{\otimes n}\otimes \measuretwo_1^{\otimes n} \| \measureone^{\otimes n}\otimes \measureone^{\otimes n})=nD(\measuretwo_1\|\measureone)
	\lesssim \frac{\tilde \theta^2 n}{n} \int_{[0,1]^d} \| D^2 g(x) \|_F^2 \, \ud x \lesssim \frac{1}{9},
\end{equation}
for \( \tilde \theta \) large enough.
\end{proof}

\bigskip

Looking back at this proof, we get a better understanding of the exponent in the minimax rate $n^{-\frac{2 \alpha}{2 \alpha - 2 + d}}$.
Given that $n^{-\frac{2 (\beta - k)}{2 \beta + d}}$ is the minimax rate of estimation of the $k$th derivative of a \( \beta \)-smooth density in \( L^2 \) \cite{MueGas79},
the rate that we obtain is formally that of an ``antiderivative'' (\( k = -1) \) of a \( \beta = \alpha - 1 \)-smooth signal in this model.
This is due to the fact that, on the one hand, the information structure, measured in terms of Kullback-Leibler divergence, is governed by the \emph{derivative} of the signal \( T_0 \), i.e., the Hessian of the $\alpha + 1$-smooth Kantorovich potential, see~\eqref{EQ:KLD2}, which is \( \alpha - 1 \)-smooth.
This follows directly from the Monge-Amp\`ere equation~\eqref{eq:monge-ampere}.
On the other hand, we measure the performance of the estimator in terms of the \( L^2(P) \) distance between \( \hat \tmap \) and \( \tmap_0 \), corresponding to the antiderivative of the Hessian.
Of course, in the absence of the classical fundamental theorem of calculus in dimension \( d > 1 \) for arbitrary maps \( \tmap \colon \R^d \to \R^d \), the existence of an antiderivative needs to be guaranteed a priori, as in the case of transport maps by assuming \( \tmap = \nabla \potential \) for \( \potential \colon \R^d \to \R \).

Similar rates arise in the estimation of the invariant measure of a diffusion process when smoothness is imposed on the drift~\cite{DalRei07,Str18}. This is not surprising as the drift is the gradient of the logarithm of the density of the invariant measure in an overdamped Langevin process.

Finally, note that the multivariate case is singularly different from the traditional univariate case where the rate of estimation of linear functionals such as anti-derivatives is known to be parametric regardless of the smoothness of the signal~\cite{IbrHas81}.

\section{Upper bounds}
\label{sec:upper}

In this section, we give an estimator \( \hat T \) that achieves the near-optimal rate \eqref{eq:je}.
We present this estimator under the following more general assumptions on the distribution and the geometry of the support of both \( \measureone \) and \( \measuretwo = (\tmap_0)_\# \measureone \). We also need slightly weaker conditions on the regularity of the transport map (Sobolev instead of H\"older regularity).  After stating these weaker assumptions, we present our estimator and restate the main upper bound.
Its proof relies on a separate control of approximation error and stochastic error, similar to a standard bias-variance tradeoff.

\subsection{Assumptions}

Throughout, we fix two constants \( M\ge 2, \beta > 1 \).
\begin{assumpgen}[Source distribution]
	\label{assumpgen:source}
	Let \( \measureclass = \measureclass(\constbd) \) be the set of all probability measures \( \measureone \) whose support $\Omega_P \subseteq \constbd  B_1$ is a bounded and connected Lipschitz domain, and that admit a density \( \density_\measureone \) with respect to the Lebesgue measure such that $\DS M^{-1} \le \density_\measureone(x) \le M$ for almost all $x \in \Omega_P$. 
	Assume that the measure \( \measureone \in \measureclass \).
\end{assumpgen}

\begin{assumpgen}[Transport map]
	\label{assumpgen:map}
	For any $P \in \measureclass$ with support $\Omega_P$, let \( \tilde \Omega_P \) denote a convex set with Lipschitz boundary such that \( \tilde \Omega_\measureone \subseteq \constbd B_1 \), and $\Omega_P + M^{-1}B_1 \subseteq \tilde \Omega_P$. Let $\tilde \funcclassmap = \tilde \funcclassmap(\constbd)$ be the set of all differentiable functions \( \tmap \colon \tilde \Omega_P \to \R^d \) such that $T = \nabla \potential$ for some differentiable convex function $ \potential \colon \tilde \Omega_P \to \R^d$ and
	\begin{enumerate}
		\item \label{itm:assump-boundedness} $\DS |T(x)|\le M$ for all $x \in \tilde \Omega_P$,
		\item \label{itm:assump-d2} $\DS M^{-1} \preceq DT(x) \preceq M$ for all $x \in \tilde \Omega_P$.
	\end{enumerate}
	For \( \constsmooth > 1 \) and \( \alpha > 1 \), assume that
	\begin{equation}
		\label{eq:lz}
		T_0 \in \tilde \funcclassmap_\alpha = \tilde \funcclassmap_\alpha(\constbd, \constsmooth) = \{ \tmap \in \tilde \funcclassmap(\constbd) :  \| \tmap \|_{C^\beta(\tilde \Omega_P)} \vee \| \tmap \|_{W^{\alpha,2}(\tilde \Omega_P)} \le R \}.
	\end{equation}
\end{assumpgen}

These new conditions have two implications.
First, they imply regularity of the Kantorovich potential $f_0$, where $T_0=\nabla f_0$, and second, they imply some conditions on the pushforward measure $Q=(T_0)_{\#}P$ that subsume the generalization of Assumption~\ref{assumpspec:map}\ref{itm:fixed-target-distribution}. These results are gathered in the following proposition (see Section~\ref{Proof:lem:immediate-bounds} for a proof).

\begin{propdef}
	\label{lem:immediate-bounds}
	Assume that $P$ satisfies \ref{assumpgen:source}, $T_0$ satisfies \ref{assumpgen:map} and let $\cX=\cX(M)$ be the set of all twice continuously differentiable functions $f\colon\tilde \Omega_P \to \R$  such that
	\begin{enumerate}
		\item \label{itm:immediate-bounds-d01} $\DS |f(x)|\le 2 M^2$ and $\DS| \nabla f(x)|\le M$ for all $x \in \tilde \Omega_P$,
	\item \label{itm:immediate-bounds-d2} $\DS M^{-1} \preceq D^2f(x) \preceq M$ for all $x \in \tilde \Omega_P$.
	\end{enumerate}
	
	 Then there exists a Kantorovich potential \( \potential_0 \in \cX(M)\) such that $T_0=\nabla f_0$,
\begin{equation}
\label{EQ:smoothpotentials}
 \| f_0 \|_{C^{\beta+1}(\tilde \Omega_\measureone)}\vee \| f_0 \|_{W^{\alpha+1,2}(\tilde \Omega_\measureone)} \le \constsmooth + 2 M^3.
\end{equation}
Further, $Q=\nabla (f_0)_{\#}P$ has a connected and bounded Lipschitz support $\Omega_Q \subseteq \constbd B_1$ and admits a density $\rho_Q$ with respect to the Lebesgue measure that satisfies $M^{-(d+1)}\le \rho_Q(y) \le M^{d+1}$ for all $y \in \Omega_Q$. 	
\end{propdef}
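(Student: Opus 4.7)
The plan is to realize $f_0$ as a suitably normalized primitive of $T_0$, translate the spectral and smoothness hypotheses on $T_0$ into quantitative bounds on $f_0$, and analyze $Q$ via change of variables using that $T_0$ is bi-Lipschitz. Assumption \ref{assumpgen:map} supplies a differentiable convex $\phi \colon \tilde \Omega_P \to \R$ with $T_0 = \nabla \phi$; pick any $x_\star \in \tilde \Omega_P$ and set $f_0 = \phi - \phi(x_\star)$, so that $\nabla f_0 = T_0$ and $f_0(x_\star) = 0$. The mean value theorem together with $|\nabla f_0| = |T_0| \le M$ and $\diam(\tilde \Omega_P) \le 2M$ (from $\tilde \Omega_P \subseteq M B_1$) yields $|f_0| \le 2M^2$. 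Condition \ref{itm:immediate-bounds-d2} of $\cX(M)$ follows from $D^2 f_0 = DT_0$ and assumption \ref{assumpgen:map}\ref{itm:assump-d2}, while $f_0 \in C^2$ is inherited from $T_0 \in C^\beta$ with $\beta > 1$, confirming $f_0 \in \cX(M)$.

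For the smoothness bounds, observe that any multi-index $\flat$ with $|\flat| \ge 1$ yields $\partial^\flat f_0 = \partial^{\flat - e_i} (T_0)_i$ for some coordinate $i$ with $\flat_i \ge 1$. Hence every contribution of order $\ge 1$ to $\|f_0\|_{C^{\beta+1}(\tilde \Omega_P)}$ and $\|f_0\|_{W^{\alpha+1,2}(\tilde \Omega_P)}$ reduces to a bound on a derivative of $T_0$ of order one less, controlled by $\|T_0\|_{C^\beta(\tilde \Omega_P)} \vee \|T_0\|_{W^{\alpha,2}(\tilde \Omega_P)} \le R$. The remaining zero-order terms are controlled by $\|f_0\|_{L^\infty} \le 2M^2$ and $\|f_0\|_{L^2(\tilde \Omega_P)}$, the latter being finite because $\tilde \Omega_P \subseteq M B_1$ is bounded. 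Collecting these estimates and absorbing the dimension-dependent volume factor into the envelope yields $\|f_0\|_{C^{\beta+1}} \vee \|f_0\|_{W^{\alpha+1,2}} \le R + 2M^3$.

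For the pushforward measure, the spectral bounds $M^{-1} \preceq DT_0 \preceq M$ combined with the inverse function theorem make $T_0$ a $C^1$-diffeomorphism onto its image, with both $T_0$ and its inverse being $M$-Lipschitz. Setting $\Omega_Q = T_0(\Omega_P)$: continuity preserves connectedness; $|T_0| \le M$ gives $\Omega_Q \subseteq MB_1$; and the local graph characterization of Lipschitz domains is preserved under bi-Lipschitz maps, so $\partial \Omega_Q$ is Lipschitz. A standard change-of-variables argument yields $\rho_Q(y) = \rho_P(T_0^{-1}(y))/\det(DT_0(T_0^{-1}(y)))$ on $\Omega_Q$, and $M^{-d} \le \det(DT_0) \le M^d$ together with $M^{-1} \le \rho_P \le M$ deliver the claimed $M^{-(d+1)} \le \rho_Q \le M^{d+1}$. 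The main obstacle is the transfer of Lipschitz boundary regularity under the bi-Lipschitz map $T_0$, which requires invoking the local-graph characterization of Lipschitz domains rather than merely the bi-Lipschitz property itself; everything else is careful bookkeeping of the hypotheses already supplied by \ref{assumpgen:source} and \ref{assumpgen:map}.
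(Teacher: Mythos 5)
Your construction of $f_0$, the $L^\infty$/diameter bound, the index-shift argument for the $C^{\beta+1}$ and $W^{\alpha+1,2}$ norms, and the change-of-variables bound on $\rho_Q$ all match the paper's proof. The genuine gap is in your treatment of the boundary regularity of $\Omega_Q$: you assert that ``the local graph characterization of Lipschitz domains is preserved under bi-Lipschitz maps,'' but this is false in general. Bi-Lipschitz homeomorphisms do \emph{not} preserve Lipschitz domains --- the standard counterexample is a logarithmic-spiral map of the plane, which is bi-Lipschitz yet sends a half-disc to a domain whose boundary is not locally a Lipschitz graph at the center of the spiral. So the step you yourself single out as ``the main obstacle'' is resolved by an incorrect principle. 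The paper avoids this by first establishing that $\nabla f_0$ is a genuine $C^1$-diffeomorphism between $\Omega_P$ and $\Omega_Q$ (both the map and its inverse continuously differentiable, via strong convexity of $f_0$ and Lemmas~\ref{lem:conj-lipschitz} and~\ref{lem:sub-der-correspondence}) and then invoking the nontrivial fact that $C^1$-diffeomorphisms preserve Lipschitz domains \cite[Theorem 4.1]{HofMitTay07}; plain bi-Lipschitz regularity is not enough. Since you do derive the $C^1$-diffeomorphism property, your argument is repairable, but as written the justification of the Lipschitz boundary would not stand.

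A secondary imprecision: you attribute global invertibility of $T_0$ to ``the spectral bounds combined with the inverse function theorem.'' The inverse function theorem only gives local invertibility; global injectivity on $\tilde \Omega_P$ comes from strong monotonicity of $\nabla \phi$, i.e.\ from $\phi$ being strongly convex on the convex set $\tilde \Omega_P$ (cf.\ Lemma~\ref{lem:diffconvex}), which also yields the $M$-Lipschitz inverse. You have all the hypotheses needed for this, so state it that way rather than via the inverse function theorem.
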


Note that the simplified Assumptions \ref{assumpspec:source} and \ref{assumpspec:map} from Section \ref{sec:mainresults} follow from  \ref{assumpgen:source} and \ref{assumpgen:map} in the case \( \Omega_\measureone = \Omega = [0,1]^d \) and \( \tilde \Omega_\measureone = \tilde \Omega = [-1, 2]^d \). Additionally, the simplified assumptions  restrict the class of transport maps to those such that  \( \Omega_\measuretwo = [0,1]^d \) and for which \( \beta = \alpha \). Indeed, noting that \( \| \tmap_0 \|_{W^{\alpha,2}(\tilde \Omega)} \lesssim \| \tmap_0 \|_{C^\alpha(\tilde \Omega)} \), we can fold the two smoothness conditions into one.

\subsection{Estimator}
\label{sec:estimator}

To construct an estimator for \( \tmap_0 \), we observe that if we had access to a Kantorovich potential \( \potential_0 \), then \( \tmap_0 = \nabla \potential_0 \) by Brenier's Theorem, Theorem \ref{thm:brenier}.
In turn, \( \potential_0 \) is the minimum of the semi-dual objective \eqref{eq:pn}.
Hence, we replace population quantities with sample ones in its definition to obtain an empirical loss function.
Moreover, to account for the assumed smoothness of the transport map and to ensure stability of the objective, we constrain our minimization problem to smooth and strongly convex Kantorovich potentials, restricted to a compact superset of the support of \( \measureone \).
Then, our estimator is the gradient of the solution to this stochastic optimization problem.

More precisely, for a measurable function \( \potential \), let us write
\begin{alignat}{2}
	\label{eq:iy}
	\measureone \potential = {} & \int \potential(x) \, \ud \measureone(x),& \quad \measuretwo \potential = {} & \int \potential(y) \, \ud \measuretwo(y),\\
	\quad \hat \measureone \potential = {} & \frac{1}{n} \sum_{i=1}^n \potential(X_i),& \quad \hat \measuretwo \potential = {} & \frac{1}{n} \sum_{j=1}^{n} \potential(Y_i),
\end{alignat}
where, as in Section \ref{sec:mainresults}, \( X_{1:n} = (X_1, \dots, X_n) \) and \( Y_{1:n} = (Y_1, \dots, Y_n) \) are \( n \) \iid samples from \( \measureone \) and \( \measuretwo \), respectively, that are mutually independent as well.
Recall from Section~\ref{sec:ot-overview} that the semi-dual objective is defined as $\risk(\potential) = \measureone \potential + \measuretwo \potential^\ast$ for $f 
\in L^1(P)$,
where \( \potential^\ast \) denotes the convex conjugate of~$\potential$.
Replacing both \( \measureone \) and \( \measuretwo \) by their empirical counterparts, we obtain the empirical semi-dual,
\begin{equation}
	\label{eq:qb}
	\emprisk(\potential) = \hat \measureone \potential + \hat \measuretwo \potential^\ast.
\end{equation}

In order to incorporate smoothness regularization into the minimization of \eqref{eq:qb}, we consider the restriction of potentials \( \potential \) to a wavelet expansion of finite degree, a strategy that is frequently used in non-parametric estimation \cite{HarKerPic98, GinNic16}.
For the purpose of this section, it is enough to think about wavelets as a graded orthogonal basis of \( L^2(\R^d) \), leading to nested subspaces
\begin{equation}
	\label{eq:qw}
	V_0 \subseteq V_1 \subseteq \dots \subseteq V_J \subseteq \dots \subseteq L^2(\R^d),
\end{equation}
that roughly correspond to increasing frequency ranges of the continuous Fourier transform of a function \( \potential \in L^2(\R^d) \).
Truncated wavelet decompositions yield good approximations for smooth functions and we control their approximation error in Lemma~\ref{lem:wavelet-approximation}.
We only consider the span \( V_J(\tilde \Omega_\measureone) \) of those basis functions of the wavelet expansion whose support has non-trivial intersection with~\( \tilde \Omega_\measureone \). This is a finite-dimensional vector space  as long as the elements of the wavelet basis have compact support.
The cut-off parameter \( J \) is chosen according to the regularity of \( \potential_0 \) in assumption \ref{assumpgen:map}, see Section \ref{sec:upper-bounds-proof}, or alternatively can be chosen adaptively by a straightforward but technical extension using a penalization scheme \cite{Mas07} that we omit for readability. 
Alternatively, other selection methods such as Lepski's method \cite{Lep91,Lep92,Lep93} could be used.
In order to ensure the necessary regularity and the compact support of the elements of the wavelet basis, we assume throughout that the wavelet basis is given by Daubechies wavelets of sufficient order.
For a more detailed treatment of wavelets, we refer the reader to Section \ref{sec:wavelets}.

To ensure stability of the minimizer of the semi-dual with respect to perturbations of the input distributions $P$ and $Q$, we further restrict the potentials \( \potential \) to mimic the assumptions in Proposition-Definition \ref{lem:immediate-bounds}, in particular, we enforce  upper and lower bounds on the Hessian \( D^2 \potential \) on \( \tilde \Omega_\measureone \) by demanding \( \potential \in \funcclassstd(2 M) \).

Combined, both wavelet regularization and strong convexity lead to the set
\begin{equation}
	\label{eq:fc}
	\funcclass_J = \funcclassstd(2M) \cap V_J(\tilde \Omega_\measureone)
\end{equation}
of candidate potentials, based on which we define the estimators
\begin{align}
	\label{eq:jb}
	\hat \potential_J \in \argmin_{\potential \in \funcclass_J} \emprisk(\potential), \quad \hat \tmap_J = \nabla \hat \potential_J,
\end{align}
for the Kantorovich potential and transport map, respectively.

Note that since we consider  candidate potentials only on the compact set \( \tilde \Omega_\measureone \), \( \potential^\ast \) above is defined as
\begin{equation}
	\label{eq:iz}
	\potential^\ast(y) = \sup_{x \in \tilde \Omega_\measureone} \langle x , y \rangle - f(x)
	= \sup_{x \in \R^d} \langle x , y \rangle - (f + \iota_{\tilde \Omega_\measureone})(x), \quad y \in \R^d,
\end{equation}
where \( \iota_{\tilde \Omega_\measureone} \) is the usual indicator function in convex analysis (see Section~\ref{SEC:convex-analysis}).

With this, we can restate the upper bound of Theorem \ref{thm:minimax}.

\begin{theorem}
	\label{thm:upper-bounds}
	Under assumptions \ref{assumpgen:source} and \ref{assumpgen:map}, there exists \( n_0 \in \mathbb{N} \) and \( J \) such that for \( n \ge n_0 \),
	\begin{equation}
		\label{eq:ls}
		\E_{(X_{1:n}, Y_{1:n})} \left[ \int\| \hat \tmap_J(x) - \tmap_0(x) \|_2^2 \, \ud \measureone(x) \right]
		\le C \left[n^{-\frac{2 \alpha}{2 \alpha - 2 + d}} (\log(n))^2 \vee \frac1n \right],
	\end{equation}
	where \( n_0, \, C \), and \( J \) may depend on \( d, \constbd, \constsmooth, \Omega_\measureone, \Omega_\measuretwo, \tilde \Omega_\measureone \), \( n_0 \) may additionally depend on \( \beta \), \( C \) may additionally depend on \( \alpha \), and \( J \) depends on \( n \).

	The cutoff \( J \) depends on \( \alpha \) if \( d \ge 3 \), but in the cases \( d = 1 \) and \( d = 2 \), \( J \) can be chosen independently from \( \alpha \).
\end{theorem}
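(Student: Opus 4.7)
The plan is a bias--variance decomposition for the M-estimator $\hat \potential_J$, driven by the quadratic curvature of the semi-dual functional $\risk$ on strongly convex potentials. By Proposition~\ref{prop:gigli} applied with constant $2M$, every $\potential \in \funcclass_J$ satisfies
\[
\| \nabla \potential - \nabla \potential_0 \|_{L^2(\measureone)}^2 \lesssim \risk(\potential) - \risk(\potential_0),
\]
so it suffices to bound the excess risk of $\hat \potential_J$. I would write $\risk(\hat \potential_J) - \risk(\potential_0) = [\risk(\hat \potential_J) - \risk(\bar \potential_J)] + [\risk(\bar \potential_J) - \risk(\potential_0)]$ for a carefully chosen $\bar \potential_J \in \funcclass_J$, so that the first bracket is a purely stochastic term (since $\emprisk(\hat \potential_J) \le \emprisk(\bar \potential_J)$ by definition of $\hat \potential_J$) and the second is the approximation error.

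For the approximation step, Proposition-Definition~\ref{lem:immediate-bounds} upgrades the smoothness hypothesis on $\tmap_0$ to the potential, giving $\potential_0 \in C^{\beta+1} \cap W^{\alpha+1,2}(\tilde \Omega_\measureone)$ with Hessian bounds $M^{-1} \preceq D^2 \potential_0 \preceq M$. Take $\bar \potential_J$ to be the truncated Daubechies wavelet expansion of $\potential_0$ on $V_J(\tilde \Omega_\measureone)$ at a resolution high enough that the Daubechies smoothness exceeds $\beta+1$. Lemma~\ref{lem:wavelet-approximation} yields $\|\nabla(\bar \potential_J - \potential_0)\|_{L^2(\measureone)} \lesssim 2^{-J\alpha}$ and $\|D^2(\bar \potential_J - \potential_0)\|_{L^\infty(\tilde \Omega_\measureone)} \to 0$ as $J \to \infty$ (using $\beta > 1$), so that $\bar \potential_J \in \funcclassstd(2M)$ for $J$ large, hence $\bar \potential_J \in \funcclass_J$. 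Combined with a smoothness upper bound on $\risk$ dual to Proposition~\ref{prop:gigli}, this gives $\risk(\bar \potential_J) - \risk(\potential_0) \lesssim 2^{-2J\alpha}$.

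For the stochastic term, I would apply a localized Talagrand inequality from Section~\ref{SEC:emp-process} over the ellipsoid $E_r = \{\potential \in \funcclass_J : \|\nabla(\potential - \bar \potential_J)\|_{L^2(\measureone)} \le r\}$, controlling the Legendre conjugate via the Lipschitz bound $\|\nabla \potential^\ast - \nabla \bar \potential_J^\ast\|_{L^2(\measuretwo)} \lesssim \|\nabla(\potential - \bar \potential_J)\|_{L^2(\measureone)}$, which follows from uniform strong convexity of both potentials together with a change of variables via $\tmap_0$. The key computation is the $L^2$-entropy of $E_r$: expressed in wavelet coefficients it is an ellipsoid with semi-axes $r \cdot 2^{-j}$ at level $j \le J$, and standard ellipsoid entropy yields $\log N(\epsilon, E_r, L^2) \lesssim \min\{2^{Jd},\,(r/\epsilon)^d\}$. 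Splitting Dudley's integral at $\epsilon \asymp r \cdot 2^{-J}$ produces an empirical process bound of order $r \cdot 2^{J(d-2)/2}/\sqrt{n}$ for $d \ge 3$, $r J/\sqrt{n}$ for $d = 2$, and $r/\sqrt{n}$ for $d = 1$. A peeling argument using the curvature bound then converts $r^2$ into the excess risk and gives a stochastic error of order $2^{J(d-2)}/n$ (up to log factors) for $d \ge 3$, and $1/n$ for $d \le 2$.

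Balancing bias and variance for $d \ge 3$ via $2^{-2J\alpha} \asymp 2^{J(d-2)}/n$ gives $2^J \asymp n^{1/(2\alpha-2+d)}$ and the claimed rate, while for $d \le 2$ the parametric $1/n$ (up to a $(\log n)^2$ factor in $d=2$) dominates, confirming the $\alpha$-independence of $J$ in those cases. The main obstacle is extracting the effective dimension $2^{J(d-2)}$ rather than the naive $2^{Jd}$; this requires exploiting the multi-scale wavelet structure and localizing in the gradient $L^2$-norm rather than the $L^2$-norm of potentials, so that the wavelet coefficients at level $j$ carry a shrinking radius $r \cdot 2^{-j}$. A secondary technical point is the control of the Legendre conjugate under the perturbation $\potential \mapsto \potential + h$, which I would handle with the convex analytic tools collected in Section~\ref{SEC:convex-analysis} and a uniform change-of-variables estimate ensuring that $\measuretwo$-integrals of $\potential^\ast$ behave like $\measureone$-integrals of $\potential$ under the restricted class $\funcclass_J$.
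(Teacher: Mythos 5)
Your overall plan — curvature of the semi-dual via Proposition~\ref{prop:gigli}, the basic inequality $\emprisk(\hat\potential_J)\le\emprisk(\bar\potential_J)$, the wavelet approximation error with the $C^2$-check that $\bar\potential_J\in\funcclassstd(2\constbd)$, localization in the gradient norm to extract the effective dimension $2^{J(d-2)}$, and the final bias--variance balancing (including the $\alpha$-independence of $J$ for $d\le 2$) — is exactly the paper's route; the peeling step is just a variant of the van de Geer one-shot localization used there. The genuine gap is in the step you call ``secondary'': the control of the dual half of the empirical process. The process is $(\hat\measureone-\measureone)(\potential-\potential_0)+(\hat\measuretwo-\measuretwo)(\potential^\ast-\potential_0^\ast)$, and your ellipsoid-entropy computation applies only to the primal half, because $\potential-\bar\potential_J\in V_J$ while the conjugates are \emph{not} band-limited: conjugation is nonlinear, $\potential^\ast\notin V_J$, so the class $\{\potential^\ast-\bar\potential_J^\ast:\potential\in E_r\}$ has wavelet energy at all scales and the bound $\log N(\epsilon,E_r,L^2)\lesssim\min\{2^{Jd},(r/\epsilon)^d\}$ says nothing about it. The Lipschitz-type bound $\|\nabla\potential^\ast-\nabla\potential_0^\ast\|_{L^2(\measuretwo)}\lesssim\|\nabla\potential-\nabla\potential_0\|_{L^2(\measureone)}$ does follow from the two sides of Proposition~\ref{prop:gigli}, but it only localizes the dual class in $L^2(\measuretwo)$; it gives no entropy control. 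Nor can the ``change of variables via $T_0$'' rescue this: $\potential^\ast(T_0(x))=\langle x,T_0(x)\rangle-\potential(x)$ only when $T_0(x)\in\partial\potential(x)$, i.e.\ essentially only for $\potential=\potential_0$, and in any case what is needed is fluctuation control of $\hat\measuretwo-\measuretwo$ over the dual class, not a statement about population integrals. Using only the uniform $C^{1,1}$-type bound on conjugates of strongly convex potentials, a direct covering/bracketing bound yields effective dimension $2^{Jd}$, i.e.\ exactly the loss you set out to avoid.

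The paper closes this with an extra idea your sketch is missing: restrict $\potential^\ast-\potential_0^\ast$ to $\Omega_\measuretwo$, extend it by $\extension$ (Theorem~\ref{lem:extension}), and split it as $\Pi_J+\Pi_{>J}$. The low-frequency part lies in $V_J$ intersected with a $W^{1,2}$-ball of radius $\lesssim\tau$ (Poincar\'e plus \eqref{eq:np}), so it enjoys precisely your ellipsoid computation and contributes $\tau\,2^{J(d-2)/2}\sqrt{J}/\sqrt n$. The high-frequency tail is small: since $\nabla\potential^\ast$ is $2\constbd$-Lipschitz, $\|\extension\potential^\ast\|_{B^2_{\infty,\infty}}$ is uniformly bounded, so by Lemma~\ref{lem:wavelet-approximation} the tail has sup-norm $\lesssim 2^{-2J}$ and $L^2$-norm $\lesssim 2^{-J}\tau$; its bracketing entropy is then bounded through the $L^\infty$-non-expansiveness of conjugation (Lemma~\ref{lem:g}) by the $\lesssim 2^{Jd}$-dimensional primal class, and Bernstein-type bracketing chaining (Theorem~\ref{thm:bernstein-chaining}) with this tiny envelope keeps its contribution at the same $2^{J(d-2)}$ order. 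Without this decomposition (or some substitute argument of comparable precision for the conjugate class), the stochastic bound you assert does not follow, and this is where the bulk of the paper's technical work actually lies.
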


\begin{remark}
	\label{rem:ub}
A few remarks are in order.
\begin{enumerate}
\item Similar upper bounds hold with high probability and can be inferred from the proof.
\item As written, the estimator \( \hat f_J \) is not directly implementable since the calculation of \( f^\ast \) involves computing a maximum over a continuous subset of \( \R^d \). However, this limitation can be overcome by a discretization of the space, although this is not practical even in moderate dimensions.
	We provide such an example implementation in Section \ref{sec:numerics}, along with a more practical estimator for which we give no theoretical upper bounds.

\item The numerical experiments in Section~\ref{sec:numerics} suggest that restricting the optimization in \eqref{eq:jb} to \( \funcclassstd(2M) \), while necessary for our proofs, might not be necessary in practice.

\item The estimator employed in Theorem~\ref{thm:upper-bounds} can be made adaptive to the unknown smoothness parameter $\alpha$ using a standard penalization scheme, see \cite{Mas07}. We omit this straightforward extension and instead focus on establishing minimax rates of estimation. For a more detailed account, we refer the reader to \cite{Hue19}.
\item \label{itm:tracking-m} For the sake of readability, we do not explicitly track the dependence on the parameter \( \constbd \).
	However, an inspection of the proof yields that the final rate scales like \( \constbd^{c_1 \, d + c_2} \) for constants \( c_1, c_2 \ge 1 \), i.e., there is an exponential dependence on the dimension \( d \).
	Further, the dependence on \( \constsmooth \) is captured in \eqref{eq:ub} below and amounts to \( \constsmooth^{\frac{2(d-2)}{2 \alpha - 2 + d}} \log(\constsmooth) \) in the case \( d \ge 3 \).
	We do not claim an optimal dependence of our rates on these parameters.
\end{enumerate}
\end{remark}

In the rest of this section, we present the proof of Theorem~\ref{thm:upper-bounds}. We begin by stating our key result, which relates the semi-dual objective to the square of our measure of performance. This result also allows us to employ a fixed-point argument when controlling the risk of our estimator using empirical process theory. Combined with approximation results for truncated wavelet expansions, these lead to a bias-variance tradeoff that achieves the minimax lower bound of Theorem~\ref{thm:lb} up to log factors.

\subsection{Stability of optimal transport maps}
\label{sec:stability}

In this section, we leverage the assumed regularity of the optimal transport map to relate  the suboptimality gap in the semi-dual objective function $\cS$ and the $L^2$-distance of interest.
\begin{proposition}
	\label{prop:gigli}
	Under assumptions \ref{assumpgen:source} -- \ref{assumpgen:map}, for all \( \potential \in \funcclassstd(2 M) \) as defined in Proposition-Definition \ref{lem:immediate-bounds}, we have
	\begin{equation}
		\label{eq:no}
	\frac1{8M}\| \nabla \potential(x) - \nabla \potential_0(x) \|_{L^2(P)}^2 \le \risk(\potential) - \risk(\potential_0)\le 2M\| \nabla \potential(x) - \nabla \potential_0(x) \|_{L^2(P)}^2 	\end{equation}
	and
	\begin{equation}
		\label{eq:np}
  \frac{1}{4M} \| \nabla \potential^\ast(y) - \nabla \potential^\ast_0(y) \|_{L^2(Q)}^2	\le  \risk(\potential) - \risk(\potential_0)
	\,.
	\end{equation}
\end{proposition}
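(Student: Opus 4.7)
The plan is to express the sub-optimality gap $\risk(f) - \risk(f_0)$ as the integral of a pointwise Bregman divergence and then invoke the strong convexity and smoothness of $f \in \funcclassstd(2M)$. Using $\measuretwo = (T_0)_{\#} \measureone$ together with Brenier's theorem (Theorem~\ref{thm:brenier}) and the Fenchel--Young equality $f_0(x) + f_0^\ast(\nabla f_0(x)) = \langle x, \nabla f_0(x)\rangle$ valid for $x \in \Omega_\measureone$, I would rewrite
\begin{equation*}
\risk(f) - \risk(f_0) = \int_{\Omega_\measureone}\bigl[f(x) + f^\ast(T_0(x)) - \langle x, T_0(x)\rangle\bigr] \, \ud \measureone(x).
\end{equation*}
Writing $x^\ast := \nabla f^\ast(T_0(x))$ and using the Fenchel identity $f^\ast(y) = \langle \nabla f^\ast(y), y\rangle - f(\nabla f^\ast(y))$ together with the interior first-order condition $\nabla f(x^\ast) = T_0(x)$, the integrand equals the Bregman divergence $D_f(x, x^\ast) = f(x) - f(x^\ast) - \langle \nabla f(x^\ast), x - x^\ast\rangle$, which is the key pointwise object to control.

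Since $f \in \funcclassstd(2M)$ satisfies $(2M)^{-1} I \preceq D^2 f \preceq 2M\, I$ on $\tilde\Omega_\measureone$, $f$ is $(2M)^{-1}$-strongly convex and $2M$-smooth, which yields the standard pointwise Bregman sandwiches
\begin{equation*}
\tfrac{1}{4M}\|\nabla f(x) - \nabla f(x^\ast)\|^2 \le D_f(x, x^\ast) \le M\|\nabla f(x) - \nabla f(x^\ast)\|^2,
\end{equation*}
\begin{equation*}
\tfrac{1}{4M}\|x - x^\ast\|^2 \le D_f(x, x^\ast).
\end{equation*}
For the $L^2(\measureone)$ bounds \eqref{eq:no} I would substitute $\nabla f(x^\ast) = T_0(x) = \nabla f_0(x)$, so that the gradient difference above collapses to $\nabla f(x) - \nabla f_0(x)$; integrating against $\measureone$ gives both inequalities, with at most a factor-of-two slack to reach the stated constants $\tfrac{1}{8M}$ and $2M$. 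For the $L^2(\measuretwo)$ lower bound \eqref{eq:np} I would instead use $x = \nabla f_0^\ast(T_0(x))$, so that $x - x^\ast = \nabla f_0^\ast(T_0(x)) - \nabla f^\ast(T_0(x))$, and then change variables $y = T_0(x)$ under the pushforward $\measuretwo = (T_0)_{\#} \measureone$.

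The main obstacle I expect is justifying that the first-order identity $\nabla f(\nabla f^\ast(y)) = y$ is valid for $y$ in the range $T_0(\Omega_\measureone)$, i.e., that the maximizer $\nabla f^\ast(y) = \arg\max_{x \in \tilde\Omega_\measureone}\{\langle x, y\rangle - f(x)\}$ is genuinely interior to $\tilde\Omega_\measureone$, since the Bregman rewriting above depends on it. This is precisely what the margin $\Omega_\measureone + M^{-1} B_1 \subseteq \tilde\Omega_\measureone$ in Assumption~\ref{assumpgen:map} is designed to buy: combined with the uniform gradient bound $|\nabla f| \le 2M$ and the strong convexity of $f$, it should keep $x^\ast$ a uniform distance away from $\partial \tilde\Omega_\measureone$ over every $x \in \Omega_\measureone$ and every $f \in \funcclassstd(2M)$. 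Turning this geometric observation into a clean quantitative statement---most plausibly by comparing $x^\ast$ to the known interior point $\nabla f_0^\ast(T_0(x)) = x$ via strong convexity of the perturbed objective---should then complete the proof of both inequalities.
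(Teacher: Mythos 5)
Your reduction of the suboptimality gap to the pointwise Fenchel--Young gap, $\risk(\potential)-\risk(\potential_0)=\int_{\Omega_\measureone}[\potential(x)+\potential^\ast(T_0(x))-\langle x,T_0(x)\rangle]\,\ud\measureone(x)$, is exactly the paper's starting point, and your treatment of \eqref{eq:np} and of the upper bound in \eqref{eq:no} is close to what is needed. The gap is the step you yourself flag as the main obstacle: the claim that the maximizer $x^\ast=\nabla \potential^\ast(T_0(x))$ of $z\mapsto\langle z,T_0(x)\rangle-\potential(z)$ over $\tilde\Omega_\measureone$ is interior, so that $\nabla \potential(x^\ast)=T_0(x)$ and the integrand is a genuine Bregman divergence $D_\potential(x,x^\ast)$. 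This is false in general, and the $M^{-1}$-margin cannot rescue it: strong concavity only places the unconstrained maximizer within distance $2M\|T_0(x)-\nabla\potential(x)\|\lesssim M^2$ of $x$, far beyond the margin $M^{-1}$. Concretely, take $d=1$, $M=2$, $\measureone=\unif([0,1])$, $T_0=\id$, $\tilde\Omega_\measureone=[-1/2,3/2]$ and $\potential(z)=(z+2M)^2/(4M)$; then $\potential\in\funcclassstd(2M)$ (curvature exactly $(2M)^{-1}$, $|\nabla\potential|\le 2M$, $|\potential|\le 8M^2$ on $\tilde\Omega_\measureone$), but for $y=T_0(x)=x\in(0,1)$ the unconstrained maximizer $2M(y-1)$ lies far to the left of $\tilde\Omega_\measureone$, so $x^\ast=-1/2\in\partial\tilde\Omega_\measureone$ and $\nabla\potential(x^\ast)\ne y$. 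Since the lower bound in \eqref{eq:no} — the inequality that drives the whole localization argument — rests in your plan on $D_\potential(x,x^\ast)\ge\frac{1}{4M}\|\nabla\potential(x)-\nabla\potential(x^\ast)\|^2$ with $\nabla\potential(x^\ast)=T_0(x)$, the argument breaks precisely in the regime where $\potential$ is far from $\potential_0$, which is where the stability estimate is supposed to bite.

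The paper avoids the argmax altogether: it majorizes $\potential$ by the quadratic $q_x(z)=\potential(x)+\langle\nabla\potential(x),z-x\rangle+\frac{L}{2}\|z-x\|_2^2+\iota_{\tilde\Omega_\measureone}(z)$ with a deliberately slack constant $L\ge 2M$, uses order reversal of conjugation and the explicit formula of Lemma~\ref{lem:quadratic-conjugate}, whose only price is a penalty $-\frac{L}{2}d^2\bigl(x+\frac{\nabla\potential_0(x)-\nabla\potential(x)}{L},\tilde\Omega_\measureone\bigr)$; taking $L$ a large enough multiple of $M$ (the paper uses $L=4M$) makes the displacement $\|\nabla\potential_0(x)-\nabla\potential(x)\|/L$ fit inside the $M^{-1}$-margin, the penalty vanishes, and the pointwise bound $\potential(x)+\potential^\ast(\nabla\potential_0(x))-\langle x,\nabla\potential_0(x)\rangle\ge\frac{1}{2L}\|\nabla\potential(x)-\nabla\potential_0(x)\|_2^2$ follows with no interiority requirement; the upper bound in \eqref{eq:no} comes the same way from the quadratic minorant, and \eqref{eq:np} from the fact that $\potential^\ast$ is $2M$-smooth on all of $\R^d$ (Lemma~\ref{lem:conj-lipschitz}). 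If you want to keep your formulation, note that \eqref{eq:np} is salvageable as written: $\potential^\ast(y)=\langle x^\ast,y\rangle-\potential(x^\ast)$ holds by definition and $y\in\partial(\potential+\iota_{\tilde\Omega_\measureone})(x^\ast)$, so strong convexity alone gives the $\frac{1}{4M}\|x-x^\ast\|^2$ lower bound without any first-order equality; but for the lower bound in \eqref{eq:no} you must replace the argmax by an explicit interior test point, i.e., essentially the paper's argument.
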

\begin{proof}
	It follows from Proposition-Definition~\ref{lem:immediate-bounds}\ref{itm:immediate-bounds-d2}  and a second-order Taylor expansion that $f$ is of \emph{quadratic type} \cite{Kol11} around every $x \in \Omega_P$:
\begin{equation}
\label{EQ:fquad}
 \frac{1}{2L}\| z - x \|_2^2 \le	\potential(z) - \potential(x) - \langle \nabla \potential(x), z - x \rangle \le  \frac{L}{2}\| z - x \|_2^2, \quad \text{for } x \in \Omega_\measureone, z \in \tilde \Omega_\measureone,
\end{equation}
for all $L \ge 2M$. 
It turns out that these conditions are sufficient to obtain the desired result. 

The upper bound in~\eqref{EQ:fquad} is of the form 
$$
f(z)\le q_x(z)=\potential(x) + \langle \nabla \potential(x), z - x \rangle + \frac{L}{2}\| z - x \|_2^2 + \iota_{\tilde \Omega_\measureone}(z)\,.$$ 
Since the convex conjugate is order reversing and because the convex conjugate $q_x^*$ of the quadratic function $q_x$ can be computed explicitly (Lemma~\ref{lem:quadratic-conjugate}), we have 
\begin{align}
f^*(\nabla f_0(x))
\ge  q^*_x(\nabla f_0(x))
= {} & \frac{1}{2L}\|\nabla f_0(x)-\nabla f(x)\|_2^2+\langle x, \nabla f_0(x)\rangle-f(x)\\
{} & \quad -\frac{L}{2}d^2\Big(\frac{\nabla f_0(x)-\nabla f(x)}{L}-x, \tilde \Omega_P\Big)\,.
\end{align}
The squared distance term vanishes for  $L=4M$:  by the triangle inequality $\|\nabla f_0(x)-\nabla f(x)\|_2\le 4M$ and since $x \in \Omega_P$, it holds that
$$
\frac{\nabla f_0(x)-\nabla f(x)}{L}-x \in \tilde \Omega_P.
$$
Together with the fact that  $Q=(\nabla f_0)_{\#}P$, this yields
\begin{align}
\cS(f)
= {} & Pf+Qf^*
= \int[f(x)+f^*(\nabla f_0(x))] \ud P(x)\\
\ge {} & \frac1{8M}\|\nabla f -\nabla f_0\|_{L^2(P)}^2 + \int \langle x, \nabla f_0(x)\rangle \ud P(x)\,.
\end{align}
Moreover, by strong duality, we have
$$
\cS(f_0)=Pf_0+Qf_0^* = \int \langle x, \nabla f_0(x)\rangle \ud P(x)\,.
$$
The above two displays yield $\cS(f)-\cS(f_0) \ge  (8M)^{-1}\|\nabla f -\nabla f_0\|_{L^2(P)}^2$.
In the same way, using the lower bound in~\eqref{EQ:fquad}, we get that $\cS(f)-\cS(f_0) \le  2M\|\nabla f -\nabla f_0\|_{L^2(P)}^2$, which concludes the proof of~\eqref{eq:no}.

It turns out that~\eqref{eq:np} is even easier to prove.
Indeed, by Proposition-Definition~\ref{lem:immediate-bounds}\ref{itm:immediate-bounds-d2} and Lemma~\ref{lem:conj-lipschitz}, we get that the upper bound in~\eqref{EQ:fquad} is also true for $f^*$ on all of $\R^d$. In this case, we can simply take $L=2M$ and get similar results.
\end{proof}

There are many ways to leverage strong convexity in order to obtain faster rates of convergence, often known as \emph{fixed-point arguments}~\cite{Mas07, Kol11, GinNic16}. In this work, we employ van de Geer's ``one-shot" localization technique originally introduced in~\cite{Gee87} and stated in a form close to our needs in~\cite{Gee02}.

\subsection{Control of the stochastic error via empirical processes}

In light of Proposition~\ref{prop:gigli}, the performance of our estimator $\hat T_J=\nabla f_J$ defined in~\eqref{eq:jb} requires the control of $\cS(\hat f_J)-\cS(f_0)$, which can be achieved using tools from empirical process theory. 
To that end, for any $f$, define
$$
\cS_0(f)=\cS(f)-\cS(f_0) \qquad \text{and} \qquad \hat \cS_0(f)=\hat \cS(f)- \hat \cS(f_0)\,,
$$
and let \( \bar f_J \in \funcclass_J \).
We observe that by optimality of \( \hat f_J \) for \( \emprisk \),
\begin{equation}
	\label{EQ:basic-ineq}
	\cS_0(\hat f_J)  -\cS_0(\bar f_J) \le  \big[\cS_0(\hat f_J)-\hat \cS_0(\hat f_J)\big] + \big[\hat \cS_0(\bar f_J)- \cS_0(\bar f_J)\big].
\end{equation}
To proceed, we control the localized empirical process
$$
\sup_{f \in \cF_J\,:\, \cS_0(f) \le \tau^2} |\cS_0(f)-\hat \cS_0(f)|.
$$
for some fixed $\tau^2>0$. 
More precisely, we prove the following result in Appendix~\ref{sec:proof-expectation-bound}.

\begin{proposition}	\label{lem:expectation-bound}
	Let assumptions \ref{assumpgen:source} -- \ref{assumpgen:map} be fulfilled and define \( \funcclass_J \) as in \eqref{eq:fc}.
For any $\tau>0$, define
	\begin{equation}
		\mathcal{F}_{J}(\tau^2) := \{ f \in \funcclass_J : \risk_0(f)  \le \tau^2 \}.
	\end{equation}
Then, there exists $\constempproc = \constempproc(d, \Omega_\measureone, \tilde \Omega_\measureone, \Omega_\measuretwo, \constbd) >0$ such that with probability at least \( 1 - \exp(-t) \),
	\begin{equation}
		\label{eq:jo}
\sup_{\potential \in \mathcal{F}_{J}(\tau^2)} | \risk_0(\potential) - \emprisk_0(\potential) |
		\le \constempproc \left(\phi_{J}(\tau^2) + \tau \sqrt{\frac{t}{n}} +  \frac{t}{n}\right) \,,
	\end{equation}
	where
	\begin{align}
		\phi_J(\tau^2)
		= {} &  \frac{2^{J(d-2)/2} \, \tau}{\sqrt{n}} \sqrt{J \log\left(1 + \frac{\constempproc}{\tau}\right)}
		 + \frac{2^{J(d-2)} J}{n} \log\left(1 + \frac{\constempproc}{\tau}\right)
		+ \frac{\tau}{\sqrt{n}}.
	\label{EQ:defphi}
	\end{align}

\end{proposition}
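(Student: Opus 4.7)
The plan is to split the centred empirical process into $P$- and $Q$-parts,
\begin{equation}
\risk_0(f) - \emprisk_0(f) = (P - \hat P)(f - f_0) + (Q - \hat Q)(f^* - f_0^*),
\end{equation}
and to apply a Talagrand-type concentration inequality (from Section~\ref{SEC:emp-process}) to each, combining via a union bound. The Bernstein-style terms $\tau\sqrt{t/n}$ and $t/n$ in \eqref{eq:jo} come directly from the concentration inequality once we control the $L^\infty$-envelope and variance of the increments $f - f_0$ and $f^* - f_0^*$ over $\mathcal{F}_J(\tau^2)$: the envelopes are bounded by constants depending on $M$ since $f \in \mathcal{X}(2M)$, and the variances are $\lesssim \tau^2$ via the stability estimate of Proposition~\ref{prop:gigli}. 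The real work is to bound the expected supremum, which produces $\phi_J$.

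To prepare for chaining, I convert $\risk_0(f) \le \tau^2$ into an $L^2$-bound on $f - f_0$. Proposition~\ref{prop:gigli} gives $\|\nabla f - \nabla f_0\|_{L^2(P)} \lesssim \tau$ and $\|\nabla f^* - \nabla f_0^*\|_{L^2(Q)} \lesssim \tau$. Using that $\risk$ is invariant under $f \mapsto f + c$ (with $f^* \mapsto f^* - c$), I normalize $P(f - f_0) = 0$ and apply Poincar\'e's inequality on the bounded Lipschitz domain $\tilde\Omega_P$ (its constant being uniform under \ref{assumpgen:source}--\ref{assumpgen:map}) to get $\|f - f_0\|_{L^2(P)} \lesssim \tau$; the analogous bound for $f^*$ on $\Omega_Q$ follows from Proposition-Definition~\ref{lem:immediate-bounds}. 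I then control the $L^2(P)$-covering number of $\mathcal{F}_J(\tau^2)$ via the wavelet characterization of Sobolev norms (Section~\ref{sec:wavelets}): the constraint $\|\nabla(f - f_0)\|_{L^2(P)} \lesssim \tau$ forces the scale-$j$ wavelet coefficients of $f - f_0$ to have $2^j$-weighted $\ell^2$ norm $\lesssim \tau$, so their $L^2$-contribution at scale $j$ is $\lesssim 2^{-j}\tau$. Combined with the fact that the number of basis functions intersecting $\tilde\Omega_P$ at scale $j \le J$ is $\lesssim 2^{jd}$, a scale-by-scale chaining yields an overall entropy of order
\begin{equation}
\log N(\epsilon; \mathcal{F}_J(\tau^2), L^2(P)) \lesssim 2^{J(d-2)} J \log(1 + C\tau/\epsilon),
\end{equation}
the exponent $d-2$ (rather than $d$) reflecting the $2^{-J}$ shrinkage of the effective radius at the highest scale, and the extra factor $J$ accounting for the aggregation across scales. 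Feeding this into Dudley's entropy integral up to radius $\tau$ produces the term $2^{J(d-2)/2}\tau\sqrt{J\log(1+\constempproc/\tau)}/\sqrt{n}$; the Talagrand quadratic remainder (envelope squared over $n$) produces $2^{J(d-2)}J\log(1+\constempproc/\tau)/n$; and the residual $\tau/\sqrt{n}$ absorbs lower-order symmetrization contributions.

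For the $Q$-part the class $\{f^* : f \in \mathcal{F}_J(\tau^2)\}$ is not itself contained in $V_J$, so entropy must be transferred through the Legendre transform. Under \ref{assumpgen:source}--\ref{assumpgen:map} and $f \in \mathcal{X}(2M)$ on the bounded set $\tilde\Omega_P$, Lemma~\ref{lem:conj-lipschitz} together with uniform strong convexity shows that $f \mapsto f^*$ is $L^\infty$-Lipschitz with a constant depending only on $M$ and $\mathrm{diam}(\tilde\Omega_P)$. Hence any $\epsilon$-net of $\mathcal{F}_J(\tau^2)$ in $L^\infty$ induces an $O(\epsilon)$-net of the dual class in $L^\infty$, and therefore in $L^2(Q)$ since $Q$ has bounded support; the chaining bound for the $Q$-part matches the $P$-part up to constants.

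The main obstacle is the wavelet entropy computation that delivers the $2^{J(d-2)/2}$ scaling rather than the naive $2^{Jd/2}$ one would obtain by regarding $V_J$ as an arbitrary $2^{Jd}$-dimensional space. This crucially exploits the $W^{1,2}$-localization scale-by-scale and is precisely why the regularization is naturally phrased in wavelets. A secondary difficulty is ensuring that the Poincar\'e and Legendre-transform stability constants, as well as the constants arising in the chaining bound for the $Q$-part via the push-forward $Q$, remain uniform under the parameter ranges described in \ref{assumpgen:source}--\ref{assumpgen:map}, so that they can be absorbed into the single constant $\constempproc$ advertised in the statement.
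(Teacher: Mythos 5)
Your overall architecture---splitting $\risk_0-\emprisk_0$ into the $P$- and $Q$-parts, localizing via Proposition~\ref{prop:gigli} plus a Poincar\'e inequality after centering, bounding the expected supremum by chaining and adding a Bousquet/Talagrand-type deviation term---matches the paper's skeleton, and your treatment of the $P$-part identifies the correct multiscale mechanism behind the exponent $d-2$. The genuine gap is in the $Q$-part. Transferring $L^\infty$-nets of $\funcclass_J(\tau^2)$ through the Legendre transform (Lemma~\ref{lem:g}) only transfers $L^\infty$-geometry: the induced entropy for the dual class is of order $2^{Jd}\log(C/\epsilon)$, and since the $L^2(Q)$-radius of the dual increments $f^\ast-f_0^\ast$ is only $\tau$ (not $2^{-J}\tau$), Dudley's integral then yields $2^{Jd/2}\,\tau\sqrt{\log(1+C/\tau)}/\sqrt{n}$ --- exactly the ``naive'' rate you set out to avoid, larger than $\phi_J(\tau^2)$ by a factor $2^{J}$, which destroys the subsequent bias--variance tradeoff. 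The $W^{1,2}$-ellipsoid structure that buys $2^{J(d-2)/2}$ on the primal side is not inherited through the $L^\infty$-Lipschitz property of conjugation, and the dual increments do not lie in $V_J$, so ``the chaining bound for the $Q$-part matches the $P$-part up to constants'' does not follow. The paper's device is to decompose the extended dual increments as $\Pi_J\extension(f^\ast-f_0^\ast)+\Pi_{>J}\extension(f^\ast-f_0^\ast)$: the low-frequency part lies in a $W^{1,2}(\R^d)$-ball of radius $\lesssim\tau$ inside $V_J$ (Poincar\'e--Wirtinger plus \eqref{eq:np} plus the extension operator and non-expansiveness of $\Pi_J$) and is bounded by a direct symmetrization/Cauchy--Schwarz computation (no entropy, hence no logarithm there), while the high-frequency remainder is uniformly small---$L^\infty\lesssim 2^{-2J}$ and $L^2\lesssim 2^{-J}\tau$ via the $B^2_{\infty,\infty}$ bounds of Lemma~\ref{lem:boundedextensions} and Lemma~\ref{lem:wavelet-approximation}---so that the crude $2^{Jd}$-dimensional bracketing entropy becomes harmless once the Dudley integral runs only up to radius $2^{-J}\tau$, again giving $2^{J(d-2)/2}\tau$. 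Your proposal contains no substitute for this step.

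Two secondary points on the $P$-part. First, $f_0\notin V_J$, so the increments $f-f_0$ are not supported on scales $\le J$; the paper applies the same $\Pi_J/\Pi_{>J}$ split there as well (your argument is patchable, e.g.\ by peeling off the fixed high-frequency tail of $\extension f_0$, but as written it silently assumes the increments live in $V_J$). Second, the displayed covering bound $\log N(\epsilon;\mathcal{F}_J(\tau^2),L^2(P))\lesssim 2^{J(d-2)}J\log(1+C\tau/\epsilon)$ cannot hold for all $\epsilon$: for fixed $\tau$ the class typically contains a full-dimensional ball of $V_J(\tilde\Omega_\measureone)$, so $\log N(\epsilon)\gtrsim 2^{Jd}\log(c/\epsilon)$ as $\epsilon\to0$. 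The scale-by-scale ellipsoid computation you gesture at does produce a Dudley integral of the right order, but it should be carried out as such (or, as in the paper, replaced by the direct Cauchy--Schwarz bound at low frequencies plus bracketing at high frequencies) rather than summarized as a single global entropy estimate.
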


Equipped with this result, we can apply van de Geer's localization technique. 
To simplify the presentation, assume that $\bar \potential = \bar \potential_J \in \argmin_{\potential \in \cF_J} \risk(\potential)$	
	exists. If not, we may repeat the proof with an \( \varepsilon \)-approximate minimizer and let $\eps \to 0$. 
Throughout the proof, we write  $\hat f=\hat f_J$ and $\|\cdot\|=\|\cdot\|_{L^2(P)}$. 

Fix \( \sigma>0 \) to be defined later and set
	\begin{equation}
		\label{eq:rb}
		\hat \potential_s = s \hat \potential + (1-s) \bar \potential, \qquad s = \frac{ \sigma}{\sigma + \norm{\nabla \hat \potential - \nabla \bar \potential}}\,,
	\end{equation}
Note that since \( s \in [0,1] \) and   \( \funcclass_J \) is convex, we have \( \hat \potential_s \in \cF_J \).

On the one hand, $\hat f_s$ is localized in the sense that
$$
		\norm{ \nabla\hat \potential_s - \nabla\bar \potential } = s \norm{ \nabla\hat \potential - \nabla\bar \potential } = \frac{\sigma \norm{ \nabla\hat \potential - \nabla\bar \potential }}{\sigma + \norm{\nabla \hat \potential -\nabla \bar \potential }} \le \sigma.
$$
By Proposition~\ref{prop:gigli} and the triangle inequality respectively, this yields
\begin{equation}
\label{eq:rd}
\cS_0(\hat f_s) \le  2M\norm{ \nabla\hat \potential_s - \nabla\potential_0 }^2 \le 4M \left(\sigma^2 + \norm{\nabla \bar \potential - \nabla\potential_0 }^2 \right)=:\tau^2\,.
\end{equation}
Therefore, $\hat f_s \in \cF_J(\tau^2)$. For the same reason, we also have that $\bar f\in \cF_J(\tau^2)$.

On the other hand, $\hat f_s$, akin to $\hat f$, has empirical risk smaller than $\bar f$. Indeed, by convexity of \( \emprisk \) and the fact that \( \hat \potential \) minimizes \( \emprisk \) over \( \funcclassgeneral \), we obtain
	\begin{align}
		\label{eq:re}
		\emprisk(\hat \potential_s) \le s \emprisk(\hat \potential) + (1-s) \emprisk(\bar \potential) \le \emprisk(\bar \potential),
	\end{align}
which yields
	\begin{align}
		\risk_0(\hat \potential_s)
		\le {} & \risk_0(\bar \potential) +2 \sup_{f \in \cF_J(\tau^2)}\big| \risk_0(f) - \emprisk_0(f) \big|.
		\label{eq:rr}
	\end{align}
Together with Jensen's inequality and Proposition~\ref{prop:gigli} respectively, the above display yields
\begin{align*}
	\norm{\nabla  \hat \potential_s - \nabla \bar \potential }^2 &\le 2\norm{\nabla  \hat \potential_s - \nabla  \potential_0 }^2+2\norm{\nabla   \potential_0 - \nabla \bar \potential }^2\le 16M\cS_0(\hat f_s) + 16M \cS_0(\bar f) \\
	&\le 32M \cS_0(\bar f) + 32M\sup_{f \in \cF_J(\tau^2)}\big| \risk_0(f) - \emprisk_0(f) \big|.
\end{align*}

Next,	note for \( s \) as in \eqref{eq:rb}, we have that $\norm{\nabla \hat \potential_s - \nabla \bar \potential }\ge \sigma/2$ iff $\norm{\nabla \hat \potential -\nabla \bar \potential} \ge \sigma$. Hence
	\begin{align*}
		\leadeq[1]{\p \big( \norm{ \nabla \hat \potential - \nabla \potential_0 } \ge  \sigma + \norm{ \nabla \bar \potential - \nabla \potential_0 } \big)\le  \p \big( \norm{\nabla  \hat \potential_s - \nabla \bar \potential }^2 \ge \sigma^2/4 \big)}\\
		&\le \p \big( \sup_{\potential \in \cF_J(\tau^2)} | \risk_0(\potential) - \emprisk_0(\potential) | \ge \frac{\sigma^2}{128 M}-\risk_0(\bar \potential)  \big)\\
		&= \p \big( \sup_{\potential \in \cF_J(\tau^2)} | \risk_0(\potential) - \emprisk_0(\potential) | \ge \frac{\tau^2}{512 M^2}-\frac{1}{128 M}\norm{\nabla \bar \potential - \nabla\potential_0 }^2-\risk_0(\bar \potential)  \big).
	\end{align*}
Recalling Proposition~\ref{lem:expectation-bound}, we take $\sigma$ such that
\begin{equation}
\label{EQ:choicetildesigma}
\frac{\tau^2}{512 M^2} \ge   \risk_0(\bar \potential)+\frac{1}{128 M}\norm{\nabla \bar \potential - \nabla\potential_0 }^2  +  \constempproc\left(\phi_{J}(\tau^2) + \tau \sqrt{\frac{\constbd \, t}{n}} + \frac{\constbd^2\, t}{n}\right)\,,
\end{equation}
so that we get
$$
\p \big( \norm{ \nabla \hat \potential - \nabla \potential_0 } \ge\sigma + \norm{ \nabla \bar \potential - \nabla \potential_0 } \big)\le e^{-t}\,.
$$
In particular, we can check that~\eqref{EQ:choicetildesigma} is fulfilled if we choose $\sigma$ such that
$$
\sigma^2
\gtrsim {\cS_0(\bar f)} +\frac{2^{J(d-2)} J}{n} \log\left(1 + \constlog n \right)+ \frac{1 + t}{n}\,,
$$
for a suitable choice of \( \constlog > 0 \).

With this, and applying Theorem \ref{prop:gigli} again, we get that with probability at least $1-e^{-t}$, it holds
$$
\norm{ \nabla \hat \potential - \nabla \potential_0 }^2 \lesssim  \norm{ \nabla \bar \potential - \nabla \potential_0 }^2 + \frac{2^{J(d-2)} J}{n} \log\left(1 + \constlog n \right)+ \frac{1+t}{n}\,.
$$
Moreover, integrating the tail with respect to $t$ readily yields by Fubini's theorem that
\begin{equation}
\label{EQ:bias-variance}
\E\norm{ \nabla \hat \potential - \nabla \potential_0 }^2 \lesssim  \norm{ \nabla \bar \potential - \nabla \potential_0 }^2 + \frac{2^{J(d-2)} J}{n} \log\left(1 + \constlog n \right)+ \frac{1}{n}\,.
\end{equation}

We have proved the following result.

\begin{proposition}	\label{prop:bias-variance}
Let \ref{assumpgen:source} -- \ref{assumpgen:map} hold and define \( \funcclass_J \) as in \eqref{eq:fc}. Then, writing
\begin{equation}
	\label{eq:a}
	E_J :=  \frac{2^{J(d-2)} J}{n} \log\left(1 + \constlog n \right)+ \frac{1}{n},
\end{equation}
the estimator $\hat T_J$ defined in~\eqref{eq:jb} satisfies
\begin{equation}
\label{EQ:bias-varianceE}
\E\norm{\hat T_J - T_0 }_{L^2(P)}^2
\lesssim  \inf_{\potential \in \funcclass_J} \| \nabla \potential -T_0 \|_{L^2(P)}^2  + E_J.
\end{equation}
	Moreover, with probability at least \( 1 - \exp(-t) \),
$$
\norm{\hat T_J - T_0 }_{L^2(P)}^2\lesssim  \inf_{\potential \in \funcclass_J} \| \nabla \potential - T_0 \|_{L^2(P)}^2 + E_J + \frac{t}{n}\,.
$$

\end{proposition}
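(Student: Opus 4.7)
The plan is to combine the quadratic-type stability of Proposition~\ref{prop:gigli} with the localized empirical process bound of Proposition~\ref{lem:expectation-bound} via van de Geer's one-shot localization technique, essentially retracing the steps already sketched in the paragraphs preceding the proposition. I would first fix an (approximate) minimizer $\bar f = \bar f_J \in \argmin_{f \in \funcclass_J} \risk(f)$; up to the factor $2M$ from Proposition~\ref{prop:gigli}, its excess risk $\risk_0(\bar f)$ is controlled by the approximation error $\inf_{f \in \funcclass_J} \|\nabla f - T_0\|_{L^2(P)}^2$, so everything will eventually be phrased in terms of $\risk_0(\bar f)$ and $E_J$.

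The heart of the argument is the convex combination
\begin{equation}
\hat f_s = s \hat f_J + (1-s) \bar f, \qquad s = \frac{\sigma}{\sigma + \|\nabla \hat f_J - \nabla \bar f\|_{L^2(P)}},
\end{equation}
for a radius $\sigma>0$ to be chosen. Two properties drive the proof. First, by construction $\|\nabla \hat f_s - \nabla \bar f\|_{L^2(P)} \le \sigma$; combined with the triangle inequality and the upper half of Proposition~\ref{prop:gigli} this forces $\risk_0(\hat f_s) \le \tau^2$ with $\tau^2 \asymp \sigma^2 + \|\nabla \bar f - \nabla f_0\|^2$, so $\hat f_s \in \cF_J(\tau^2)$ (and likewise $\bar f \in \cF_J(\tau^2)$). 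Second, convexity of $f \mapsto f^\ast$ and the optimality of $\hat f_J$ give $\emprisk(\hat f_s) \le \emprisk(\bar f)$, hence the basic inequality
\begin{equation}
\risk_0(\hat f_s) - \risk_0(\bar f) \le 2 \sup_{f \in \cF_J(\tau^2)} |\risk_0(f) - \emprisk_0(f)|.
\end{equation}
The right-hand side is precisely the quantity bounded by Proposition~\ref{lem:expectation-bound}.

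From here the lower half of Proposition~\ref{prop:gigli} converts $\risk_0(\hat f_s)$ into $\|\nabla \hat f_s - \nabla f_0\|^2$; one triangle-inequality step then controls $\|\nabla \hat f_s - \nabla \bar f\|^2$, and I would observe that this latter quantity exceeds $\sigma^2/4$ if and only if $\|\nabla \hat f_J - \nabla \bar f\| \ge \sigma$. The fixed-point step is to choose $\sigma$ so that the complementary event has probability at most $e^{-t}$. Parsing the definition of $\phi_J$ in \eqref{EQ:defphi} and requiring the empirical-process term plus the bias contribution to be dominated by a constant multiple of $\sigma^2$, a direct computation shows this is achieved as soon as
\begin{equation}
\sigma^2 \gtrsim \risk_0(\bar f) + \frac{2^{J(d-2)} J}{n}\log(1+\constlog n) + \frac{1+t}{n}.
\end{equation}

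On the resulting event, the triangle inequality and the bias bound $\|\nabla \bar f - \nabla f_0\|^2 \lesssim \risk_0(\bar f)$ yield the stated high-probability inequality; Fubini's theorem applied to $\int_0^\infty e^{-t}\,\mathrm{d}t$ converts it into the expectation bound, absorbing the $t/n$ tail into the additive $1/n$ already present in $E_J$. The main subtlety is the fixed-point step: $\phi_J(\tau^2)$ contains a $\log(1/\tau)$ factor depending on $\tau$, and I would handle this by using the trivial a priori lower bound $\tau^2 \gtrsim 1/n$ to replace $\log(1/\tau)$ with $\log(1+\constlog n)$ for a suitable $\constlog$, which is precisely the reason for the specific form of the logarithmic factor appearing in $E_J$.
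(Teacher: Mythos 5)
Your proposal is correct and follows essentially the same route as the paper: the paper's own argument (given in the text immediately preceding the proposition) is exactly this van de Geer one-shot localization with the convex combination $\hat f_s$, the basic inequality from optimality and convexity of $\emprisk$, the localized bound of Proposition~\ref{lem:expectation-bound}, the fixed-point choice of $\sigma$, and Fubini integration of the tail, with the same treatment of the $\log(1/\tau)$ factor.
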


\subsection{Control of the approximation error}
\label{sec:approx-error}

Next, we control the approximation error \( \inf_{\potential \in \funcclass_J} \| \nabla \potential - \nabla f_0 \|_{L^2(\measureone)} \) that appears in Proposition~\ref{prop:bias-variance}. 
In fact, it is sufficient to control \( \| \nabla \bar \potential - \nabla \potential_0 \|_{L^2(\measureone)} \) where $\bar f=\Pi_J \extension \potential_0$ is the truncation of \( \potential_0 \) to its first \( J \) wavelet scales after extending $f_0$ to all \( \R^d \). In light of Theorem~\ref{lem:extension}, we may assume that $\extension \bar f$ has the same  \( C^{\beta} \)- and \( W^{\alpha,2} \)-norm as $\bar f$ up to a constant depending on \( \tilde \Omega_\measureone \).

To control the approximation associated with truncating a wavelet decomposition, we rely on the following lemma for Besov functions.

\begin{lemma}
	\label{lem:wavelet-approximation}
	Let \( f \in B_{p, q}^{s}(\R^d) \) and denote by \( \Pi_J \) its projection onto the first scale \( J \) wavelet coefficients.
	That is, if
	\begin{equation}
		\label{eq:kv}
		f = \sum_{j=0}^{\infty} \sum_{g \in G^{j}} \sum_{k \in \mathbb{Z}^d} \gamma_k^{j, g} \Psi_k^{j, g}\,, \quad \text{we set } \quad
		\Pi_J f = \sum_{j=0}^{J} \sum_{g \in G^{j}} \sum_{k \in \mathbb{Z}^d} \gamma_k^{j, g} \Psi_k^{j, g},
	\end{equation}
	where \( \Psi^{j, g}_k \) are multi-dimensional Daubechies wavelets and \( G^j \) the associated index sets as in Section~\ref{sec:wavelets}.
	Then, for all \( 1 \le p, q \le \infty \), \( s \ge 0 \),
	\begin{align}
		\label{eq:kx}
		\| \Pi_J f \|_{B_{p, q}^s(\R^d)} {} & \le \| f \|_{B_{p, q}^s},\\
		\| \Pi_J f - f \|_{B_{p, q}^s(\R^d)} {} & \le \| f \|_{B_{p, q}^s}.
		\label{eq:ky}
	\end{align}
	Moreover, for every \( q' \in [1, \infty] \), \( s' > 0 \), and \( 1 \le p \le p' \le \infty \) such that $s-d/p>s'-d/p'$,
	\begin{equation}
		\label{eq:cd}
		\| \Pi_J f - f \|_{B^{s'}_{p', q'}} \lesssim 2^{-J(s - d/p - (s' - d/p'))} \| f \|_{B^{s}_{p, q}}.
	\end{equation}
	In particular: If \( f \in C^{\alpha + 1} \) for \( \alpha > 1 \), then $\| f - \Pi_J f \|_{C^2} \lesssim 2^{-J (\alpha - 1)} \| f \|_{C^{\alpha + 1}}$ and if \( f \in W^{\alpha + 1, 2} \), for \( \alpha > 0 \), then $\| f - \Pi_J f \|_{W^{1,2}} \lesssim 2^{-J \alpha} \| f \|_{W^{\alpha+1,2}}$.
\end{lemma}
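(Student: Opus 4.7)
The plan is to reduce all three displays to the wavelet characterization of Besov norms recalled in Section~\ref{sec:wavelets}. With respect to a basis of compactly supported Daubechies wavelets of sufficient order, the Besov norm $\|\cdot\|_{B^s_{p,q}(\R^d)}$ is equivalent to a weighted $\ell^q(\ell^p)$ norm of the coefficient sequence $(\gamma^{j,g}_k)$, the weight at scale $j$ being a fixed power of $2^j$ that depends only on $(s,p,d)$. I will invoke this equivalence throughout.

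Given that, the estimates \eqref{eq:kx} and \eqref{eq:ky} are immediate: both $\Pi_J f$ and $f - \Pi_J f$ are obtained from $f$ by zeroing out a subset of the wavelet coefficients, so the associated sequence norm is bounded by the sequence norm of the full expansion, which is $\asymp \|f\|_{B^s_{p,q}}$.

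For the key estimate \eqref{eq:cd}, the plan is to work scale by scale. At a fixed scale $j$, monotonicity of $\ell^p$-norms ($\|c\|_{\ell^{p'}} \le \|c\|_{\ell^p}$ for $p \le p'$) applied to the scale-$j$ coefficient block translates, via the wavelet characterization, into an inequality between the scale-$j$ contributions to the two Besov norms: the $B^{s'}_{p',q'}$ contribution is bounded by $2^{-j\delta}$ times the $B^{s}_{p,q}$ contribution, where I set $\delta := (s-d/p)-(s'-d/p')$, which is strictly positive by hypothesis. Summing (or taking supremum, when $q' = \infty$) over $j > J$ produces a geometric series with ratio $2^{-\delta}$ whose sum is dominated by its first term $2^{-J\delta}\|f\|_{B^s_{p,q}}$; this is exactly \eqref{eq:cd}.

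The two concrete consequences stated at the end of the lemma then follow from the standard identifications $C^{\alpha+1}(\R^d) = B^{\alpha+1}_{\infty,\infty}(\R^d)$ (for non-integer $\alpha+1$; with the customary modification otherwise) and $W^{\alpha+1,2}(\R^d) = B^{\alpha+1}_{2,2}(\R^d)$. Plugging $(s,p,q,s',p',q') = (\alpha+1,\infty,\infty,2,\infty,\infty)$ into \eqref{eq:cd} gives $\delta = \alpha - 1$ and the stated $C^2$ bound, while $(\alpha+1,2,2,1,2,2)$ gives $\delta = \alpha$ and the stated $W^{1,2}$ bound. The only mild point of care is to ensure that the Daubechies wavelets fixed in Section~\ref{sec:wavelets} have enough regularity and vanishing moments to support the norm equivalence at the smoothness levels we consider, but this is built into the choice of basis. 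I do not anticipate any genuine obstacle; once the correct wavelet norm equivalence is in hand, the proof is essentially bookkeeping.
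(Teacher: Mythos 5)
Your handling of \eqref{eq:kx}, \eqref{eq:ky} and of the main bound \eqref{eq:cd} is essentially the paper's own argument: pass to the wavelet characterization \eqref{eq:fe}, note that zeroing coefficients cannot increase the sequence norm, use $\|\cdot\|_{\ell^{p'}}\le\|\cdot\|_{\ell^{p}}$ blockwise at each scale (together with the harmless comparability of the $q$- and $q'$-aggregation over the finite set $G^j$, $|G^j|\le 2^d$), and sum the resulting geometric tail over $j>J$. The $W^{1,2}$ consequence, obtained from \eqref{eq:cd} with parameters $(\alpha+1,2,2)\to(1,2,2)$ and the identity $\|\cdot\|_{W^{s,2}}=\|\cdot\|_{B^s_{2,2}}$, is also exactly as in the paper.

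There is, however, a genuine gap in your derivation of the $C^2$ consequence. You apply \eqref{eq:cd} with target $(s',p',q')=(2,\infty,\infty)$ and then invoke $C^{s}=B^{s}_{\infty,\infty}$ ``with the customary modification'' at integer smoothness. The problem sits at the \emph{target}, where $s'=2$ is always an integer: $B^2_{\infty,\infty}$ is the Zygmund-type space, strictly larger than $C^2$, and the only valid embedding is $C^2\subseteq B^2_{\infty,\infty}$, i.e.\ $\|g\|_{B^2_{\infty,\infty}}\lesssim\|g\|_{C^2}$ --- the reverse of what you need. Hence controlling $\|f-\Pi_Jf\|_{B^2_{\infty,\infty}}$ does not control $\|f-\Pi_Jf\|_{C^2}$. (Your caveat is aimed at the source space $C^{\alpha+1}$, where in fact no modification is needed: $\|f\|_{B^{\alpha+1}_{\infty,\infty}}\lesssim\|f\|_{C^{\alpha+1}}$ holds whether or not $\alpha+1$ is an integer.) The gap is local and fixable precisely because \eqref{eq:cd} allows an arbitrary secondary index $q'$: bound the $C^2$ norm by a Besov norm whose third index is $1$ rather than $\infty$ (so that $\|g\|_{C^2}\lesssim\|g\|_{B^2_{\infty,1}}$, the tail over scales being summed rather than sup'ed), and then apply \eqref{eq:cd} with $(s,p,q)=(\alpha+1,\infty,\infty)$ and $(s',p',q')=(2,\infty,1)$; the exponent is unchanged, $s-d/p-(s'-d/p')=\alpha-1$. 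This is in substance how the paper concludes: it does not identify $C^s$ with a single Besov space, but uses the two one-sided comparisons between $\|\cdot\|_{C^s}$ and Besov norms recorded in Theorem~\ref{thm:restrict-extend}, one for the source space and a different one for the target.
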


\begin{proof}
  Write \( \gamma \) for the wavelet coefficients of \( f \).
	The statements \eqref{eq:kx} and \eqref{eq:ky} follow immediately from the wavelet characterization of the Besov norms, \eqref{eq:fe}.

	To prove the remaining statements, note that for every \( j \), because \( \| \, . \, \|_{\ell^{p'}} \le \| \, . \, \|_{\ell^p} \) and \( \| \, . \, \|_q \) and \( \| \, . \, \|_{q'} \) are comparable up to a constant due to \( |G^j| \le 2^{d} \) being finite,
	\begin{align}
		\label{eq:hq}
2^{j (s' + \frac{d}{2} - \frac{d}{p'})}& \Big[ \sum_{g \in G^j} \Big( \sum_{k \in \mathbb{Z}^d} | \gamma^{j, g}_k |^{p'} \Big)^{q' / p'} \Big]^{1/q'}\\
= {} & 2^{j(s' - \frac{d}{p'} - (s - \frac{d}{p}))}\, 2^{j (s + \frac{d}{2} - \frac{d}{p})} \Big[ \sum_{g \in G^j} \Big( \sum_{k \in \mathbb{Z}^d} | \gamma^{j, g}_k |^{p'} \Big)^{q' / p'} \Big]^{1/q'}\\
		\lesssim {} & 2^{j(s' - \frac{d}{p'} - (s - \frac{d}{p}))}\, 2^{j (s + \frac{d}{2} - \frac{d}{p})} \Big[ \sum_{g \in G^j} \Big( \sum_{k \in \mathbb{Z}^d} | \gamma^{j, g}_k |^{p} \Big)^{q / p} \Big]^{1/q}\\
		\le {} & 2^{j(s' - \frac{d}{p'} - (s - \frac{d}{p}))} \| f \|_{B^{s}_{p,q}}.
	\end{align}
	Then, plugging this into the wavelet expansion of \( \Pi_J f - f \), we obtain
	\begin{align}
		\label{eq:ho}
		\| \Pi_J f - f \|_{B^{s'}_{p', q'}}^{q'}
		= {} & \sum_{j = J+1}^\infty 2^{j q' (s' + \frac{d}{2} - \frac{d}{p'})} \sum_{g \in G^j} \Big( \sum_{k \in \mathbb{Z}^d} | \gamma^{j, g}_k |^{p'} \Big)^{q' / p'}\\
		\lesssim {} & \sum_{j=J+1}^{\infty} 2^{j q' (s' - \frac{d}{p'} - (s - \frac{d}{p}))} \| f \|_{B^s_{p,q}}^{q'}\lesssim 2^{J q' (s' - \frac{d}{p'} - (s - \frac{d}{p}))} \| f \|_{B^s_{p,q}}^{q'},
	\end{align}
	Finally, to obtain the special cases, note that \( \| \, . \, \|_{B^{s}_{\infty, \infty}} \lesssim \| \, . \, \|_{C^s} \lesssim \| \, . \, \|_{B^{s}_{1, \infty}} \) and \( \| \, . \, \|_{W^{s,2}} = \| \, . \, \|_{B^{s}_{2,2}} \) by Theorem~\ref{thm:restrict-extend}.
\end{proof}

The above lemma together with Proposition-Definition~\ref{lem:immediate-bounds} allows us to check that $\bar f \in \cF_J$. Indeed,	by Weyl's inequality, we have for any \( x \in \tilde \Omega_\measureone \) that
\begin{align}
\lambda_{\min} (D^2 \Pi_J \extension \potential_0(x))
\ge {} & \lambda_{\min} (D^2 \potential_0(x)) - \| D^2 \Pi_J \extension \potential_0(x) - D^2 \potential_0(x) \|_{\mathrm{op}}\\
\ge {} & \constbd^{-1} - C \| \Pi_J \extension \potential_0-f_0 \|_{C^2(\tilde \Omega_\measureone)}.
\end{align}
It follows from Lemma~\ref{lem:wavelet-approximation} that $\| \Pi_J \extension \potential_0 - \potential_0 \|_{C^2(\tilde \Omega_\measureone)} \lesssim 2^{-(\beta - 1)J} (\constbd^3 + \constsmooth)\le 1/(2\constbd)$, if
	\begin{equation}
		\label{eq:if}
		J \ge J_0:=\constjapprox \frac{1}{\beta - 1} \log \left( 2\constbd^4 + 2\constsmooth \constbd \right) \,,
	\end{equation}
	and \( \constjapprox = \constjapprox(d, \beta, \tilde \Omega_\measureone) \) is large enough.
	This yields $\lambda_{\min} (D^2 \Pi_J \extension \potential_0(x))\ge 1/(2M)$ and $\bar f$ is strongly convex. Similarly, we get $\lambda_{\max} (D^2 \Pi_J \extension \potential_0(x))\le 2M$ and hence that $\bar f \in \cF_J$.
	Thus,
	\begin{align}
		\inf_{\potential \in \funcclass_J} \| \nabla \potential - \nabla \potential_0 \|_{L^2(\measureone)}^2
		\le {} & \| \nabla \bar \potential - \nabla \potential_0 \|_{L^2(\measureone)}
		\lesssim \int_{\Omega_\measureone} \| \nabla \bar f - \nabla f_0 \|_2^2 \, \ud \lambda(x)\\
		\le {} & \| \bar f - f_0 \|_{W^{1,2}(\Omega_\measureone)}^2 \lesssim \constsmooth^2 \, 2^{-2 J\alpha}.
	\end{align}
	where we used Assumption \ref{assumpgen:map} and Lemma~\ref{lem:wavelet-approximation}.
	
	We have thus proved that
\begin{equation}
\label{EQ:approx-error}
\inf_{\potential \in \funcclass_J} \| \nabla \potential - \nabla \potential_0 \|_{L^2(\measureone)}^2 \lesssim \constsmooth^2 \, 2^{-2 J\alpha} \quad \text{for } J \ge J_0.
\end{equation}

\subsection{Bias-variance tradeoff}
\label{sec:upper-bounds-proof}

We are now in a position to complete the proof of Theorem~\ref{thm:upper-bounds}. Combining the bounds~\eqref{EQ:bias-varianceE} and~\eqref{EQ:approx-error}, we get
\begin{equation}
		\label{eq:ip}
		\E \|\hat T_J - T_0 \|_{L^2(\measureone)}^2 
		\lesssim \constsmooth^2 \, 2^{-2J\alpha} + 2^{J(d-2)}J \frac{\log(n)}{n} + \frac{1}{n}\,.
\end{equation}
We conclude the proof by optimizing with respect to $J$. It yields
\begin{equation}
	\label{eq:ub}
\E \| \hat T_J - T_0  \|_{L^2(\measureone)}^2	\lesssim \left\{
\begin{array}{ll}
\DS n^{-1} & \text{if } d=1,\\
\DS \log(\constsmooth) \, n^{-1}(\log (n))^2  & \text{if } d=2,\\
\DS \constsmooth^2 \, (1/\constsmooth^2)^{\frac{2 \alpha}{2 \alpha - 2 + d}} \log(\constsmooth) \, n^{\frac{-2 \alpha}{2 \alpha - 2 + d}} (\log(n))^2& \text{if } d\ge3.
\end{array}\right.
\end{equation}
We note that since \( \alpha > 1 \), in the first two cases, \( d \in \{1, 2\} \), the cut-off \( J \) can be picked independently from \( \alpha \).
Finally, high-probability bounds can be obtained in a similar manner.

\section{Numerical experiments}
\label{sec:numerics}

In this section, we provide numerical experiments with synthetic data that illustrate how leveraging the smoothness of the underlying transport map can lead to dramatically improved rates.
We give two estimators exploiting smoothness: the first, \( \hat T_{\mathrm{wav}} \) below, is an approximation to the estimator \( \hat T_J \) in \eqref{eq:jb}, illustrating that \eqref{eq:jb} can be implemented in low dimensions and that this approximation achieves favorable rates in \( d = 3 \).
The second, \( \hat T_{\mathrm{ker}} \) below, is a more practical heuristic two-step procedure based on smoothing the optimal matching between the empirical distributions via radial basis functions.
We compare these to a baseline estimator given by the optimal transport plan between the empirical distributions.

Additional implementation details and comments on these experiments are provided in Section \ref{sec:numerics-ctd} of the Appendix.

\subsection{Estimators}

\subsubsection{Baseline estimator}
In order to highlight the benefit of regularization, we consider the following simple estimator based on the optimal transport matrix between the empirical distributions as a baseline.
Denote the empirical distributions of \( \measureone \) and \( \measuretwo \) by
\begin{equation}
	\label{eq:tw}
	\hat \measureone = \frac{1}{n} \sum_{i = 1}^{n} \delta_{X_i}, \quad
	\hat \measuretwo = \frac{1}{n} \sum_{i = 1}^{n} \delta_{Y_i},
\end{equation}
respectively, and calculate the optimal transport matrix \( \Gamma \in \R^{n \times n} \)
\begin{align}
	\label{eq:tx}
	\hat \Gamma = \argmin \left\{ \sum_{i, j = 1}^{n} \| X_i - Y_j \|^2 \Gamma_{i, j} \, : \quad
		\begin{aligned}
	&\Gamma_{i, j} \ge 0 \; \forall i, j \in [n], \;\\
	&\Gamma \1 = \1/n, \; \Gamma^\top \1 = \1 / n
	\end{aligned}
	\right\}
\end{align}
which can be solved exactly by linear optimization toolkits or approximated by entropic regularization \cite{PeyCut19}.
An estimated transport function on the observations \( X_i \) is then obtained by
\begin{equation}
	\label{eq:te}
	\hat \tmap_{\mathrm{emp}}(X_i) = n \sum_{j = 1}^{n} \hat \Gamma_{i, j} Y_j,
\end{equation}
which corresponds to the conditional mean of the coupling given \( X_i \).
Note that since we assume that the sample sizes from \( \measureone \) and \( \measuretwo \) are both \( n \), the optimal \( \hat \Gamma \) is in fact a (rescaled) permutation matrix, leading to a matching \( \hat \pi : [n] \to [n] \), and hence to \( \hat \tmap_{\mathrm{emp}}(X_i) = Y_{\pi(i)} \).

Because \( \hat \tmap_{\mathrm{emp}}(X_i) \) above is only defined on the sample points and we do not want to introduce additional bias against the estimator, we consider the following error measure, approximating the \( L^2(\measureone) \) norm analyzed in Theorem \ref{thm:minimax}:
\begin{equation}
	\label{eq:ty}
	\mathrm{MSE}_n(\hat T_\mathrm{emp}) = \frac{1}{n} \sum_{i = 1}^{n} \Big\| \hat \tmap_{\mathrm{emp}} (X_i) - \tmap_0(X_i) \Big\|_2^2.
\end{equation}

\subsubsection{Wavelet estimator}

Next, we turn to an approximation of \eqref{eq:jb}.
Assume that in addition to a superset \( \tilde \Omega_\measureone \supseteq \Omega_\measureone \), we are also given a superset \( \tilde \Omega_\measuretwo \supseteq \Omega_\measuretwo \), and that both \( \tilde \Omega_\measureone \) and \( \tilde \Omega_\measuretwo \) are boxes (hypercubes).
We consider all functions originally defined over \( \tilde \Omega_\measureone \) and \( \tilde \Omega_\measuretwo \) as given by their samples on grids with resolution \( N \in \mathbb{N} \), \( \xvec = (x_i)_{i \in [N]^d} \in (\R^{d})^{N^d} \) and \( \yvec = (y_i)_{i \in [N]^d} \in (\R^d)^{N^d} \), respectively.
In particular, we write \( \fvec = (f_i)_{i \in [N]^d} \in \R^{N^d} \) for the discretization of the potential \( f \) and \( \Tvec = (T_i)_{i \in [N]^d} \in (\R^d)^{[N]^d} \) for the discretization of the transport map \( T \) on the grid \( \xvec \).
Here, we pick \( N = 65 \).

We employ the following discretization/approximation schemes:
\begin{enumerate}
	\item The restrictions to functions up to wavelet scale \( J \in \mathbb{N} \) can be obtained by parametrizing \( \fvec \) by the inverse discrete wavelet transform up to order \( J \), which we write as \( \fvec = \invwavtrafo_J \gamma_J \) for wavelet coefficients \( \gamma_J \in \R^{m_J} \) with \( m_J \in \mathbb{N} \).
		For these experiments, we use db4 Daubechies wavelets, i.e., Daubechies wavelets with four vanishing moments.
	\item An approximation to the (continuous) Legendre transform is given by the discrete Legendre transform,
		\begin{align}
			\label{eq:ua}
			\legtrafo(\fvec)_j := {} & \legtrafo(\fvec)(y_j) := \legtrafo_{\xvec \to \yvec}(\fvec)(y_j) \\
			:= {} & \sup_{i \in [N]^d} \{ \langle x_i , y_i \rangle - f_i :  i \in [N]^d \},
		\end{align}
		for \( \fvec \in (\R^d)^{N^d} \) and \( j \in [N]^d \).
		\noeqref{eq:ua}
	\item The gradient of \( \fvec \) on grid-points can be calculated by a finite-difference scheme, which we write as \( \nabla_{\xvec} \fvec \).
	\item To obtain values of these approximations at non-grid points, we appeal to linear interpolation, which we write as \( \interp_{\xvec \to \{X_1, \dots, X_n\}} \fvec \) for the interpolated value of \( \fvec \) defined on the grid \( \xvec \) at the point \( \{X_1, \dots, X_n\} \in \R^d \), collected in one vector.
\end{enumerate}

Given \iid observations \( X_{1:n} = \{X_1, \dots, X_n\} \) and \( Y_{1:n} = \{Y_1, \dots, Y_n\} \) from \( \measureone \) and \( \measuretwo \), respectively, we arrive at the following optimization problem as approximation to \eqref{eq:jb}:
\begin{align}
	\label{eq:td}
	\hat \gamma_J = {} & \min_{\wavcoeffs_J \in \R^{m_J}} \frac{1}{n} \1^\top \interp_{\xvec \to X_{1:n}} \, \invwavtrafo_J \wavcoeffs_J + \frac{1}{n} \1^\top \interp_{\yvec \to Y_{1:n}} \, \legtrafo(\invwavtrafo_J \wavcoeffs_J).
\end{align}
Note that we dropped all boundedness and convexity constraints that were given by \( \funcclassstd(2M) \) in \eqref{eq:jb}.
In practice, this can lead to degraded estimation quality of the gradient of \( \invwavtrafo_J \gamma_J \) near the boundary of \( \Omega_\measureone \), see Section \ref{sec:double-legendre} in the Appendix, which we remedy by computing the convex envelope of the resulting estimator, yielding an estimator of \( \potential_0 \) that is convex.
This envelope can, for example, be calculated by applying the Legendre transform twice, and we set
\begin{equation}
	\label{eq:tf}
	\hat \Tvec_J = \nabla_{\xvec} [\legtrafo_{\yvec \to \xvec}(\legtrafo_{\xvec \to \yvec}(\invwavtrafo_J \hat \wavcoeffs_J))].
\end{equation}

With this, we denote by \( \hat \tmap_{\mathrm{wav}}^{(J)} \) the function obtained by linearly interpolating \( \hat \Tvec_{J} \),
\begin{equation}
	\label{eq:ue}
	\hat T_{\mathrm{wav}}^{(J)}(x) = \interp_{\xvec \to x} \hat \Tvec_{J}, \quad x \in \tilde \Omega_\measureone.
\end{equation}
For the purpose of these experiments, we select the wavelet scale \( \hat J \) by an oracle choice, i.e., as the minimizer of an approximation to the population semi-dual \eqref{eq:pn}, see Section \ref{sec:implementation}, while in practice, one would resort to cross-validation methods for this purpose.
Finally, we set
\begin{equation}
	\label{eq:e}
	\hat T_{\mathrm{wav}} = \hat T_{\mathrm{wav}}^{(\hat J)}.
\end{equation}

To obtain an error measure for \( \hat \tmap_{\mathrm{wav}} \) that is easily comparable to \( \mathrm{MSE}_n(\hat T_{\mathrm{emp}}) \), we consider the empirical \( L^2(\hat P) \) norm on the sample points,
\begin{equation}
	\label{eq:tz}
	\mathrm{MSE}_n(\hat T_{\mathrm{wav}}) = \frac{1}{n} \sum_{i = 1}^{n} \Big\| \hat \tmap_{\mathrm{wav}}(X_i) - \tmap_0(X_i) \Big\|_2^2.
\end{equation}

Note that the objective function in \eqref{eq:td} can be calculated in linear time with respect to the underlying grid, that is, in \( O(N^d) \), thanks to efficient algorithms for the discrete wavelet transform \cite{Mal99} and the Linear-Time Legendre Transform algorithm \cite{Luc97}.
It can be checked that the objective is convex, rendering first-order methods provably convergent.
We use the L-BFGS Quasi-Newton method to find \( \hat \gamma_J \), even though the objective is not smooth for every \( \gamma_J \) due to the form of the discrete Legendre transform.
In practice, we observe that it converges faster than simple \mbox{(sub-)gradient} descent methods.

Since \( \hat T_{\mathrm{wav}} \) is mainly used to illustrate the practical behavior of a wavelet-based regularization of the semi-dual objective, we do not explicitly analyze the convergence of \( \hat T_{\mathrm{wav}} \) to the estimator \( \hat T_J \) considered in Section~\ref{sec:upper}.
We remark, however, that our approximations closely follow the definition of \( \hat T_J \) and that the omitted constraints defining the set \( \funcclassstd(2M) \) could be incorporated into the approximation by means of a finite-difference discretization as well, albeit at an additional computational cost.

\subsubsection{Kernel smoothing estimator}

While the estimator \( \hat \tmap_{\mathrm{wav}} \) closely follows our theoretical analysis, its applicability is limited to low dimensions by the fact that a discretization grid is needed.
As a heuristic alternative, we consider smoothing the assignment obtained by \( \hat \tmap_{\mathrm{emp}} \).
The idea of smoothing the empirical transport matrix has been previously used in practice, see for example \cite{SchShuTab19}, where regularized linear regression was used as a post-processing step, and kernels have been applied for smoothing potential functions in \cite{GenCutPey16}.
Here, we obtain a smoother version of \( \hat \tmap_{\mathrm{emp}} \) by smoothing it via kernel-ridge regression \cite{Mur12}.

Let \( \rkhs \) denote a reproducing kernel Hilbert space (RKHS) with associated kernel \( k(x, y) \), \( x, y \in \R^d \), and norm \( \| T \|_{\rkhs} \) for \( T \in \rkhs \).
Here, we consider the RKHS given by Gaussian radial basis functions,
\begin{equation}
	\label{eq:ui}
	k(x, y) = \exp(-\nu_{\mathrm{kernel}} \| x - y \|^2), \quad x, y \in \R^d, \quad \nu_{\mathrm{kernel}} > 0.
\end{equation}
We fit an RKHS function \( \tmap \) to the pairs \( (X_i, \tilde Y_i = \hat \tmap_{\mathrm{emp}}(X_i)) \) by solving the regularized kernel regression objective
\begin{equation}
	\label{eq:uf}
	\hat T_{\mathrm{ker}}^{(\nu_{\mathrm{ridge}}, \nu_{\mathrm{kernel}})} = \argmin_{T \in \rkhs} \sum_{i = 1}^{n} \| \tilde Y_i - T(X_i) \|_{2}^2 + \nu_{\mathrm{ridge}} \| T \|_{\rkhs}^2,
\end{equation}
for \( \nu_{\mathrm{ridge}} > 0 \).
By the representer theorem \cite{Mur12}, \eqref{eq:uf} has a solution
\begin{equation}
	\label{eq:f}
	\hat T_{\mathrm{ker}}^{(\nu_{\mathrm{ridge}}, \nu_{\mathrm{kernel}})}(x) = \sum_{i = 1}^{n} \hat w_i k(x_i, x),
\end{equation}
where 
\begin{equation}
	\label{eq:uh}
	\hat W = \begin{pmatrix}
		\hat w_1^\top \\
		\vdots\\
		\hat w_n^\top
	\end{pmatrix}
	= (K + \nu_{\mathrm{ridge}} I)^{-1} \tilde Y, \quad
	\text{with } K_{i, j} = k(X_i, X_j)
	\text{ and }
	\tilde Y =
	\begin{pmatrix}
		\tilde Y_1\\
		\vdots\\
		\tilde Y_n
	\end{pmatrix}.
\end{equation}
We measure its performance by
\begin{equation}
	\label{eq:ug}
	\mathrm{MSE}_n(\hat T_{\mathrm{ker}}^{(\nu_{\mathrm{ridge}}, \nu_{\mathrm{kernel}})}) = \frac{1}{n} \sum_{i = 1}^{n} \Big\| \hat \tmap_{\mathrm{ker}}^{(\nu_{\mathrm{ridge}}, \nu_{\mathrm{kernel}})}(X_i) - \tmap_0(X_i) \Big\|_2^2.
\end{equation}
Similar to \( \hat T_{\mathrm{wav}} \), we select the tuning parameters \( \nu_{\mathrm{kernel}} \) and \( \nu_{\mathrm{ridge}} \) by an oracle procedure, picking those parameters from a finite grid that minimize \eqref{eq:ug} on an independent hold-out sample \( \tilde X_{1:n} \), and denote the resulting estimator by \( \hat T_{\mathrm{ker}} \).

\begin{figure}[!ht]
	\centering
	\subcaptionbox{
		Ground truth transport map
		\label{fig:vis_grad_gt_interior}
		}{
		\includegraphics[width=0.4\textwidth]{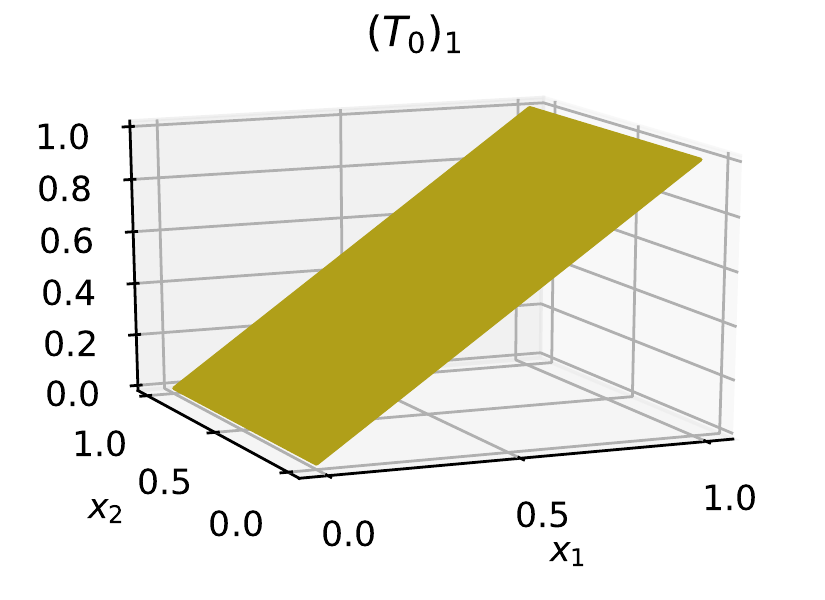}
	}
	\subcaptionbox{
		Transport map estimator from empirical distributions and 1-NN interpolation
		\label{fig:vis_tmap_vanilla}
		}{
		\includegraphics[width=0.4\textwidth]{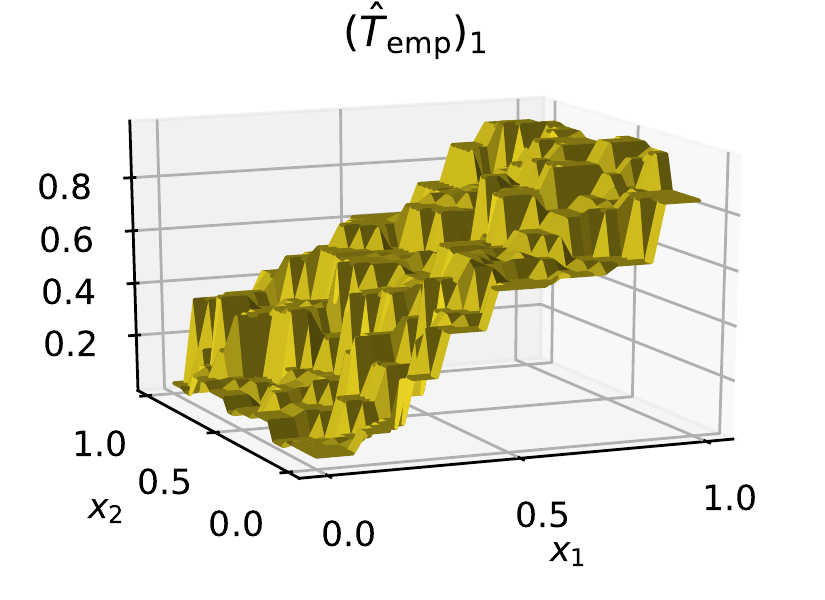}
	}
	\subcaptionbox{
		Wavelet estimator, \( J = 1 \)
		\label{fig:vis_grad_double_interior}
		}{
		\includegraphics[width=0.4\textwidth]{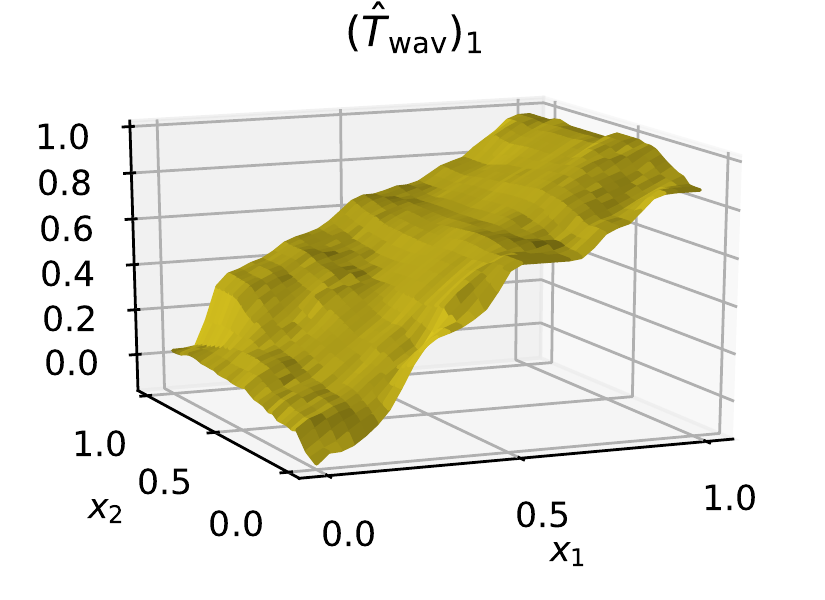}
	}
	\subcaptionbox{
		Kernel estimator, \( \nu_{\mathrm{ridge}} = \nu_{\mathrm{kernel}} = 10^{-3} \)
		\label{fig:vis_grad_kernel}
		}{
		\includegraphics[width=0.4\textwidth]{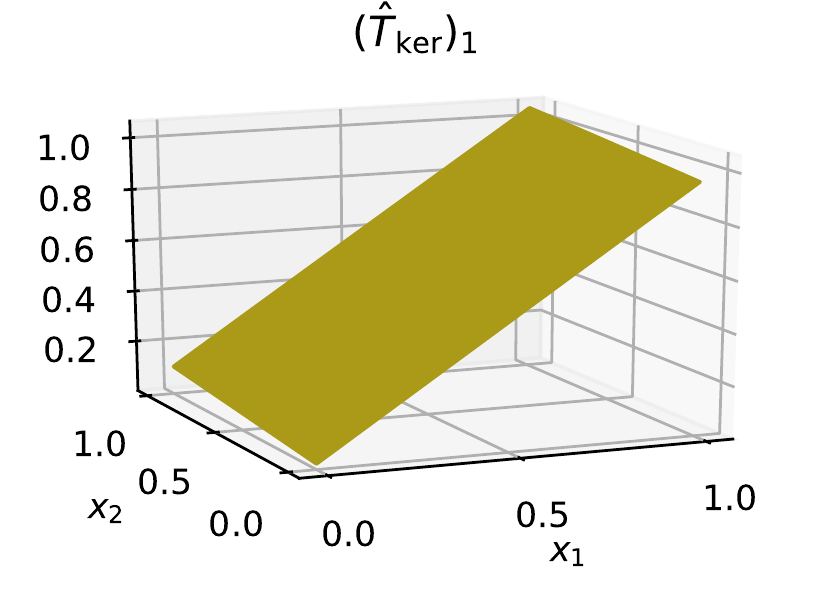}
	}

	\caption{Qualitative comparison between \( \hat T_{\mathrm{emp}} \), \( \hat T_{\mathrm{wav}} \), and \( \hat T_{\mathrm{ker}} \).
		Both the wavelet-based estimator \( \hat T_{\mathrm{wav}} \) and the kernel estimator \( \hat T_{\mathrm{ker}} \) produce a qualitatively smoother output then the optimal coupling between the empirical measures. 
	Visualizations of the first coordinate of the transport maps.}
	\label{fig:vis_smooth}
\end{figure}

\subsection{Setup}
\label{sec:numerics-setup}

For \( d \in \mathbb{N} \), we consider the following examples of smooth potentials and transport maps:
\begin{alignat}{2}
	\qquad \potential_0^{(1)}(x) = {} & \frac{1}{2} \| x \|_2^2, & \quad \tmap_0^{(1)}(x) = {} & x, \quad x \in \R^d; \tag{id} \label{eq:identity}\\
	\qquad \potential_0^{(2)}(x) = {} & \sum_{i=1}^{d} \exp(x_i), & \quad (\tmap_0^{(2)}(x))_i = {} & \exp(x_i), \quad x \in \R^d, \; i \in [d]; \tag{exp} \label{eq:exponential}
\end{alignat}
where for \eqref{eq:identity}, \( \measureone^{(1)} = \measuretwo^{(1)} = \unif([0,1]^d) \).
For \eqref{eq:exponential}, \( \measureone^{(2)} = \unif([0,1]^d) \), and the target measure is defined as \( \measuretwo^{(2)} = (\tmap_0^{(2)})_\# \measureone^{(2)} \).
Note that these potentials and transport maps are \( C^{\infty} \) and strongly convex on any compact convex subset of \( \R^d \).
For the purpose of qualitative comparisons, we consider case \eqref{eq:identity} in \( d = 2 \).
In order to determine the quantitative behavior of the estimators, we study both cases for \( d \in \{3, 10\} \).

The runtime of computing the optimal transport matching via linear programming scales unfavorably with the sample size, a shortcoming that could be remedied by employing recent numerical approximation techniques \cite{AltBacRud19}.
Similarly, computing the kernel regression \eqref{eq:uf} with off-the-shelf methods scales with \( O(n^3) \).
For the sake of this comparison, we simply restrict our experiments on \( \hat T_{\mathrm{emp}} \) and \( \hat T_{\mathrm{ker}} \) to sample sizes \( \le 10^4 \).
Likewise, we do not compute \( \hat T_{\mathrm{wav}} \) in \( d = 10 \) due to the large computational cost.

\subsection{Results}

\subsubsection{Qualitative comparison in 2D}

To give a qualitative idea of the considered estimators compared to the baseline \( \hat T_{\mathrm{emp}}\), we visualize the first coordinate of the transport map estimators for case \eqref{eq:identity} with \( d = 2 \) and \( n = 100 \) observations from \( \measureone = \measuretwo = \unif([0,1]^2) \) in Figure \ref{fig:vis_smooth}, together with the ground truth transport map \( (T_0^{(1)})_1 \).
To depict the coupling \( \hat T_{\mathrm{emp}} \) induced by the empirical distributions, we employ 1-nearest-neighbor (1-NN) interpolation to obtain a map on \( [0,1]^2 \).
We observe that the wavelet-based regularization in Figure \ref{fig:vis_grad_double_interior} produces a visibly smoother map compared to the unregularized \( \hat T_{\mathrm{emp}} \) in Figure \ref{fig:vis_tmap_vanilla}.
The kernel estimator in Figure \ref{fig:vis_grad_kernel} is even smoother and visually very similar to the ground truth in Figure \ref{fig:vis_grad_gt_interior}, due to the possibility of employing a large amount of regularization.

\begin{figure}[!ht]
	\centering
	\subcaptionbox{
		Identity transport map, \( d = 3 \)
		\label{fig:errors_uniform_3d}
	}{
		\includegraphics[width=0.45\textwidth]{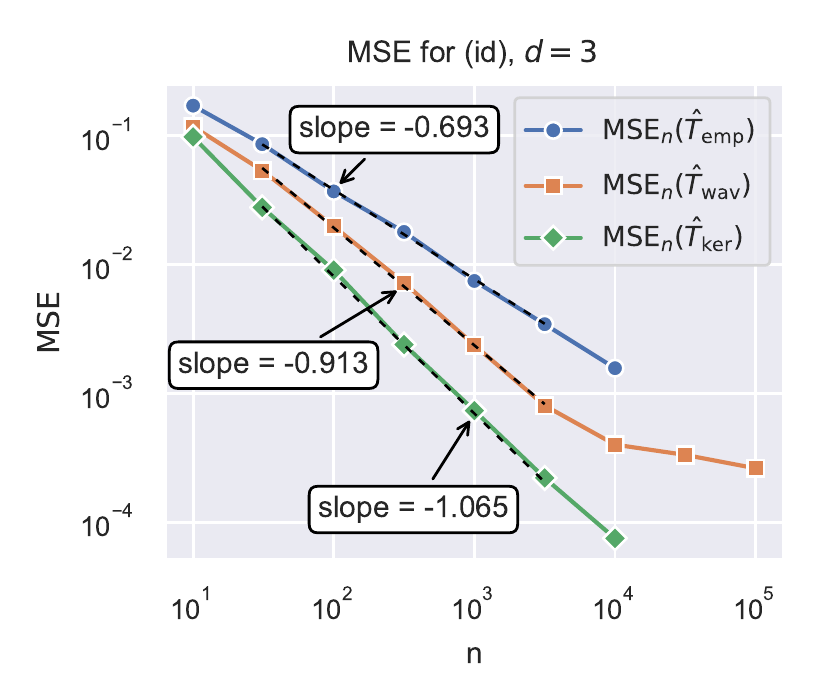}
	}
	\subcaptionbox{
		Identity transport map, \( d = 10 \)
		\label{fig:errors_uniform_10d}
	}{
		\includegraphics[width=0.45\textwidth]{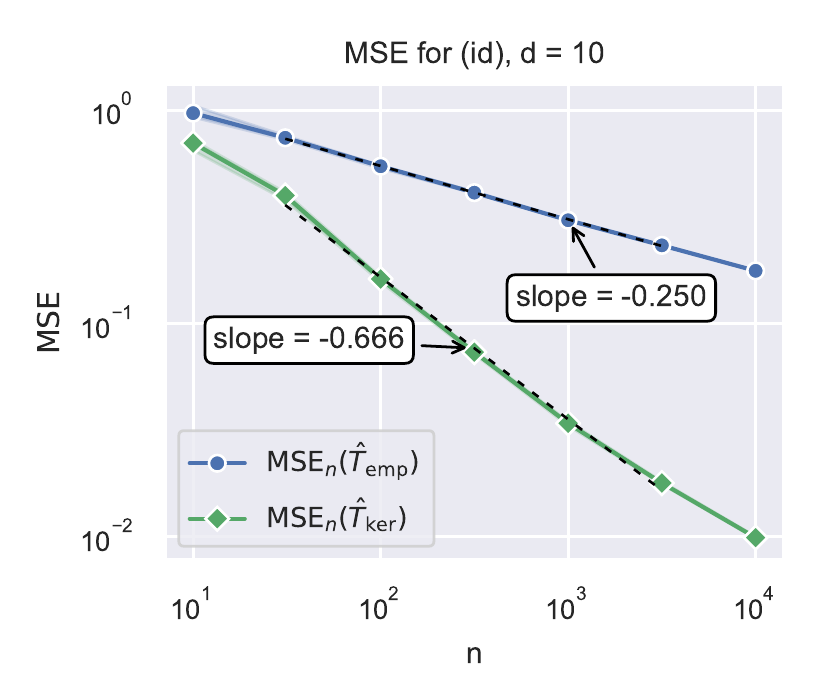}
	}
	\subcaptionbox{
		Exponential transport map, \( d = 3 \)
		\label{fig:errors_exp_3d}
	}{
		\includegraphics[width=0.45\textwidth]{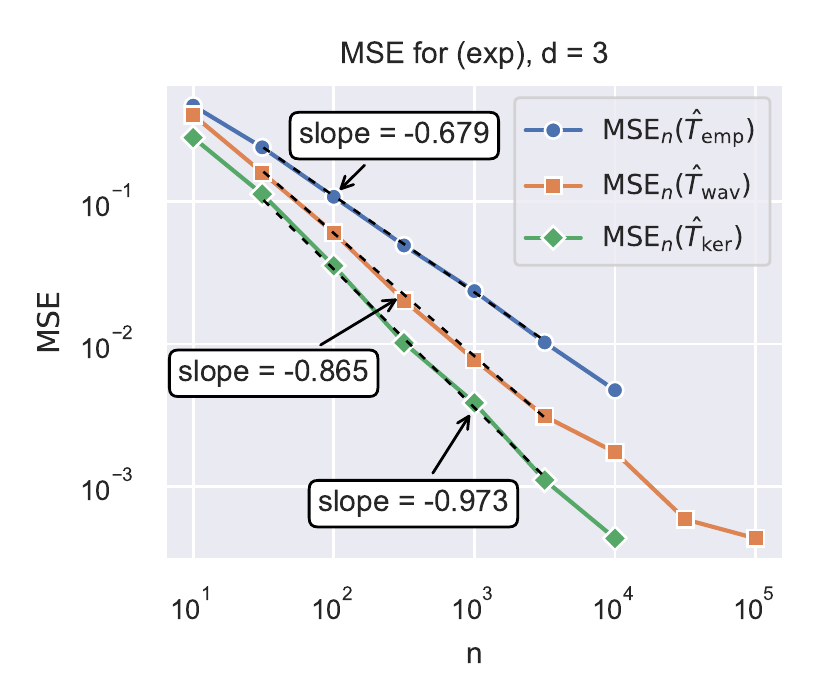}
	}
	\subcaptionbox{
		Exponential transport map, \( d = 10 \)
		\label{fig:errors_exp_10d}
	}{
		\includegraphics[width=0.45\textwidth]{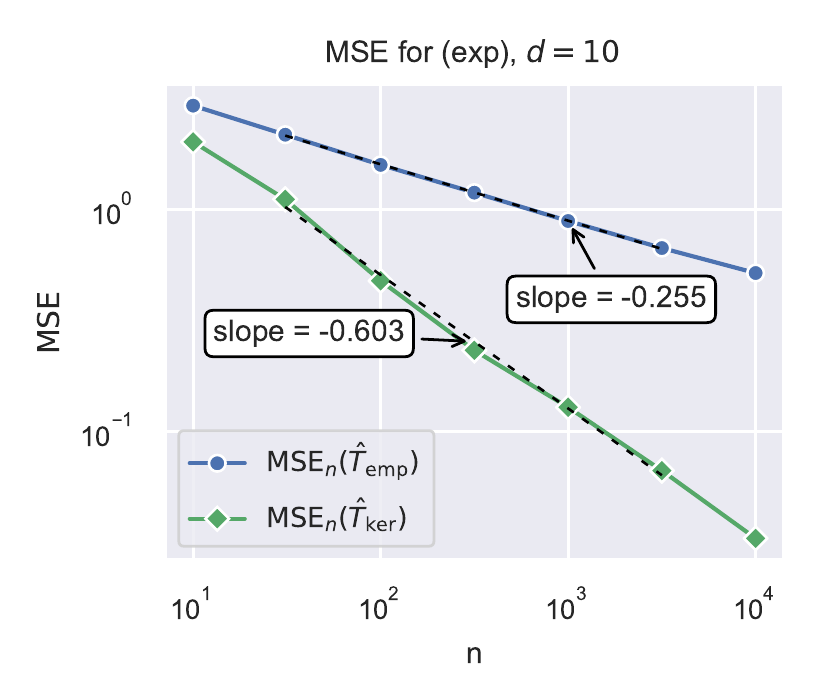}
	}
	\caption{Log-log plot of MSE plotted against \( n \), showing the median error over 32 replicates. See Section \ref{sec:numerics-setup} for details.}
	\label{fig:errors}
\end{figure}

\subsubsection{Quantitative comparison in 3D and 10D}
To obtain a quantitative comparison, for both test cases and \( d \in \{3, 10\} \), we compute \( \hat T_{\mathrm{emp}} \), \( \hat T_{\mathrm{wav}} \) (only \( d = 3 \)), and \( \hat T_{\mathrm{ker}} \) over 32 replicates and a logarithmically spaced selection of sample sizes \( n \), reporting the median error over the replicates in Figure \ref{fig:errors}.
Here, the dashed lines indicate the result of linear regression on the logarithmically transformed sample sizes and error results for a selected subset of \( n \).

In 3D, for both test cases, the error curves for the standard empirical measure-based estimator \( \hat T_{\mathrm{emp}} \) roughly follow a \( n^{-2/3} \) rate.
This corresponds to the decay of the average \( \ell^2 \) cost of optimal matchings between samples from the uniform distribution on the cube \cite{Tal14, Yuk06} (and Gaussian distributions \cite{Led19}), and also matches the \( n^{-2/d} \) rate for the convergence of \( W_2^2(\hat P, P) \).
We are, however, not aware of a generalization of this rate to the error measure \( \mathrm{MSE} \) in the case of a transport map different from the identity.

The error curves for the wavelet estimator \( \hat T_{\mathrm{wav}} \) all follow a similar trend.
For low sample sizes, we obtain rates faster than \( n^{-0.85} \), showing the large statistical benefit of fitting only functions that have wavelet expansions of low order.
For large sample sizes, the error flattens out, which can be explained by the numerical approximation errors dominating the statistical ones.
This can be readily seen from repeating the experiment with a smaller grid resolution (\( N = 33 \)), which shows the same trend for smaller values of \( n \), see Section \ref{sec:error-flattening} in the Appendix.
Moreover, for all sample sizes we considered, the error curves for \( \hat T_{\mathrm{wav}} \) all lie below the baseline estimator.
The kernel estimator \( \hat T_{\mathrm{ker}} \) performs even better, attaining rates close to \( n^{-1} \) and yielding consistently better rates than \( \hat T_{\mathrm{wav}} \).

We observe that the favorable behavior of \( \hat T_{\mathrm{wav}} \) suggests that the restriction of candidate potentials to \( \funcclassstd(2M) \) in \eqref{eq:jb} might not be necessary and could possibly be omitted.

In 10D, for both test cases, \( \hat T_{\mathrm{emp}} \) shows a convergence rate of about \( n^{-0.25} \), which is slightly better than the expected \( n^{-2/d} = n^{-0.2} \) rate.
It is vastly outperformed by the kernel-based estimator that achieves rates better than \( n^{-0.6} \) in both examples.

Further plots showing individual error curves for varying values of the regularization parameters can be found in Section \ref{sec:num-params} of the appendix, illustrating the sample size-dependent performance gain achieved by \( \hat T_{\mathrm{wav}} \) and \( \hat T_{\mathrm{ker}} \).

To summarize, in cases where smoothness of the transport map can be assumed, its estimation greatly benefits from smoothness regularization.
In particular, these experiments suggest further research on proving error bounds for the kernel estimator \( \hat T_{\mathrm{ker}} \) under regularity assumptions on \( T_0 \), for which the minimax rates established in this work can serve as a benchmark.

\medskip
\noindent{\bf Acknowledgments.} The authors would like to thank Richard Nickl for pointing out relevant references on the estimation of invariant measures in the multivariate Gaussian white noise model, and Jonathan Niles-Weed for suggesting references on the rate between random matchings. The authors are also indebted to the Associate Editor and three referees for providing insightful comments that greatly improved the quality of the paper.

	Philippe Rigollet was supported by NSF awards IIS-1838071, DMS-1712596, DMS-TRIPODS- 1740751, and ONR grant N00014-17- 1-2147.
\appendix

\section{Convex analysis}
\label{SEC:convex-analysis}

In this section, we recall some useful facts from convex analysis. We refer the reader to~\cite{HirLem01} for a comprehensive treatment.

Recall that a set \( \convexset \subseteq \R^d \) is convex if for all \( x, y \in \convexset \), \( t \in [0,1] \), \( t x + (1-t) y \in \convexset \).
A function \( \potential \colon \convexset \to \R \cup \{+\infty\} \) is  convex if for all \( x, y \in \convexset \), \( t \in [0,1] \), it holds that
\begin{equation}
	\label{eq:mw}
	\potential(t x + (1 - t) y) \le t \potential(x) + (1-t) \potential(y).
\end{equation}
Moreover, we call a function \( \mu \)-strongly convex if for all \( x, y \in \convexset \), \( t \in [0,1] \), we have
\begin{equation}
	\label{eq:mx}
	\potential(t x + (1 - t) y) \le t \potential(x) + (1-t) \potential(y) - \frac{\mu}{2} t (1-t) \| x - y \|_2.
\end{equation}

For twice differentiable functions, it is often more convenient to employ the following analytic criterion for strong convexity.
\begin{lemma}
	[{\cite[Theorem B.4.3.1]{HirLem01}}]
	\label{lem:diffconvex}
	Let \( \convexset \subseteq \R^d \) be an open convex set and \( \potential \colon \convexset \to \R \) twice differentiable.
	Then, \( \potential \) is \( \mu \)-strongly convex if and only if $\lambda_{\min}(D^2 \potential(x)) \ge \mu$ for all  $x \in \convexset$.
\end{lemma}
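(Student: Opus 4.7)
The plan is to prove the two implications separately, passing between the pointwise inequality defining strong convexity and a second-order (Taylor) characterization, and then to derive the Hessian bound from the second-order characterization and vice versa.

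For the forward direction, I would first show that $\mu$-strong convexity as stated in \eqref{eq:mx} is equivalent to the gradient inequality $f(y) \ge f(x) + \langle \nabla f(x), y-x\rangle + \tfrac{\mu}{2} \| y-x\|_2^2$ for all $x, y \in \convexset$. This is the standard reformulation obtained by dividing the definition by $t$ (or $1-t$) and letting $t \to 0^+$, using differentiability of $f$. Given this, fix $x \in \convexset$ and $h \in \R^d$ with $x \pm \eps h \in \convexset$ for small $\eps > 0$. Adding the gradient inequality at $(x, x+\eps h)$ and $(x, x - \eps h)$ yields $f(x+\eps h) + f(x-\eps h) - 2 f(x) \ge \mu \eps^2 \|h\|_2^2$. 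A second-order Taylor expansion shows that the left-hand side equals $\eps^2 \langle h, D^2 f(x) h\rangle + o(\eps^2)$, so dividing by $\eps^2$ and sending $\eps \to 0$ gives $\langle h, D^2 f(x) h\rangle \ge \mu \|h\|_2^2$. Since $h$ is arbitrary, $\lambda_{\min}(D^2 f(x)) \ge \mu$.

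For the converse, I would use the integral form of Taylor's theorem on the line segment from $x$ to $y$ (which lies in $\convexset$ by convexity):
\begin{equation}
f(y) = f(x) + \langle \nabla f(x), y - x\rangle + \int_0^1 (1 - t) \langle y - x, D^2 f(x + t(y-x)) (y-x)\rangle \, \ud t.
\end{equation}
The Hessian bound $D^2 f \succeq \mu I$ makes the integrand at least $\mu (1-t) \|y-x\|_2^2$, so evaluating the integral yields the gradient inequality $f(y) \ge f(x) + \langle \nabla f(x), y-x\rangle + \tfrac{\mu}{2}\|y-x\|_2^2$. Applying this inequality with a convex combination $z = t x + (1-t)y$ as the base point, once for $x$ and once for $y$, then multiplying by $t$ and $1-t$ respectively and summing, eliminates the gradient term and produces $tf(x) + (1-t)f(y) \ge f(z) + \tfrac{\mu}{2} \bigl(t \|x - z\|_2^2 + (1-t) \|y - z\|_2^2\bigr)$. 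Using $\|x - z\|_2 = (1-t)\|x-y\|_2$ and $\|y - z\|_2 = t\|x-y\|_2$, the parenthesized term simplifies to $t(1-t)\|x-y\|_2^2$, recovering \eqref{eq:mx}.

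No step is particularly difficult: both directions are routine once the gradient inequality is identified as the bridge between the pointwise convexity condition and the Hessian bound. The only mild subtlety is justifying that in the forward direction the second-order expansion is valid at every point of the open set $\convexset$ using only twice differentiability (not $C^2$), which is handled by the standard definition of the Hessian via the quadratic remainder of the first-order expansion. Given that the result is classical and cited from \cite{HirLem01}, one could alternatively just reduce to the well-known convexity characterization $D^2 g \succeq 0 \Leftrightarrow g \text{ convex}$ applied to $g(x) := f(x) - \tfrac{\mu}{2}\|x\|_2^2$, which turns the lemma into a one-line corollary.
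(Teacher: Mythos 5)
The paper does not actually prove this lemma: it is imported verbatim from \cite[Theorem B.4.3.1]{HirLem01} as a black box, so there is no in-paper argument to compare yours against. Your proof is the standard textbook argument and is correct in substance: the forward direction (strong convexity \( \Rightarrow \) gradient inequality \( \Rightarrow \) symmetric second difference \( \Rightarrow \) Hessian bound) is fine, including your remark that the second-order Peano expansion only needs differentiability of \( \nabla \potential \) at the point. Two small caveats. First, in the converse you invoke the integral form of Taylor's theorem; under mere twice differentiability (no continuity of \( D^2\potential \)) the Hessian along the segment need not be integrable, so this step is not literally justified. The standard fix is to work on the one-dimensional restriction \( h(t)=\potential(x+t(y-x)) \): the function \( \psi(t)=h(t)-h(0)-h'(0)t-\tfrac{\mu}{2}\|y-x\|_2^2 t^2 \) has \( \psi''\ge 0 \), hence \( \psi' \) is nondecreasing and \( \psi(1)\ge 0 \), which gives the gradient inequality without any integration; alternatively, your closing one-line reduction to the convexity criterion for \( g(x)=\potential(x)-\tfrac{\mu}{2}\|x\|_2^2 \) avoids the issue entirely and is the cleanest route. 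Second, note that the paper's display \eqref{eq:mx} writes \( \|x-y\|_2 \) where the standard definition (and the one your argument establishes and the rest of the paper uses, e.g.\ \eqref{EQ:fquad}) has \( \|x-y\|_2^2 \); this is evidently a typo in the paper, and proving the squared version as you do is the right target.
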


Note that even if \( \convexset \subsetneq \R^d \) is a proper convex subset of \( \R^d \), we can always consider a function \( \potential \colon \convexset \to \R \cup \{+\infty\} \) to be defined on all of \( \R^d \) by setting it to \( + \infty \) outside of \( \convexset \). To that end, let $\iota_{U}$ be the \emph{indicator} function defined by 
 \begin{equation}
	\label{eq:mn}
	\iota_{U}(x) =
	\left\{
	\begin{aligned}
		&0, &\quad &x \in U \\
		&+\infty, &\quad &\text{otherwise.}
	\end{aligned}
	\right.
\end{equation}
We define the extension of $f$ outside $U$ by $f+\iota_U$, which by abuse of notation we also denote by $f$.
Note that if $f$ is (strongly) convex on $U$ then its extension outside $U$ is also (strongly) convex.
We call $\domain(\potential) = \{ x \in \R^d : f(x) < + \infty \}$ the domain of a convex function.

We now recall two important notions associated with  convex functions $f$.

First, the subdifferential of $f$ at $x \in \domain(f)$ is defined as
\begin{equation}
	\label{eq:nb}
	\partial f(x) = \{ a \in \R^d : f(y) \ge \langle a , y - x \rangle + f(x) \text{ for all } y \in \R^d \}.
\end{equation}
As indicated by the following lemma, the subdifferential reduces to the gradient for differentiable functions.
\begin{lemma}
	[{\cite[Corollary D.2.1.4]{HirLem01}}]
	Let \( \potential \colon \R^d \to \R \cup \{ + \infty \} \) be a convex function.
	If \( \potential \) is differentiable at \( x \in \R^d \) with gradient \( \nabla \potential(x) \), then \( \partial \potential(x) = \{ \nabla \potential(x) \} \).

	Conversely, if \( \partial \potential(x) = \{ a \} \) consists of only a single element, then \( \potential \) is differentiable at \( x \) with gradient \( \nabla \potential(x) = a \).
\end{lemma}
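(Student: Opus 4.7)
The plan is to prove the two directions separately, both relying on the connection between the subdifferential and the one-sided directional derivative $f'(x;v) = \lim_{t \downarrow 0} (f(x+tv) - f(x))/t$, which exists for every convex $f$ at any $x \in \interior(\domain(f))$ because the difference quotient is monotone in $t$.

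For the forward direction, suppose $f$ is differentiable at $x$ with gradient $\nabla f(x)$. First I would show $\nabla f(x) \in \partial f(x)$: fix an arbitrary $y$ and apply convexity to $f(x + t(y-x))$ for $t \in (0,1]$ to obtain the standard ``secant dominates tangent'' inequality $(f(x + t(y-x)) - f(x))/t \ge (f(x+s(y-x))-f(x))/s$ for $t \ge s$; taking $s \downarrow 0$ and using differentiability gives $f(y) - f(x) \ge \langle \nabla f(x), y - x \rangle$, which is exactly the subgradient inequality. Conversely, if $a \in \partial f(x)$, then for every direction $v$ and $t>0$, $f(x+tv) - f(x) \ge t \langle a, v \rangle$; dividing by $t$ and sending $t \downarrow 0$ yields $\langle \nabla f(x), v \rangle \ge \langle a, v \rangle$ for every $v$, forcing $a = \nabla f(x)$.

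For the reverse direction, assume $\partial f(x) = \{a\}$. The key identity to invoke is that the support function of the subdifferential equals the one-sided directional derivative, $f'(x;v) = \sup_{b \in \partial f(x)} \langle b, v \rangle$, a standard fact for convex functions at interior points of the domain. Under our assumption this collapses to $f'(x;v) = \langle a, v \rangle$, which is linear in $v$; in particular $f'(x;v) + f'(x;-v) = 0$, so the two-sided directional derivative in every direction $v$ exists and equals $\langle a, v \rangle$. To upgrade this to Fréchet differentiability (and hence to identify $a = \nabla f(x)$), I would use local Lipschitz continuity of $f$ near $x$ (another standard consequence of convexity on an interior point of the domain) together with the linearity in $v$ just established: the Lipschitz bound makes the family of scaled difference quotients $v \mapsto (f(x + tv) - f(x))/t$ equicontinuous on bounded sets, so the pointwise limit $\langle a, v \rangle$ is also the uniform limit on compact sets, which is equivalent to Fréchet differentiability with gradient $a$.

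The main obstacle is the reverse direction, specifically the promotion from ``all directional derivatives exist and form a linear functional'' to ``$f$ is Fréchet differentiable.'' For a general function this implication fails, but for convex functions it holds thanks to local Lipschitzness; citing this carefully (or quoting the corresponding result from \cite{HirLem01}) is the delicate step, while the forward direction and the support-function identity are routine manipulations of the subgradient inequality.
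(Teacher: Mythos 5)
The paper gives no proof of this lemma at all: it is quoted directly from \cite[Corollary D.2.1.4]{HirLem01}, so there is no internal argument to compare yours against. Your proposal is the standard textbook route and is essentially sound: in the forward direction, monotonicity of the difference quotient gives \( \nabla \potential(x) \in \partial \potential(x) \), and letting \( t \downarrow 0 \) in the subgradient inequality along \( \pm v \) gives uniqueness; in the converse direction, the identity \( \potential'(x;v) = \sup_{b \in \partial \potential(x)} \langle b, v \rangle \) collapses to the linear map \( v \mapsto \langle a, v \rangle \), and the equi-Lipschitz difference quotients then converge uniformly on the unit sphere, which is Fr\'echet differentiability with gradient \( a \).

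One step you should make explicit in the converse direction: both the support-function identity and the local Lipschitz continuity you invoke are only valid at \( x \in \interior(\domain(\potential)) \), whereas the hypothesis is merely \( \partial \potential(x) = \{a\} \) and the lemma allows \( \potential \) to take the value \( +\infty \). You need the short finite-dimensional observation that a nonempty \emph{bounded} subdifferential forces \( x \in \interior(\domain(\potential)) \): if \( x \) were not an interior point of the domain, the normal cone \( N_{\domain(\potential)}(x) \) would contain a nonzero vector \( u \) (supporting hyperplane if the domain has interior, any orthogonal direction if it lies in a proper affine subspace), and then \( a + t u \in \partial \potential(x) \) for all \( t \ge 0 \), since \( \langle t u, y - x \rangle \le 0 \) for \( y \in \domain(\potential) \) and the subgradient inequality is vacuous elsewhere; this contradicts the singleton assumption. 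With that remark inserted, the interiority needed for both the directional-derivative formula and the Lipschitz/equicontinuity upgrade is justified, and the rest of your argument goes through as written.
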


Second, the convex conjugate, or Legendre-Fenchel conjugate, is defined for any function \( \potential \colon \R^d \to \R \cup \{ + \infty \} \) as
\begin{equation}
	\label{eq:my}
	\potential^\ast(y) = \sup_{x \in \R^d} \langle x , y \rangle - \potential(x), \quad y \in \R^d.
\end{equation}
By considering $f+\iota_U$, this definition extends to functions \( \potential \colon \convexset \to \R \cup \{ + \infty \} \).

We recall the following standard facts about the convex conjugate, stated here without proof (see~\cite[Part E]{HirLem01} for details).

\begin{lemma}
	\label{lem:biconjugate}
	If \( \potential \colon \R^d \to \R \cup \{ + \infty \} \) is convex and lower semi-continuous, then \( \potential^{\ast \ast} = \potential \).
\end{lemma}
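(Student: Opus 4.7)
The plan is to establish the two inequalities $f^{\ast\ast} \le f$ and $f^{\ast\ast} \ge f$ separately, where the first is immediate and the second is a consequence of the geometric Hahn--Banach theorem applied to the epigraph of $f$.

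For the easy direction, I would observe that by the definition of $f^\ast$, for every $x, y \in \R^d$ we have $f^\ast(y) \ge \langle x, y \rangle - f(x)$, which after rearranging gives $\langle x, y \rangle - f^\ast(y) \le f(x)$. Taking the supremum over $y \in \R^d$ on the left-hand side yields $f^{\ast\ast}(x) \le f(x)$ for all $x$. This direction requires no hypothesis on $f$ beyond the convention that suprema over the empty set are $-\infty$; if $f \equiv +\infty$ the identity holds trivially, so I would next assume $f$ is proper.

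For the reverse inequality, I would argue by contradiction: suppose there exist $x_0 \in \R^d$ and $t \in \R$ with $f^{\ast\ast}(x_0) < t < f(x_0)$. Because $f$ is convex and lower semi-continuous, the epigraph $\mathrm{epi}(f) = \{(x,s) \in \R^d \times \R : s \ge f(x)\}$ is closed and convex, and the point $(x_0, t)$ lies outside it. By the Hahn--Banach separation theorem applied to the closed convex set $\mathrm{epi}(f)$ and the compact set $\{(x_0,t)\}$, there exist $(a,b) \in \R^d \times \R$ and $c \in \R$ with
\begin{equation}
\langle a, x \rangle + b s \ge c > \langle a, x_0 \rangle + b t \qquad \text{for all } (x,s) \in \mathrm{epi}(f).
\end{equation}
Since $s$ can be taken arbitrarily large on the epigraph, $b \ge 0$.

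The cleanest case is $b > 0$: normalizing to $b=1$ and plugging $s = f(x)$ yields $\langle -a, x\rangle - f(x) \le -c$ for all $x$, so $f^\ast(-a) \le -c$, whence
\begin{equation}
f^{\ast\ast}(x_0) \ge \langle -a, x_0 \rangle - f^\ast(-a) \ge \langle -a, x_0 \rangle + c > t,
\end{equation}
contradicting $f^{\ast\ast}(x_0) < t$. The main obstacle is the degenerate case $b = 0$, where the separating hyperplane is vertical and we only obtain $\langle a, x \rangle \ge c > \langle a, x_0 \rangle$ for $x \in \mathrm{dom}(f)$, which does not by itself bound $f^{\ast\ast}(x_0)$. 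I would resolve this by exploiting properness: since $f$ is proper, convex, and lower semi-continuous, $f^\ast$ is also proper, so pick any $y_0$ with $f^\ast(y_0) < \infty$, giving the affine minorant $f(x) \ge \langle y_0, x \rangle - f^\ast(y_0)$. Adding a large multiple $\lambda > 0$ of the degenerate separation to this affine minorant produces the strict inequality
\begin{equation}
f(x) + \langle \lambda a - y_0, x \rangle \ge \lambda c - f^\ast(y_0) > \lambda \langle a, x_0 \rangle + t - \langle y_0, x_0 \rangle
\end{equation}
for $\lambda$ sufficiently large, which is a nondegenerate separation of the same form as the $b > 0$ case and yields the contradiction by the same computation. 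Combining both cases gives $f^{\ast\ast} \ge f$, completing the proof.
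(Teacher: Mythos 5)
Your proof is correct, and it is the standard Fenchel--Moreau argument: the easy inequality \( \potential^{\ast\ast} \le \potential \) from the definition, then strict separation of a point \( (x_0,t) \) strictly below the epigraph, with the nonvertical case handled directly and the vertical case repaired by adding a large multiple of the vertical separation to an affine minorant. The paper itself does not prove this lemma --- it is stated as a standard fact with a citation to Hiriart-Urruty and Lemar\'echal --- so there is no in-paper argument to compare against; your write-up is essentially the textbook proof (e.g.\ the one in Brezis or in the cited reference). Both cases check out: in the \( b>0 \) case your normalization gives \( \potential^\ast(-a)\le -c \) and hence \( \potential^{\ast\ast}(x_0)>t \); in the \( b=0 \) case your combined inequality is exactly a nonvertical separation with slope \( \lambda a - y_0 \) and offset \( \lambda c - \potential^\ast(y_0) \), and the same computation yields \( \potential^{\ast\ast}(x_0)\ge \langle y_0-\lambda a, x_0\rangle - \potential^\ast(y_0)+\lambda c > t \) for \( \lambda \) large, since \( c - \langle a, x_0\rangle>0 \). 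The only step you take on faith is that \( \potential^\ast \) is proper when \( \potential \) is proper, convex and lower semi-continuous; this is itself a separation-based fact (equivalently, the existence of a continuous affine minorant of \( \potential \)), but it is not circular: it follows by separating a point \( (x_1,t_1) \) with \( x_1\in\domain(\potential) \) and \( t_1<\potential(x_1) \) from the epigraph, where the vertical case is impossible because \( x_1 \) lies in the domain, so the resulting hyperplane is nonvertical and directly produces \( y_0 \) with \( \potential^\ast(y_0)<\infty \). If you want the argument fully self-contained, insert that two-line remark where you invoke properness of \( \potential^\ast \); otherwise the proof stands as written, including the degenerate case \( \potential\equiv+\infty \), which you correctly dispose of at the outset.
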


\begin{lemma}
	\label{lem:conj-lipschitz}
	Let \( \convexset \) be a closed, convex set.
	If \( f \colon \convexset \to \R \) is \( \mu \)-strongly convex, then \( \domain(f^\ast) = \R^d \), and \( \nabla f^\ast \) is \( \mu^{-1} \)-Lipschitz:
	$$
	\| \nabla f^\ast(x) - \nabla f^\ast(y)\|_2 \le \frac1\mu \|x-y\|_2\,, \qquad \forall\, x,y \in \R^d\,.
	$$
\end{lemma}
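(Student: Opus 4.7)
The plan is to work with the extended function $\tilde f := f + \iota_U$, which is a proper, lower semicontinuous, $\mu$-strongly convex function on all of $\R^d$ (strong convexity transfers trivially to the extension because any strict convex combination of points where $\iota_U$ is finite lies in $U$ by convexity, and the inequality is vacuous otherwise). Note that $\tilde f^* = f^*$ by definition of $f^*$ on $U$. I will then establish the result in four steps.

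First, I would show $\operatorname{dom}(f^*) = \R^d$. Since $\tilde f$ is $\mu$-strongly convex, the function $h(x) := \tilde f(x) - \tfrac{\mu}{2}\|x\|_2^2$ is proper and convex on $\R^d$, hence admits an affine minorant: there exist $a \in \R^d$, $b \in \R$ with $h(x) \ge \langle a, x\rangle + b$ for all $x$. Substituting back gives $\tilde f(x) \ge \tfrac{\mu}{2}\|x\|_2^2 + \langle a, x\rangle + b$, so for every $y \in \R^d$,
\begin{equation}
\langle x, y\rangle - \tilde f(x) \le \langle y - a, x\rangle - \tfrac{\mu}{2}\|x\|_2^2 - b,
\end{equation}
a concave quadratic in $x$, whose supremum is finite. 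Hence $f^*(y) < \infty$.

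Second, I would argue that for each $y \in \R^d$ the supremum defining $f^*(y)$ is attained at a unique point $x^*(y) \in U$. Attainment follows because the objective is upper semicontinuous, coercive (its upper bound from the previous step tends to $-\infty$ as $\|x\|_2 \to \infty$), and $U$ is closed. Uniqueness follows because $x \mapsto \tilde f(x) - \langle x, y\rangle$ is itself $\mu$-strongly convex and hence has a unique minimizer. Since $f^*$ is convex as a supremum of affine functions, and $\partial f^*(y) = \{x : y \in \partial \tilde f(x)\}$ reduces to the singleton $\{x^*(y)\}$, this forces $f^*$ to be differentiable everywhere with $\nabla f^*(y) = x^*(y)$.

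Third, and this is the main step, I would derive the Lipschitz estimate. Fix $y_1, y_2 \in \R^d$ and set $x_i := \nabla f^*(y_i)$, so $y_i \in \partial \tilde f(x_i)$ and $x_i \in U$. The subgradient form of $\mu$-strong convexity applied at $x_1$ with subgradient $y_1$ and test point $x_2 \in U$, and symmetrically, yields
\begin{align}
\tilde f(x_2) &\ge \tilde f(x_1) + \langle y_1, x_2 - x_1\rangle + \tfrac{\mu}{2}\|x_2 - x_1\|_2^2,\\
\tilde f(x_1) &\ge \tilde f(x_2) + \langle y_2, x_1 - x_2\rangle + \tfrac{\mu}{2}\|x_1 - x_2\|_2^2.
\end{align}
Adding the two inequalities cancels the function values and gives $\mu\|x_1 - x_2\|_2^2 \le \langle y_1 - y_2, x_1 - x_2\rangle$, and Cauchy--Schwarz then yields $\|x_1 - x_2\|_2 \le \mu^{-1}\|y_1 - y_2\|_2$, which is the desired Lipschitz bound.

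The only real subtlety is the second step: making sure that, even when $U$ has boundary and hence $\partial \tilde f(x)$ at boundary points is enlarged by the normal cone to $U$, the argument for $\nabla f^*(y) = x^*(y)$ still goes through. This is handled by noting that uniqueness of the maximizer $x^*(y)$ (from strong convexity of $\tilde f$) directly forces $\partial f^*(y)$ to be a singleton via the inversion rule $x \in \partial f^*(y) \iff y \in \partial \tilde f(x)$, which holds in full generality for proper lower semicontinuous convex functions (see Lemma~\ref{lem:biconjugate} and standard Legendre--Fenchel duality).
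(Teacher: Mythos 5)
Your proposal is correct. Note that the paper itself does not prove this lemma at all: it is listed among the ``standard facts about the convex conjugate, stated here without proof'' with a pointer to Hiriart-Urruty and Lemar\'echal, Part E. What you supply is the standard self-contained argument behind that citation, and it is sound: (i) writing $\tilde f=f+\iota_U$ and using that $\tilde f-\tfrac{\mu}{2}\|\cdot\|_2^2$ is proper convex, hence has an affine minorant, gives a quadratic lower bound on $\tilde f$ and therefore $\domain(f^\ast)=\R^d$; (ii) strong convexity of $x\mapsto \tilde f(x)-\langle x,y\rangle$ gives a unique maximizer $x^\ast(y)\in U$, and the inversion rule $x\in\partial f^\ast(y)\iff y\in\partial \tilde f(x)$ (the paper's Lemma~\ref{lem:sub-der-correspondence}) turns this into a singleton subdifferential, hence differentiability of $f^\ast$ with $\nabla f^\ast(y)=x^\ast(y)$; (iii) adding the two subgradient strong-convexity inequalities yields $\mu\|x_1-x_2\|_2^2\le\langle y_1-y_2,x_1-x_2\rangle$, and Cauchy--Schwarz gives the $\mu^{-1}$-Lipschitz bound. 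This is exactly the strong-convexity/co-coercivity duality one would find in the reference, so nothing is gained or lost relative to the paper beyond making the argument explicit.

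One small point to tighten: you assert that $\tilde f=f+\iota_U$ is lower semicontinuous, which is what your attainment argument in step two and the two-sided inversion rule require, but this is not automatic from the lemma's hypotheses (a real-valued convex $f$ on a closed $U$ can jump up at boundary points of $U$). This is harmless: either note that in all applications in the paper $f$ is continuous (indeed $C^2$) on the closed set $U$, so $\tilde f$ is closed, or replace $\tilde f$ by its lower semicontinuous hull, which leaves $f^\ast$ unchanged and preserves $\mu$-strong convexity, after which your argument goes through verbatim.
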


\begin{lemma}
	\label{lem:sub-der-correspondence}
	Denote by \( \potential \colon \R^d \to \R \cup \{ +\infty \} \) a lower semi-continuous and convex function.
	Then, for \( x,y \in \R^d \),
	\begin{equation}
		\label{eq:mm}
		\potential^\ast(y) = \langle x , y \rangle - f(x)
		\iff y \in \partial \potential(x) \iff x \in \partial \potential^\ast(y).
	\end{equation}
\end{lemma}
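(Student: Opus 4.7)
The plan is to prove Lemma~\ref{lem:sub-der-correspondence} in two steps, each exploiting the Fenchel--Young inequality $\potential^\ast(y) + \potential(x) \ge \langle x,y\rangle$, which is immediate from the definition $\potential^\ast(y) = \sup_{z} \langle z,y\rangle - \potential(z)$ applied with $z = x$. The key observation is that the first stated equivalence is purely a bookkeeping of when equality holds in Fenchel--Young, while the second follows by symmetry once one applies Lemma~\ref{lem:biconjugate}.

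For the first equivalence $\potential^\ast(y) = \langle x,y\rangle - \potential(x) \iff y \in \partial \potential(x)$, I would argue as follows. Assume the equality; then for every $z \in \R^d$, the defining supremum gives $\langle x,y\rangle - \potential(x) = \potential^\ast(y) \ge \langle z, y\rangle - \potential(z)$, which rearranges to $\potential(z) \ge \potential(x) + \langle y, z-x\rangle$, i.e.\ $y \in \partial\potential(x)$. Conversely, if $y \in \partial\potential(x)$, then $\langle z,y\rangle - \potential(z) \le \langle x,y\rangle - \potential(x)$ for every $z$, so taking the supremum yields $\potential^\ast(y) \le \langle x,y\rangle - \potential(x)$; the reverse inequality is Fenchel--Young, so equality must hold.

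The second equivalence follows by symmetry: since $\potential$ is lower semi-continuous and convex, Lemma~\ref{lem:biconjugate} gives $\potential^{\ast\ast} = \potential$, and $\potential^\ast$ is itself convex and lower semi-continuous (as a supremum of affine functions). Applying the first equivalence to the pair $(\potential^\ast, \potential^{\ast\ast})$ with the roles of $x$ and $y$ swapped yields
\begin{equation}
\potential^{\ast\ast}(x) = \langle x,y\rangle - \potential^\ast(y) \iff x \in \partial \potential^\ast(y),
\end{equation}
and substituting $\potential^{\ast\ast} = \potential$ and rearranging gives exactly $\potential^\ast(y) = \langle x,y\rangle - \potential(x) \iff x \in \partial \potential^\ast(y)$, which chains with the first equivalence to complete the proof.

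There is no real obstacle here; the only subtlety is ensuring that the biconjugate identity $\potential^{\ast\ast} = \potential$ applies, which is guaranteed by the hypotheses of lower semi-continuity and convexity through Lemma~\ref{lem:biconjugate}. One should also briefly note that the argument is trivially valid at points $x$ with $\potential(x) = +\infty$: the equality $\potential^\ast(y) = \langle x,y\rangle - \potential(x)$ then fails for any finite $y$ (and $\partial\potential(x)$ is empty by convention), so the equivalence is vacuous on such points.
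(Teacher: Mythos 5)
Your argument is correct and complete. Note, however, that the paper itself gives no proof of this lemma: it is listed among "standard facts about the convex conjugate, stated here without proof," with a pointer to Part E of Hiriart-Urruty and Lemar\'echal, so there is no paper proof to compare against line by line. Your route is the standard one: the first equivalence is exactly the characterization of equality in the Fenchel--Young inequality, proved directly from the definitions of \( \potential^\ast \) and \( \partial \potential \) (and, as you observe, it uses neither convexity nor lower semi-continuity), while the second equivalence is obtained by applying the first to \( \potential^\ast \) and invoking \( \potential^{\ast\ast} = \potential \) from Lemma~\ref{lem:biconjugate}, which is precisely where the hypotheses enter. Two small points of hygiene: the rearrangement of \( \potential^{\ast\ast}(x) = \langle x , y \rangle - \potential^\ast(y) \) into \( \potential^\ast(y) = \langle x , y \rangle - \potential(x) \) tacitly uses finiteness of the quantities involved, and your closing remark about the case \( \potential(x) = +\infty \) implicitly assumes \( \potential \) is proper (if \( \potential \equiv +\infty \) the equivalence degenerates); both are harmless here, since the lemma is only ever applied in the paper to finite, strongly convex potentials, but it would be cleaner to state properness explicitly.
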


\begin{lemma}
	\label{lem:conjugate-order}
	If \( f, g \colon \convexset \to \R \), \( f(x) \leq g(x) \) for all \( x \in \convexset \), then \( f^\ast(y) \geq g^\ast(y) \) for all \( y \in \R \).
\end{lemma}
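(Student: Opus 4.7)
The plan is to prove this by directly unwinding the definition of the Legendre--Fenchel conjugate. Recall from equation~\eqref{eq:my} that
\[
h^\ast(y) = \sup_{x \in \R^d} \langle x, y \rangle - h(x),
\]
where, via the indicator-function convention described after~\eqref{eq:mn}, I may regard $f$ and $g$ as extended real-valued functions on all of $\R^d$ by setting them to $+\infty$ outside of $\convexset$. The crucial feature is that $h$ enters the expression inside the supremum with a \emph{minus} sign, so pointwise inequalities reverse when passing to $h^\ast$.

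Concretely, I would proceed in two very short steps. First, from $f(x) \le g(x)$ for all $x \in \convexset$ (and $f(x) = g(x) = +\infty$ off $\convexset$ under the extension convention), deduce that
\[
\langle x, y \rangle - f(x) \ge \langle x, y \rangle - g(x) \qquad \text{for every } x \in \R^d \text{ and every } y \in \R^d.
\]
Second, take the supremum over $x$ on both sides, invoking the elementary monotonicity of the supremum: if $\phi(x) \ge \psi(x)$ pointwise, then $\sup_x \phi(x) \ge \sup_x \psi(x)$. This immediately yields $f^\ast(y) \ge g^\ast(y)$ for every $y$.

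There is no real obstacle here: the lemma is essentially a direct reading-off from the definition of the conjugate, and the only ``content'' is the order-reversal caused by the minus sign. Its utility lies downstream---for example in the proof of Proposition~\ref{prop:gigli}, where a pointwise upper quadratic envelope $f(z) \le q_x(z)$ for a candidate potential $f$ is converted, via precisely this lemma, into the pointwise lower bound $f^\ast(\nabla f_0(x)) \ge q_x^\ast(\nabla f_0(x))$ that drives the stability estimate.
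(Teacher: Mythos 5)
Your proof is correct and is exactly the standard argument: the paper states this lemma without proof (deferring to \cite[Part E]{HirLem01}), and the order-reversal via the minus sign inside the supremum, together with monotonicity of the supremum, is precisely the intended reasoning. Your handling of the extension by $\iota_{\convexset}$ is also consistent with the paper's convention in \eqref{eq:mn}--\eqref{eq:my}.
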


\begin{lemma}
	\label{lem:g}
	Given two functions \( f, g : \convexset \to \R \) for a compact set \( \convexset \).
	Abusing notation, denote the convex conjugate of the function \( f + \iota_{\convexset} \) by \( f^\ast \).
	Then, $\| f^\ast - g^\ast \|_{L^\infty(\R^d)} \le \| f - g \|_{L^\infty(\convexset)}$.
\end{lemma}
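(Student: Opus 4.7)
The plan is to argue directly from the definition of the Legendre--Fenchel conjugate, exploiting the fact that taking a supremum of a family of functions is a nonexpansive operation in the sup-norm. No structural property of $f$ or $g$ (such as convexity or continuity) is actually needed; the sole role of the compact set $\convexset$ is to guarantee that the $L^\infty(\convexset)$-norm is well-defined and that $\iota_\convexset$ restricts the sup in the conjugate to $\convexset$.

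Concretely, set $\varepsilon := \|f - g\|_{L^\infty(\convexset)}$. Since $f + \iota_\convexset$ and $g + \iota_\convexset$ agree with $f$ and $g$ respectively on $\convexset$ and are $+\infty$ off $\convexset$, the definition \eqref{eq:my} yields
\begin{equation}
f^\ast(y) = \sup_{x \in \convexset}\bigl[\langle x,y\rangle - f(x)\bigr], \qquad g^\ast(y) = \sup_{x \in \convexset}\bigl[\langle x,y\rangle - g(x)\bigr],
\end{equation}
for every $y \in \R^d$. From $f(x) \ge g(x) - \varepsilon$ for all $x \in \convexset$, we get $\langle x,y\rangle - f(x) \le \langle x,y\rangle - g(x) + \varepsilon$ uniformly in $x \in \convexset$, and taking the supremum over $x \in \convexset$ gives $f^\ast(y) \le g^\ast(y) + \varepsilon$. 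The reverse inequality follows by interchanging the roles of $f$ and $g$.

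Thus $|f^\ast(y) - g^\ast(y)| \le \varepsilon$ for every $y \in \R^d$, and taking the supremum over $y$ yields the claimed bound $\|f^\ast - g^\ast\|_{L^\infty(\R^d)} \le \|f - g\|_{L^\infty(\convexset)}$. There is no genuine obstacle here; the only minor subtlety worth flagging is the abuse of notation whereby $f^\ast$ really denotes $(f + \iota_\convexset)^\ast$, which is what justifies restricting the supremum in the conjugate to $\convexset$ rather than all of $\R^d$.
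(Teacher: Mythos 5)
Your proof is correct. The paper states this lemma without proof (deferring to the standard reference of Hiriart-Urruty and Lemar\'echal), and your argument---that the supremum over $x \in \convexset$ is nonexpansive in the sup-norm, applied to $\langle x,y\rangle - f(x) \le \langle x,y\rangle - g(x) + \|f-g\|_{L^\infty(\convexset)}$ and its symmetric counterpart---is exactly the standard one-line argument that fills this gap.
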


The next lemma provides an explicit form for the convex conjugate of a quadratic function.
It is instrumental in the stability proof for the semi-dual objective function (see Proposition~\ref{prop:gigli}).

\begin{lemma}
\label{lem:quadratic-conjugate}
	Let \( a > 0 \), \( b, t \in \R^d \), \( c \in \R \) and let \( \convexset \subseteq \R^d\) be a closed, convex set.
Define
		\begin{equation*}
			q_t(x) =\frac{a}{2} \| x-t\|_2^2 + \langle b, x-t\rangle + c +\iota_U(x)\,, \quad x \in \R^d\,.
		\end{equation*}
		Then,
		\begin{equation}
			\label{eq:eq}
			q_t^\ast(y) 
			= \frac{\| y - b \|_2^2}{2a} + \langle t, y\rangle- c - \frac{a}{2} d^2\left( \frac{y - b}{a}-t , \convexset \right),
		\end{equation}
		where \( d^2 \) denotes the squared distance $d^2(x, \convexset) = \inf_{y \in \convexset} \| x - y \|_2^2$.
\end{lemma}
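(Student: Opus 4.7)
The plan is an elementary computation via completing the square. Starting from the definition of the Legendre--Fenchel conjugate,
\begin{equation}
q_t^{\ast}(y) = \sup_{x \in \R^d}\bigl\{\langle x,y\rangle - q_t(x)\bigr\} = \sup_{x \in U}\Bigl\{\langle x,y\rangle - \tfrac{a}{2}\|x-t\|_2^2 - \langle b, x-t\rangle - c\Bigr\},
\end{equation}
I would first isolate the $x$-independent part. Writing $\langle x,y\rangle = \langle x-t,y\rangle + \langle t,y\rangle$ lets us factor out the constant $\langle t,y\rangle - c$, leaving the constrained problem
\begin{equation}
\sup_{x\in U}\Bigl\{\langle x-t,\; y-b\rangle - \tfrac{a}{2}\|x-t\|_2^2\Bigr\}
\end{equation}
to analyze, which combines the $\langle x-t,y\rangle$ and $-\langle b, x-t\rangle$ terms into a single inner product against $y-b$.

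The main step is completing the square. The pointwise algebraic identity
\begin{equation}
\langle x-t,\; y-b\rangle - \tfrac{a}{2}\|x-t\|_2^2 = \tfrac{\|y-b\|_2^2}{2a} - \tfrac{a}{2}\,\Bigl\|x - \bigl(t + \tfrac{y-b}{a}\bigr)\Bigr\|_2^2
\end{equation}
displays the objective as a constant plus a concave quadratic in $x$ centered at $t + (y-b)/a$. Taking the supremum over $x \in U$ turns the second term into $-\tfrac{a}{2}$ times the squared Euclidean distance from the center to $U$:
\begin{equation}
\sup_{x\in U}\Bigl\{-\tfrac{a}{2}\bigl\|x - \bigl(t+\tfrac{y-b}{a}\bigr)\bigr\|_2^2\Bigr\} = -\tfrac{a}{2}\,d^2\Bigl(t + \tfrac{y-b}{a},\, U\Bigr),
\end{equation}
by the very definition of $d(\cdot,U)$. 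Assembling the three contributions $\langle t,y\rangle - c$, $\tfrac{\|y-b\|_2^2}{2a}$, and $-\tfrac{a}{2}d^2(\,\cdot\,,U)$ gives the stated identity.

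There is no real obstacle here; the proof is pure bookkeeping. The only point requiring attention is the sign in the argument of $d^2$: the computation above produces $t + (y-b)/a$ as the point whose distance to $U$ appears, which is how the formula is used in the proof of Proposition~\ref{prop:gigli} (where one checks that this point lies in $\tilde\Omega_P$ so that the distance term vanishes). Closedness and convexity of $U$ are used only implicitly, to guarantee that $d(\cdot,U)$ is the correct expression for the infimum; no regularity of $f$ or $U$ beyond what is stated in the hypothesis is required.
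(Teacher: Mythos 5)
Your computation is correct and follows essentially the paper's own route: complete the square in $x-t$ and recognize the supremum of the resulting concave quadratic over $U$ as $-\tfrac{a}{2}$ times a squared distance (the paper merely runs the same calculation after a preliminary shift reducing to the $t$-free case). The substantive point is the sign you flag: your derivation yields $-\tfrac{a}{2}d^2\bigl(t+\tfrac{y-b}{a},U\bigr)$, and this is the \emph{correct} formula, whereas the lemma as stated, and the paper's proof of it, have $-\tfrac{a}{2}d^2\bigl(\tfrac{y-b}{a}-t,U\bigr)$. The slip originates in the paper's shift step, where $q_t(x)=q(x-t)$ is asserted with $q$ carrying the indicator $\iota_{U+t}$; matching the indicator $\iota_U(x)$ appearing in $q_t$ requires $\iota_{U-t}$ instead, which turns the distance term into $d^2\bigl(\tfrac{y-b}{a},U-t\bigr)=d^2\bigl(\tfrac{y-b}{a}+t,U\bigr)$, i.e., your version. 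A one-dimensional sanity check (e.g.\ $a=1$, $b=c=0$, $U=\{0\}$, $t\neq 0$, where $q_t^\ast(y)=-t^2/2$) confirms the plus sign. One caveat about your closing remark: in the proof of Proposition~\ref{prop:gigli} the paper in fact invokes the minus-sign version, writing $\tfrac{\nabla f_0(x)-\nabla f(x)}{L}-x$ as the point projected onto $\tilde\Omega_P$, so it is not accurate to say the plus version is ``how the formula is used'' there; rather, with your corrected sign the relevant point becomes $x+\tfrac{\nabla f_0(x)-\nabla f(x)}{L}$, which is exactly the point for which membership in $\tilde\Omega_P$ follows naturally from $\Omega_P+M^{-1}B_1\subseteq\tilde\Omega_P$ (for $L$ large enough), so the correction leaves the downstream stability argument intact.
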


\begin{proof}
Note first that for any $y \in \R^d$,
\begin{equation}
\label{EQ:ess}
q_t^*(y)=\sup_{x \in \R^d} \langle x , y \rangle - q(x-t)=\sup_{x \in \R^d} \langle x+t , y \rangle - q(x)=q^*(y)+ \langle t, y\rangle\,,
\end{equation}
where 
$$
q(x)=\frac{a}{2} \| x\|_2^2 + b^\top x + c +\iota_{U+t}(x)
$$
Moreover, 
	\begin{equation}
		\label{eq:es}
		q^\ast(y) = \sup_{x \in \R^d} \langle x , y \rangle - q(x)
		= - \inf_{x \in \R^d} \{ q(x) - \langle x , y \rangle \}.
	\end{equation}
  Writing
	$$
	q(x) - \langle x , y \rangle= \frac{a}{2} \big\| x - \frac{y-b}{a} \big\|_2^2 - \frac{\| y - b \|_2^2}{2 a} + c+ \iota_{U+t}(x)
	$$
	we see that the infimum in~\eqref{eq:es} is achieved by the projection $\bar x$ of $(y-b)/a$ onto the closed convex set $U+t$. Moreover, the value of the objective at $\bar x$ is given by
	\begin{align}
	q(\bar x) - \langle \bar x , y \rangle
	= {} & \frac{a}{2}d^2\left( \frac{y-b}{a}, \convexset+t \right)- \frac{\| y - b \|_2^2}{2a} + c\\
	= {} & \frac{a}{2}d^2\left( \frac{y-b}{a}-t, \convexset \right)- \frac{\| y - b \|_2^2}{2a} + c
	\end{align}
Together with~\eqref{EQ:ess} and~\eqref{eq:es}, this completes the proof of the lemma. 
\end{proof}

\section{Wavelets and function spaces}
\label{sec:wavelets}

In this section, we give a brief overview of the different function spaces used in the paper and how their norms can be related to their wavelet coefficients.

First, we recall the definition of Hölder and Sobolev spaces.
Let  \( \Omega \subseteq \R^d \) be a closed set with non-empty interior and denote by \( C_u(\Omega) \) the set of uniformly continuous functions on \( \Omega \). The \emph{H\"older} norms for any $\potential \in C_u(\Omega)$ are defined as follows. For any integer $k\ge 0$ and \( \potential \) that admits continuous derivatives up to order \( k \), define
\begin{equation}
\| f \|_{C^k(\Omega)} := \sum_{| \flat | \le k} \| \partial^\flat f \|_{L^\infty(\Omega)},
\end{equation}
and for any real number $\alpha>0$, define
\begin{equation}
\| f \|_{C^\alpha(\Omega)} := \| f \|_{C^{\lfloor \alpha \rfloor}(\Omega)} + \sum_{| \flat | = \lfloor \alpha \rfloor} \sup_{x \neq y, x,y \in \Omega} \frac{| \partial^\flat f(x) - \partial^\flat f(y) |}{\| x - y \|_2^{\alpha - \lfloor \alpha \rfloor}}.
\end{equation}
The space \( C^\alpha(\Omega) \) is then defined as the set of functions for which this norm is finite.
For a vector-valued function \( \tmap \colon \Omega \to \R^d \), \( T = (T_1, \dots, T_d)^\top \), we similarly define the norms as the sum over the individual norms,
\begin{equation}
	\label{eq:sk}
	\| T \|_{C^\alpha(\Omega)} := \sum_{i = 1}^d \| T_i \|_{C^\alpha(\Omega)}.
\end{equation}

Similarly, for an integer \( k \ge 0 \) and \( p \in [1, \infty] \), the Sobolev norms are defined as
\begin{equation}
	\label{eq:sj}
	\| f \|_{W^{k,p}(\Omega)} := \sum_{|\flat| \le k} \| \partial^\flat \potential \|_{L^p(\Omega)},
\end{equation}
where the derivative \( \partial^\flat \) is to be understood in the sense of distributions and the Sobolev space \( W^{m,2}(\Omega) \) is the space of all functions for which this norm is finite.
This definition can be extended to \( \alpha > 0 \), for example by defining \( W^{\alpha,2}(\Omega) \) as the Besov space \( B^\alpha_{2,2}(\Omega) \), which we define shortly.

Next, we define wavelet bases and Besov spaces, following the definitions given in \cite[Section 3]{Tri06}, and we refer the reader to this reference for further details on wavelets.
Denote by \( \psi_{\mathfrak{M}} \in C^{r}(\R) \) and \( \psi_{\mathfrak{F}} \in C^{r}(\R) \) a compactly supported wavelet and scaling function, respectively, for example Daubechies wavelets.
This implies that
\begin{equation}
	\label{eq:hr}
	\psi^j_k =
	\left\{
	\begin{aligned}
		&\psi_{\mathfrak{F}}(x - k),& \quad &j = 0, k \in \mathbb{Z},\\
		&2^{(j-1)/2} \psi_{\mathfrak{M}}(2^{j-1} x - k),& \quad &j \in \mathbb{N}, k \in \mathbb{Z},
	\end{aligned}
	\right.
\end{equation}
is an orthonormal basis of \( L^2(\R) \).
To obtain a basis of \( L^2(\R^d) \), for \( j \in \mathbb{N} \), set
\begin{equation}
	\label{eq:hs}
	G^j = \{\mathfrak{F}, \mathfrak{M}\}^{d} \setminus \{(\mathfrak{F}, \dots, \mathfrak{F})\}, \quad G^0 = \{(\mathfrak{F}, \dots, \mathfrak{F})\},
\end{equation}
and for \( g \in G^j \cup G^0 \),
\begin{equation}
	\label{eq:ht}
	\Psi_k^g(x) = \prod_{i=1}^n \psi_{g_i}(x_i - k_i), \quad k \in \mathbb{Z}^d.
\end{equation}
This gives the orthonormal basis
\begin{equation}
	\label{eq:hv}
	\Psi^{j, g}_k =
	\left\{
	\begin{aligned}
		&\Psi^g_k(x), &\quad &j = 0, \, g \in G^0, \, k \in \mathbb{Z}^d\\
		&2^{(j-1)d/2} \Psi^g_k(2^{j-1} x), & \quad  & j \in \mathbb{N}, \, g \in G^j, \, k \in \mathbb{Z}^d.
	\end{aligned}
	\right.
\end{equation}

Wavelet coefficients, defined as the expansion coefficients with respect to the above basis for \( L^2(\R^d) \) functions, can be used to characterize the so-called \emph{Besov spaces}:
Let \( 1 \le p, q \le \infty \), \( s \ge 0 \), and let the regularity of the above wavelets satisfy

\begin{equation}
	\label{eq:hw}
	r > s \vee \left( \frac{2d}{p} + \frac{d}{2} - s \right).
\end{equation}
With this, if \( f \) admits the wavelet representation
\begin{equation}
	\label{eq:hy}
	f = \sum_{j=0}^{\infty} \sum_{g \in G^{j}} \sum_{k \in \mathbb{Z}^d} \gamma_k^{j, g} \Psi_k^{j, g},
\end{equation}
we define the family of norms \( \| \, . \, \|_{B^s_{p, q}} \) by 
\begin{align}
	\label{eq:fe}
	\| f \|_{B_{p,q}^s}
	:= {} & \| f \|_{B_{p, q}^s(\R^d)}
	:= \| \gamma \|_{b_{p,q}^s}\\
	:= {} & \left[ \sum_{j=0}^{\infty} 2^{j q (s + \frac{d}{2} - \frac{d}{p})} \sum_{g \in G^j} \left( \sum_{k \in \mathbb{Z}^d} | \gamma_k^{j, g} |^p \right)^{q/p} \right]^{1/q}.
\end{align}
In particular, by the orthonormality of the wavelets \( \{ \Psi_{k}^{j, g} \}_{j, g, k} \),
\begin{equation}
	\label{eq:la}
	\| f \|_{B_{2,2}^0} = \| \gamma \|_{\ell^2} = \| f \|_{L^2(\R^d)}.
\end{equation}
For a bounded Lipschitz domain \( \Omega \subseteq \R^d \), we can define the Besov spaces \( B^{s}_{p, q}(\Omega) \) by restrictions of functions on \( \R^d \) with norm
\begin{equation}
	\label{eq:sl}
	\| \potential \|_{B^{s}_{p, q}(\Omega)} := \inf \{ \| \dualpotential \|_{B^{s}_{p, q}(\R^d)} : \left. \dualpotential \right|_{\Omega} = \potential \}.
\end{equation}
\noeqref{eq:fe}

Note that since we work with compactly supported wavelets, for a function \( f \) with compact support, only a finite number of wavelet coefficients are non-vanishing in \eqref{eq:hy}.
In particular, for non-zero wavelet coefficients are contained in a set \( \Lambda(j) \) with \( | \Lambda(j) | \lesssim 2^{jd} \).

The following theorem collects some basic properties of Besov spaces and their relationship to H\"older and Sobolev spaces.

\begin{theorem}
	\label{thm:restrict-extend}
	Let \( \Omega \subseteq \R^d \) be a bounded Lipschitz domain.
	\begin{enumerate}
		\item \cite[Proposition 4.3.6]{GinNic16} Let \( s, s' > 0 \), \( 1 \le p, p', q, q' \le \infty \).
			Then, the following inclusions hold in the sense of continuous embeddings:
			\begin{enumerate}[label=(\alph*)]
				\item \( B^{s}_{p, q} \subseteq B^s_{p, q'} \), if \( q \le q' \),
				\item \( B^{s}_{p, q} \subseteq B^{s'}_{p, q'} \), if \( s > s' \),
			\end{enumerate}
		\item \cite[Theorem 1.122]{Tri06}
			Let \( k \in \mathbb{N} \).
			Then, $W^{k,2}(\Omega) = B_{2,2}^k(\Omega)$ and we define
			\( W^{\alpha,2}(\Omega) := B_{2,2}^\alpha(\Omega) \) for \( \alpha > 0 \) not an integer.
			
		\item \cite[Theorem 1.122]{Tri06}, \cite[Proposition 4.3.20]{GinNic16} Let \( \alpha > 0 \).
		If \( \alpha \) is not integer, then
			\begin{equation}
				\label{eq:fw}
				C^\alpha(\Omega) = B_{\infty, \infty}^\alpha(\Omega),
			\end{equation}
			If \( \alpha \) is integer, then
						\begin{equation}
				\label{eq:hu}
				B^\alpha_{1, \infty}(\Omega) \subseteq C^\alpha(\Omega) \subseteq B^\alpha_{\infty, \infty}(\Omega).
			\end{equation}
	\end{enumerate}
\end{theorem}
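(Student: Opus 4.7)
The plan is to leverage the wavelet characterization of the Besov norm in \eqref{eq:fe}: each of the three parts ultimately reduces to a statement about the sequence \( \{\gamma^{j,g}_k\} \) of wavelet coefficients, which provides a unifying framework for relating the spaces involved.

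For the embeddings in part (i), I would argue directly from the sequence-space definition. The inclusion \( B^s_{p,q} \subseteq B^s_{p,q'} \) for \( q \le q' \) is immediate from the elementary embedding \( \ell^q(\mathbb{N}) \subseteq \ell^{q'}(\mathbb{N}) \) applied to the sequence \( a_j := 2^{j(s + d/2 - d/p)} \bigl( \sum_g (\sum_k |\gamma^{j,g}_k|^p)^{q/p} \bigr)^{1/q} \) indexed by the scale \( j \). For the embedding \( B^{s}_{p,q} \subseteq B^{s'}_{p, q'} \) when \( s > s' \), one factors \( 2^{j(s' + d/2 - d/p)} = 2^{-j(s - s')} \cdot 2^{j(s + d/2 - d/p)} \) inside the \( \ell^{q'} \) sum over \( j \) and bounds the result using a geometric series (or Hölder's inequality when \( q' < \infty \)), exploiting that the exponential decay factor \( 2^{-j(s - s')} \) is summable.

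For part (ii), the identification \( W^{k,2}(\Omega) = B^k_{2,2}(\Omega) \) follows from Parseval's identity for the orthonormal wavelet basis. By \eqref{eq:la}, the \( L^2 \) norm coincides with the \( \ell^2 \) norm of the wavelet coefficients, and for each multi-index \( \flat \) with \( |\flat| \le k \) the \( L^2 \) norm of \( \partial^\flat f \) is comparable to a weighted \( \ell^2 \) norm of the coefficients with scale-weight \( 2^{j|\flat|} \); summing over \( |\flat| \le k \) yields the equivalence with the \( B^k_{2,2} \) norm. This step reduces to the fact that differentiating a wavelet at scale \( j \) multiplies it essentially by a factor of order \( 2^j \), a core piece of Littlewood--Paley theory. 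Part (iii) for non-integer \( \alpha \) uses the classical characterization: \( f \in C^\alpha \) iff \( \sup_{j,g,k} 2^{j(\alpha + d/2)} |\gamma^{j,g}_k| < \infty \), which matches the \( B^\alpha_{\infty,\infty} \) definition after accounting for the \( d/2 \) normalization shift. For integer \( \alpha \), \( B^\alpha_{\infty,\infty} \) is the Zygmund class, strictly larger than classical \( C^\alpha \), which yields only the inclusions in \eqref{eq:hu}: the right inclusion is obtained as above, while the left inclusion uses that coefficients in the small space \( B^\alpha_{1,\infty} \) decay fast enough to be absolutely summable after rescaling, giving uniform bounds on classical derivatives.

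The main obstacle is not conceptual but bookkeeping: the equivalences between weighted wavelet-coefficient sequence norms and derivative-based norms are classical but technically intricate, and the constants depend delicately on the smoothness of the wavelet basis (condition \eqref{eq:hw}) and on the Lipschitz regularity of \( \Omega \), the latter entering through a Calderón-type extension-restriction argument that ensures that the restriction-based norm \eqref{eq:sl} on \( \Omega \) behaves compatibly with the wavelet characterization on \( \R^d \). Since the statement is a compendium of standard results, the proof in practice reduces to invoking \cite{Tri06} and \cite{GinNic16} while checking that the assumption on the wavelet regularity parameter \( r \) is compatible with the smoothness indices appearing in each subpart.
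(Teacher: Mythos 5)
Your sketches of parts (i), (ii), and the non-integer case of (iii) are the standard wavelet-coefficient arguments, and at the level of detail the paper itself operates on they are adequate: the paper offers no proof of this theorem at all, it simply cites \cite{GinNic16} and \cite{Tri06} and adds only the remark that the one-dimensional proof of \eqref{eq:hu} extends to $d$ dimensions; your closing observation that the proof "reduces to invoking" those references is therefore exactly the paper's stance.

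The one genuine gap is your justification of the left inclusion in \eqref{eq:hu}. Membership in $B^{\alpha}_{1,\infty}$ only gives $\sup_j 2^{j(\alpha+\frac{d}{2}-d)}\sum_{g,k}|\gamma^{j,g}_k|<\infty$; the third index $q=\infty$ supplies no decay across scales, so nothing is "absolutely summable after rescaling." Concretely, the scale-$j$ block of $\partial^{\flat}f$ with $|\flat|=\alpha$ is bounded at a point only by $2^{j(\alpha+\frac{d}{2})}\max_k|\gamma^{j,g}_k|\lesssim 2^{jd}\|f\|_{B^{\alpha}_{1,\infty}}$, which diverges in $j$; and indeed the inclusion is false as printed (one nonzero coefficient per scale of size $2^{-j(\alpha+\frac{d}{2}-d)}$ gives a bounded $B^{\alpha}_{1,\infty}$-norm but infinite $B^{\alpha}_{\infty,\infty}$-norm, hence the function is not in $C^{\alpha}$; note also that on a bounded domain $B^{\alpha}_{\infty,\infty}\subseteq B^{\alpha}_{1,\infty}$, so $B^{\alpha}_{1,\infty}$ cannot embed into $C^{\alpha}$). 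The mechanism you describe—summability over scales yielding uniform convergence of the differentiated wavelet series—is precisely the proof of $B^{\alpha}_{\infty,1}(\Omega)\subseteq C^{\alpha}(\Omega)$, which is the statement actually proved in \cite{GinNic16}; the indices in \eqref{eq:hu} appear to be transposed, and the downstream use in Lemma~\ref{lem:wavelet-approximation} still goes through with the corrected embedding, since taking $p'=\infty$, $q'=1$, $s'=2$ in \eqref{eq:cd} satisfies $s-d/p>s'-d/p'$ because $\alpha>1$ and yields the same rate $2^{-J(\alpha-1)}$. So your write-up asserts the key summability for the wrong pair of indices: the argument is right for $(\infty,1)$ and cannot be repaired for $(1,\infty)$.
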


Note that	the proof in \cite{GinNic16} of \eqref{eq:hu} is given in 1D, but extends naturally to arbitrary dimensions by summability of the wavelet coefficients.

A very useful tool to handle function spaces on domains is the availability of extension operators that preserve the norms.
The following theorem guarantees the existence of such an extension operator that is the same among all Besov spaces.
This allows us to characterize Besov functions on domains via the wavelet coefficients of their extensions.

\begin{theorem}
	[{Extension operator, \cite[Theorem 1.105]{Tri06}, \cite{Ryc99}}]
	\label{lem:extension}
	Let \( \Omega \subseteq \R^d \) be a bounded Lipschitz domain.
	Then, there exists a linear extension operator \( \extension \) that preserves \( L^2 \)-, \( C^{\beta} \)-, and \( W^{\alpha,2} \)-norms.
	That is, there exists an extension operator \( \extension = \extension(\Omega) \) such that for \( A \in \{ L^2, C^{\beta}, W^{\alpha,2} : \beta > 0, \alpha > 0 \}\), there exist constants \( C = C(\Omega, A) \) with 
	\begin{equation}
		\label{eq:nz}
		\| \extension \potential \|_{A(\R^d)} \le C\| \potential \|_{A(\Omega)}  , \quad \text{and} \quad \left.\extension \potential \right|_{\Omega} = \potential, \quad \text{for } \potential \in A.
	\end{equation}
\end{theorem}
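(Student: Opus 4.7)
This theorem is cited from Triebel and Rychkov, so my proposal is really a sketch of how Rychkov's universal extension is constructed. The plan is to reduce from a general Lipschitz domain to the half-space case via local charts, and then exhibit a single extension operator on the half-space that simultaneously preserves the norms of all Besov spaces (which, by Theorem~\ref{thm:restrict-extend}, subsume $L^2$, $C^\beta$, and $W^{\alpha,2}$).

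First I would reduce to the half-space. Since $\Omega$ is a bounded Lipschitz domain, its boundary $\partial \Omega$ admits a finite open cover $\{U_1, \ldots, U_N\}$ such that, after a rigid motion, $\Omega \cap U_i$ is the epigraph of a Lipschitz function $\omega_i \colon \R^{d-1} \to \R$; let $U_0 \Subset \Omega$ be an interior open set so that $\overline{\Omega} \subseteq \bigcup_{i=0}^N U_i$. Fix a smooth partition of unity $\{\varphi_i\}_{i=0}^N$ subordinate to this cover. For $i \geq 1$, the bi-Lipschitz map $\Phi_i(x', x_d) = (x', x_d - \omega_i(x'))$ straightens the boundary so that $\Phi_i(\Omega \cap U_i) \subseteq \R^d_+ := \{x_d > 0\}$. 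The extension is then defined as
\begin{equation}
\extension f := \varphi_0 f + \sum_{i=1}^{N} \varphi_i \cdot \bigl(\bigl(\extension_{+}(f \circ \Phi_i^{-1})\bigr) \circ \Phi_i\bigr),
\end{equation}
where $\extension_+$ is a universal extension operator on $\R^d_+$. That multiplication by smooth $\varphi_i$ and composition with bi-Lipschitz $\Phi_i$ preserve Besov norms (at the expense of a constant depending on $\Omega$) is a standard property of the scale and can be invoked off-the-shelf.

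The heart of the matter is therefore the construction of $\extension_+$. The classical Stein-Hestenes approach
\begin{equation}
\extension_+ f(x', x_d) = \int_1^\infty f(x', -\lambda x_d)\, \psi(\lambda)\, \diff\lambda, \quad x_d < 0,
\end{equation}
with $\psi$ chosen to satisfy moment conditions $\int_1^\infty \lambda^k \psi(\lambda)\, \diff\lambda = (-1)^k$ for $k = 0, 1, \ldots, K$, handles $C^\beta$ and $W^{\alpha, 2}$ provided $K$ is taken larger than the required smoothness. However, to obtain a single operator that works for \emph{all} $\alpha$ and $\beta$ simultaneously, I would adopt Rychkov's construction: use a Calder\'on-type reproducing formula built from a pair of functions $(\phi_0, \phi)$ with $\phi$ having infinitely many vanishing moments, supported in $\R^d_+$, to write $f = \sum_{j \geq 0} \phi_j * \psi_j * f$ on $\R^d_+$; since each $\phi_j$ is supported in $\R^d_+$, the convolutions $\phi_j * (\psi_j * f)$ are well-defined on all of $\R^d$ and the sum yields the extension. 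The key is that this decomposition simultaneously controls the Littlewood-Paley characterization of $B^s_{p,q}(\R^d)$ for every $s \in \R$, $1 \leq p, q \leq \infty$.

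The main obstacle is verifying that Rychkov's operator indeed bounds the $B^s_{p,q}(\R^d)$-norm of the extension by a constant times the $B^s_{p,q}(\Omega)$-norm uniformly across scales; this requires showing that convolutions with $\phi_j$ supported in a cone inside $\R^d_+$ act boundedly on the Peetre maximal functions used to define the Besov norms. Once this is established for $\R^d_+$, the transfer to $\Omega$ via the partition of unity and local Lipschitz charts is routine. Combining everything, $\extension$ has the claimed boundedness property on $L^2$, $C^\beta$, and $W^{\alpha,2}$ simultaneously, with constants depending on $\Omega$ (through the Lipschitz charts and partition of unity) and on the space, but otherwise universal.
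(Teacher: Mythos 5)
The paper does not prove this statement at all: it is imported verbatim from Triebel and Rychkov, so your sketch has to be measured against the construction in those references. The second half of your outline (a Calder\'on-type reproducing formula with kernels supported in a cone, giving one operator for the whole Besov scale) does identify the core of Rychkov's argument. The problem is the first half.

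The genuine gap is your reduction to the half-space. You straighten each boundary chart with the shear $\Phi_i(x',x_d)=(x',x_d-\omega_i(x'))$ and assert that composition with a bi-Lipschitz map preserves the norms in question ``off the shelf''. That invariance holds only up to smoothness one: since $\omega_i$ is merely Lipschitz, the operator $f\mapsto f\circ\Phi_i^{-1}$ is not bounded on $C^\beta$, $W^{\alpha,2}$, or $B^s_{p,q}$ once $\beta,\alpha,s>1$ (diffeomorphism invariance of these scales requires charts whose smoothness exceeds $s$). Concretely, $f(x)=x_d$ is smooth with all norms controlled on $\Omega\cap U_i$, yet $f\circ\Phi_i^{-1}(y)=y_d+\omega_i(y')$ is only Lipschitz in $y'$ and in general fails to lie in $C^2$ or $W^{2,2}$. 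This is not a removable technicality here: the paper applies the theorem to potentials of regularity $C^{\beta+1}$ and $W^{\alpha+1,2}$ with $\alpha,\beta>1$, and the impossibility of flattening a Lipschitz boundary without destroying higher-order smoothness is precisely why extension from Lipschitz (as opposed to $C^k$) domains is nontrivial --- if your reduction were legitimate, Hestenes--Seeley reflection on the half-space would already finish the proof and neither Stein's nor Rychkov's construction would be needed. The actual argument localizes using only rotations and smooth cutoffs, reducing to special Lipschitz domains $\Omega=\{x_d>\omega(x')\}$, and builds the extension \emph{directly} on such an epigraph: the kernels are supported in a cone $-K$ whose aperture is adapted to the Lipschitz constant of $\omega$ (not merely in a half-space), so that $x+K\subseteq\Omega$ for every $x\in\Omega$ and $\phi_j*f$ samples $f$ only inside $\Omega$. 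Two smaller imprecisions: compactly supported kernels cannot have infinitely many vanishing moments (their Fourier transform is entire), so the cancellation in Rychkov's formula must be arranged more carefully than you state; and for this paper a single operator bounded on the three fixed spaces $L^2$, $C^\beta$, $W^{\alpha,2}$ would suffice, so universality in $s$ is not actually needed. With the flattening step removed and replaced by the direct cone construction on epigraph domains, the remainder of your outline matches the cited proof.
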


We conclude this section by a lemma that provides uniform control of a function by its wavelet coefficients. It is useful to control bracketing entropy numbers.

\begin{lemma}
	\label{lem:e}
	Let \( f \in V_J(\R^d) \) with wavelet coefficients \( \gamma^{j,g}_k \) for compactly supported mother and father wavelets.
	Then, $\| f \|_\infty \lesssim 2^{Jd/2} \| \gamma \|_\infty \le 2^{Jd/2} \| \gamma \|_2.$
\end{lemma}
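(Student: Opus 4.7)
The plan is a direct pointwise estimate on the wavelet expansion, exploiting two features: compact support of the father/mother wavelet, and the explicit $2^{jd/2}$-scaling of the basis functions $\Psi^{j,g}_k$ in \eqref{eq:hv}.

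First I would write $f = \sum_{j=0}^{J} \sum_{g \in G^j} \sum_{k \in \Z^d} \gamma^{j,g}_k \Psi^{j,g}_k$ and fix $x \in \R^d$. Because $\psi_{\mathfrak F}$ and $\psi_{\mathfrak M}$ are compactly supported, the tensor product $\Psi^g_k$ has support contained in $k + [-A, A]^d$ for some constant $A$ depending only on the wavelet. Consequently, for each fixed scale $j$ and type $g$, the set of indices $k \in \Z^d$ for which $\Psi^{j,g}_k(x) \ne 0$ has cardinality bounded by a constant $C_0$ independent of $j$, $g$ and $x$ (it is the number of lattice translates whose dyadic-rescaled support contains $x$). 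Moreover, \eqref{eq:hv} and the trivial bound $\|\Psi^g\|_\infty \le \|\psi_{\mathfrak F}\|_\infty^d \vee \|\psi_{\mathfrak M}\|_\infty^d =: C_1$ yield $|\Psi^{j,g}_k(x)| \le C_1 2^{(j-1)d/2}$ for $j \ge 1$, and $|\Psi^{0,g}_k(x)| \le C_1$.

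Combining these two observations, for every $x$,
\begin{equation}
|f(x)| \le \sum_{j=0}^{J} \sum_{g \in G^j} \sum_{k} | \gamma^{j,g}_k | \, |\Psi^{j,g}_k(x)| \le C_0 C_1 |G^0 \cup G^1| \, \|\gamma\|_\infty \sum_{j=0}^J 2^{jd/2},
\end{equation}
since $|G^j| \le 2^d$ for all $j$. The geometric sum satisfies $\sum_{j=0}^J 2^{jd/2} \le \frac{2^{(J+1)d/2}}{2^{d/2}-1} \lesssim 2^{Jd/2}$, giving $\|f\|_\infty \lesssim 2^{Jd/2} \|\gamma\|_\infty$. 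The second inequality $\|\gamma\|_\infty \le \|\gamma\|_2$ is immediate from the definition of the $\ell^\infty$ and $\ell^2$ norms.

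There is no real obstacle here; the only point requiring slight care is the uniform bound on the number of nonzero translates $k$ at each scale, which follows because the dilation factor $2^{j-1}$ in the argument of $\Psi^g$ compresses the support, so the number of lattice points $k$ with $x \in \supp \Psi^{j,g}_k$ stays bounded by $(2A+1)^d$ uniformly in $j$.
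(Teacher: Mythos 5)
Your proof is correct and follows essentially the same route as the paper's: bound \( |f(x)| \) by \( \|\gamma\|_\infty \) times the pointwise sum of \( |\Psi^{j,g}_k(x)| \), use compact support to see that only a bounded number of translates \( k \) contribute at each scale, invoke the \( 2^{jd/2} \) normalization, and sum the geometric series. The paper states these steps more tersely, but the argument is the same; your uniform count \( (2A+1)^d \) of contributing translates is exactly the ``only a finite number of \( k \)'' remark in the paper's proof.
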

\begin{proof}
	Let \( x \in \R^d \) and write
	\begin{align}
		| f(x) |
		= {} & \big| \sum_{j = 0}^J \sum_{g \in G^j, k \in \mathbb{Z}^d} \gamma^{j, g}_k \Psi^{j, g}_k (x) \big|
		\le  \| \gamma \|_\infty \sum_{j = 0}^ J \sum_{g \in G^j, k \in \mathbb{Z}^d} | \Psi^{j, g}_k (x) |\\
		\lesssim {} & \| \gamma \|_\infty \sum_{j = 0}^ J 2^{jd/2}
		 \lesssim  2^{Jd/2} \| \gamma \|_\infty,
	\end{align}
	where we used H\"older's inequality and the fact that only a finite number of \( k \) enters the summation for each fixed \( x \in \R^d \). The last inequality is trivial.
\end{proof}

\section{Metric entropy and suprema of stochastic processes}
\label{SEC:emp-process}

Here, we collect some basic results about empirical processes that are needed in the proofs.
Note that because all suprema we deal with are over subsets of finite-dimensional vector spaces, we do not consider issues of measurability in the remainder.

Denote the bracketing number of a set \( \mathcal{F} \) with respect to a norm \( \|.\| \) by $N_{[\,]}(\delta, \mathcal{F}, \|\,.\,\|)$
and define the Dudley integral as
\begin{equation}
	\label{eq:ez}
	\cD_{[\,]}(\sigma, \mathcal{F}, \|\,.\,\|) = \int_0^\sigma \sqrt{1 + \log N_{[\,]}(\delta, \mathcal{F}, \|\,.\,\|)} \, \ud \delta.
\end{equation}

\begin{theorem}
	[Bernstein chaining, { \cite[Lemma 3.4.2]{vanWel07} }]
	\label{thm:bernstein-chaining}
	Let \( \mathcal{F} \) be a class of measurable functions such that \( \E[f^2] < \sigma^2 \) and \( \| f \|_\infty \le \constbd \) for all \( f \in \mathcal{F} \).
	Then,
	\begin{equation}
		\label{eq:ex}
		\E \Big[ \sup_{f \in \mathcal{F}} | \sqrt{n}(\hat P - P) f |\Big]
		\lesssim \cD_{[\,]}(\sigma, \mathcal{F}, L_2(P)) \Big( 1 + \frac{\cD_{[\,]}(\sigma, \mathcal{F}, L_2(P))}{\sigma^2 \sqrt{n}} \constbd \Big).
	\end{equation}
\end{theorem}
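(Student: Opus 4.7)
The plan is to prove this via a bracketing chaining argument: construct nested brackets at geometrically decreasing $L_2(P)$ scales, apply Bernstein's inequality at each chaining level, and telescope. Fix the scales $\eta_k = \sigma 2^{-k}$ for $k \ge 0$, and for each $k$ let $\mathcal{B}_k$ be a minimal $L_2(P)$-bracketing cover of $\mathcal{F}$ of radius $\eta_k$, with cardinality $N_k = N_{[\,]}(\eta_k, \mathcal{F}, L_2(P))$. Because $|f| \le \constbd$ for every $f \in \mathcal{F}$, we may truncate each bracket endpoint to $[-\constbd, \constbd]$ without enlarging the bracket width. For each $f \in \mathcal{F}$, let $\pi_k f$ denote the lower endpoint of a bracket in $\mathcal{B}_k$ containing $f$ and let $\Delta_k f$ be that bracket's width, so that $|f - \pi_k f| \le \Delta_k f$ pointwise, $\|\Delta_k f\|_{L_2(P)} \le \eta_k$, and $\|\Delta_k f\|_\infty \le 2\constbd$. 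Telescope $f - \pi_0 f = \sum_{k \ge 1}(\pi_k f - \pi_{k-1} f)$, with a residual $f - \pi_K f$ that will vanish in the limit $K \to \infty$ by monotone convergence.

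At each level $k$, the increment $\pi_k f - \pi_{k-1} f$ takes at most $N_k N_{k-1} \le N_k^2$ distinct values, has $L_2(P)$-norm at most $2\eta_{k-1}$, and $L_\infty$-norm at most $4\constbd$. Applying Bernstein's inequality together with a finite union bound (the maximal inequality of Lemma 2.2.10 in van der Vaart and Wellner) yields
\begin{equation*}
	\E \max_{f \in \mathcal{F}} \left| \sqrt{n}(\hat P - P)(\pi_k f - \pi_{k-1} f) \right| \lesssim \eta_{k-1} \sqrt{1+\log N_k} + \frac{\constbd(1+\log N_k)}{\sqrt{n}},
\end{equation*}
with an analogous bound at level $0$. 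Summing the sub-Gaussian contributions and identifying the resulting sum $\sum_{k \ge 0} \eta_k \sqrt{1+\log N_k}$ as a Riemann approximation to $\int_0^\sigma \sqrt{1 + \log N_{[\,]}(\delta, \mathcal{F}, L_2(P))}\, d\delta$ produces a bound of order $\cD_{[\,]}(\sigma, \mathcal{F}, L_2(P))$, which accounts for the first term of the claimed inequality.

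The technical heart of the argument lies in the sub-exponential contribution $\frac{\constbd}{\sqrt{n}} \sum_{k \ge 1}(1+\log N_k)$: summed naively, this need not even converge, let alone match the target $\cD_{[\,]}^2 \constbd / (\sigma^2 \sqrt{n})$. The standard remedy, going back to Ossiander and systematized in van der Vaart and Wellner, is a per-level truncation: at each level, split $\pi_k f - \pi_{k-1} f$ according to whether its absolute value is at most or exceeds a threshold $a_k$, chosen so that Bernstein applied to the bounded part produces a sub-exponential contribution of the same order as the sub-Gaussian one, namely $a_k(1+\log N_k)/\sqrt{n} \asymp \eta_{k-1}\sqrt{1+\log N_k}$, i.e.\ $a_k \asymp \sqrt{n}\,\eta_{k-1}/\sqrt{1+\log N_k}$. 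The tail part contributes at most $\eta_{k-1}^2 / a_k$ in $L_1(P)$ by Chebyshev and is absorbed into the bracket widths at the preceding scale. After summing the contributions across levels and invoking Cauchy-Schwarz to relate $\sum_k \eta_{k-1}\sqrt{1+\log N_k}$ to $\cD_{[\,]}(\sigma)$ and the weighted log-entropy sum to $\cD_{[\,]}^2(\sigma)/\sigma^2$, one arrives at the stated product-form bound $\cD_{[\,]}(\sigma)\bigl(1 + \cD_{[\,]}(\sigma) \constbd/(\sigma^2\sqrt{n})\bigr)$. The delicate step is orchestrating the thresholds $a_k$ so that the tail leftover at scale $k$ can be consistently absorbed into the brackets at scale $k-1$ without introducing extra logarithmic losses; everything else in the derivation is routine chaining bookkeeping.
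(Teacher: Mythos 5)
The paper does not actually prove this statement---it is imported directly from van der Vaart and Wellner (Lemma 3.4.2)---and your sketch reconstructs essentially the argument of that cited source: nested bracketing at dyadic \(L_2(P)\) scales, Bernstein's inequality plus a finite union bound at each level, and the Ossiander-type adaptive truncation with thresholds \(a_k \asymp \sqrt{n}\,\eta_{k-1}/\sqrt{1+\log N_k}\), so the approach matches. The only small caveats are that the second term of the bound arises from the coarsest level via monotonicity of the entropy, \(\log N_{[\,]}(\sigma,\mathcal F,L_2(P)) \le \cD_{[\,]}^2(\sigma,\mathcal F,L_2(P))/\sigma^2\), rather than from Cauchy--Schwarz, and that the nesting/tail-absorption bookkeeping you defer as ``the delicate step'' is exactly the substance of the cited proof rather than routine.
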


\begin{theorem}
	[{Concentration, see \cite[Equation (5.50)]{Mas07}, going back to \cite{Bou02}}]
	Under the same assumptions as Theorem \ref{thm:bernstein-chaining},
	\label{thm:concentration}
	\begin{equation}
		\label{eq:im}
		\p \Big(\sup_{f \in \mathcal{F}} \sqrt{n} | (\hat P - P) f| \ge 2 \E[\sup_{f \in \mathcal{F}} \sqrt{n} | (\hat P - P) f|] + \sigma \sqrt{2x} + \frac{\constbd}{\sqrt{n}} x \Big) \le \exp(-x).
	\end{equation}
\end{theorem}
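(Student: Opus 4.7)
The plan is to derive the stated concentration inequality as a corollary of Bousquet's version of Talagrand's inequality for suprema of bounded empirical processes. Setting $W := \sup_{f \in \mathcal{F}} |\sum_{i=1}^n (f(X_i) - Pf)|$ so that $W = \sqrt{n} \sup_{f \in \mathcal{F}}|(\hat P - P)f|$, I note that the class $\mathcal{F}$ is uniformly bounded by $\constbd$ and has pointwise second moments bounded by $\sigma^2$, which are exactly the two inputs needed by Bousquet.

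First, I would invoke Bousquet's inequality: for every $x > 0$,
$$
\p\!\left(W \ge \E[W] + \sqrt{2 v x} + \tfrac{1}{3} \constbd \, x\right) \le e^{-x},
$$
where $v = n \sigma^2 + 2 \constbd \, \E[W]$ is the effective variance combining the summand-level variance $\sigma^2$ and a self-bounding correction $2\constbd\E[W]$. I would not reprove Bousquet; its proof goes through Ledoux's entropy/Herbst method applied to a carefully chosen exponential functional of $W$ together with a modified log-Sobolev inequality. Equivalently, one can reach the same bound via Massart's version of Talagrand's inequality.

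Second, I would rescale by $\sqrt{n}$ and apply two elementary inequalities to put the bound in the advertised form. Writing $Z_n := W/\sqrt{n}$, the fluctuation term becomes $\sqrt{2 v x}/\sqrt{n} = \sqrt{2 \sigma^2 x + 4 \constbd \, x \, \E[Z_n]/\sqrt{n}}$. Using $\sqrt{a+b} \le \sqrt{a} + \sqrt{b}$ splits this as
$$
\sqrt{2 v x}/\sqrt{n} \;\le\; \sigma \sqrt{2 x} + 2\sqrt{\constbd\, x \, \E[Z_n]/\sqrt{n}}.
$$
A single AM-GM step $2\sqrt{ab} \le a + b$, with the free parameter tuned so that $a = \E[Z_n]$ and $b = \constbd x/\sqrt{n}$, absorbs the mixed term into one extra copy of $\E[Z_n]$ plus a multiple of $\constbd x/\sqrt{n}$. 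Collecting everything with the leftover $\constbd x /(3\sqrt{n})$ from Bousquet's Bernstein term yields the bound $Z_n \le 2 \E[Z_n] + \sigma \sqrt{2 x} + C \constbd x /\sqrt{n}$ for a universal constant $C$, matching the statement (up to the numerical constant in front of $\constbd x/\sqrt{n}$, which can be sharpened by optimizing the AM-GM parameter).

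The conceptual heart of the argument, and its only real obstacle, sits inside Bousquet's inequality itself: controlling the exponential moments of $W$ with the sharp constants $\sqrt{2}$ in front of $\sigma\sqrt{x}$ and $1/3$ in front of $\constbd x$ requires the modified log-Sobolev machinery of Ledoux--Massart--Boucheron, which I would cite rather than reprove. The passage from Bousquet's form to the paper's form, by contrast, is a routine rescaling and AM-GM manipulation that only costs a factor of $2$ in front of the mean term $\E[Z_n]$; no chaining or metric entropy enters this step, since those are handled separately by Theorem \ref{thm:bernstein-chaining} when bounding $\E[Z_n]$.
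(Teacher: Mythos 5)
Your proposal matches the paper's treatment: the paper gives no proof of this statement but cites it as Massart's Equation (5.50), which is itself precisely the corollary of Bousquet's version of Talagrand's inequality that you describe, so deriving it by applying Bousquet to $\mathcal F\cup(-\mathcal F)$, rescaling by $\sqrt n$, and absorbing the cross term $2\sqrt{M x\,\E[Z_n]/\sqrt n}$ via AM--GM is exactly the intended route. One small caveat: once the factor $2$ in front of $\E[Z_n]$ is fixed, the AM--GM weight must satisfy $\varepsilon\le 1$, so the last term cannot be pushed below $\bigl(\tfrac{1}{\varepsilon}+\tfrac13\bigr)\tfrac{Mx}{\sqrt n}\ge \tfrac43\tfrac{Mx}{\sqrt n}$ --- your claim that optimizing the parameter sharpens it to the displayed coefficient $1$ does not quite work, but this discrepancy is an absolute constant and immaterial to every use of the theorem in the paper.
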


\begin{lemma}
	[Bracketing numbers from \( L^\infty \) covering numbers]
	\label{lem:h}
	Let \( \measureone \) be a probability measure and \( A \subseteq \mathcal{F} \) be a set of functions with $N(A, L^\infty(\measureone), \delta/2) \le \phi(\delta)$.
	Then, $N_{[]}(A, L^2(\measureone), \delta) \le \phi(\delta)$.

	Moreover, if for every function in the original class \( f \in \mathcal{F} \), $\E[f^2] < \sigma^2$ and $\| f \|_{L^\infty(\measureone)} < \constbd$, 
	then every bracket \( [f_1, f_2] \) in a \( \delta \)-cover above (note that \( f_1 \) and \( f_2 \) need not be members of \( \mathcal{F} \)) satisfies
	\begin{equation}
		\label{eq:he}
		\E[f_{j}^2] < 2 \sigma^2 + \frac{1}{2} \delta^2, \quad \| f_j \|_{L^\infty(\nu)} < \constbd + \delta/2, \quad j \in \{1, 2\}.
	\end{equation}
\end{lemma}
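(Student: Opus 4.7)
The plan is to convert an $L^\infty(\measureone)$ cover of $A$ directly into an $L^2(\measureone)$ bracketing cover by padding each cover element above and below by $\delta/2$. This is essentially a one-line construction; the only thing to check carefully is that the bracket size is measured correctly and that the inflated functions keep the stated moment and sup-norm control.

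Concretely, I would start with an $L^\infty(\measureone)$ cover $\{f_1, \dots, f_N\}$ of $A$ at radius $\delta/2$ with $N \le \phi(\delta)$, and define, for each $i$, the bracket $[\ell_i, u_i] := [f_i - \delta/2,\, f_i + \delta/2]$. For any $f \in A$ there exists $i$ with $\|f - f_i\|_{L^\infty(\measureone)} \le \delta/2$, so $\measureone$-almost everywhere $\ell_i \le f \le u_i$. Because $\measureone$ is a probability measure,
\begin{equation}
\|u_i - \ell_i\|_{L^2(\measureone)} = \|\delta \cdot \mathbbm{1}\|_{L^2(\measureone)} = \delta,
\end{equation}
so $\{[\ell_i, u_i]\}_{i=1}^N$ is an $L^2(\measureone)$ $\delta$-bracketing cover of $A$, yielding $N_{[\,]}(A, L^2(\measureone), \delta) \le N \le \phi(\delta)$.

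For the additional bounds on the bracket endpoints, note that each $\ell_i$ and $u_i$ equals $f_i \pm \delta/2$ with $f_i \in A \subseteq \mathcal{F}$, so the hypotheses $\E[f_i^2] < \sigma^2$ and $\|f_i\|_{L^\infty(\measureone)} < \constbd$ apply. The sup-norm claim is immediate from the triangle inequality, giving $\|f_j\|_{L^\infty(\measureone)} \le \|f_i\|_{L^\infty(\measureone)} + \delta/2 < \constbd + \delta/2$. The $L^2$ claim follows from the elementary inequality $(a+b)^2 \le 2a^2 + 2b^2$ applied pointwise:
\begin{equation}
\E[f_j^2] = \E[(f_i \pm \delta/2)^2] \le 2 \E[f_i^2] + 2 (\delta/2)^2 < 2 \sigma^2 + \tfrac{1}{2} \delta^2.
\end{equation}
There is no genuine obstacle here; the only subtlety worth flagging is that $\ell_i, u_i$ are in general not elements of $\mathcal{F}$, which is why the lemma explicitly states ``$f_1$ and $f_2$ need not be members of $\mathcal{F}$'' and why one needs the inflated constants $2\sigma^2 + \delta^2/2$ and $\constbd + \delta/2$ rather than $\sigma^2$ and $\constbd$ directly.
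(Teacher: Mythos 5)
Your proposal is correct and follows essentially the same argument as the paper: pad each $L^\infty(\measureone)$ cover center by $\pm\delta/2$ to form brackets whose $L^2(\measureone)$ width is at most their $L^\infty$ width $\delta$, then bound the endpoint moments via $(a+b)^2 \le 2a^2+2b^2$ and the triangle inequality. No gaps to flag.
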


\begin{proof}
	Denote by \( \{f_1, \dots, f_N\} \) the centers of a minimal \( \delta/2 \)-covering of \( \mathcal{F} \) in \( L^\infty(\measureone) \).
	Let \( i \in \{1, \dots, N\} \).
	Then, each \( \delta/2 \) ball around \( f_i \) is contained in the bracket \( [f_i - \delta/2, f_i + \delta/2] \).
	Moreover, the \( L^2(\measureone) \) diameter of the above bracket is bounded by its \( L^\infty(\measureone) \) diameter, which is \( \delta \), so the collection of those brackets yields the desired covering with brackets.

	The rest of the lemma follows from
	\begin{equation}
		\label{eq:hf}
		\E[(f \pm \delta/2)^2] \le 2 \E[f^2] + \frac{1}{2} \delta^2,\qquad \| f \pm v\delta/2 \|_{L^\infty(\nu)} \le \| f \|_{L^\infty(\nu)} + \frac{\delta}{2}.
	\end{equation}
\end{proof}

The following lemma can be shown by directly specifying a grid or by a volume argument such as \cite[Proposition 4.2.12]{Ver18}.

\begin{lemma}
	[Covering numbers for norm balls]
	\label{lem:f}
	Fix \( p \in \mathbb{N} \) and denote by $B_\infty(A)$ the $\ell^\infty$ ball of $\R^d$ with radius $A$.
	Then, $N(B_\infty(A), \|\, . \,\|_\infty, \delta) \le \left( {3A}/{\delta} \right)^d$.
\end{lemma}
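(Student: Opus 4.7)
The plan is to use the standard volume-comparison argument, since the bound has the classical factor $3^d$ that comes out of this approach naturally. I would first reduce covering to packing via the well-known inequality $N(K, d, \delta) \le P(K, d, \delta)$, where $P$ denotes the packing number: any maximal $\delta$-separated subset of $K$ is automatically a $\delta$-covering of $K$.

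To bound the packing number, I would let $\{x_1, \dots, x_N\} \subseteq B_\infty(A)$ be any $\delta$-separated set in $\|\cdot\|_\infty$. Then the $\ell^\infty$ balls $B_\infty(x_i, \delta/2)$ are pairwise disjoint, and each is contained in the Minkowski sum $B_\infty(A) + B_\infty(\delta/2) = B_\infty(A + \delta/2)$. Comparing Lebesgue volumes then gives
\begin{equation}
N \cdot \delta^d = N \cdot \mathrm{vol}(B_\infty(0, \delta/2)) \le \mathrm{vol}(B_\infty(A + \delta/2)) = (2A + \delta)^d,
\end{equation}
so that $N \le \bigl( (2A + \delta)/\delta \bigr)^d$.

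Finally, I would conclude by noting that for the nontrivial regime $\delta \le A$ one has $2A + \delta \le 3A$, so the bound simplifies to $(3A/\delta)^d$ as claimed; in the regime $\delta > A$ a single ball already covers $B_\infty(A)$, so the inequality is trivial unless $\delta > 3A$, in which case the covering number is $1$ and the stated bound becomes vacuous. No substantial obstacle arises here: the argument is entirely elementary, and the only minor care needed is to ensure the volume comparison uses the $\ell^\infty$ ball (whose volume is $(2r)^d$ for radius $r$) rather than the Euclidean ball, so that the ratio comes out to a clean $((2A+\delta)/\delta)^d$. An alternative, equally short approach would be to directly exhibit the uniform grid $\delta \mathbb{Z}^d \cap [-A, A]^d$, which yields the slightly sharper bound $\lceil A/\delta \rceil^d$, but the volume argument aligns more cleanly with the constant $3$ appearing in the statement.
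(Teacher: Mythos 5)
Your volume-comparison argument is correct and is essentially the proof the paper has in mind: the paper gives no details and simply notes the lemma follows from a direct grid construction or from a volume argument such as \cite[Proposition 4.2.12]{Ver18}, which is exactly what you carried out (packing-to-covering reduction, disjoint $\ell^\infty$ balls of radius $\delta/2$ inside $B_\infty(A+\delta/2)$, then $N \le ((2A+\delta)/\delta)^d \le (3A/\delta)^d$ for $\delta \le A$, with the large-$\delta$ regime handled separately). Two cosmetic remarks only: the alternative grid you mention should have spacing $2\delta$ (not $\delta$) to give $\lceil A/\delta\rceil^d$ centers of $\ell^\infty$-radius $\delta$, and, as you correctly observe, the stated inequality is only meaningful (and only used) in the regime where $\delta$ is small compared to $A$.
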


\begin{lemma}
	[{\cite[Lemma 10.3]{Rau10b}}]
	\label{lem:i}
	For \( \alpha > 0 \),
	\begin{equation}
		\label{eq:hl}
		\int_0^\alpha \sqrt{\log(1 + t^{-1})} \, \ud t \le \alpha \sqrt{1+\log(1+\alpha^{-1})}\,.	\end{equation}
\end{lemma}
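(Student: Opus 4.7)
\bigskip

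\noindent\textbf{Proof plan for Lemma~\ref{lem:i}.}
The plan is to apply Cauchy--Schwarz to reduce the bound on $\int_0^\alpha \sqrt{\log(1+t^{-1})}\,dt$ to a bound on the ordinary (unsquared) integral, which we can compute exactly by antiderivatives, and then finish with the elementary inequality $\log(1+\alpha)\le \alpha$.

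Concretely, first I would use Cauchy--Schwarz on the interval $(0,\alpha)$ to write
\begin{equation}
\int_0^\alpha \sqrt{\log(1+t^{-1})}\,dt \le \sqrt{\alpha}\,\sqrt{\int_0^\alpha \log(1+t^{-1})\,dt}.
\end{equation}
Next I would evaluate the inner integral explicitly. Splitting $\log(1+t^{-1}) = \log(1+t) - \log t$ and using the antiderivatives $\int \log(1+t)\,dt = (1+t)\log(1+t) - (1+t)$ and $\int \log t\,dt = t\log t - t$, together with $\lim_{t\to 0^+} t\log t = 0$, I obtain
\begin{equation}
\int_0^\alpha \log(1+t^{-1})\,dt \;=\; (1+\alpha)\log(1+\alpha) - \alpha\log\alpha \;=\; \alpha\log(1+\alpha^{-1}) + \log(1+\alpha).
\end{equation}

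Combining the two displays, the claim reduces to verifying
\begin{equation}
\alpha\log(1+\alpha^{-1}) + \log(1+\alpha) \;\le\; \alpha\bigl[1+\log(1+\alpha^{-1})\bigr],
\end{equation}
which is exactly $\log(1+\alpha)\le \alpha$, valid for all $\alpha>-1$. There is no real obstacle here: the only point requiring a bit of care is handling the boundary behavior at $t=0$ of $t\log t$ when computing the antiderivative. This is a very short proof that fits the stated constants exactly, with no slack to spare in the final step.
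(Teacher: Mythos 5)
Your proof is correct: the Cauchy--Schwarz step, the exact evaluation $\int_0^\alpha \log(1+t^{-1})\,dt = \alpha\log(1+\alpha^{-1})+\log(1+\alpha)$, and the final reduction to $\log(1+\alpha)\le\alpha$ all check out. The paper itself gives no proof and simply cites Rauhut's Lemma 10.3, and your argument is precisely the standard one from that source, so there is nothing further to add.
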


\section{Tools for lower bounds}
\label{SEC:lower-bounds-tools}

For the convenience of the reader, in this section we restate the standard tools we use in the proof of Theorem \ref{thm:lb} to establish lower bounds based on Fano's inequality and the Varshamov-Gilbert Lemma, taken from \cite[Theorem 2.9, Lemma 2.9, Theorem 2.2]{Tsy09}.

\begin{theorem}
	[Lower bounds from multiple hypotheses]
	\label{thm:lower-bounds-master}
	\cite[Theorem 2.9]{Tsy09}
	Let \( K \ge 2 \), \( \Theta = \{T_0, \dots, T_K\} \) a collection of hypotheses, and let \( d \) be a pseudometric, i.e., a bi-variate function on \( \Theta \) such that
	\begin{enumerate}[label=(\alph*)]
		\item \( d(T_j, T_k) \ge 0 \);
		\item \( d(T_j, T_j) = 0 \);
		\item \( d(T_j, T_k) = d(T_k, T_j)  \);
		\item \( d(T_j, T_\ell) \le d(T_j, T_k) + d(T_k, T_\ell) \);
	\end{enumerate}
	for all \( 0 \le j, k, \ell \in \{0, \dots, K\} \).
	
	Suppose \( \Theta \) fulfills:
	\begin{enumerate}
			\item \( d(T_j, T_k) \ge 2s > 0 \), for all \( 0 \le j < k \le K \);
			\item \( P_j \ll P_0 \), for all \( j \in [K] \), and
				\begin{equation}
					\label{eq:th}
					\frac{1}{K} \sum_{j = 1}^{K} D(P_j \| P_0) \le C \log K
				\end{equation}
			with \( 0 < C < 1/8 \), where \( P_j = P_{T_j} \) denotes a probability distribution associated with every \( T_j \) for all \( j = 0,1, \dots, K\).
		\end{enumerate}
			Then,
			\begin{equation}
				\label{eq:ti}
				\inf_{\hat T} \sup_{T \in \Theta} P_{T} (d(\hat T, T) \ge s) \ge \frac{\sqrt{K}}{1 + \sqrt{K}} \left(1 - 2 C - \sqrt{\frac{2 C}{\log K}} \right) > 0.
			\end{equation}
\end{theorem}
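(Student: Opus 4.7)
The plan is to reduce the minimax estimation problem over $\Theta$ to a multiple--hypothesis testing problem, and then to apply a sharp form of Fano's inequality. First I would introduce the minimum--distance test: given any estimator $\hat T$ based on the data, set $\psi^{\ast}(\hat T) = \argmin_{k \in \{0, \dots, K\}} d(\hat T, T_k)$ (breaking ties arbitrarily). The pseudometric axioms and the separation condition $d(T_j, T_k) \ge 2s$ for $j \ne k$ imply, via the triangle inequality, that on the event $\{d(\hat T, T_j) < s\}$ one has $d(\hat T, T_k) > s \ge d(\hat T, T_j)$ for every $k \ne j$, so $\psi^\ast(\hat T) = j$. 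Contrapositively $\{\psi^\ast(\hat T) \ne j\} \subseteq \{d(\hat T, T_j) \ge s\}$, and hence
\begin{equation}
    \sup_{T \in \Theta} P_T(d(\hat T, T) \ge s) \ge \max_{0 \le j \le K} P_j(\psi^\ast(\hat T) \ne j) \ge \inf_{\psi} \max_{0 \le j \le K} P_j(\psi \ne j),
\end{equation}
where the final infimum runs over all tests $\psi$ taking values in $\{0,\dots,K\}$ based on the same data. This is the standard reduction that transfers the problem to lower bounding the Bayes error of $(K{+}1)$--ary hypothesis testing.

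Next I would apply Fano's inequality in the sharp form used by Tsybakov and Birg\'e. The hypothesis $\frac{1}{K}\sum_{j=1}^{K} D(P_j \| P_0) \le C \log K$ with $C < 1/8$ is exactly what is needed to make Fano--type bounds informative. The cleanest route uses $P_0$ as a reference measure rather than the uniform mixture: for every test $\psi$, write $P_j(\psi \ne j) = \E_0[\mathbf{1}\{\psi \ne j\} \, dP_j/dP_0]$, then control the likelihood ratios using the KL bound together with Markov's inequality on the log--likelihood ratio. Combining this with a Chebyshev--type argument on the average error $\frac{1}{K+1}\sum_j P_j(\psi \ne j)$ yields the quantitative bound
\begin{equation}
    \inf_\psi \max_{0 \le j \le K} P_j(\psi \ne j) \ge \frac{\sqrt{K}}{1+\sqrt{K}}\left(1 - 2C - \sqrt{\tfrac{2C}{\log K}}\right).
\end{equation}
Substituting into the previous display gives \eqref{eq:ti} and completes the proof.

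The conceptual obstacle in this argument is essentially nil; any textbook Fano bound delivers the qualitative statement that $\inf_{\hat T} \sup_T P_T(d(\hat T, T) \ge s) > 0$ whenever $C < 1$. The only real work is bookkeeping to obtain the explicit prefactor $\tfrac{\sqrt K}{1+\sqrt K}$ and the sharp constant $1/8$: this requires carefully choosing the reference measure (here $P_0$ rather than the uniform mixture, which would produce a weaker constant) and running the Markov step on a truncated log--likelihood ratio, as in the proof of Theorem~2.5 in \cite{Tsy09}. If one is willing to settle for a weaker but still non--trivial lower bound of the form $c(C) > 0$, a direct application of classical Fano to the $(K{+}1)$--ary testing problem suffices and no delicate estimate is needed.
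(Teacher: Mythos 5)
The paper offers no proof of this statement at all---it is imported verbatim as a standard tool from \cite[Theorem~2.9]{Tsy09}---and your sketch follows exactly the argument of that cited source: reduction of estimation to \((K{+}1)\)-ary testing via the minimum-distance test (your triangle-inequality step \(\{\psi^\ast \neq j\} \subseteq \{d(\hat T, T_j) \ge s\}\) is carried out correctly), followed by the sharp Fano-type bound that uses \(P_0\) as reference measure and a Markov/truncated likelihood-ratio argument. The quantitative prefactor \(\frac{\sqrt K}{1+\sqrt K}\bigl(1-2C-\sqrt{2C/\log K}\bigr)\) that you defer to ``bookkeeping'' is precisely the content of Tsybakov's Proposition~2.3 with the choice \(\tau = 1/\sqrt K\), so your route coincides with the (cited) proof and nothing essential is missing beyond what you explicitly delegate to the reference.
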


\begin{lemma}
	[Varshamov-Gilbert lemma]
	\label{lem:varshamov-gilbert}
	\cite[Lemma 2.9]{Tsy09}
	Let \( D \ge 8 \).
	There exists a subset \( \tau^{(0)}, \dots, \tau^{(K)} \) of \( \{0, 1\}^{D} \) such that \( \tau^{(0)} = (0, \dots, 0) \),
	\begin{equation}
		\label{eq:tj}
		\sum_{\ell = 1}^{D} \1 (\tau^{(j)}_l, \tau^{(k)}_l) \ge \frac{D}{8}, \quad \text{for all } 0 \le j < k \le K,
	\end{equation}
	and
	\begin{equation}
		\label{eq:tk}
		K \ge 2^{D/8}.
	\end{equation}
\end{lemma}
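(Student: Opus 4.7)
The plan is to apply the probabilistic method. I would draw $K$ random vectors $\tilde\tau^{(1)}, \dots, \tilde\tau^{(K)}$ independently and uniformly at random in $\{0,1\}^D$ and, together with the fixed $\tau^{(0)} = (0,\dots,0)$, show that for $K = \lfloor 2^{D/8}\rfloor$ the probability that all $\binom{K+1}{2}$ pairs have Hamming distance at least $D/8$ is strictly positive; the lemma then follows by extracting any deterministic realization that meets the threshold.

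First, I would observe that for any pair $0 \le j < k \le K$, the Hamming distance $\rho_{jk} := \sum_{\ell=1}^D \1(\tilde\tau^{(j)}_\ell \neq \tilde\tau^{(k)}_\ell)$ is distributed as a sum of $D$ independent $\mathrm{Bern}(1/2)$ random variables with mean $D/2$: for $j,k \ge 1$ this is because the XOR of two independent uniform bits is $\mathrm{Bern}(1/2)$, while for $j = 0$ it reduces to the Hamming weight of a uniform vector. Hoeffding's inequality then yields $\p(\rho_{jk} \le D/8) = \p(\rho_{jk} - D/2 \le -3D/8) \le \exp(-9D/32)$, and a union bound over all pairs gives
\[ \p\Big(\min_{0 \le j < k \le K} \rho_{jk} \le D/8\Big) \;\le\; \binom{K+1}{2} \exp(-9D/32) \;\le\; 4 \cdot 2^{D/4} \exp(-9D/32). \]

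To conclude, I would verify that this rightmost expression is strictly below $1$ for $D \ge 8$, which reduces to checking $2 \log 2 + (D \log 2)/4 < 9D/32$; since $9/32 - (\log 2)/4 \approx 0.108 > 0$ and $D \ge 8$, this holds with room to spare, so a deterministic realization with the required pairwise separation exists. The main obstacle is purely of a bookkeeping nature, namely ensuring that Hoeffding's constant $9/32$ comfortably exceeds $(\log 2)/4$ so the union bound accommodates a set of cardinality $2^{D/8}$; this is immediate from the numerical values. Alternative routes via sharper Chernoff bounds for binomials, or a greedy construction that successively removes the Hamming ball of radius $D/8$ around each previously chosen vector (bounding its size by a standard entropy estimate), would yield the same conclusion with essentially identical constants.
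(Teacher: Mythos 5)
The paper does not prove this lemma at all --- it is restated verbatim from \cite[Lemma 2.9]{Tsy09} for the reader's convenience --- so your argument has to stand on its own. Your route (i.i.d.\ uniform vectors, Hoeffding for each pairwise Hamming distance, union bound over pairs) is a perfectly legitimate way to obtain a Varshamov--Gilbert-type bound, and the Hoeffding step $\p(\rho_{jk}\le D/8)\le \exp(-9D/32)$ is correct. The gap is in your final verification. With your count $\binom{K+1}{2}\le 4\cdot 2^{D/4}$, the condition $4\cdot 2^{D/4}\exp(-9D/32)<1$ is equivalent to $D\bigl(9/32-(\log 2)/4\bigr)>2\log 2$, and since $9/32-(\log 2)/4\approx 0.108$ while $2\log 2\approx 1.39$, this requires $D\ge 13$; positivity of the slope difference alone does not give it for all $D\ge 8$. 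Concretely, at $D=8$ your bound evaluates to $16\,e^{-9/4}\approx 1.69>1$, and it remains above $1$ through $D=12$, so as written the proof does not cover $8\le D\le 12$. A second, smaller slip: the lemma demands $K\ge 2^{D/8}$, so you must take $K=\lceil 2^{D/8}\rceil$; your choice $\lfloor 2^{D/8}\rfloor$ falls short whenever $D/8$ is not an integer.

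Both issues are repairable without any new idea. Since $2^{D/8}\ge 2$ for $D\ge 8$, one has $K+1\le 2^{D/8}+2\le 2\cdot 2^{D/8}$, hence $\binom{K+1}{2}\le 2\cdot 2^{D/4}$, and the sharper condition $2\cdot 2^{D/4}\exp(-9D/32)<1$ reduces to $D\bigl(9/32-(\log 2)/4\bigr)>\log 2$, which holds for all $D\ge 7$; alternatively, treat $8\le D\le 12$ by hand, where $K\le 3$ and the union bound runs over at most $6$ pairs, giving probabilities at most $6e^{-9D/32}<1$. With either repair (or with the greedy Hamming-ball volume argument you mention, which gives far more codewords than $2^{D/8}$), your probabilistic proof is a valid alternative to the cited one.
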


\begin{theorem}
	[Lower bounds from two hypotheses]
	\label{thm:lower-bounds-master-twopoint}
	\cite[Theorem 2.2]{Tsy09}
	Let \( T_0, T_1 \) be two hypotheses with associated probability measures \( P_j = P_{T_j} \) for \( j \in \{0, 1\} \).
	Denoting \( s = d(T_0, T_1)/2 \), if
	\begin{equation}
		\label{eq:tm}
		D(P_1 \| P_0) \le C < \infty,
	\end{equation}
	then
	\begin{equation}
		\label{eq:tn}
		\inf_{\hat T} \max_{j \in \{0, 1\}} P_j(d(\hat T, T_j) \ge s) \ge \frac{1}{4} \exp(-C) \vee \frac{1 - \sqrt{C/2}}{2} > 0.
	\end{equation}
\end{theorem}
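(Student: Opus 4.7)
The plan is to reduce the estimation problem in the pseudometric $d$ to a binary hypothesis test between $T_0$ and $T_1$, and then to lower bound the resulting testing error using Le Cam's two-point method combined with both Pinsker's inequality and the Bretagnolle--Huber inequality, each of which translates total variation into Kullback--Leibler divergence.

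First I would construct, from any estimator $\hat T$, a test $\psi(\hat T) \in \{0,1\}$ defined by $\psi = \argmin_{j \in \{0,1\}} d(\hat T, T_j)$, breaking ties arbitrarily. The triangle inequality for the pseudometric $d$, together with $d(T_0, T_1) \ge 2s$, implies that under hypothesis $j$ the event $\{\psi \neq j\}$ forces $d(\hat T, T_j) \ge s$. Therefore
$$\max_{j \in \{0,1\}} P_j\bigl(d(\hat T, T_j) \ge s\bigr) \ge \inf_{\psi} \max_{j \in \{0,1\}} P_j(\psi \neq j),$$
where the infimum is taken over all measurable tests valued in $\{0,1\}$.

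Next, since the maximum dominates the average, Le Cam's lemma yields
$$\inf_{\psi} \max_{j} P_j(\psi \neq j) \ge \tfrac{1}{2}\inf_{\psi} \bigl[P_0(\psi = 1) + P_1(\psi = 0)\bigr] = \tfrac{1}{2}\bigl(1 - \|P_0 - P_1\|_{\mathrm{TV}}\bigr),$$
where the last identity comes from the likelihood-ratio characterization of the optimal Bayes test. It then remains to upper bound $\|P_0 - P_1\|_{\mathrm{TV}}$ in terms of $C$, which I would do in two complementary ways. Pinsker's inequality gives $\|P_0 - P_1\|_{\mathrm{TV}} \le \sqrt{D(P_1\|P_0)/2} \le \sqrt{C/2}$, producing the bound $(1 - \sqrt{C/2})/2$. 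The Bretagnolle--Huber inequality gives $1 - \|P_0 - P_1\|_{\mathrm{TV}} \ge \tfrac{1}{2}\exp(-D(P_1\|P_0)) \ge \tfrac{1}{2}\exp(-C)$, producing the bound $\tfrac{1}{4}\exp(-C)$. Taking the maximum of the two yields the advertised inequality.

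The argument is entirely classical and I do not foresee any substantive obstacle. The only step requiring some care is the reduction from estimation in a pseudometric to a binary test, which exploits both the triangle inequality for $d$ and the exact separation $d(T_0,T_1) \ge 2s$; thereafter the chain Le Cam $\to$ Pinsker and Le Cam $\to$ Bretagnolle--Huber is mechanical. The reason the two bounds are combined by a $\vee$ in the statement is that Pinsker becomes vacuous as soon as $C \ge 2$, while Bretagnolle--Huber remains informative for all finite $C$, and conversely the Pinsker bound is sharper when $C$ is small.
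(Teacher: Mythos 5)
Your proof is correct: the reduction from estimation to testing via the triangle inequality and the exact separation $d(T_0,T_1)=2s$, followed by Le Cam's bound $\inf_\psi\max_j P_j(\psi\neq j)\ge \tfrac12(1-\|P_0-P_1\|_{\mathrm{TV}})$ and the two KL-to-TV conversions (Pinsker and Bretagnolle--Huber), is exactly the classical argument. The paper does not prove this statement itself but cites \cite[Theorem 2.2]{Tsy09}, and your argument coincides with the proof given there, so there is nothing to add.
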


\section{Alternative assumptions via Caffarelli's regularity theory}
\label{sec:caffarelli}

In this section, we show how to apply Theorem \ref{thm:upper-bounds} under smoothness assumptions on the source and target distribution instead of the transport map.
This is enabled by Caffarelli's regularity theory \cite{Caf92,Caf92a,Caf96}, Theorem \ref{thm:caffarelli} below, which gives regularity estimates on the transport map \( \tmap_0 \) under regularity assumptions on the source and target densities and their supports.

For simplicity, we denote an open, convex set \( \Omega \) with \( C^2 \) boundary as \emph{uniformly convex} if it can be written as the sublevel set, \( \Omega = \{f < 0\} \), of a strongly convex function \( f \).
Note that uniform convexity can also be characterized by the positivity of the second fundamental form of \( \Omega \) on its boundary \( \partial \Omega \) \cite{Vil09}.

\begin{assumpalt}[Supports]
	\label{assumpalt:geometry}
	Assume \( \Omega_P, \Omega_Q \) are two uniformly convex, bounded, open subsets of \( \R^d \) with \( C^{\lfloor \alpha - 1 \rfloor + 2} \) boundaries.
\end{assumpalt}

\begin{assumpalt}[Densities]
	\label{assumpalt:regularity}
	With \( \Omega_P, \Omega_Q \) as in Assumption \ref{assumpalt:geometry}, assume that \( \rho_P \in C^{\alpha - 1}(\overline{\Omega}_P) \) and \( \rho_Q \in C^{\alpha - 1}(\overline{\Omega}_Q) \) are two probability densities on \( \R^d \) with \( \supp(\rho_P) = \overline{\Omega}_P \), \( \supp(\rho_Q) = \overline{\Omega}_Q \) that are bounded from above and below on their support.
	Further, denote the associated probability distributions by \( P \) and \( Q \), respectively.
\end{assumpalt}

\begin{theorem}
	[Caffarelli's global regularity theory]
	\ \\
	\label{thm:caffarelli}
	\cite[Theorem 12.50(iii)]{Vil09}, \cite{Caf96}.
	Let \( \alpha > 1 \) and assume that Assumptions \ref{assumpalt:geometry} and \ref{assumpalt:regularity} hold.
	Then, for the optimal transport potential \( \potential_0 \), i.e., the solution to \eqref{eq:pn} for \( P \) and \( Q \), which is unique up to an additive constant, it holds that \( \potential_0 \in C^{\alpha + 1}(\overline{\Omega_P}) \).
\end{theorem}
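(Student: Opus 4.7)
The plan is to deduce the result from the classical Monge--Amp\`ere regularity theory, proceeding in three conceptual stages. First, I would invoke Brenier's theorem (Theorem~\ref{thm:brenier}) to conclude that an optimal potential \( \potential_0 \) is convex and that \( \tmap_0 = \nabla \potential_0 \) pushes \( \measureone \) to \( \measuretwo \). A change-of-variables calculation (formally identical to \eqref{eq:monge-ampere}) then shows that \( \potential_0 \) solves the Monge--Amp\`ere equation
\begin{equation}
\det D^2 \potential_0(x) = \frac{\density_P(x)}{\density_Q(\nabla \potential_0(x))}, \qquad x \in \Omega_P,
\end{equation}
in the Brenier sense, subject to the natural second-boundary-value condition \( \nabla \potential_0(\Omega_P) = \Omega_Q \). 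Under Assumption~\ref{assumpalt:regularity} the right-hand side is bounded above and below by positive constants, so \( \potential_0 \) is in particular an Alexandrov solution with Monge--Amp\`ere measure comparable to Lebesgue.

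Next, I would establish a baseline \( C^{1,\gamma} \) regularity up to the boundary for some \( \gamma \in (0,1) \). The crucial input is that the uniform convexity of the \emph{target} set \( \Omega_Q \) forces \( \potential_0 \) to be strictly convex on \( \Omega_P \); this is Caffarelli's strict convexity theorem \cite{Caf92, Caf92a}, whose proof relies on delicate geometric arguments involving sections of \( \potential_0 \). Strict convexity combined with bounds on the Monge--Amp\`ere measure yields interior \( C^{1,\gamma} \) regularity via the same line of work. Pushing this up to \( \partial \Omega_P \) requires the uniform convexity of the \emph{source} \( \Omega_P \) and the \( C^2 \) regularity of both boundaries, and proceeds by constructing suitable convex barriers at boundary points, which is the content of \cite{Caf96}.

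Once \( \potential_0 \in C^{1,\gamma}(\overline{\Omega_P}) \) is in hand, the nonlinear Monge--Amp\`ere equation becomes uniformly elliptic along \( \potential_0 \) with H\"older continuous right-hand side \( \density_P / (\density_Q \circ \nabla \potential_0) \in C^\gamma(\overline{\Omega_P}) \). I would then differentiate the equation and apply classical Schauder estimates iteratively, exploiting the \( C^{\alpha - 1} \) regularity of the densities and the \( C^{\lfloor \alpha - 1 \rfloor + 2} \) regularity of \( \partial \Omega_P, \partial \Omega_Q \), to bootstrap up to \( \potential_0 \in C^{\alpha + 1}(\overline{\Omega_P}) \). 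The main obstacle is clearly the second stage: the passage from the Brenier map to strict convexity and global \( C^{1,\gamma} \) regularity, where the uniform convexity assumptions on both supports are indispensable. Once that step is accomplished, the Schauder bootstrap in the third stage is essentially routine elliptic theory.
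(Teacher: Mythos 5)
Your outline is essentially the same route as the paper, which does not prove this result at all but imports it directly from \cite[Theorem 12.50(iii)]{Vil09} and \cite{Caf92,Caf92a,Caf96}: the Monge--Amp\`ere formulation via Brenier, strict convexity and global \( C^{1,\gamma} \) regularity from uniform convexity of the supports, and a Schauder bootstrap are precisely the ingredients of the cited argument. Your sketch is a faithful account of that classical proof, with the genuinely hard second stage deferred to Caffarelli's work exactly as the paper defers it.
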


Moreover, we need the following extension lemma to smoothly extend the potential (and associated transport map) to a larger set as required by Assumption \ref{assumpgen:map}.
Similar statements have been shown in \cite{Gho02,Yan14,AzaMud19}.

\begin{lemma}
	\label{lem:convex-extension}
	Let \( \Omega \) be a convex compact subset of \( \R^d \), \( \potential : \Omega \to \R \) a convex \( C^\alpha(\Omega) \) function for \( \alpha > 2 \) with
	\begin{equation}
		\label{eq:to}
		\min_{x \in \Omega} \lambda_{\min}(D^2 \potential(x)) > 0.
	\end{equation}
	Then, there exists an \( \epsilon > 0 \) and an extension \( \tilde \potential \) of \( \potential \) to \( \Omega_\epsilon = \Omega + \epsilon B(0,1) \) such that \( \potential \in C^\alpha(\Omega_\epsilon) \) and
	\begin{equation}
		\label{eq:uj}
		\min_{x \in \Omega_\epsilon} \lambda_{\min}(D^2 \tilde \potential(x)) > 0.
	\end{equation}
\end{lemma}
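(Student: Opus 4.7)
The plan is to combine a standard $C^\alpha$ extension with a continuity-of-Hessian argument; convexity of the extended function on $\Omega_\epsilon$ is recovered a posteriori, from positive-definiteness of its Hessian on a convex neighborhood. First I would apply the extension operator from Theorem~\ref{lem:extension} (applicable because a convex compact body with non-empty interior is a bounded Lipschitz domain) to produce $\tilde\potential := \extension\potential \in C^\alpha(\R^d)$ with $\tilde\potential|_\Omega = \potential$ and $\|\tilde\potential\|_{C^\alpha(\R^d)} \lesssim \|\potential\|_{C^\alpha(\Omega)}$. Since $\alpha > 2$, this $\tilde\potential$ is at least $C^2$, so $x \mapsto D^2\tilde\potential(x)$ is continuous on all of $\R^d$ and agrees with $D^2\potential$ on $\Omega$.

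The key observation is that strong convexity then propagates from $\Omega$ to a small neighborhood by pure continuity. Because $\lambda_{\min}(\cdot)$ is continuous on the space of symmetric matrices, the map $g(x) := \lambda_{\min}(D^2\tilde\potential(x))$ is continuous, and by hypothesis $g \ge \lambda_0 := \min_{x\in\Omega}\lambda_{\min}(D^2\potential(x)) > 0$ on the compact set $\Omega$. Hence the open superlevel set $U := \{x \in \R^d : g(x) > \lambda_0/2\}$ contains $\Omega$, and compactness of $\Omega$ together with openness of $U$ yields $\delta := d(\Omega, \R^d \setminus U) > 0$ (with the convention $\delta = +\infty$ if $U = \R^d$). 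Choosing any $\epsilon \in (0, \delta)$, we have $\Omega_\epsilon \subseteq U$, so $\lambda_{\min}(D^2\tilde\potential(x)) > \lambda_0/2$ throughout $\Omega_\epsilon$, which is exactly \eqref{eq:uj}. Convexity of $\tilde\potential$ on $\Omega_\epsilon$ follows automatically from positive-definiteness of its Hessian together with convexity of $\Omega_\epsilon$ itself, which is a Minkowski sum of two convex sets.

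I do not anticipate a substantial obstacle: the proof rests on two standard ingredients, namely the existence of a $C^\alpha$ extension operator on Lipschitz domains and the continuity of the minimal eigenvalue as a function of matrix entries. The one mild subtlety is that the raw extension $\tilde\potential$ need not be convex outside of $\Omega$, since Whitney-type extensions do not preserve convexity in general; convexity is recovered only after shrinking the collar around $\Omega$ so that the Hessian cannot leave the positive-definite cone. If one wanted to track a quantitative value of $\epsilon$, one could replace the abstract open cover argument by a modulus-of-continuity estimate on $D^2\tilde\potential$, but for the qualitative statement claimed in the lemma the compactness argument above suffices.
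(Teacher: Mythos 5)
Your proposal is correct and follows essentially the same route as the paper: extend via the operator of Theorem~\ref{lem:extension} to preserve the \( C^\alpha \) class, then use continuity of the Hessian (the paper phrases it as uniform continuity of the extension) to push the positive lower bound on \( \lambda_{\min}(D^2\tilde\potential) \) onto a small enlargement \( \Omega_\epsilon \). Your compactness/superlevel-set argument and the remark that convexity on the convex set \( \Omega_\epsilon \) follows from positive-definiteness of the Hessian are just a more detailed writing of the paper's sketch.
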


\begin{proof}
The claim follows from extending \( \potential \) via Theorem \ref{lem:extension} to all of \( \R^d \) while preserving the H\"older class \( C^\alpha \), and observing that the strong convexity condition \eqref{eq:uj} on an enlargement of \( \Omega \) can be ensured by uniform continuity of the extension.
\end{proof}

\begin{corollary}
	\label{cor:upper-bounds-caffarelli}
	Let \( \alpha > 1 \). Under Assumptions \ref{assumpalt:geometry} and \ref{assumpalt:regularity}, writing \( \potential_0 \) and \( \tmap_0 = \nabla \potential_0 \) for the Kantorovich potential and optimal transport map between \( P \) and \( Q \), respectively, there exists an estimator \( \hat \tmap \) such that
	\begin{equation}
		\label{eq:tq}
		\E_{(X_{1:n}, Y_{1:n})} \left[ \int\| \hat \tmap(x) - \tmap_0(x) \|_2^2 \, \ud \measureone(x) \right]
		\lesssim \left[n^{-\frac{2 \alpha}{2 \alpha - 2 + d}} (\log(n))^2 \vee \frac1n \right],
	\end{equation}
	where \( X_{1:n}, Y_{1:n} \) denote \iid observations from \( P \) and \( Q \), respectively.
\end{corollary}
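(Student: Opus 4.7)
The plan is to verify that Assumptions \ref{assumpgen:source} and \ref{assumpgen:map} are fulfilled under \ref{assumpalt:geometry} and \ref{assumpalt:regularity}, possibly after enlarging the constant \( M \) and choosing an appropriate enlargement \( \tilde \Omega_P \), so that Theorem \ref{thm:upper-bounds} can be invoked directly. Assumption \ref{assumpgen:source} is nearly immediate: a uniformly convex bounded open set is a bounded connected Lipschitz domain, and the two-sided bound on \( \rho_P \) is supplied by \ref{assumpalt:regularity}. The bulk of the work lies in establishing \ref{assumpgen:map} and, in particular, in producing a representative of \( T_0 \) defined on an open enlargement of \( \overline{\Omega_P} \) with quantitative control of its Hessian.

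First, I would apply Theorem \ref{thm:caffarelli} to obtain a Kantorovich potential \( f_0 \in C^{\alpha+1}(\overline{\Omega_P}) \), so that \( T_0 = \nabla f_0 \in C^{\alpha}(\overline{\Omega_P}) \). This already produces uniform upper bounds on \( |T_0| \) and on \( \lambda_{\max}(D^2 f_0) \) over \( \overline{\Omega_P} \). To obtain the matching lower bound \( \lambda_{\min}(D^2 f_0) \ge c > 0 \), I would appeal to the Monge-Amp\`ere equation \( \det(D^2 f_0(x)) = \rho_P(x)/\rho_Q(\nabla f_0(x)) \) satisfied by the Brenier map, whose right-hand side is pinched between two positive constants by \ref{assumpalt:regularity}. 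Combined with the upper bound on \( \lambda_{\max}(D^2 f_0) \), the elementary inequality \( \lambda_{\min}(A) \ge \det(A)/\lambda_{\max}(A)^{d-1} \) for positive definite \( A \) produces the sought-after strictly positive lower bound on the full Hessian spectrum on \( \overline{\Omega_P} \).

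Next, I would apply Lemma \ref{lem:convex-extension} with \( \Omega = \overline{\Omega_P} \) to produce an \( \varepsilon > 0 \) and a strongly convex \( C^{\alpha+1} \) extension \( \tilde f_0 \) of \( f_0 \) to \( \Omega_\varepsilon := \Omega_P + \varepsilon B_1 \). Choosing \( M \) large enough to dominate \( 1/\varepsilon \), \( \| \nabla \tilde f_0 \|_{L^\infty(\Omega_\varepsilon)} \), \( \lambda_{\max}(D^2 \tilde f_0) \), and \( 1/\lambda_{\min}(D^2 \tilde f_0) \) on \( \Omega_\varepsilon \), and setting \( \tilde \Omega_P := \Omega_P + M^{-1} B_1 \subseteq \Omega_\varepsilon \), the extension satisfies the pointwise constraints \ref{assumpgen:map}\ref{itm:assump-boundedness}--\ref{itm:assump-d2}, and the smoothness bound \( \| \nabla \tilde f_0 \|_{C^\alpha(\tilde \Omega_P)} \le R \) is inherited from the construction together with Theorem \ref{lem:extension}. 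The companion bound \( \| \nabla \tilde f_0 \|_{W^{\alpha,2}(\tilde \Omega_P)} \le R \) is then automatic on the bounded set \( \tilde \Omega_P \), after possibly enlarging \( R \), since on a bounded Lipschitz domain \( C^\alpha \) embeds continuously into \( W^{\alpha,2} = B^{\alpha}_{2,2} \) by Theorem \ref{thm:restrict-extend}. Since \( \nabla \tilde f_0 \) agrees with the true optimal transport map \( T_0 \) on \( \Omega_P \), and the \( L^2(P) \) error only sees values on \( \Omega_P \), Theorem \ref{thm:upper-bounds} applied with \( \beta = \alpha \) yields the claimed rate for the resulting estimator \( \hat T \).

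The main obstacle is bridging Caffarelli's closed-domain regularity, which lives on \( \overline{\Omega_P} \), to the open enlargement \( \tilde \Omega_P \supseteq \Omega_P + M^{-1} B_1 \) required by \ref{assumpgen:map}; this is precisely what Lemma \ref{lem:convex-extension} is designed to handle. A secondary subtlety worth flagging in the proof is that both the Caffarelli estimates and the extension step are non-uniform in the class parameters, so the constants appearing in the resulting upper bound cannot be made uniform over the full class specified by \ref{assumpalt:geometry}--\ref{assumpalt:regularity}; this is the reason a matching minimax lower bound cannot be claimed here, as observed in Remark \ref{rem:caf-no-minimax}.
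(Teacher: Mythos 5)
Your proposal is correct and follows essentially the same route as the paper: verify Assumptions \ref{assumpgen:source} and \ref{assumpgen:map} via Caffarelli's theorem (Theorem \ref{thm:caffarelli}) combined with the convex extension of Lemma \ref{lem:convex-extension}, and then invoke Theorem \ref{thm:upper-bounds} with the same estimator. Your explicit Monge--Amp\`ere argument for the uniform lower bound on \( \lambda_{\min}(D^2 \potential_0) \) on \( \overline{\Omega}_P \)---needed both to apply Lemma \ref{lem:convex-extension} and to check \ref{assumpgen:map}\ref{itm:assump-d2}---is a detail the paper leaves implicit, and your closing caveat about the non-uniformity of the constants matches Remark \ref{rem:caf-no-minimax}.
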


\begin{proof}
	The statement follows by verifying the assumptions \ref{assumpgen:source} and \ref{assumpgen:map} on the original transport map and concluding by Theorem \ref{thm:upper-bounds}, using the same estimator.

	Assumption \ref{assumpgen:source} is satisfied for \( \overline{\Omega}_P \) by the (stronger) conditions imposed on it in Theorem \ref{thm:caffarelli}.

	To show Assumption \ref{assumpgen:map}, denote by \( \potential_0 \) the optimal transport potential and by \( \tmap_0 \) the optimal transport map for the problem given by \( \rho_P \) and \( \rho_Q \).
By Theorem \ref{thm:caffarelli}, \( \potential_0 \in C^{\alpha + 1}(\overline{\Omega}_P) \) and thus \( \tmap_0 = \nabla \potential_0 \in C^\alpha(\overline{\Omega}_P, \R^d) \).
Denoting by \( \tilde \potential_0 \) the extension of \( \potential_0 \) to \( \tilde \Omega_P = (\overline{\Omega}_P)_\epsilon \) from Lemma \ref{lem:convex-extension}, we observe that \( \tilde \Omega_P \) is connected and has a Lipschitz boundary. This follows from the general fact that the Minkowski sum of a convex set with \( C^1 \) boundary and another convex set has a \( C^1 \) boundary \cite{KraPar91}.
	From the regularity of \( \tilde \tmap_0 = \tilde \potential_0 \) and the compactness of \( \tilde \Omega_P \), all requirements in \ref{assumpgen:map} can now be easily checked.
\end{proof}

An inspection of the proof of our lower bound results, Theorem \ref{thm:lb}, readily yields a lower bound under smoothness assumptions on \( P \) and \( Q \) as well.
In particular, after changing the domain from \( [0,1]^d \) to the unit ball in \( d \) dimensions, the explicit form of the density of \( Q_k \) in \eqref{eq:monge-ampere} guarantees that the candidate densities fulfill assumptions \ref{assumpalt:geometry} and \ref{assumpalt:regularity}.
This indicates that the above rate is optimal up to log factors.

\begin{remark}
	\label{rem:caf-no-minimax}
	In Corollary \ref{cor:upper-bounds-caffarelli}, we do not claim any uniform bound over a regularity class, such as an explicit dependence of the constant on the \( C^{\alpha - 1} \)-norms of \( \rho_P \) and \( \rho_Q \) alone.
	Although the statement of Theorem \ref{thm:caffarelli} strongly suggests a bound on \( \| \potential_0 \|_{C^{\alpha + 1}(\overline{\Omega}_P)} \) in terms of \( \| \rho_P \|_{C^{\alpha - 1}(\overline{\Omega}_P)} \) and \( \| \rho_Q \|_{C^{\alpha - 1}(\overline{\Omega}_Q)} \), to the best of our knowledge, such bounds are not available in the literature for the ``global'' version of Caffarelli's regularity theory as stated Theorem \ref{thm:caffarelli}.
	On the other hand, such bounds hold ``locally'', that is, \( \| \potential_0 \|_{C^{\alpha + 1}(U)} \) can be bounded for strictly open subsets of \( \overline{\Omega}_P \), see \cite[Theorem 12.56]{Vil09}, \cite{Caf92}.
	This deficiency is due to the non-constructive nature of the available proofs of Theorem \ref{thm:caffarelli} and could possibly be remedied, but we consider it out of the scope of this paper.
\end{remark}

\section{Omitted proofs}
\label{sec:proofs}

\subsection{Proof of Proposition-Definition~\ref{lem:immediate-bounds}}
\label{Proof:lem:immediate-bounds}

We proceed in order of statement of the results. 

\ref{itm:immediate-bounds-d01} It follows from  \ref{assumpgen:map} that  there exists $f_0$ such that $\nabla f_0 =T_0$ and by  \ref{assumpgen:map}\ref{itm:assump-boundedness} that $|\nabla f_0(x)| \le M$ for all $x \in \tilde \Omega_P$. Moreover, since $f_0$ is defined up an additive constant, assume that $f_0(x_0)=0$ for some $x_0 \in \tilde \Omega_P$. A first-order Taylor expansion yields that for any $x \in \tilde \Omega_P$, 
$$
|f_0(x)|=|f_0(x)-f_0(x_0)|\le \sup_{z \in \tilde \Omega_P}\|\nabla f_0 (z)\|_2\|x-x_0\|_2 \le M \diam(\tilde \Omega_P)\le 2 M^2,
$$
where in the second inequality, we used \ref{assumpgen:map}\ref{itm:assump-boundedness}, and in the third one, we used that $ \diam(\tilde \Omega_P)\le 2 M$ according to \ref{assumpgen:map}.

\ref{itm:immediate-bounds-d2} follows immediately from \ref{assumpgen:map}\ref{itm:assump-d2}.

	Next, as in \ref{itm:immediate-bounds-d01}, \eqref{EQ:smoothpotentials} follows from a first-order expansion: For any $x \in \tilde 
\Omega_P$, note that on the one hand
$$
| f_0(x)|=|f_0(x)-f_0(x_0)| \le 2 \|T_0\|_2 M\le 2 M^2
$$
so that $\|f_0\|_{L^\infty} \le M^2$ and \( \| f_0 \|_{L^2} \le 2 M^3 \).
Together with the fact that $T_0=\nabla f_0$, this allows us to shift the smoothness index by one speaking about potentials.

To show the statements about \( \measuretwo \), by  Lemma \ref{lem:diffconvex} and \ref{itm:immediate-bounds-d2} above, \( \potential_0 \) is strongly convex  on \( \interior(\tilde \Omega_\measureone) \) and can be extended to \( + \infty \) outside of \( \tilde \Omega_\measureone \) and thus be also be considered a strongly convex function on \( \R^d \).
	By Lemmas  \ref{lem:conj-lipschitz} and \ref{lem:sub-der-correspondence}, we  conclude that \( \tmap_0 = \nabla \potential_0 \) is a bijection from \( \Omega_\measureone \) onto its image \( \Omega_\measuretwo = \supp ((\nabla \potential_0)_\# \measureone) \), with both \( \nabla \potential_0 \) and \( (\nabla \potential_0)^{-1} \) being continuously differentiable; in other words, \( \nabla \potential_0 \) is a \( C^1 \)-diffeomorphism between \( \Omega_\measureone \) and \( \Omega_\measuretwo \).
	Since \( C^1 \)-diffeomorphisms preserve Lipschitz domains~\cite[Theorem 4.1]{HofMitTay07} and connectedness, we can conclude that $\Omega_Q$ is a connected and bounded Lipschitz domain, and the fact that \( \Omega_\measuretwo \subseteq M B_1 \) follows from \( T(x) \le \constbd \) for all \( x \in \tilde \Omega_\measureone \).
	
	Finally, we turn to check the condition on the density $\rho_Q$. To that end, note that  by the change of variables formula,  \( \measuretwo \) has the density
		\begin{equation}
		\label{eq:mr}
		\density_\measuretwo(y) = \frac{1}{|\det D^2 \potential_0((\nabla \potential_0)^{-1}(y))|} \density_\measureone((\nabla \potential_0)^{-1}(y)) \1(y \in \Omega_\measuretwo).
	\end{equation}
	This readily yields the desired bound in light of \ref{assumpgen:source} which assumes boundedness of $\rho_P$ and \ref{itm:immediate-bounds-d2} which gives boundedness of the Hessian $D^2 \potential_0$.

\subsection{Proof of Proposition \ref{lem:expectation-bound}}
\label{sec:proof-expectation-bound}

We first bound the expectation of the supremum of interest and then obtain the high-probability bound via concentration.
To begin, note that
	\begin{equation}
		\label{eq:lp}
		\emprisk_0(\potential)- \risk_0(\potential) 
		=  (\hat \measureone - \measureone) (\potential- \potential_0) + (\hat \measuretwo - \measuretwo) (\potential^\ast - \potential_0^\ast)\,,
	\end{equation}
	which yields
	\begin{align}
		\label{eq:fs}
		\leadeq{\E [\sup_{\potential\in \funcclass_J(\tau^2)} | \emprisk_0(\potential)-\risk_0(\potential) | ]}\\
	\le {} &
		 \E [\sup_{\potential\in \funcclass_J(\tau^2)} |(\hat \measureone - \measureone) (\potential- \potential_0)|]
		+ \E [\sup_{\potential\in \funcclass_J(\tau^2)} |(\hat \measuretwo - \measuretwo) (\potential^\ast - \potential_0^\ast)|]\\
		=: {} & T_1 + T_2.
	\end{align}
	We first focus on the \( T_2 \)-term.
	Once understood, the \( T_1 \)-term can be bounded similarly.

\medskip

\noindent\textbf{Bound on \( T_2 \)-term.}

		We estimate \( T_2 \) from above by two suprema, corresponding to low and high frequencies.
		To this end, we center all functions and consider the extension of a restriction of the functions to \( \Omega_\measuretwo \), which allows us to use wavelet expansions for harmonic analysis.

	First, because \( (\hat \measuretwo - \measuretwo)(g + c) = (\hat \measuretwo - \measuretwo)g \) for every function \( g \) and constant \( c \in \R \), we may assume without loss of generality that 
	$$
	\int_{\Omega_\measuretwo} (f^*(z)-f^*_0(z)) \, \ud \lambda(z)=0 \quad \forall f \in \cF_J.
	$$

Note that $f^*$ and $f_0^*$ are defined over all of $\R^d$ but we do not control their norms over the whole space. To overcome this limitation, with a slight abuse of notation, we denote by $\ext f^*$ (resp. $\extension f^*_0$) the Lipschitz extension of the restriction of $f^*$  (resp. $f^*_0$) to \( \Omega_\measuretwo \). The existence of a linear extension operator is guaranteed by Theorem~\ref{lem:extension}. In particular, we control the norms of $\ext f^*$ and $\ext f^*_0$.

For a function \( g \) with  wavelet expansion
\begin{equation}
		\label{eq:ft}
		g = \sum_{j=0}^{\infty} \sum_{g \in G^{j}} \sum_{k \in \mathbb{Z}^d} \gamma_k^{j, g} \Psi_k^{j, g},
	\end{equation}
	as defined in Section \ref{sec:wavelets}, we define the two $L^2$-projections
	\begin{equation}
		\label{eq:fu}
		\Pi_J g =  \sum_{j=0}^{J} \sum_{g \in G^{j}} \sum_{k \in \mathbb{Z}^d} \gamma_k^{j, g} \Psi_k^{j, g}\,, \qquad 
		\Pi_{>J} g = \sum_{j=J+1}^{\infty} \sum_{g \in G^{j}} \sum_{k \in \mathbb{Z}^d} \gamma_k^{j, g} \Psi_k^{j, g}.
	\end{equation}
With this, we write $T_2=T_{2,1} + T_{2,2}$, where
\begin{align}
	\label{eq:bi}
	T_{2,1}={} &\E[\sup_{\potential \in \funcclass_J(\tau^2)} |(\hat \measuretwo - \measuretwo) \Pi_{J} \extension (\potential^\ast - \potential_0^\ast)|]\,,\\
	T_{2,2}={} &\E[\sup_{\potential \in \funcclass_J(\tau^2)} |(\hat \measuretwo - \measuretwo) \Pi_{>J} \extension (\potential^\ast - \potential_0^\ast)|]\,.
\end{align}

	Note that the projected functions are again well-defined continuous functions: By assumption, all \( \potential\) and \( f_0 \) are strongly convex, hence their conjugates have Lipschitz-continuous gradients and therefore bounded \( B^2_{\infty, \infty}(\Omega_\measuretwo) \)-norm.
	In turn, their projections also have bounded \( B^2_{\infty, \infty}(\Omega_\measuretwo) \)-norm, and by Theorem~\ref{lem:extension}, this implies that both \( \Pi_J \extension (f^\ast - f_0^\ast) \) and \( \Pi_{>J} \extension (f^\ast - f_0^\ast) \) are in \( C^s(\R^d) \), for \( s < 2 \), and hence they are continuous.

	\medskip
\noindent\textbf{Bound on \( T_{2,1} \)-term.}
	
Recall that it follows from Proposition-Definition~\ref{lem:immediate-bounds} that \( \Omega_\measuretwo \) is a connected Lipschitz domain, so we can apply  the Poincar\'e-Wirtinger inequality, Lemma \ref{lem:poincare}, together with \eqref{eq:np} from Proposition~\ref{prop:gigli}, to get
		\begin{equation}
			\label{eq:bk}
			\int_{\Omega_\measuretwo} (f^\ast - f_0^\ast)^2 \, \ud \lambda  \lesssim \int_{\Omega_\measuretwo} \| \nabla f^\ast - \nabla f_0^\ast \|^2 \, \ud \lambda \lesssim M \tau^2,
		\end{equation}
		where we used that we assumed \( f^\ast - f_0^\ast \) to be centered.

		Hence, \( \potential\in \funcclass_J(\tau^2) \) implies \( \|f^\ast - f_0^\ast \|_{W^{1,2}(\Omega_\measuretwo)} \lesssim \tau \), and therefore due to the properties of the extension operator \( \extension \),
		\begin{equation}
			\label{eq:bm}
			\| \extension  (f^\ast - f_0^\ast) \|_{W^{1,2}(\R^d)} \le \constsobolev \, \tau,
		\end{equation}
		for some constant  \( \constsobolev = \constsobolev(\Omega_\measuretwo, \constbd) \).
Since $\Pi_J$ is a non-expansive operator on Besov spaces, it follows from the above display that 
		\begin{equation}
			\label{eq:bl}
			T_{2,1}
			\le \sup \{|(\hat \measuretwo - \measuretwo) h | : h \in V_J(\R^d), \; \| h \|_{W^{1,2}(\R^d)} \le \constsobolev \tau\}.
		\end{equation}
		
		Bounding the empirical process over this standard function class can now be performed as follows.
Observe first that for any function $h \in V_J(\R^d)$ with wavelet decomposition
		\begin{equation}
			\label{eq:bj}
			h = \sum_{j=0}^{J} \sum_{g \in G^j} \sum_{k \in \mathbb{Z}^d} \gamma^{j, g}_k \Psi^{j, g}_k,
		\end{equation}
		the condition \( \| h\|_{W^{1,2}(\R^d)} \le \constsobolev \, \tau \) is equivalent to
		\begin{equation}
			\label{eq:fz}
			\sum_{j = 0}^J \sum_{g \in G^j} \sum_{k \in \mathbb{Z}^d} 2^{2j} | \gamma^{j, g}_k |^2 \le \constsobolev^2 \tau^2.
		\end{equation}

Next, by symmetrization, for independent copies of Rademacher random variables \( \varepsilon_i \), \( i = 1, \dots, n \),
$$
\E  \sup |(\hat \measuretwo - \measuretwo)h | \le  \E  \sup \frac{1}{n} \sum_{i=1}^{n} \varepsilon_i h(Y_i),
$$
where both suprema are taken over the set $\mathfrak{G}_J=\{h \in V_J(\R^d), \; \| h \|_{W^{1,2}(\R^d)} \le \constsobolev \tau \}$. To control the Rademacher process, fix  \( h \in \mathfrak{G}_J \)  with wavelet decomposition~\eqref{eq:bj}.
		By the Cauchy-Schwarz inequality,
		\begin{align}
			\label{eq:bs}
	 \sum_{i=1}^n \varepsilon_i h(Y_i)
	 \le {} &  \Big( \sum 2^{2j} |\gamma^{j, g}_k|^2 \Big)^{1/2} \Big( \sum \Big(\sum_{i = 1}^{n} \frac{\varepsilon_i}{2^{j}} \Psi^{j, g}_k(Y_i) \Big)^2 \Big)^{1/2} \\
			\le {} & \constsobolev \, \tau \Big( \sum \Big( \sum_{i = 1}^{n} \frac{\varepsilon_i}{2^{j}} \Psi^{j, g}_k(Y_i) \Big)^2 \Big)^{1/2},
		\end{align}
		where here and below, all sums without indices are over $\{0 \le j \le J, \, g \in G^j, \, k \in \mathbb{Z}^d\}$. Since the right-hand side in the display above does not depend on $h$, we get by Jensen's inequality that
		$$
		\E  \sup \frac{1}{n} \sum_{i=1}^{n} \varepsilon_i h(Y_i)\le \frac{\constsobolev \, \tau}{n} \Big( \E \sum \Big( \sum_{i = 1}^{n} \frac{\varepsilon_i}{2^{j}} \Psi^{j, g}_k(Y_i) \Big)^2 \Big)^{1/2}\,.
		$$
		Since \( Y_i \in \Omega_\measuretwo \), a compact set by assumption, for given \( j \) and \( g \), \( \Psi^{j,g}_k(Y_i) \) is non-zero only for \( k \in \Lambda(j) \) where \( \Lambda(j) \) depends on the diameter of \( \Omega_\measuretwo \), and \( |\Lambda(j)| \lesssim 2^{jd} \). Together with the   independence of the \( \varepsilon_i \), this yields
		$$
		\E \sum \Big( \sum_{i = 1}^{n} \frac{\varepsilon_i}{2^{j}} \Psi^{j, g}_k(Y_i) \Big)^2 = \sum_{\substack{0 \le j \le J,\\ g \in G^j, k \in \Lambda(j)}} \sum_{i = 1}^{n} \frac{1}{2^{2j}} \E\left[ \Psi^{j, g}_k(Y_i)^2 \right].
		$$
		By Proposition-Definition~\ref{lem:immediate-bounds} and the fact that the \( \Psi^{j, g}_k \) form an orthonormal basis in \( L^2(\R^d) \),
		\begin{equation}
			\label{eq:bo}
			\E\left[\Psi^{j, g}_k(Y_i)^2 \right]
			\lesssim \int_{\Omega_\measuretwo} \Psi^{j, g}_k(y)^2 \, \ud \lambda(y)
			\le \int_{\R^d} \Psi^{j, g}_{k}(y)^2 \, \ud \lambda(y) = 1.
		\end{equation}
		Thus,
		\begin{align}
			\label{eq:ga}
			\sum_{\substack{0 \le j \le J,\\ g \in G^j, k \in \Lambda(j)}} \sum_{i = 1}^{n} \frac{1}{2^{2j}} \E\left[ \Psi^{j, g}_k(Y_i)^2 \right]
			\lesssim {} & n \sum_{0 \le j \le J} \frac{2^{jd}}{2^{2j}}
			\lesssim \left\{
			\begin{aligned}
				&n, &\quad &d = 1,\\
				&nJ, &\quad &d = 2,\\
				&n 2^{J(d-2)}, & \quad & d \ge 3.
			\end{aligned}
			\right.
		\end{align}
We have proved that
\begin{equation}
\label{EQ:T21}
T_{2,1}\lesssim 	\frac{\tau}{\sqrt{n}}\mathfrak{r}_J, \qquad 
\mathfrak{r}_J:=\left\{	\begin{aligned}
				& 1, & \quad & d = 1,\\
				& \sqrt{J}, & \quad & d = 2,\\
				& \ 2^{J\frac{d-2}{2}}, & \quad & d \ge 3.
			\end{aligned}\right.
\end{equation}

\noindent \textbf{Bound on \( T_{2,2} \)-term.}

To control the term $T_{2,2}$, we use a chaining bound for bracketing entropy (Theorem \ref{thm:bernstein-chaining}).
	To that end, we exhibit bounds on the \( L^{\infty} \)-covering numbers of the corresponding function space, which in turn implies control of \( L^2 \)-bracketing numbers.
	The idea is to bound the covering numbers in the original space \( \funcclass_J(\tau^2) \) and exploit continuity properties of the transformations that lead to the function class in the definition of \( T_{2,2} \), in particular the operation of taking the convex conjugate.

Define the function space
	\begin{equation}
		\label{eq:gw}
		\tilde{\funcclass}_J(\tau^2) = \{ \Pi_{>J} \extension  (\potential^\ast - \potential_0^\ast) : \potential \in \funcclass_J(\tau^2) \}\,,
	\end{equation}
	and let $f_1, f_2 \in \cF_J(\tau^2)$ have wavelet coefficients given by the sequences $\gamma_1$ and $\gamma_2$.
	Observe that by linearity of the projection and extension operators, we have for any cutoff $J' \ge 0$, 
\begin{align*}
\leadeq{\|\Pi_{>J} \extension (f_1^*-f_0^*)-\Pi_{>J} \extension (f_2^*-f_0^*)\|_{L^{\infty}(\Omega_Q)}}\\
={} &\|\Pi_{>J} \extension (f_1^*-f_2^*)\|_{L^{\infty}(\Omega_Q)}\\
	\le {} & \|\Pi_{>J'}\Pi_{>J} \extension (f_1^*-f_2^*)\|_{L^{\infty}(\Omega_Q)}+\|\Pi_{J'}\Pi_{>J} \extension (f_1^*-f_2^*)\|_{L^{\infty}(\Omega_Q)}\,.
\end{align*}

To control the first term in the right-hand side above, in the following lemma, we establish bounds on the potentials in \( \tilde F_J(\tau^2) \).
Its proof is deferred to the next section.

\begin{lemma}
\label{lem:boundedextensions}
There exists a constant \( \constbdder = \constbdder(\Omega_\measureone, \tilde \Omega_\measureone, \Omega_\measuretwo, \constbd) \) such that for all \( \potential \in \funcclassstd(2 \constbd) \),
		\begin{equation}
			\label{eq:jn}
			\| \extension \potential^\ast \|_{B^2_{\infty, \infty}(\R^d)} \le \frac{\constbdder}{2}.
		\end{equation}

		Moreover, under assumptions \ref{assumpgen:source} -- \ref{assumpgen:map}, \( \| \extension \potential_0^\ast \|_{B^2_{\infty, \infty}(\R^d)} \le \constbdder  /2 \) as well.
\end{lemma}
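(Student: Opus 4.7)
The plan is to reduce the bound on $\|\ext \potential^\ast\|_{B^2_{\infty,\infty}(\R^d)}$ to showing that $\potential^\ast$ is uniformly bounded in $C^{1,1}(\Omega_\measuretwo)$, promote this to a $B^2_{\infty,\infty}(\Omega_\measuretwo)$ bound via the Zygmund characterization, and finally invoke the extension operator of Theorem \ref{lem:extension}.

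To establish the $C^{1,1}$ bound, I will use that any $\potential \in \funcclassstd(2\constbd)$ satisfies $|\potential| \le 8\constbd^2$, $|\nabla \potential| \le 2\constbd$, and $(2\constbd)^{-1} I \preceq D^2 \potential \preceq 2\constbd I$ on the compact set $\tilde \Omega_\measureone \subseteq \constbd B_1$. Extending $\potential$ by $+\infty$ outside $\tilde \Omega_\measureone$, the supremum in the definition of $\potential^\ast$ is attained on $\tilde \Omega_\measureone$, so $\potential^\ast$ is finite on $\R^d$. For $y \in \Omega_\measuretwo \subseteq \constbd B_1$, bounding the sup above by $|x||y| + |\potential(x)|$ and below by evaluating at any fixed $x_0 \in \tilde \Omega_\measureone$ yields $\|\potential^\ast\|_{L^\infty(\Omega_\measuretwo)} \lesssim \constbd^2$. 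Since every maximizer $\nabla \potential^\ast(y)$ lies in $\tilde \Omega_\measureone$, we also get $\|\nabla \potential^\ast\|_{L^\infty(\R^d)} \le \constbd$. Finally, the $(2\constbd)^{-1}$-strong convexity of $\potential$ combined with Lemma \ref{lem:conj-lipschitz} furnishes a global $2\constbd$-Lipschitz bound on $\nabla \potential^\ast$. Together these yield $\|\potential^\ast\|_{C^{1,1}(\Omega_\measuretwo)} \le C(\constbd, \Omega_\measuretwo)$.

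Next, I will promote this to a Besov bound. For any function $g$ with $L$-Lipschitz gradient, the identity
\begin{equation}
	g(x + h) + g(x - h) - 2 g(x) = \int_0^1 \big(\nabla g(x + s h) - \nabla g(x - s h)\big) \cdot h \, \ud s
\end{equation}
yields $|g(x+h) + g(x-h) - 2g(x)| \le L |h|^2$, i.e., the Zygmund condition characterizing $B^2_{\infty, \infty}$. Combined with the $L^\infty$ bound, this gives $\|\potential^\ast\|_{B^2_{\infty, \infty}(\Omega_\measuretwo)} \le C(\constbd, \Omega_\measuretwo)$. Applying $\ext$ from Theorem \ref{lem:extension}, which as constructed by Rychkov preserves Besov norms across all parameter ranges, yields $\|\ext \potential^\ast\|_{B^2_{\infty, \infty}(\R^d)} \le \constbdder / 2$ for a constant $\constbdder = \constbdder(\Omega_\measureone, \tilde \Omega_\measureone, \Omega_\measuretwo, \constbd)$. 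The assertion for $\potential_0^\ast$ follows verbatim, since Proposition-Definition \ref{lem:immediate-bounds} guarantees $\potential_0 \in \cX(\constbd) \subseteq \funcclassstd(2\constbd)$.

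The main subtlety lies in the passage from $C^{1,1}(\Omega_\measuretwo)$ to $B^2_{\infty,\infty}(\Omega_\measuretwo)$ and in the preservation of the latter norm by the extension operator. The former is handled elementarily via the Zygmund characterization, but care is needed since the paper's $C^\alpha$ at integer $\alpha$ is defined by classical (not weak) derivatives, so $C^{1,1}$ must be introduced as an auxiliary space; the latter is implicit in the universality of Rychkov's construction but is not spelled out in Theorem \ref{lem:extension}. Alternatively, one could circumvent this second point by first multiplying $\potential^\ast$ by a smooth cutoff equal to one on $\Omega_\measuretwo$ and supported in a slightly larger bounded set, then extending by zero; this uses that $B^2_{\infty, \infty}$ forms an algebra under multiplication by compactly supported smooth functions, and sidesteps the need to invoke universality.
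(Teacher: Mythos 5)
Your proposal is correct and follows essentially the same route as the paper: bound \( |\potential^\ast| \) on \( \Omega_\measuretwo \) directly from the sup formula over the compact set \( \tilde\Omega_\measureone \), obtain the \( 2\constbd \)-Lipschitz bound on \( \nabla\potential^\ast \) from strong convexity via Lemma~\ref{lem:conj-lipschitz}, pass to a \( B^2_{\infty,\infty} \) bound, apply the extension operator, and treat \( \potential_0 \) by the inclusion \( \cX(\constbd)\subseteq\funcclassstd(2\constbd) \). Your explicit Zygmund second-difference argument is simply a more careful rendering of the step the paper compresses into the bound by \( \| \potential^\ast \|_{C^2(\Omega_\measuretwo)} \), and your remark on the universality of Rychkov's extension addresses a point the paper also uses implicitly.
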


It follows from Lemma~\ref{lem:boundedextensions} that there exists $\constbdder$ such that for any $f \in \cF(\tau^2)$, we have 
\begin{equation}
	\label{eq:sz}
	\|\Pi_{>J} \extension (f_1^*-f_2^*)\|_{B^2_{\infty, \infty}} \le \constbdder \,.
\end{equation}
Therefore, by  Lemma~\ref{lem:wavelet-approximation}, we get
$$
\|\Pi_{>J'}\Pi_{>J} \extension (f_1^*-f_2^*)\|_{L^{\infty}(\Omega_Q)} \le 2^{-2J'}\|\Pi_{>J'} \extension (f_1^*-f_2^*)\|_{B^2_{\infty, \infty}} \le \eps
$$
if we choose $ J' = \left\lceil  \log \left( \constbdder/\varepsilon \right)/2 \right\rceil$
so that $\constbdder \, 2^{-2 J'} \le \varepsilon$. 

To control the second term, we get from Lemma~\ref{lem:e} that
\begin{align}
			\| \Pi_{J'} \Pi_{>J} \extension (\potential_1^\ast - \potential_2^\ast) \|_{L^\infty(\Omega_\measuretwo)}
			\lesssim {} & 2^{J' d/2} \| \Pi_{J'} \gamma \|_{2} \le 2^{J' d/2} \| \gamma \|_{2}\\
			= {} & 2^{J' d/2} \| \extension  (\potential_1^\ast - \potential_2^\ast) \|_{L^2(\R^d)}.
	\end{align}
Moreover, using respectively the fact that  $\extension$ is a Lipschitz operator on Besov spaces, \( \Omega_\measuretwo \) is bounded, and the convex conjugate is non-expansive in \( L^\infty \) by Lemma \ref{lem:g},  we have
\begin{align}
	\| \extension  (\potential_1^\ast - \potential_2^\ast) \|_{L^2(\R^d)}
	\lesssim {} & \| \potential_1^\ast - \potential_2^\ast \|_{L^2(\Omega_\measuretwo)}
	\lesssim  \| \potential_1^\ast - \potential_2^\ast \|_{L^\infty(\Omega_\measuretwo)}\\
	\le {} & \| \potential_1 - \potential_2 \|_{L^\infty(\tilde \Omega_\measureone)} 
	\lesssim 2^{Jd/2}\|\gamma_1-\gamma_2\|_\infty,
\end{align}
where in the last inequality, we used Lemma~\ref{lem:e}.
We have proved that
$$
\|\Pi_{>J} \extension (f_1^*-f_0^*)-\Pi_{>J} \extension (f_2^*-f_0^*)\|_{L^{\infty}(\Omega_Q)} \le \constlinfty \, 2^{(J+J')d/2}\|\gamma_1-\gamma_2\|_\infty+ \eps\,,
$$
for some constant $\constlinfty = \constlinfty(\tilde \Omega_{\measureone}, \Omega_{\measuretwo})$.
This inequality allows us to control the $L^\infty$-bracketing numbers of $\tilde \cF_J(\tau^2)$ using $\ell^\infty$-covering numbers for the wavelet coefficients. To  control the latter, note that for all \( \potential\in \funcclass_J(\tau^2) \subset \funcclassstd(2 \constbd) \), it holds \( \| \potential\|_{B^2_{\infty, \infty}(\R^d)} \lesssim \constbd^2 \) so that $\| \gamma \|_{\ell^\infty} \lesssim \constbd^2$. Moreover, these wavelet coefficients are in a space of dimension at most \(C \, 2^{Jd} \), $C(\tilde \Omega_\measureone, \constbd) >0$ because \( \funcclass_J(\tau^2) \subseteq V_J(\tilde \Omega_\measureone) \). Hence,   choosing \( \varepsilon = \delta/4 \), Lemmas~\ref{lem:h}, \ref{lem:f}, and the previous display yield
	\begin{align}
\log N_{[\,]}( \tilde{\funcclass}_J(\tau^2), \| \, . \, \|_{L^2(\Omega_\measuretwo)}, \delta)
\lesssim {} & \log N( \tilde{\funcclass}_J(\tau^2), \| \, . \, \|_{L^\infty(\Omega_\measuretwo)}, \delta/2)\\
\lesssim {} & 2^{Jd} J + 2^{Jd} \log \left( \frac{1}{\delta} \right).
\label{eq:lh}
\end{align}

To apply the chaining bound of Theorem~\ref{thm:bernstein-chaining}, note that by \eqref{eq:bm} and Lemma \ref{lem:boundedextensions}, respectively, combined with Lemma \ref{lem:wavelet-approximation}, we have
\begin{equation}
	\label{eq:ta}
	\| g \|_{L^2(\R^d)} \lesssim \constsobolev \, 2^{-J}  \, \tau, \quad \| g \|_{L^{\infty}(\R^d)} \lesssim \constbdder \, 2^{-2J}, \quad \text{for } g \in \tilde \cF_J(\tau^2).
\end{equation}
Thus,
	\begin{equation}
		\label{eq:hj}
		T_{2,2} 
		\lesssim \frac{1}{\sqrt{n}} \cD_{[\,]} \big( 1 + \frac{\cD_{[\,]}}{\constsobolev^2 \, 2^{-2J} \, \tau^2 \sqrt{n}} \constbdder \, 2^{-2J}  \big)=\frac{1}{\sqrt{n}} \cD_{[\,]} \left( 1 + \constdudleycombined \, \frac{\cD_{[\,]}}{ \tau^2 \sqrt{n}}   \right),
	\end{equation}
where $\cD_{[\,]}=\cD_{[\,]}(\constsobolev \, 2^{-J}\tau, \tilde{\funcclass}_{J}(\tau^2), L^2(\measuretwo))$ is the Dudley integral defined in~\eqref{eq:ez}.
Moreover, by~\eqref{eq:lh} and Lemma~\ref{lem:i}, we have
	\begin{align}
		\label{eq:hk}
		\cD_{[\,]}
		\lesssim {} & \int_0^{\constsobolev \, 2^{-J} \tau} \sqrt{1 + 2^{Jd} J + 2^{Jd} \log \left( 1/\delta \right)} \, \ud \delta\\
		\lesssim {} & \tau 2^{J\frac{d-2}{2}} \sqrt{J \log\left(1 + \constbdder /\tau\right)}\,,
	\end{align}
	and therefore,
	\begin{align}
		\label{eq:hm}
		T_{2,2} \lesssim\tau \frac{2^{J\frac{d-2}{2}}}{\sqrt{n}} \, \sqrt{J \log\left(1+\constbdder/\tau\right)} + \frac{2^{J(d-2)} J}{n} \, \log\left(1+\constbdder/\tau\right).
	\end{align}
	
	\medskip
	Together with~\eqref{EQ:T21}, we have $T_2\lesssim \phi_J(\tau^2)$
with $\phi_J$ defined as in~\eqref{EQ:defphi} and we absorbed $\mathfrak{r}_J$ for $d\ge 2$ into the first term on the right-hand side.

\medskip

\textbf{Bounding \( T_1 \)}.
	\( T_1 \) can be bounded completely analogously to how we bounded \( T_2 \), with the exception that Lemma \ref{lem:g} is not needed. Thus, we obtain $T_1 \lesssim  \phi_J(\tau^2)$.

	\medskip

	\textbf{Final bound and concentration.}
	Collecting the above bounds on $T_1$ and $T_2$, we get
	\begin{equation}
		\label{eq:gl}
		\E [\sup_{\potential\in \funcclass_J(\tau^2)} | \emprisk_0(\potential)-\risk_0(\potential) | \lesssim \phi_J(\tau^2)\,.
	\end{equation}
To obtain a bound that holds with high probability, we apply the concentration result of Theorem~\ref{thm:concentration}.
	For this, note that \eqref{eq:lp} can be written as
	\begin{equation}
		\label{eq:lo}
		\emprisk_0(\potential)  -\risk_0(\potential)
		= ((\hat \measureone \otimes \hat \measuretwo) - (\measureone \otimes \measuretwo))((\potential - \potential_0) \otimes (\potential^\ast - \potential_0^\ast)),
	\end{equation}	
	where \( \measureone \otimes \measuretwo \) denotes the product measure and
	\( (f \otimes g)(x, y) = f(x) + g(y) \).
	Following the same argument as in the proof of Lemma~\ref{lem:boundedextensions}, we get
	\begin{equation}
		\label{eq:lq}
	\| (\potential - \potential_0) \otimes (\potential^\ast - \potential_0^\ast) \|_{L^\infty(\Omega_\measureone \times \Omega_\measuretwo)} \le \constbdder \, .
	\end{equation}
Moreover, similarly to Proposition~\ref{prop:gigli}, we have for any $f \in \cF_J(\tau^2)$ that
	\begin{equation}
		\label{eq:lr}
		\| (\potential - \potential_0) \otimes (\potential^\ast - \potential_0^\ast) \|_{L^2(\measureone \otimes \measuretwo)} \le \constsobolev \, \tau.
	\end{equation}
We can therefore apply Theorem~\ref{thm:concentration} and conclude that with probability at least $1-e^{-t}$, it holds that
	\begin{equation}
		\label{eq:dt}
		\sup_{\potential \in \funcclass_{J}(\tau^2)} \left| \risk_0(\potential) - \emprisk_0(\potential) \right|
		\lesssim  \phi_{J}(\tau^2) + \tau \sqrt{\frac{t}{n}} +  \frac{t}{n} \,,
	\end{equation}
	which concludes our proof.

	\subsection{Proof of Lemma~\ref{lem:boundedextensions}}
		By the boundedness conditions in the definition of \( \funcclassstd(2M) \) in Proposition-Definition \ref{lem:immediate-bounds}, and the boundedness of \( \tilde \Omega_\measureone \) and \( \Omega_\measuretwo \), we have for \( y \in \Omega_\measuretwo \) that
		\begin{equation}
			\label{eq:gh}
			| \potential^\ast(y) | = | \sup_{x \in \tilde \Omega_\measureone} \langle x , y \rangle - \potential(x) | \le \frac{1}{2} \constbdder(\tilde \Omega_\measureone, \Omega_\measuretwo, \constbd).
		\end{equation}
Moreover, by Lemma \ref{lem:conj-lipschitz}, \(  \nabla \potential^\ast \) is \( 2M \)-Lipschitz.
	Therefore, since \( \Omega_\measuretwo \) is bounded,
		\begin{equation}
			\label{eq:gk}
			\| \extension \potential^\ast \|_{B^2_{\infty, \infty}(\Omega_\measuretwo)}
			\lesssim_{\Omega_\measuretwo} \| \potential^\ast \|_{C^2(\Omega_\measuretwo)}
			\lesssim \constbdder.
		\end{equation}
		We can similarly deduce the second claim by Proposition-Definition \ref{lem:immediate-bounds} since \( f_0 \in \funcclassstd(\constbd) \), possibly choosing a larger \( \constbdder \).

\section{Additional lemmas}
\label{sec:lemmas}

\begin{lemma}
\label{LEM:1to1}
In the notation of the proof of Theorem \ref{thm:lb}, for any $k=0, \ldots, K$ and $m \ge m_0$, \( \nabla \phi_k \) is a bijection from \( [0,1]^d \) to \( [0,1]^d \).
\end{lemma}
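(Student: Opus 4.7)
\medskip
\noindent\textit{Proof plan.} The idea is to extend $\phi_k$ to all of $\R^d$, use strong convexity to obtain a global bijection $\nabla \phi_k\colon \R^d \to \R^d$, and then exploit the fact that $\nabla \phi_k$ equals the identity outside $[0,1]^d$ to conclude.

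First, I would check two geometric facts about the bumps $g_j$. Since $x^{(j)}_i = (j_i-1)/m \in \{0, 1/m, \ldots, (m-1)/m\}$ for $j \in [m]^d$, each cube $x^{(j)} + [0, 1/m]^d$ lies inside $[0,1]^d$, so $\mathrm{supp}(g_j) \subseteq [0,1]^d$. Moreover, because $\xi \in C^\infty(\R)$ with $\mathrm{supp}(\xi) \subseteq [0,1]$ forces $\xi^{(\ell)}(0) = \xi^{(\ell)}(1) = 0$ for every $\ell \ge 0$, both $g_j$ and $\nabla g_j$ vanish identically on the boundary of $x^{(j)} + [0,1/m]^d$. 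Consequently, the formula $\phi_k(x) = \tfrac12 \|x\|^2 + \sum_{j \in [m]^d} \tau^{(k)}_j g_j(x)$ defines a $C^1$ function on all of $\R^d$ that coincides with $\tfrac12 \|x\|^2$ for every $x \notin [0,1]^d$, so $\nabla \phi_k(x) = x$ there.

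Next, I would argue that $\phi_k$ is $\tfrac12$-strongly convex on $\R^d$. For $x \in [0,1]^d$ this is exactly the content of \eqref{eq:tg}: by Weyl's inequality, the disjoint-support property, and the bound $\|D^2 g_j(x)\|_{\mathrm{op}} \le 1/2$, one has $\lambda_{\min}(D^2\phi_k(x)) \ge 1/2$. For $x \notin [0,1]^d$ we have $D^2\phi_k(x) = I_d$, so the bound holds globally. By Lemma~\ref{lem:diffconvex}, $\phi_k$ is $\tfrac12$-strongly convex on $\R^d$, and the standard argument (for any $y \in \R^d$, $x \mapsto \phi_k(x) - \langle y,x\rangle$ is strongly convex and coercive, hence has a unique minimizer at which $\nabla \phi_k = y$) shows that $\nabla \phi_k\colon \R^d \to \R^d$ is a bijection.

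Finally, I would finish by a complement argument. Since $\nabla \phi_k(y) = y$ for every $y \in \R^d \setminus [0,1]^d$, each such $y$ is its own unique preimage, so $(\nabla \phi_k)^{-1}(\R^d \setminus [0,1]^d) = \R^d \setminus [0,1]^d$. Taking complements in the domain, $(\nabla \phi_k)^{-1}([0,1]^d) = [0,1]^d$, i.e.\ $\nabla \phi_k$ restricts to a bijection of $[0,1]^d$ onto itself. I do not anticipate a serious obstacle here; the only subtle point is verifying that $\nabla g_j$ vanishes on $\partial(x^{(j)} + [0,1/m]^d)$, which is what allows us to paint $\phi_k$ globally as the identity-gradient outside $[0,1]^d$ and run the complement trick.
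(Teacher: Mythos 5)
Your proof is correct and follows essentially the same route as the paper: establish that $\nabla \phi_k$ is a global bijection of $\R^d$ via the uniform strong convexity of $\phi_k$, then use that $\phi_k(x)=\|x\|_2^2/2$ outside $[0,1]^d$ together with uniqueness of preimages to conclude that the cube maps bijectively onto itself. The only (minor) difference is that you obtain the global bijectivity by a direct coercivity argument, whereas the paper invokes Lemma~\ref{lem:conj-lipschitz} on the conjugate $\phi_k^\ast$; your extra verification that $g_j$ and $\nabla g_j$ vanish on the boundary of the small cubes is a detail the paper leaves implicit.
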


\begin{proof}
	[{Proof of Lemma \ref{LEM:1to1}}]
	By construction, \( \phi_k \) is strongly convex and has Lipschitz continuous derivatives, hence so does its convex conjugate \( \phi_k^\ast \) by Lemma \ref{lem:conj-lipschitz}.
	In particular, \( \phi_k^\ast \) is defined on all of \( \R^d \) and for each \( y \),
	\begin{equation}
		\label{eq:aq}
		\phi_k^\ast(y) = \sup_x \langle x , y \rangle - \phi_k(x).
	\end{equation}
	Hence, the equation $\nabla \phi_k(x) = y$
	has a unique solution $x(y)$ for every \( y \in \R^d \) which implies that $\nabla \phi_k$ is injective and that for any $y \in [0,1]^d$, there exists $x=x(y)\in \R^d$ such that $\nabla \phi_k(x)=y$. It remains to check that $x(y) \in [0,1]^d$ for all $y \in [0,1]^d$. To that end, note that $\phi_k(x)=\|x\|^2/2$ for $x \notin [0,1]^d$. Hence $x(y)=y$ whenever $y \notin [0,1]^d$. In particular,  if $y \in [0,1]^d$, we must have $x(y)\in [0,1]^d$. This completes the proof.
	\end{proof}

\begin{lemma}
	[{Poincar\'e inequality, \cite[Section 5.8.1]{Eva10}, \cite[Theorem 13.27]{Leo17}}]
	\label{lem:poincare}
	~\\
	Let \( \Omega \subseteq \R^d \) be a bounded and connected Lipschitz domain.
	Then, there exists a constant \( C = C(d, \Omega) \) such that for any function $f \in W^{1,2}(\Omega)$,
	\begin{equation}
		\label{eq:ec}
		\| \potential - \int_\Omega \potential(x) \, \ud \lambda(x) \|_{L^2(\Omega)}
		\le C \| \nabla u \|_{L^2(\Omega)}.
	\end{equation}
\end{lemma}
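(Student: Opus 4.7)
The plan is to prove this classical Poincaré--Wirtinger inequality by a compactness/contradiction argument based on the Rellich--Kondrachov theorem. This is the standard approach and it exploits exactly the two hypotheses we need: boundedness and Lipschitz regularity of $\Omega$ (to get compact embedding $W^{1,2}(\Omega) \hookrightarrow L^2(\Omega)$), plus connectedness (to conclude that a gradient-free limit must be a single constant rather than one constant per component).

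Concretely, I would argue as follows. Denote by $\bar f_\Omega = \frac{1}{|\Omega|}\int_\Omega f \, \mathrm{d}\lambda$ the mean (we may absorb $|\Omega|$ into the constant, so this matches the statement up to a harmless rescaling). Suppose, for contradiction, that no constant $C$ works. Then there exists a sequence $(f_n) \subset W^{1,2}(\Omega)$ such that
\begin{equation}
\| f_n - \bar{(f_n)}_\Omega \|_{L^2(\Omega)} = 1 \quad \text{while} \quad \| \nabla f_n \|_{L^2(\Omega)} \to 0.
\end{equation}
Setting $g_n := f_n - \bar{(f_n)}_\Omega$, we have $\int_\Omega g_n \, \mathrm{d}\lambda = 0$, $\|g_n\|_{L^2(\Omega)} = 1$ and $\|\nabla g_n\|_{L^2(\Omega)} \to 0$, so $(g_n)$ is bounded in $W^{1,2}(\Omega)$.

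Next, since $\Omega$ is a bounded Lipschitz domain, the Rellich--Kondrachov compactness theorem gives a subsequence (not relabeled) with $g_n \to g$ strongly in $L^2(\Omega)$ for some $g \in L^2(\Omega)$. Combined with $\nabla g_n \to 0$ in $L^2(\Omega)$, the distributional derivatives of $g_n$ converge to those of $g$, whence $g \in W^{1,2}(\Omega)$ with $\nabla g = 0$ a.e. On a connected Lipschitz domain this forces $g$ to be a.e.\ constant (this is where connectedness is essential; without it one only gets a locally constant function). Passing to the limit in $\int_\Omega g_n = 0$ and $\|g_n\|_{L^2} = 1$ gives $\int_\Omega g = 0$ together with $\|g\|_{L^2} = 1$, so the constant value of $g$ is simultaneously $0$ and nonzero, a contradiction.

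The main obstacle, if one were to write this out carefully, is really just keeping track of where each hypothesis enters: the Rellich--Kondrachov compactness uses boundedness and the Lipschitz boundary; the ``zero gradient implies constant'' step uses connectedness. Everything else is soft functional analysis. An alternative would be to extend $f$ to a ball $B \supset \Omega$ using the extension operator from Theorem~\ref{lem:extension} and invoke the Poincaré inequality on $B$, but that route hides the dependence of $C$ on $\Omega$ inside the extension constant and is no simpler. Since the result is stated here only as a citation, a proof sketch along the compactness lines is the cleanest self-contained argument.
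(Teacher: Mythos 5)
Your compactness argument is correct and is exactly the standard proof of this classical result; the paper does not prove the lemma itself but cites Evans and Leoni, whose argument is the same Rellich--Kondrachov contradiction scheme you outline (with connectedness used precisely where you say, to force the zero-gradient limit to be a single constant). The only cosmetic point is that the paper's display writes the unnormalized integral $\int_\Omega \potential \, \ud\lambda$ where the mean $\frac{1}{\lambda(\Omega)}\int_\Omega \potential \, \ud\lambda$ is intended, which your version handles correctly.
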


\section{Numerical Experiments, continued}
\label{sec:numerics-ctd}

In this section, we give additional details on the numerical experiments in Section \ref{sec:numerics}, using the same notation used there to define \( \hat T_{\mathrm{emp}} \), \( \hat T_{\mathrm{wav}} \) and \( \hat T_{\mathrm{ker}} \).

\subsection{Implementation details}
\label{sec:implementation}

All simulations are done with Python 3.8.0 and Numpy 1.17.3, where some calculations are accelerated by the just-in-time compiler of Numba 0.47.0, in particular the calculation of the Linear-Time Legendre transform \cite{Luc97}.
The discrete wavelet transform is calculated with PyWT 1.1.1 \cite{LeeGomWas2019}.
We use a second-order finite difference operator for the calculation of the numerical gradient provided by the findiff package, version 0.8.0.
The optimization \eqref{eq:td} is performed with the L-BFGS algorithm \cite{LiuNoc89} as implement in Scipy 1.4.1, stopping at a relative decrease in objective function value of less than \( 10^{-9} \) and a maximum iteration number of \( 10000 \).
The baseline estimator is computed with the {\tt ot.emd} function of the Python Optimal Transport package, version 0.6.0.
The kernel regression problem \eqref{eq:uf} is solved via scikit-learn, version 0.22.2 \cite{scikit-learn}.
Plots were made using Matplotlib 3.1.2 and Seaborn 0.9.0.

The boxes in the calculation of \( \hat \fvec_J \) are picked to be \( \tilde \Omega_\measureone = \tilde \Omega_\measuretwo = [-0.5, 1.5]^d \) in the case \eqref{eq:identity} and \( \tilde \Omega_\measureone = [-0.5, 1.5]^d \), \( \tilde \Omega_\measuretwo = [0, 4]^d \) in the case \eqref{eq:exponential}.
To compute \( \hat \fvec_J \) for different \( J \), we initialize the optimization at all scales of \( J \) with the all zeros vector.
An approximation to the ground truth semi-dual objective \eqref{eq:pn} is obtained by integrating both \( \invwavtrafo_J\wavcoeffs_J \) and the interpolation \( \interp_{\yvec \to \tmap_0(\xvec)} \legtrafo(\invwavtrafo_J \wavcoeffs_J) \) over \( [0,1]^d \) with Simpson's rule, exploiting the fact that integration over \( \measuretwo \) is equivalent to integration over \( \measureone \) under the push-forward \( \tmap_0 \).

The parameters \( \nu_{\mathrm{kernel}} \) and \( \nu_{\mathrm{ridge}} \) are chosen via an oracle procedure, picking the best \( \nu_{\mathrm{kernel}} \in \{10^{-9}, 10^{-8.5}, \dots, 10^{-5}\}\), \( \nu_{\mathrm{ridge}} \in \{10^{-5}, 10^{-4.5}, \dots, 10^{-1} \} \) as determined by evaluating \( \mathrm{MSE} \) for an independent draw from \( \measureone \).

The data for all quantitative plots are obtained by taking the median over 32 \iid replicates.
Error bars are not shown since 95-percentile bootstrapped confidence intervals were not visible for most estimators at the present scale of the plots.

Running on one core of server processors such as an Intel\textsuperscript{\textregistered} Xeon\textsuperscript{\textregistered} E5-2670 v3 (2.30GHz), the calculation of all wavelet scales of \( \hat T_{\mathrm{wav}} \) for one replicate takes between 10 and 70 minutes, depending on the sample size and hence the conditioning of the problem.
We note that the runtime and space complexity of the algorithm is determined both by the discretization size \( N \) and to a lesser extent by the sample size \( n \), as opposed to computing the optimal transport plan between empirical distributions, whose complexity in the regimes considered here is governed entirely by the sample size \( n \).
As a comparison, computing all \( 100 \) different parameter settings for \( \hat T_{\mathrm{ker}} \) with \( n = 10000 \) takes about five hours.

\subsection{Numerical error dominates for large sample size}
\label{sec:error-flattening}

To investigate the observed flattening out of the \( \hat T_{\mathrm{wav}} \) error curves in Figure \ref{fig:errors}, we repeat the experiment for \eqref{eq:identity} and \( d = 3 \) with a lower resolution discretization, \( N = 33 \).
In Figure \ref{fig:errors_low_acc}, we compare the resulting error to the \( N = 65 \) case considered in Section \ref{sec:numerics}.
The error bottoms out for much lower values of \( n \), suggesting that numerical accuracy is indeed responsible for this behavior.

\begin{figure}[ht]
	\centering
		\includegraphics[width=0.48\textwidth]{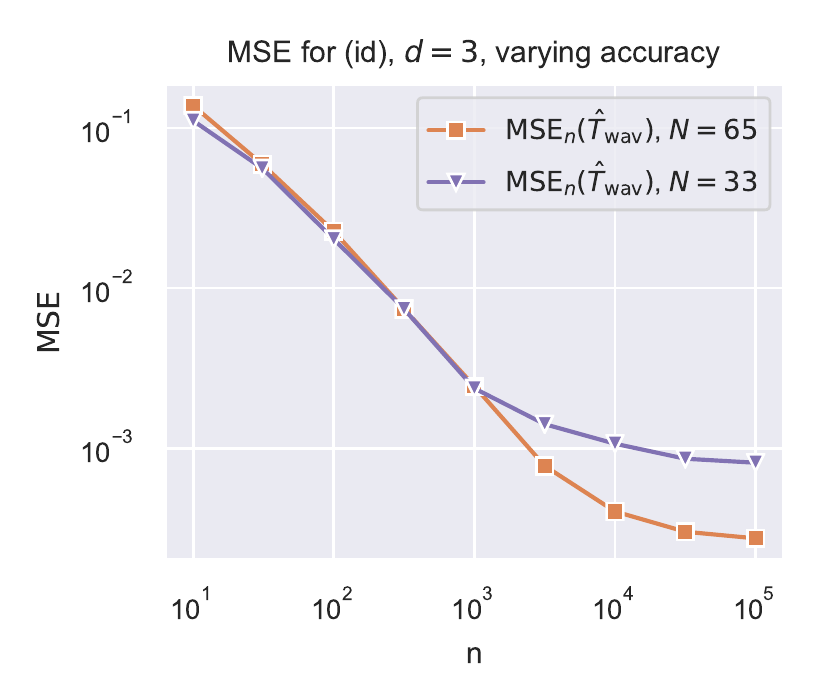}
		\caption{Estimation errors for \eqref{eq:identity}, \( d = 3 \), low and high accuracy discretization. Median over 32 replicates. The error curve flattens out earlier for \( N = 33 \), suggesting that numerical approximation errors are responsible for this phenomenon.}
	\label{fig:errors_low_acc}
\end{figure}

\begin{figure}[!ht]
	\centering
		\includegraphics[width=0.48\textwidth]{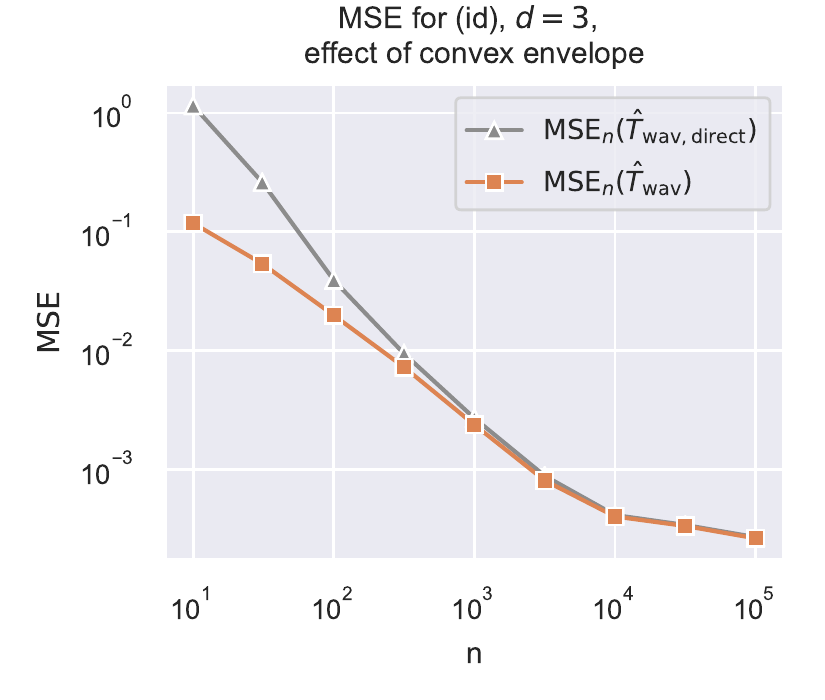}
		\caption{Estimation errors for \eqref{eq:identity}, \( d = 3 \), direct output of \eqref{eq:td}, \( \hat T_{\mathrm{wav, direct}} \), compared to its convex envelope \( \hat T_{\mathrm{wav}} \). Median over 32 replicates. For low sample sizes, taking the convex envelope improves the estimation rate.}
	\label{fig:errors_double}
\end{figure}

\subsection{Improved gradient estimation by computing convex envelope}
\label{sec:double-legendre}

\begin{figure}[hbtp!]
	\centering
	\subcaptionbox{
		Ground truth potential \( \potential_0 = \frac{1}{2} \|x\|^2 \)
		\label{fig:vis_pot_gt}
		}{
		\includegraphics[width=0.45\textwidth]{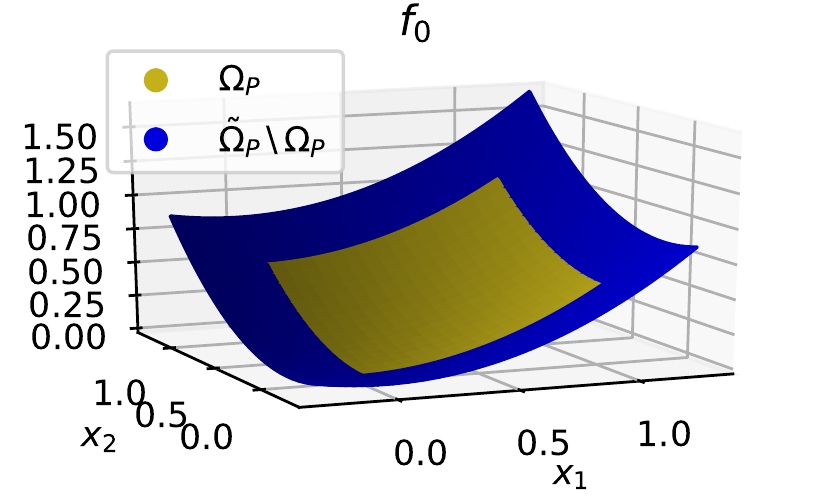}
	}
	\subcaptionbox{
		Ground truth transport map, first coordinate, \( (\tmap_0(x))_1 = (\nabla \potential_0(x))_1 = x_1 \)
		\label{fig:vis_grad_gt}
		}{
		\includegraphics[width=0.45\textwidth]{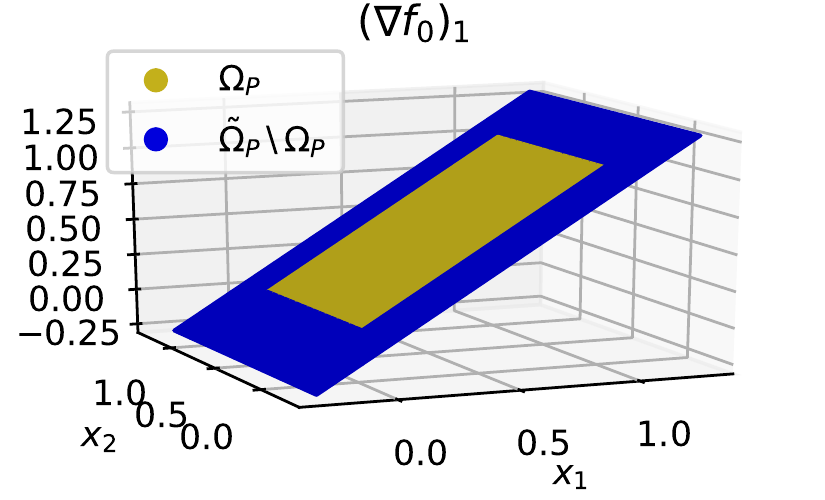}
	}

	\subcaptionbox{
		Estimator \( \hat \fvec_J \) with \( J =  1 \), see Section \ref{sec:double-legendre}
		\label{fig:vis_pot_est}
		}{
		\includegraphics[width=0.45\textwidth]{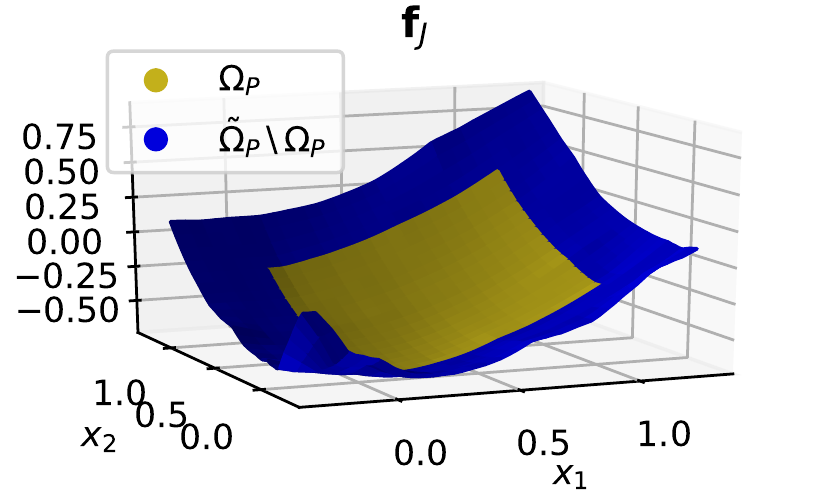}
	}
	\subcaptionbox{
		First coordinate of numerical gradient of \( \hat \fvec_J \) in Panel (\subref{fig:vis_pot_est})
		\label{fig:vis_grad_est}
		}{
		\includegraphics[width=0.45\textwidth]{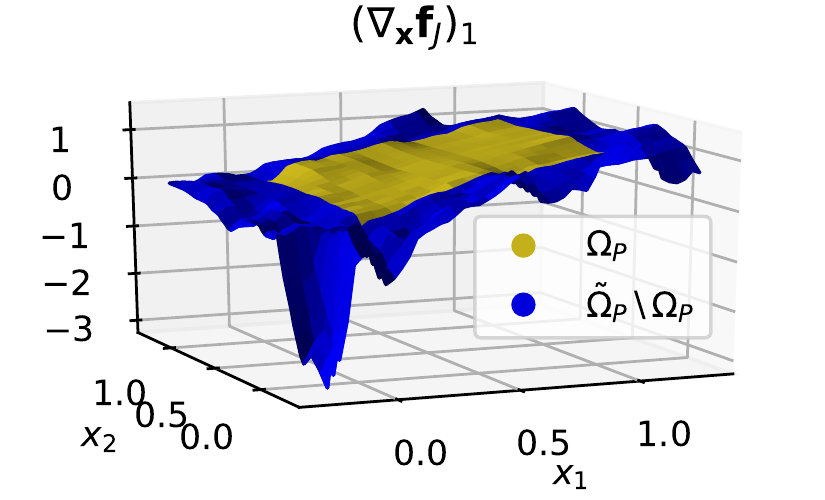}
	}

	\subcaptionbox{
		Convex hull \( \legtrafo(\legtrafo(\hat \fvec_J)) \) of estimator in Panel (\subref{fig:vis_pot_est})
		\label{fig:vis_pot_double}
		}{
		\includegraphics[width=0.45\textwidth]{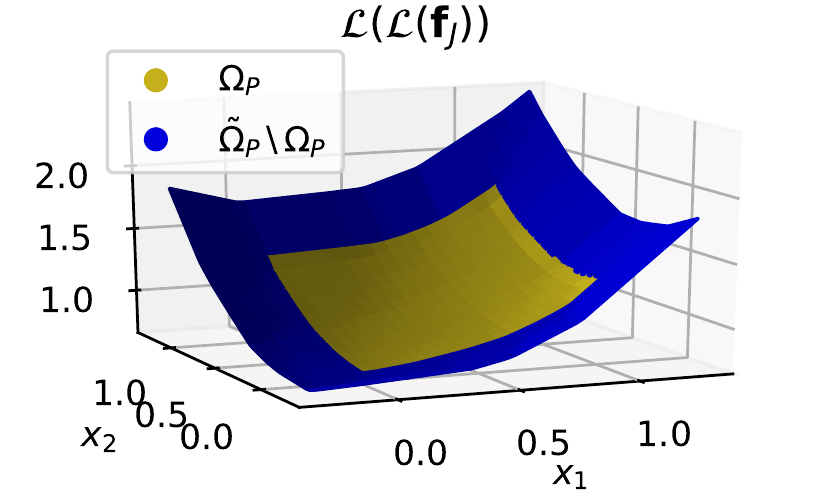}
	}
	\subcaptionbox{
		First coordinate of numerical gradient of \( \legtrafo(\legtrafo(\hat \fvec_J)) \) in Panel (\subref{fig:vis_pot_double})
		\label{fig:vis_grad_double}
		}{
		\includegraphics[width=0.45\textwidth]{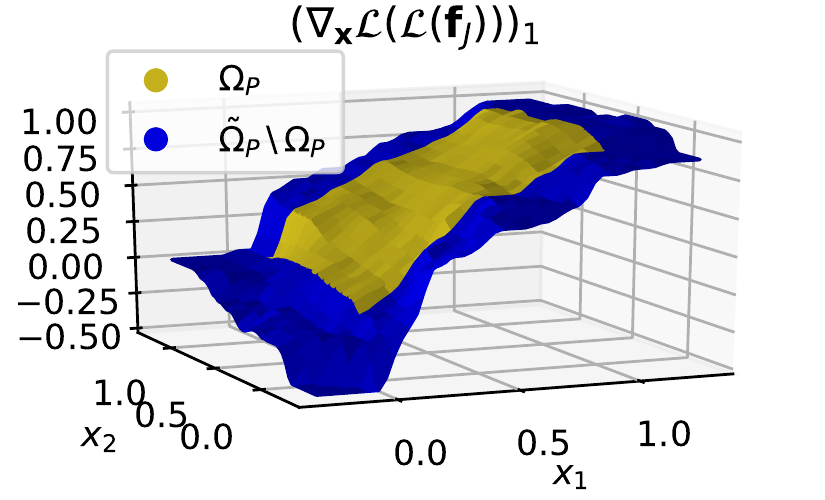}
	}

	\caption{Numerical instability of the boundary is remedied by computing the convex envelope of \( \hat \fvec \), see Section \ref{sec:double-legendre}. Visualization of potentials (ground truth (\subref{fig:vis_pot_gt}), estimated potential (\subref{fig:vis_pot_est}), and convex envelope of estimated potential (\subref{fig:vis_pot_double})), and the first coordinate of the associated gradients (\subref{fig:vis_grad_gt}, \subref{fig:vis_grad_est}, \subref{fig:vis_grad_double}, respectively) for the identity transport map and \( \measureone = \unif([0,1]^2) \) in \( d = 2 \), \( J = 1\), \( n = 100 \).}
	\label{fig:vis_pot}
\end{figure}

Write \( \hat \fvec_J = \invwavtrafo_J \hat \wavcoeffs_J \).
We observe that while \( \hat \fvec_J \) by itself often qualitatively yields good results on \( \Omega_\measureone \) (the support of \( \measureone \)), the lack of additional global regularity conditions in the optimization \eqref{eq:td} often leads to poor approximation outside of the support, i.e., on \( \tilde \Omega_\measureone \setminus \Omega_\measureone \).
In particular, certain regions could simply not enter the optimization at all due to the form of the discrete Legendre transform \( \legtrafo \) in \eqref{eq:ua}.
The numerical gradient estimate is somewhat sensitive to this, especially near the boundary, which prompts us to regularize the result further by instead considering its convex envelope, i.e., the largest convex function that lies entirely below the graph of \( \hat \fvec_J \).

We give a qualitative visual example of this in Figure \ref{fig:vis_pot}.
Depicted for \( d = 2 \) are first the ground truth potential \( \potential_0(x) = \frac{1}{2} \| x \|_2^2 \) corresponding to the identity transport map (Figure \ref{fig:vis_pot_gt}) and its derivative in the first coordinate (Figure \ref{fig:vis_grad_gt}), where we set \( \measureone = \unif([0,1]^2) \).
The parts of those functions corresponding to the support of \( \measureone \) are depicted in yellow, while those outside of the support are colored blue.
Results of the optimization for \( n = 100 \) \iid samples from \( \measureone \) and \( \measuretwo = \unif([0,1]^2) \) are shown in Figures \ref{fig:vis_pot_est} (potential) and \ref{fig:vis_grad_est} (first coordinate of numerical gradient) for \( J = 1 \).
While the estimator yields a good approximation to \( \potential_0 \) and \( \tmap_0 \) on the interior of \( \Omega_\measureone \), the outside appears very ragged.
This is remedied by applying \( \legtrafo \) twice, see Figures \ref{fig:vis_pot_double} (potential) and \ref{fig:vis_grad_double} (numerical gradient).

We also quantitatively compare the estimation accuracy of a linear interpolation of \( \nabla_{\xvec} \hat \fvec_{\hat J} \), denoted by \( \hat T_{\mathrm{wav, \,direct}} \) to that of \( \hat T_{\mathrm{wav}} \) by repeating the experiment for \eqref{eq:identity} and \( d = 3 \), plotting the results in Figure \ref{fig:errors_double}.
One can observe that for low sample sizes, considering the convex envelope indeed significantly improves the estimation accuracy.

\subsection{Performance for varying regularization strength}
\label{sec:num-params}

In order to further investigate the performance of the proposed regularized estimators \( \hat T_{\mathrm{wav}} \) and \( \hat T_{\mathrm{ker}} \), we show error plots for a selection of fixed regularization parameters in Figure \ref{fig:error-params}.
In the case of \( \hat T_{\mathrm{wav}} \), we plot \( \hat T_{\mathrm{wav}}^{(J)} \) for the whole range of \( J \in \{0, \dots, 3\} \) that can be calculated by means of the discrete wavelet transform when \( N = 65 \).
In the case of \( \hat T_{\mathrm{ker}} \), we plot a subset of all considered combinations \( \nu_{\mathrm{kernel}} \in \{10^{-9}, 10^{-8.5}, \dots, 10^{-5} \} \), \( \nu_{\mathrm{ridge}} \in \{10^{-5}, 10^{-4.5}, \dots, 10^{-1} \} \).

First, for the wavelet estimators \( \hat T_{\mathrm{wav}}^{(J)} \), we observe that the best bias-variance trade-off is achieved by \( J = 0 \) for smaller values of \( n \) and by \( J = 1 \) for \( n \ge 10^{3.5} \).
Moreover, the curve for \( J = 3 \), corresponding to no regularization, roughly follows the shape of the error curve of \( \hat T_{\mathrm{emp}} \), until it flattens out due to the numerical error.
This is most likely due to the grid approximations involved in the definition of \( \hat T_{\mathrm{wav}} \).

Next, the kernel estimators \( \hat T_{\mathrm{ker}}^{(\nu_{\mathrm{ridge}}, \nu_{\mathrm{kernel}})} \) in 3D show mostly similar error curves for the range of parameters considered, although a trade-off that depends on the sample size can be observed between \( (\nu_{\mathrm{ridge}}, \nu_{\mathrm{kernel}}) \in \{ (10^{-7}, 10^{-4}), (10^{-7}, 10^{-3}) \} \).
Moreover, in the case of very strong regularization, \( (\nu_{\mathrm{ridge}}, \nu_{\mathrm{kernel}}) = (10^{-5}, 10^{-5}) \), the error curve flattens out for values of \( n \) as low as \( 10^{2.5} \).
In general, for \( \hat T_{\mathrm{ker}} \), we observe that its performance is more sensitive to changes in \( \nu_{\mathrm{kernel}} \) than to those in \( \nu_{\mathrm{ridge}} \).

Last, In 10D, we observe a wider range of error curves according to the regularization strength, ranging from a curve that closely matches that of \( \hat T_{\mathrm{emp}} \) for \( (\nu_{\mathrm{ridge}}, \nu_{\mathrm{kernel}}) = (10^{-9}, 10^{-1}) \) to a curve that matches most of the error curve shown in Figure \ref{fig:errors_exp_10d} for \( (\nu_{\mathrm{ridge}}, \nu_{\mathrm{kernel}}) = (10^{-7}, 10^{-4}) \).

\begin{figure}[tbp]
	\centering
	\captionsetup[subfigure]{justification=centering}
	\subcaptionbox{
		Wavelet estimator \( \hat T_{\mathrm{wav}} \), \( d = 3 \)
		\label{fig:error-params-wav-3d}
		}{
		\includegraphics[height=0.25\textheight]{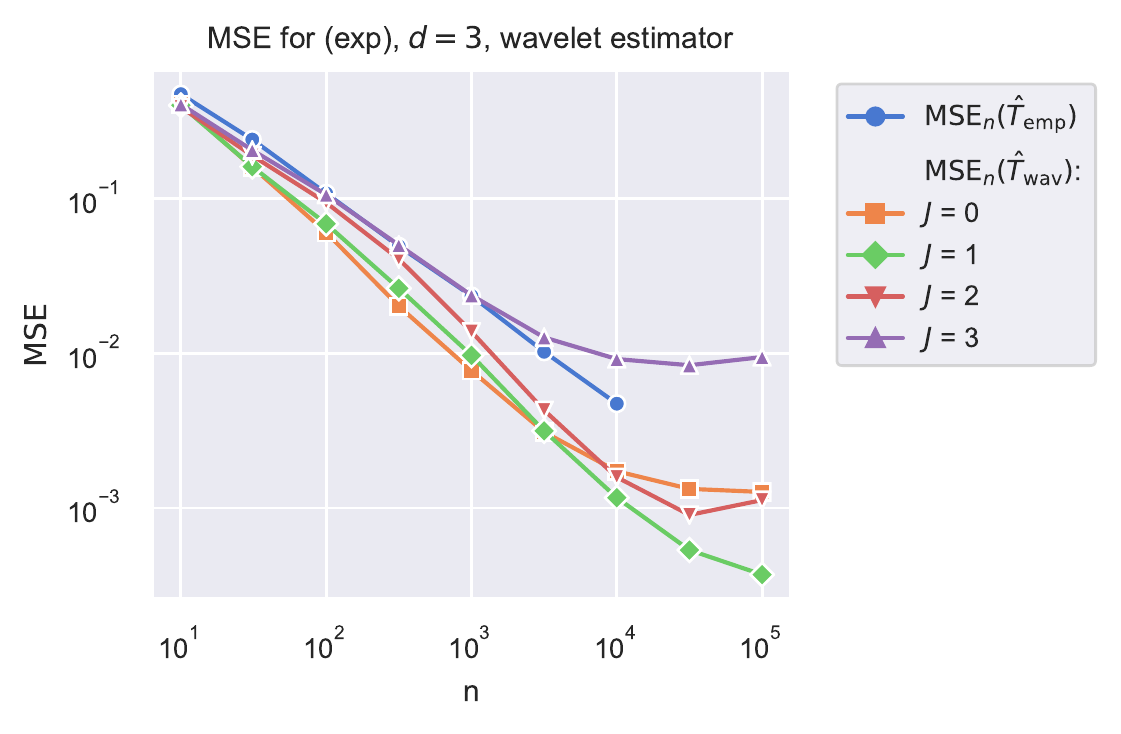}
	}
	\subcaptionbox{
		Kernel estimator \( \hat T_{\mathrm{ker}} \), \( d = 3 \)
		\label{fig:error-params-ker-3d}
		}{
		\includegraphics[height=0.25\textheight]{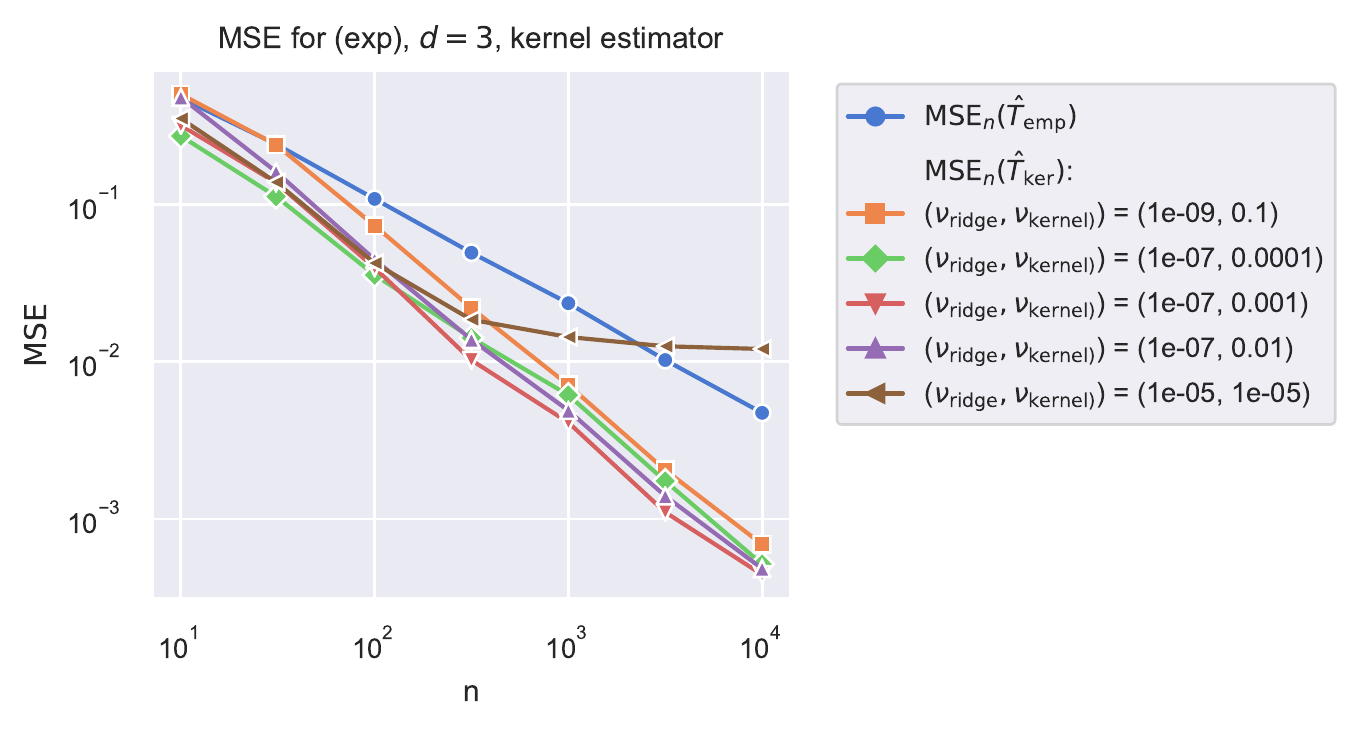}
	}

	\subcaptionbox{
		Kernel estimator \( \hat T_{\mathrm{ker}} \), \( d = 10 \)
		\label{fig:error-params-ker-10d}
		}{
		\includegraphics[height=0.25\textheight]{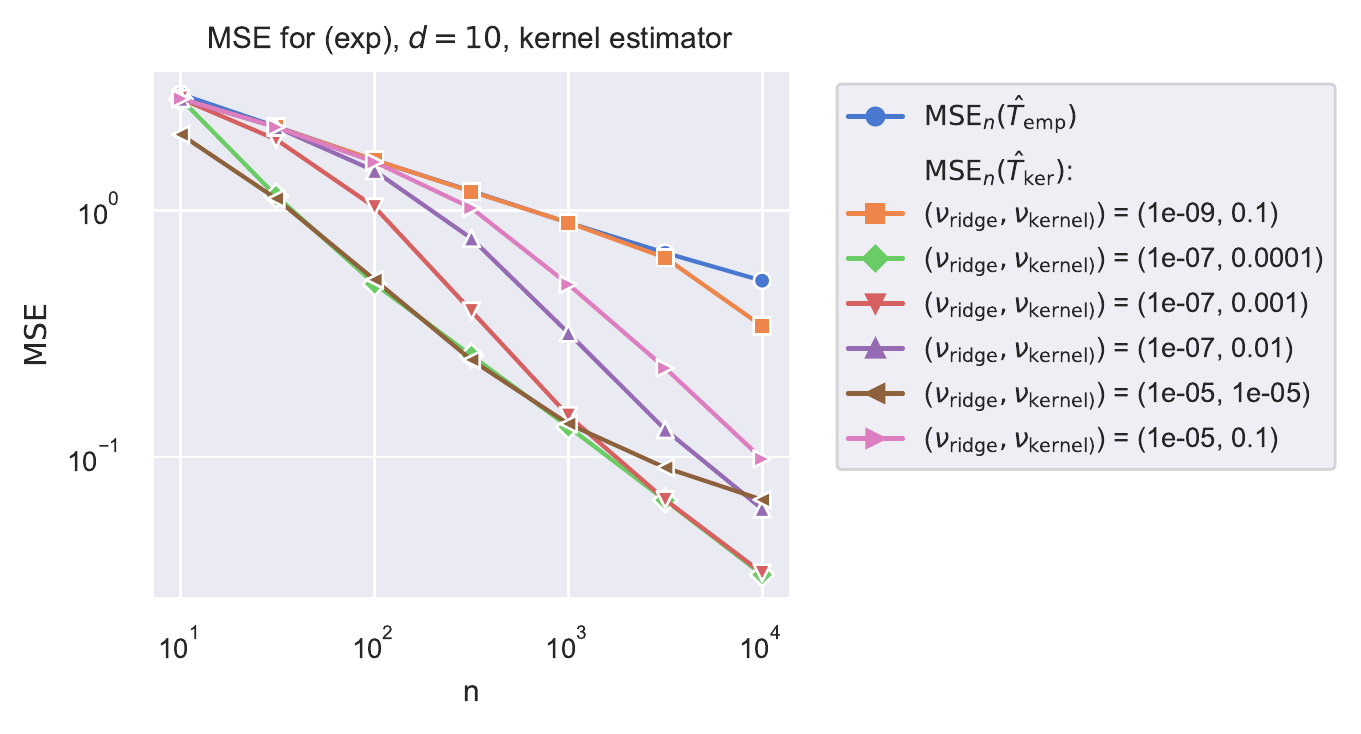}
	}

	\caption{Log-log plot of MSE for the exponential transport map, plotted against \( n \), showing the median error over 32 replicates. Individual curves correspond to the performance for different values of the regularization parameters, with errors for \( \hat T_{\mathrm{emp}} \) shown for comparison.}
	\label{fig:error-params}
\end{figure}

\clearpage

\bibliographystyle{alpha}
\bibliography{ot_stability}

\newcommand{\etalchar}[1]{$^{#1}$}
\begin{thebibliography}{ABRNW19}

\bibitem[ABM16]{AndBor16}
Ethan Anderes, Steffen Borgwardt, and Jacob Miller.
\newblock Discrete wasserstein barycenters: Optimal transport for discrete
  data.
\newblock {\em Mathematical Methods of Operations Research}, 84(2):389--409,
  2016.

\bibitem[ABRNW19]{AltBacRud19}
Jason Altschuler, Francis Bach, Alessandro Rudi, and Jonathan Niles-Weed.
\newblock Massively scalable sinkhorn distances via the nystr{\"o}m method.
\newblock In {\em Advances in Neural Information Processing Systems}, pages
  4429--4439, 2019.

\bibitem[ACB17]{ArjChiBot17a}
Martin Arjovsky, Soumith Chintala, and L\'eon Bottou.
\newblock Wasserstein generative adversarial networks.
\newblock In {\em International {{Conference}} on {{Machine Learning}}}, pages
  214--223, 2017.

\bibitem[AGCJ18]{AlaGraCut18}
Jean Alaux, Edouard Grave, Marco Cuturi, and Armand Joulin.
\newblock Unsupervised hyperalignment for multilingual word embeddings.
\newblock {\em CoRR}, abs/1811.01124, 2018.

\bibitem[AGP18]{AhiGouPar18}
Adil {Ahidar-Coutrix}, Thibaut~Le Gouic, and Quentin Paris.
\newblock Convergence rates for empirical barycenters in metric spaces:
  Curvature, convexity and extendible geodesics.
\newblock {\em arXiv:1806.02740 [math, stat]}, June 2018.

\bibitem[AJJ18]{AlvJaaJeg18}
David Alvarez{-}Melis, Tommi~S. Jaakkola, and Stefanie Jegelka.
\newblock Structured optimal transport.
\newblock In {\em International Conference on Artificial Intelligence and
  Statistics, {AISTATS} 2018, 9-11 April 2018, Playa Blanca, Lanzarote, Canary
  Islands, Spain}, pages 1771--1780, 2018.

\bibitem[AM19]{AzaMud19}
Daniel Azagra and Carlos Mudarra.
\newblock Smooth convex extensions of convex functions.
\newblock {\em Calculus of Variations and Partial Differential Equations},
  58(3):84, 2019.

\bibitem[AWR17]{AltWeeRig17}
Jason Altschuler, Jonathan Weed, and Philippe Rigollet.
\newblock Near-linear time approximation algorithms for optimal transport via
  {{Sinkhorn}} iteration.
\newblock In {\em Advances in {{Neural Information Processing Systems}}}, pages
  1964--1974, 2017.

\bibitem[BCP17]{BigCazPap17}
J{\'e}r{\'e}mie Bigot, Elsa Cazelles, and Nicolas Papadakis.
\newblock Central limit theorems for sinkhorn divergence between probability
  distributions on finite spaces and statistical applications.
\newblock {\em arXiv preprint arXiv:1711.08947}, 2017.

\bibitem[BGK{\etalchar{+}}17]{BigGouKlei17}
J{\'e}r{\'e}mie Bigot, Ra{\'u}l Gouet, Thierry Klein, Alfredo L{\'o}pez, et~al.
\newblock Geodesic pca in the wasserstein space by convex pca.
\newblock In {\em Annales de l'Institut Henri Poincar{\'e}, Probabilit{\'e}s et
  Statistiques}, volume~53, pages 1--26. Institut Henri Poincar{\'e}, 2017.

\bibitem[Bou02]{Bou02}
Olivier Bousquet.
\newblock A {{Bennett}} concentration inequality and its application to suprema
  of empirical processes.
\newblock {\em Comptes Rendus Mathematique}, 334(6):495--500, 2002.

\bibitem[Bre91]{Bre91}
Yann Brenier.
\newblock Polar factorization and monotone rearrangement of vector-valued
  functions.
\newblock {\em Communications on pure and applied mathematics}, 44(4):375--417,
  1991.

\bibitem[BSR17]{BloSegRol17}
Mathieu Blondel, Vivien Seguy, and Antoine Rolet.
\newblock Smooth and {{Sparse Optimal Transport}}.
\newblock {\em arXiv:1710.06276 [cs, stat]}, October 2017.

\bibitem[Caf92a]{Caf92a}
Luis~A. Caffarelli.
\newblock Boundary regularity of maps with convex potentials.
\newblock {\em Communications on pure and applied mathematics},
  45(9):1141--1151, 1992.

\bibitem[Caf92b]{Caf92}
Luis~A. Caffarelli.
\newblock The regularity of mappings with a convex potential.
\newblock {\em Journal of the American Mathematical Society}, 5(1):99--104,
  1992.

\bibitem[Caf96]{Caf96}
Luis~A. Caffarelli.
\newblock Boundary {{Regularity}} of {{Maps}} with {{Convex
  Potentials}}\textendash{{II}}.
\newblock {\em Annals of mathematics}, 144(3):453--496, 1996.

\bibitem[CCDM11]{CaiChaDed11}
Claire Caillerie, Fr\'ed\'eric Chazal, J\'er\^ome Dedecker, and Bertrand
  Michel.
\newblock Deconvolution for the {{Wasserstein}} metric and geometric inference.
\newblock {\em Electronic Journal of Statistics}, 5:1394--1423, 2011.

\bibitem[CFHR17]{CouFlaHab17}
Nicolas Courty, R\'emi Flamary, Amaury Habrard, and Alain Rakotomamonjy.
\newblock Joint distribution optimal transportation for domain adaptation.
\newblock In {\em Advances in {{Neural Information Processing Systems}}}, pages
  3730--3739, 2017.

\bibitem[CFT14]{CouFlaTui14}
Nicolas Courty, R\'emi Flamary, and Devis Tuia.
\newblock Domain adaptation with regularized optimal transport.
\newblock In {\em Joint {{European Conference}} on {{Machine Learning}} and
  {{Knowledge Discovery}} in {{Databases}}}, pages 274--289. {Springer}, 2014.

\bibitem[CFTR17]{CouFlaTui17}
Nicolas Courty, R\'emi Flamary, Devis Tuia, and Alain Rakotomamonjy.
\newblock Optimal transport for domain adaptation.
\newblock {\em IEEE transactions on pattern analysis and machine intelligence},
  39(9):1853--1865, 2017.

\bibitem[CR12]{CanRos12}
Guillermo Canas and Lorenzo Rosasco.
\newblock Learning probability measures with respect to optimal transport
  metrics.
\newblock In {\em Advances in {{Neural Information Processing Systems}}}, pages
  2492--2500, 2012.

\bibitem[CS18]{ClaSol18}
Sebastian Claici and Justin Solomon.
\newblock Wasserstein coresets for lipschitz costs.
\newblock {\em CoRR}, abs/1805.07412, 2018.

\bibitem[CSB{\etalchar{+}}18]{CazSegBig18}
Elsa Cazelles, Vivien Seguy, J{\'{e}}r{\'{e}}mie Bigot, Marco Cuturi, and
  Nicolas Papadakis.
\newblock Geodesic {PCA} versus log-pca of histograms in the wasserstein space.
\newblock {\em {SIAM} J. Scientific Computing}, 40(2), 2018.

\bibitem[Cut13]{Cut13}
Marco Cuturi.
\newblock Sinkhorn distances: {{Lightspeed}} computation of optimal transport.
\newblock In {\em Advances in Neural Information Processing Systems}, pages
  2292--2300, 2013.

\bibitem[DBGGL19]{DelGamGor19}
Eustasio Del~Barrio, Fabrice Gamboa, Paula Gordaliza, and Jean-Michel Loubes.
\newblock Obtaining fairness using optimal transport theory.
\newblock {\em ICML}, 2019.

\bibitem[dBGLL19]{BarGorLes19}
Eustasio del Barrio, Paula Gordaliza, H{\'e}l{\`e}ne Lescornel, and Jean-Michel
  Loubes.
\newblock Central limit theorem and bootstrap procedure for wasserstein's
  variations with an application to structural relationships between
  distributions.
\newblock {\em Journal of Multivariate Analysis}, 169:341 -- 362, 2019.

\bibitem[DFM15]{DedFisMic15}
J\'er\^ome Dedecker, Aur\'elie Fischer, and Bertrand Michel.
\newblock Improved rates for {{Wasserstein}} deconvolution with ordinary smooth
  error in dimension one.
\newblock {\em Electronic journal of statistics}, 9(1):234--265, 2015.

\bibitem[DGK18]{DvuGasKro18}
Pavel Dvurechensky, Alexander Gasnikov, and Alexey Kroshnin.
\newblock Computational optimal transport: Complexity by accelerated gradient
  descent is better than by sinkhorn's algorithm.
\newblock In Jennifer Dy and Andreas Krause, editors, {\em Proceedings of the
  35th International Conference on Machine Learning}, volume~80 of {\em
  Proceedings of Machine Learning Research}, pages 1367--1376,
  Stockholmsm{\"a}ssan, Stockholm Sweden, 10--15 Jul 2018. PMLR.

\bibitem[DKF{\etalchar{+}}18]{DamKelFla18}
Bharath~Bhushan Damodaran, Benjamin Kellenberger, R{\'{e}}mi Flamary, Devis
  Tuia, and Nicolas Courty.
\newblock Deepjdot: Deep joint distribution optimal transport for unsupervised
  domain adaptation.
\newblock In {\em Computer Vision - {ECCV} 2018 - 15th European Conference,
  Munich, Germany, September 8-14, 2018, Proceedings, Part {IV}}, pages
  467--483, 2018.

\bibitem[DM13]{DedMic13}
J\'er\^ome Dedecker and Bertrand Michel.
\newblock Minimax rates of convergence for {{Wasserstein}} deconvolution with
  supersmooth errors in any dimension.
\newblock {\em Journal of Multivariate Analysis}, 122:278--291, 2013.

\bibitem[DR07]{DalRei07}
Arnak Dalalyan and Markus {Rei\ss}.
\newblock Asymptotic statistical equivalence for ergodic diffusions: The
  multidimensional case.
\newblock {\em Probability theory and related fields}, 137(1-2):25--47, 2007.

\bibitem[ENT11]{EspNitTro11}
Luca Esposito, Carlo Nitsch, and Cristina Trombetti.
\newblock Best constants in {Poincar\'e} inequalities for convex domains.
\newblock {\em arXiv preprint arXiv:1110.2960}, 2011.

\bibitem[Eva10]{Eva10}
Lawrence~C. Evans.
\newblock {\em Partial {{Differential Equations}}}.
\newblock {American Mathematical Soc.}, 2010.

\bibitem[FCCR18]{FlaCutCou18}
R{\'{e}}mi Flamary, Marco Cuturi, Nicolas Courty, and Alain Rakotomamonjy.
\newblock Wasserstein discriminant analysis.
\newblock {\em Machine Learning}, 107(12):1923--1945, 2018.

\bibitem[FCVP17]{FeyChaVia17}
Jean Feydy, Benjamin Charlier, Fran{\c{c}}ois{-}Xavier Vialard, and Gabriel
  Peyr{\'{e}}.
\newblock Optimal transport for diffeomorphic registration.
\newblock In {\em Medical Image Computing and Computer Assisted Intervention -
  {MICCAI} 2017 - 20th International Conference, Quebec City, QC, Canada,
  September 11-13, 2017, Proceedings, Part {I}}, pages 291--299, 2017.

\bibitem[FHN{\etalchar{+}}19]{ForHutNit18}
Aden Forrow, Jan-Christian H{\"u}tter, Mor Nitzan, Geoffrey Schiebinger,
  Philippe Rigollet, and Jonathan Weed.
\newblock Statistical optimal transport via factored couplings.
\newblock {\em AISTATS}, 06 2019.

\bibitem[FLF19]{FlaLouFer19}
R{\'e}mi Flamary, Karim Lounici, and Andr{\'e} Ferrari.
\newblock Concentration bounds for linear {M}onge mapping estimation and
  optimal transport domain adaptation, 2019.

\bibitem[GCPB16]{GenCutPey16}
Aude Genevay, Marco Cuturi, Gabriel Peyr\'e, and Francis Bach.
\newblock Stochastic optimization for large-scale optimal transport.
\newblock In {\em Advances in {{Neural Information Processing Systems}}}, pages
  3440--3448, 2016.

\bibitem[Gho02]{Gho02}
Mohammad Ghomi.
\newblock The problem of optimal smoothing for convex functions.
\newblock {\em Proceedings of the American Mathematical Society},
  130(8):2255--2259, 2002.

\bibitem[Gig11]{Gig11}
Nicola Gigli.
\newblock On {{H\"older}} continuity-in-time of the optimal transport map
  towards measures along a curve.
\newblock {\em Proceedings of the Edinburgh Mathematical Society},
  54(2):401--409, June 2011.

\bibitem[GJB19]{GraJouBer19}
Edouard Grave, Armand Joulin, and Quentin Berthet.
\newblock Unsupervised alignment of embeddings with wasserstein procrustes.
\newblock In Kamalika Chaudhuri and Masashi Sugiyama, editors, {\em Proceedings
  of Machine Learning Research}, volume~89 of {\em Proceedings of Machine
  Learning Research}, pages 1880--1890. PMLR, 16--18 Apr 2019.

\bibitem[GN16]{GinNic16}
Evarist Gin\'e and Richard Nickl.
\newblock {\em Mathematical Foundations of Infinite-Dimensional Statistical
  Models}, volume~40.
\newblock {Cambridge University Press}, 2016.

\bibitem[GPC18]{GenPeyCut18}
Aude Genevay, Gabriel Peyr{\'{e}}, and Marco Cuturi.
\newblock Learning generative models with sinkhorn divergences.
\newblock In {\em International Conference on Artificial Intelligence and
  Statistics, {AISTATS} 2018, 9-11 April 2018, Playa Blanca, Lanzarote, Canary
  Islands, Spain}, pages 1608--1617, 2018.

\bibitem[Gun18]{Gun18}
Florian~F Gunsilius.
\newblock On the convergence rate of potentials of brenier maps.
\newblock 2018.

\bibitem[H\"19]{Hue19}
Jan-Christian H\"utter.
\newblock {\em Minimax Estimation with Structured Data: Shape Constraints,
  Causal Models, and Optimal Transport}.
\newblock PhD thesis, Massachusetts Institute of Technology, 2019.

\bibitem[HKPT98]{HarKerPic98}
Wolfgang H\"ardle, Gerard Kerkyacharian, Dominique Picard, and Alexander
  Tsybakov.
\newblock {\em Wavelets, Approximation, and Statistical Applications}, volume
  129.
\newblock {Springer Science \& Business Media}, 1998.

\bibitem[HL01]{HirLem01}
Jean-Baptiste {Hiriart-Urruty} and Claude Lemar\'echal.
\newblock {\em Fundamentals of {{Convex Analysis}}}.
\newblock Grundlehren Text Editions. {Springer-Verlag}, Berlin Heidelberg,
  2001.

\bibitem[HMT07]{HofMitTay07}
Steve Hofmann, Marius Mitrea, and Michael Taylor.
\newblock Geometric and transformational properties of {{Lipschitz}} domains,
  {{Semmes}}-{{Kenig}}-{{Toro}} domains, and other classes of finite perimeter
  domains.
\newblock {\em The Journal of Geometric Analysis}, 17(4):593--647, 2007.

\bibitem[IH81]{IbrHas81}
I.~A. Ibragimov and R.~Z. Hasminski{\u\i}.
\newblock {\em Statistical estimation}, volume~16 of {\em Applications of
  Mathematics}.
\newblock Springer-Verlag, New York, 1981.

\bibitem[JCG18]{JanCutGra18}
Hicham Janati, Marco Cuturi, and Alexandre Gramfort.
\newblock Wasserstein regularization for sparse multi-task regression.
\newblock {\em CoRR}, abs/1805.07833, 2018.

\bibitem[Jon81]{Jon81}
Peter~W. Jones.
\newblock Quasiconformal mappings and extendability of functions in {{Sobolev}}
  spaces.
\newblock {\em Acta Mathematica}, 147(1):71--88, 1981.

\bibitem[Kan42]{Kan42}
Leonid~Vitalievich Kantorovich.
\newblock On the translocation of masses.
\newblock In {\em Dokl. {{Akad}}. {{Nauk}}. {{USSR}} ({{NS}})}, volume~37,
  pages 199--201, 1942.

\bibitem[Kan48]{Kan48}
Leonid~V. Kantorovich.
\newblock On a problem of {{Monge}}.
\newblock In {\em {{CR}} ({{Doklady}}) {{Acad}}. {{Sci}}. {{URSS}} ({{NS}})},
  volume~3, pages 225--226, 1948.

\bibitem[Kol11]{Kol11}
Vladimir Koltchinskii.
\newblock {\em {Oracle inequalities in empirical risk minimization and sparse
  recovery problems. {\'E}cole d'{\'E}t{\'e} de Probabilit{\'e}s de Saint-Flour
  XXXVIII-2008.}}
\newblock {Lecture Notes in Mathematics 2033. Berlin: Springer. ix, 254~p.
  EUR~48.10 }, 2011.

\bibitem[KP91]{KraPar91}
Steven~G Krantz and Harold~R Parks.
\newblock On the vector sum of two convex sets in space.
\newblock {\em Canadian Journal of Mathematics}, 43(2):347--355, 1991.

\bibitem[KS84]{KnoSmi84}
Martin Knott and Cyril~S. Smith.
\newblock On the optimal mapping of distributions.
\newblock {\em Journal of Optimization Theory and Applications}, 43(1):39--49,
  1984.

\bibitem[KSS19]{KroSpoSuv19}
Alexey Kroshnin, Vladimir Spokoiny, and Alexandra Suvorikova.
\newblock Statistical inference for bures-wasserstein barycenters.
\newblock {\em arXiv preprint arXiv:1901.00226}, 2019.

\bibitem[KTM18]{KlaTamMun18}
Marcel Klatt, Carla Tameling, and Axel Munk.
\newblock Empirical regularized optimal transport: Statistical theory and
  applications.
\newblock {\em arXiv preprint arXiv:1810.09880}, 2018.

\bibitem[LCCS18]{LavClaChi18}
Hugo Lavenant, Sebastian Claici, Edward Chien, and Justin Solomon.
\newblock Dynamical optimal transport on discrete surfaces.
\newblock {\em {ACM} Trans. Graph.}, 37(6):250:1--250:16, 2018.

\bibitem[Led19]{Led19}
Michel Ledoux.
\newblock On optimal matching of gaussian samples.
\newblock {\em Journal of Mathematical Sciences}, 238(4):495--522, 2019.

\bibitem[Leo17]{Leo17}
Giovanni Leoni.
\newblock {\em A first course in Sobolev spaces}, volume 181.
\newblock American Mathematical Soc., 2017.

\bibitem[Lep91]{Lep91}
OV~Lepskii.
\newblock On a problem of adaptive estimation in gaussian white noise.
\newblock {\em Theory of Probability \& Its Applications}, 35(3):454--466,
  1991.

\bibitem[Lep92]{Lep92}
OV~Lepskii.
\newblock Asymptotically minimax adaptive estimation. i: Upper bounds.
  optimally adaptive estimates.
\newblock {\em Theory of Probability \& Its Applications}, 36(4):682--697,
  1992.

\bibitem[Lep93]{Lep93}
OV~Lepskii.
\newblock Asymptotically minimax adaptive estimation. ii. schemes without
  optimal adaptation: Adaptive estimators.
\newblock {\em Theory of Probability \& Its Applications}, 37(3):433--448,
  1993.

\bibitem[LGW{\etalchar{+}}19]{LeeGomWas2019}
Gregory Lee, Ralf Gommers, Filip Waselewski, Kai Wohlfahrt, and Aaron O'Leary.
\newblock Pywavelets: A python package for wavelet analysis.
\newblock {\em Journal of Open Source Software}, 4(36):1237, 2019.

\bibitem[LN89]{LiuNoc89}
Dong~C Liu and Jorge Nocedal.
\newblock On the limited memory bfgs method for large scale optimization.
\newblock {\em Mathematical programming}, 45(1-3):503--528, 1989.

\bibitem[Luc97]{Luc97}
Yves Lucet.
\newblock Faster than the fast legendre transform, the linear-time legendre
  transform.
\newblock {\em Numerical Algorithms}, 16(2):171--185, 1997.

\bibitem[Mal99]{Mal99}
St\'ephane Mallat.
\newblock {\em A Wavelet Tour of Signal Processing}.
\newblock {Elsevier}, 1999.

\bibitem[Mas07]{Mas07}
Pascal Massart.
\newblock {\em Concentration {{Inequalities}} and {{Model Selection}}:
  {{Ecole}} d'{{Et\'e}} de {{Probabilit\'es}} de {{Saint}}-{{Flour XXXIII}} -
  2003}.
\newblock \'Ecole d'\'Et\'e de Probabilit\'es de Saint-Flour.
  {Springer-Verlag}, Berlin Heidelberg, 2007.

\bibitem[MC98]{MunCza98}
Axel Munk and Claudia Czado.
\newblock Nonparametric validation of similar distributions and assessment of
  goodness of fit.
\newblock {\em Journal of the Royal Statistical Society: Series B (Statistical
  Methodology)}, 60(1):223--241, 1998.

\bibitem[MG79]{MueGas79}
Hans-Georg M{\"u}ller and Theo Gasser.
\newblock Optimal convergence properties of kernel estimates of derivatives of
  a density function.
\newblock In {\em Smoothing techniques for curve estimation}, pages 144--154.
  Springer, 1979.

\bibitem[MMC16]{MonMulCut16}
Gr{\'{e}}goire Montavon, Klaus{-}Robert M{\"{u}}ller, and Marco Cuturi.
\newblock Wasserstein training of restricted boltzmann machines.
\newblock In {\em Advances in Neural Information Processing Systems 29: Annual
  Conference on Neural Information Processing Systems 2016, December 5-10,
  2016, Barcelona, Spain}, pages 3711--3719, 2016.

\bibitem[Mon81]{Mon81}
Gaspard Monge.
\newblock M\'emoire sur la th\'eorie des d\'eblais et des remblais.
\newblock {\em Histoire de l'Acad\'emie Royale des Sciences de Paris}, 1781.

\bibitem[MPZ19]{MasPanZem19}
Valentina Masarotto, Victor~M Panaretos, and Yoav Zemel.
\newblock Procrustes metrics on covariance operators and optimal transportation
  of gaussian processes.
\newblock {\em Sankhya A}, 81(1):172--213, 2019.

\bibitem[Mur12]{Mur12}
Kevin~P Murphy.
\newblock {\em Machine learning: a probabilistic perspective}.
\newblock MIT press, 2012.

\bibitem[NWR19]{NilRig19}
Jonathan Niles-Weed and Philippe Rigollet.
\newblock Estimation of wasserstein distances in the spiked transport model.
\newblock {\em arXiv preprint arXiv:1909.07513}, 2019.

\bibitem[PC19]{PeyCut19}
Gabriel Peyr{\'{e}} and Marco Cuturi.
\newblock Computational optimal transport.
\newblock {\em Foundations and Trends in Machine Learning}, 11(5-6):355--607,
  2019.

\bibitem[PCFH16]{PerCouFla16}
Micha\"el Perrot, Nicolas Courty, R\'emi Flamary, and Amaury Habrard.
\newblock Mapping estimation for discrete optimal transport.
\newblock In {\em Advances in {{Neural Information Processing Systems}}}, pages
  4197--4205, 2016.

\bibitem[PVG{\etalchar{+}}11]{scikit-learn}
F.~Pedregosa, G.~Varoquaux, A.~Gramfort, V.~Michel, B.~Thirion, O.~Grisel,
  M.~Blondel, P.~Prettenhofer, R.~Weiss, V.~Dubourg, J.~Vanderplas, A.~Passos,
  D.~Cournapeau, M.~Brucher, M.~Perrot, and E.~Duchesnay.
\newblock Scikit-learn: Machine learning in {P}ython.
\newblock {\em Journal of Machine Learning Research}, 12:2825--2830, 2011.

\bibitem[PZ19]{PanZem19}
Victor~M. Panaretos and Yoav Zemel.
\newblock Statistical aspects of {W}asserstein distances.
\newblock {\em Annual Review of Statistics and Its Application}, 6(1):405--431,
  2019.

\bibitem[Rau10]{Rau10b}
Holger Rauhut.
\newblock Compressive sensing and structured random matrices.
\newblock {\em Theoretical foundations and numerical methods for sparse
  recovery}, 9:1--92, 2010.

\bibitem[RCP16]{RolCutPey16}
Antoine Rolet, Marco Cuturi, and Gabriel Peyr{\'{e}}.
\newblock Fast dictionary learning with a smoothed wasserstein loss.
\newblock In {\em Proceedings of the 19th International Conference on
  Artificial Intelligence and Statistics, {AISTATS} 2016, Cadiz, Spain, May
  9-11, 2016}, pages 630--638, 2016.

\bibitem[RR90]{RusRac90}
Ludger R\"uschendorf and Svetlozar~T. Rachev.
\newblock A characterization of random variables with minimum {{L2}}-distance.
\newblock {\em Journal of Multivariate Analysis}, 32(1):48--54, 1990.

\bibitem[RTC17]{RamTriCut17}
Aaditya Ramdas, Nicol{\'{a}}s~Garc{\'{\i}}a Trillos, and Marco Cuturi.
\newblock On wasserstein two-sample testing and related families of
  nonparametric tests.
\newblock {\em Entropy}, 19(2):47, 2017.

\bibitem[RW18]{RigWee18}
Philippe Rigollet and Jonathan Weed.
\newblock Entropic optimal transport is maximum-likelihood deconvolution.
\newblock {\em Comptes Rendus Mathematique}, 356(11):1228 -- 1235, 2018.

\bibitem[RW19]{RigWee19}
Philippe Rigollet and Jonathan Weed.
\newblock Uncoupled isotonic regression via minimum {{Wasserstein}}
  deconvolution.
\newblock {\em Information and Inference (to appear)}, 2019.

\bibitem[Ryc99]{Ryc99}
Vyacheslav~S. Rychkov.
\newblock On restrictions and extensions of the {{Besov}} and
  {{Triebel}}\textendash{{Lizorkin}} spaces with respect to {{Lipschitz}}
  domains.
\newblock {\em Journal of the London Mathematical Society}, 60(1):237--257,
  1999.

\bibitem[San15]{San15}
Filippo Santambrogio.
\newblock Optimal transport for applied mathematicians.
\newblock {\em Birk\"auser, NY}, pages 99--102, 2015.

\bibitem[SC15]{SegCut15}
Vivien Seguy and Marco Cuturi.
\newblock Principal geodesic analysis for probability measures under the
  optimal transport metric.
\newblock In {\em Advances in Neural Information Processing Systems 28: Annual
  Conference on Neural Information Processing Systems 2015, December 7-12,
  2015, Montreal, Quebec, Canada}, pages 3312--3320, 2015.

\bibitem[SCSJ17]{StaClaSol17}
Matthew Staib, Sebastian Claici, Justin~M. Solomon, and Stefanie Jegelka.
\newblock Parallel streaming wasserstein barycenters.
\newblock In {\em Advances in Neural Information Processing Systems 30: Annual
  Conference on Neural Information Processing Systems 2017, 4-9 December 2017,
  Long Beach, CA, {USA}}, pages 2644--2655, 2017.

\bibitem[SDF{\etalchar{+}}18]{SegDamFla18}
Vivien Seguy, Bharath~Bhushan Damodaran, Remi Flamary, Nicolas Courty, Antoine
  Rolet, and Mathieu Blondel.
\newblock Large {{Scale Optimal Transport}} and {{Mapping Estimation}}.
\newblock {\em arXiv preprint arXiv:1711.02283}, February 2018.

\bibitem[SdGP{\etalchar{+}}15]{SolGoePey15}
Justin Solomon, Fernando de~Goes, Gabriel Peyr{\'e}, Marco Cuturi, Adrian
  Butscher, Andy Nguyen, Tao Du, and Leonidas Guibas.
\newblock Convolutional wasserstein distances: Efficient optimal transportation
  on geometric domains.
\newblock {\em ACM Trans. Graph.}, 34(4):66:1--66:11, July 2015.

\bibitem[SHB{\etalchar{+}}17]{SchHeiBon17}
Morgan~A. Schmitz, Matthieu Heitz, Nicolas Bonneel, Fred Maurice~Ngol{\`{e}}
  Mboula, David Coeurjolly, Marco Cuturi, Gabriel Peyr{\'{e}}, and Jean{-}Luc
  Starck.
\newblock Wasserstein dictionary learning: Optimal transport-based unsupervised
  non-linear dictionary learning.
\newblock {\em CoRR}, abs/1708.01955, 2017.

\bibitem[SHB{\etalchar{+}}18]{SchHeiBon18}
Morgan~A. Schmitz, Matthieu Heitz, Nicolas Bonneel, Fred Maurice~Ngol{\`{e}}
  Mboula, David Coeurjolly, Marco Cuturi, Gabriel Peyr{\'{e}}, and Jean{-}Luc
  Starck.
\newblock Wasserstein dictionary learning: Optimal transport-based unsupervised
  nonlinear dictionary learning.
\newblock {\em {SIAM} J. Imaging Sciences}, 11(1):643--678, 2018.

\bibitem[SP18]{SinPoc18}
Shashank Singh and Barnab{\'a}s P{\'o}czos.
\newblock Minimax distribution estimation in wasserstein distance.
\newblock {\em arXiv preprint arXiv:1802.08855}, 2018.

\bibitem[SPKS16]{SolPeyKim16}
Justin Solomon, Gabriel Peyr{\'{e}}, Vladimir~G. Kim, and Suvrit Sra.
\newblock Entropic metric alignment for correspondence problems.
\newblock {\em {ACM} Trans. Graph.}, 35(4):72:1--72:13, 2016.

\bibitem[SST{\etalchar{+}}19]{SchShuTab19}
Geoffrey Schiebinger, Jian Shu, Marcin Tabaka, Brian Cleary, Vidya Subramanian,
  Aryeh Solomon, Joshua Gould, Siyan Liu, Stacie Lin, Peter Berube, Lia Lee,
  Jenny Chen, Justin Brumbaugh, Philippe Rigollet, Konrad Hochedlinger, Rudolf
  Jaenisch, Aviv Regev, and Eric~S. Lander.
\newblock Optimal-transport analysis of single-cell gene expression identifies
  developmental trajectories in reprogramming.
\newblock {\em Cell}, 176(4):928 -- 943.e22, 2019.

\bibitem[Str18]{Str18}
Claudia Strauch.
\newblock Adaptive invariant density estimation for ergodic diffusions over
  anisotropic classes.
\newblock {\em The Annals of Statistics}, 46(6B):3451--3480, 2018.

\bibitem[Tal14]{Tal14}
Michel Talagrand.
\newblock {\em Upper and lower bounds for stochastic processes: modern methods
  and classical problems}, volume~60.
\newblock Springer Science \& Business Media, 2014.

\bibitem[TM18]{TamMun18}
Carla Tameling and Axel Munk.
\newblock Computational strategies for statistical inference based on empirical
  optimal transport.
\newblock In {\em 2018 {IEEE} Data Science Workshop, {DSW} 2018, Lausanne,
  Switzerland, June 4-6, 2018}, pages 175--179, 2018.

\bibitem[TMS{\etalchar{+}}13]{TanMizSek13}
Kazuaki Tanaka, Makoto Mizuguchi, Kouta Sekine, Akitoshi Takayasu, and
  Shin'ichi Oishi.
\newblock Estimation of an embedding constant on {{Lipschitz}} domains using
  extension operators.
\newblock In {\em {{JSST}} 2013 {{International Conference}} on {{Simulation
  Technology}}}, 2013.

\bibitem[Tri06]{Tri06}
Hans Triebel.
\newblock {\em Theory of {{Function Spaces III}}}.
\newblock Monographs in Mathematics. {Birkh\"auser Basel}, 2006.

\bibitem[Tsy09]{Tsy09}
Alexandre~B. Tsybakov.
\newblock {\em Introduction to nonparametric estimation}.
\newblock Springer Series in Statistics. Springer, New York, 2009.

\bibitem[vdG87]{Gee87}
Sara van~de Geer.
\newblock A new approach to least-squares estimation, with applications.
\newblock {\em Ann. Statist.}, 15(2):587--602, 1987.

\bibitem[vdG02]{Gee02}
Sara van~de Geer.
\newblock {$M$}-estimation using penalties or sieves.
\newblock {\em J. Statist. Plann. Inference}, 108(1-2):55--69, 2002.
\newblock C. R. Rao 80th birthday felicitation volume, Part II.

\bibitem[VdV00]{Vaa00}
Aad~W Van~der Vaart.
\newblock {\em Asymptotic statistics}, volume~3.
\newblock Cambridge university press, 2000.

\bibitem[Ver18]{Ver18}
Roman Vershynin.
\newblock {\em High-Dimensional Probability: {{An}} Introduction with
  Applications in Data Science}, volume~47.
\newblock {Cambridge University Press}, 2018.

\bibitem[Vil03]{Vil03}
C{\'e}dric Villani.
\newblock {\em Topics in Optimal Transportation}, volume~58.
\newblock American Mathematical Soc., 2003.

\bibitem[Vil09]{Vil09}
C{{\'e}}dric Villani.
\newblock {\em Optimal Transport: Old and New}, volume 338 of {\em Grundlehren
  der Mathematischen Wissenschaften [Fundamental Principles of Mathematical
  Sciences]}.
\newblock Springer-Verlag, Berlin, 2009.
\newblock Old and new.

\bibitem[vW07]{vanWel07}
Aad~W. {van der Vaart} and Jon A.~Wellner Wellner.
\newblock Empirical processes indexed by estimated functions.
\newblock In {\em Asymptotics: Particles, Processes and Inverse Problems},
  pages 234--252. {Institute of Mathematical Statistics}, 2007.

\bibitem[WB17]{WeeBac17}
Jonathan Weed and Francis Bach.
\newblock Sharp asymptotic and finite-sample rates of convergence of empirical
  measures in {{Wasserstein}} distance.
\newblock {\em arXiv:1707.00087 [math, stat]}, June 2017.

\bibitem[WB19]{WeeBer19}
Jonathan Weed and Quentin Berthet.
\newblock Estimation of smooth densities in {{Wasserstein}} distance.
\newblock {\em COLT}, 2019.

\bibitem[Yan14]{Yan14}
Min Yan.
\newblock Extension of convex function.
\newblock {\em Journal of Convex Analysis}, 21(4):965--987, 2014.

\bibitem[YDV{\etalchar{+}}18]{YanDamVen18}
Karren~D Yang, Karthik Damodaran, Saradha Venkatchalapathy, Ali~C Soylemezoglu,
  GV~Shivashankar, and Caroline Uhler.
\newblock Autoencoder and optimal transport to infer single-cell trajectories
  of biological processes.
\newblock {\em bioRxiv}, page 455469, 2018.

\bibitem[Yuk06]{Yuk06}
Joseph~E Yukich.
\newblock {\em Probability theory of classical Euclidean optimization
  problems}.
\newblock Springer, 2006.

\bibitem[ZP19]{ZemPan19}
Yoav Zemel and Victor~M. Panaretos.
\newblock {F}r{\'e}chet means and {P}rocrustes analysis in {W}asserstein space.
\newblock {\em Bernoulli}, 25(2):932--976, 05 2019.

\end{thebibliography}

\end{document}